\renewcommand{\P}{\mathbb{P}}
\newcommand{\E}{\mathbb{E}}
\newcommand{\Z}{\mathbb{Z}}
\newcommand{\R}{\mathbb{R}}
\newcommand{\N}{\mathbb{N}}
\renewcommand{\S}{\mathbb{S}}
\newcommand{\eps}{\varepsilon} 
\def\id{{\mathbf I}}
\newcommand{\<}{\langle}
\renewcommand{\>}{\rangle}
\newcommand{\diag}{\text{diag}}
\newcommand{\tr}{\text{tr}}
\newcommand{\op}{{\rm op}}
\newcommand{\ones}{\bm{1}}
\newcommand{\what}{\widehat}
\def\sT{{\mathsf T}}
\def\bzero{{\boldsymbol 0}}
\DeclareMathOperator*{\argmin}{arg\,min}
\newtheorem{theorem}{Theorem}
\newtheorem*{theorem*}{Theorem}
\newtheorem{lemma}{Lemma}
\newtheorem{assumption}{Assumption}
\newtheorem{proposition}{Proposition}
\newtheorem{corollary}{Corollary}
\theoremstyle{definition}
\newtheoremstyle{myremark} 
    {\topsep}                    
    {\topsep}                    
    {\rm}                        
    {}                           
    {\bf}                        
    {.}                          
    {.5em}                       
    {}  
\theoremstyle{myremark}
\newtheorem{remark}{Remark}[section]
\DeclareSymbolFont{rsfs}{U}{rsfs}{m}{n}
\DeclareSymbolFontAlphabet{\mathscrsfs}{rsfs}
\def\bA{{\boldsymbol A}}
\def\bB{{\boldsymbol B}}
\def\bC{{\boldsymbol C}}
\def\bD{{\boldsymbol D}}
\def\bE{{\boldsymbol E}}
\def\bG{{\boldsymbol G}}
\def\bH{{\boldsymbol H}}
\def\bL{{\boldsymbol L}}
\def\bM{{\boldsymbol M}}
\def\bN{{\boldsymbol N}}
\def\bP{{\boldsymbol P}}
\def\bQ{{\boldsymbol Q}}
\def\bR{{\boldsymbol R}}
\def\bS{{\boldsymbol S}}
\def\bT{{\boldsymbol T}}
\def\bU{{\boldsymbol U}}
\def\bV{{\boldsymbol V}}
\def\bW{{\boldsymbol W}}
\def\bX{{\boldsymbol X}}
\def\bZ{{\boldsymbol Z}}
\def\ba{{\boldsymbol a}}
\def\bb{{\boldsymbol b}}
\def\be{{\boldsymbol e}}
\def\boldf{{\boldsymbol f}}
\def\bh{{\boldsymbol h}}
\def\bu{{\boldsymbol u}}
\def\bv{{\boldsymbol v}}
\def\bx{{\boldsymbol x}}
\def\by{{\boldsymbol y}}
\def\bz{{\boldsymbol z}}
\def\beps{{\boldsymbol \eps}}
\def\bpsi{{\boldsymbol \psi}}
\def\bphi{{\boldsymbol \phi}}
\def\btheta{{\boldsymbol \theta}}
\def\bDelta{{\boldsymbol \Delta}}
\def\bLambda{{\boldsymbol \Lambda}}
\def\bPsi{{\boldsymbol \Psi}}
\def\bPhi{{\boldsymbol \Phi}}
\def\bSigma{{\boldsymbol \Sigma}}
\def\bTheta{{\boldsymbol \Theta}}
\def\bfzero{{\boldsymbol 0}}
\def\bfone{{\boldsymbol 1}}
\def\hba{{\hat {\boldsymbol a}}}
\def\hf{{\hat f}}
\def\ha{{\hat a}}
\def\spn{{\rm span}}
\def\KR{{\rm KR}}
\def\bbHe{{\rm He}}
\def\de{{\rm d}}
\def\Trace{{\rm Tr}}
\def\Coeff{{\rm Coeff}}
\def\de{{\rm d}}
\def\Unif{{\rm Unif}}
\def\RF{{\rm RF}}
\def\kernel{\rm Ker}
\def\image{{\rm Im}}
\def\ddiag{{\rm ddiag}}
\def\KR{{\rm KR}}
\def\bbHe{{\rm He}}
\def\spn{{\rm span}}
\def\cV{{\mathcal V}}
\def\cL{{\mathcal L}}
\def\cF{{\mathcal F}}
\def\cE{{\mathcal E}}
\def\cV{{\mathcal V}}
\def\cH{{\mathcal H}}
\def\cA{{\mathcal A}}
\def\Unif{{\sf Unif}}
\def\normal{{\sf N}}
\def\proj{{\mathsf P}}
\def\RF{{\sf RF}}
\def\reals{{\mathbb R}}
\def\integers{{\mathbb Z}}
\def\naturals{{\mathbb N}}
\def\Top{{\mathbb T}}
\def\normal{{\sf N}}
\def\proj{{\mathsf P}}
\def\Unif{{\sf Unif}}
\def\normal{{\sf N}}
\def\Uop{{\mathbb U}}
\def\Hop{{\mathbb H}}
\def\proj{{\mathsf P}}
\def\RF{{\sf RF}}
\def\reals{{\mathbb R}}
\def\integers{{\mathbb Z}}
\def\naturals{{\mathbb N}}
\def\proj{{\mathsf P}}
\def\Hop{{\mathbb H}}
\def\Uop{{\mathbb U}}
\def\App{{\rm App}}
\def\stest{\mbox{\tiny\rm test}}
\def\seff{\mbox{\tiny\rm eff}}
\def\ocV{\overline{{\mathcal V}}}
\def\hba{{\hat {\boldsymbol a}}}
\def\hf{{\hat f}}
\def\hy{{\hat y}}
\def\hU{\widehat{U}}
\def\hUop{\widehat{\mathbb U}}
\def\cE{{\mathcal E}}
\def\cD{{\mathcal D}}
\def\cX{{\mathcal X}}
\def\cF{{\mathcal F}}
\def\He{{\rm He}}
\def\Trace{{\rm Tr}}
\def\Coeff{{\rm Coeff}}
\def\de{{\rm d}}
\def\Unif{{\rm Unif}}
\def\RF{{\rm RF}}
\def\stest{\mbox{\tiny\rm test}}
\def\cE{{\mathcal E}}
\def\normal{{\sf N}}
\def\bDelta{{\boldsymbol \Delta}}
\def\cX{{\mathcal X}}
\def\Coeff{{\rm Coeff}}
\def\RF{{\rm RF}}
\def\bA{{\boldsymbol A}}
\def\btheta{{\boldsymbol \theta}}
\def\bTheta{{\boldsymbol \Theta}}
\def\bLambda{{\boldsymbol \Lambda}}
\def\Tr{{\rm Tr}}
\def\cV{{\mathcal V}}
\def\bP{{\boldsymbol P}}
\def\diag{{\rm diag}}
\def\bS{{\boldsymbol S}}
\def\bD{{\boldsymbol D}}
\def\bPsi{{\boldsymbol \Psi}}
\def\bL{{\boldsymbol L}}
\def\tbu{\Tilde \bu}
\def\tbZ{\Tilde \bZ}
\def\tbphi{\Tilde \bphi}
\def\tbpsi{\Tilde \bpsi}
\def\tbD{\Tilde \bD}
\def\tbf{\Tilde \boldf}
\def\hbU{\hat{{\boldsymbol U}}_\lambda }
\def\hbUi{\hat{{\boldsymbol U}}_\lambda^{-1} }
\def\bb{{\boldsymbol b}}
\def\bsigma{{\boldsymbol \sigma}}
\def\hf{\hat f}
\def\hbf{\hat \boldf}
\def\bR{{\boldsymbol R}}
\def\bpsi{{\boldsymbol \psi}}
\def\cuH{\mathscrsfs{H}}
\def\noise{\sigma_{\varepsilon}}
\def\evn{{\mathsf m}}
\def\evN{{\mathsf M}}
\def\lvn{{\mathsf s}}
\def\lvN{{\mathsf S}}
\def\bc{{\boldsymbol c}}
\def\bC{{\boldsymbol C}}
\def\oba{\overline{{\boldsymbol a}}}
\def\uba{\underline{{\boldsymbol a}}}
\def\barsigma{\bar{\sigma}}
\def\tbN{\Tilde \bN}
\def\Cube{{\mathscrsfs Q}^d}
\def\bzeta{{\boldsymbol \zeta}}
\def\hbzeta{{\hat {\boldsymbol \zeta}}}
\def\oproj{{\overline \proj}}
\colorlet{linkequation}{blue}
\begin{document}
\title{Generalization error of random features and kernel methods: hypercontractivity and kernel matrix concentration}

\author{Song Mei\thanks{Department of Statistics, University of California, Berkeley},\;\; Theodor Misiakiewicz\thanks{Department of
    Statistics, Stanford University}, \;\;
  Andrea Montanari\footnotemark[2] \thanks{Department of Electrical Engineering,
    Stanford University} }

\maketitle

\begin{abstract}
  Consider the classical supervised learning problem: we are given data $(y_i,\bx_i)$, $i\le n$,
  with $y_i$ a response and $\bx_i\in\cX$ a  covariates vector, and try to learn a model $f:\cX\to\reals$
  to predict future responses. Random features methods map the covariates vector $\bx_i$ to a point
  $\bphi(\bx_i)$ in a higher dimensional space $\reals^N$, via a random featurization map $\bphi$.
  We study the use of  random features methods in conjunction with ridge regression in the feature space $\reals^N$.
  This can be viewed as a finite-dimensional approximation of kernel ridge regression (KRR), or as a stylized model
  for neural networks in the so called lazy training regime.

  We define a class of  problems satisfying certain spectral conditions on the underlying kernels,
  and a hypercontractivity assumption on the associated eigenfunctions. These conditions are verified by
  classical high-dimensional examples. 
  Under these conditions,  we prove a sharp characterization of the
  error of random features ridge regression.
  In particular, we address two fundamental questions: $(1)$~What is the generalization error of KRR?
  $(2)$~How big $N$ should be for the random features approximation  to achieve the same error as KRR?

  In this setting, we prove that KRR is well approximated by a projection onto the top $\ell$ eigenfunctions of the kernel,
  where $\ell$ depends on the sample size $n$. 
  We show that the  test error of random features ridge regression is dominated by its approximation error  and is larger than the error of KRR
  as long as $N\le n^{1-\delta}$ for some $\delta>0$. We characterize this gap. 
   For $N\ge n^{1+\delta}$,  random features achieve the same error as the corresponding KRR,
  and further increasing $N$ does not lead to a significant change in test error.
\end{abstract}

\tableofcontents

\section{Introduction}

\subsection{Background}
\label{sec:IntroBackground}

Consider the supervised learning problem in which we are given i.i.d. samples $(y_i,\bx_i)$, $i\le n$,
from a common probability distribution on $\reals\times \cX$. Here $\bx_i\in \cX$ is a vector of covariates, and
$y_i$ is a response variable. We are interested in learning a model $\hf:\cX\to \reals$ which, given
a new point $\bx_{\stest}$, predicts the corresponding response $y_{\stest}$ via $\hf(\bx_{\stest})$.

A number of statistical learning methods can be viewed as a combination of two steps: featurization and training.
Featurization maps sample points into a convenient `feature space' $\cH$ (a vector space)
via a featurization map $\bphi:\cX\to \cH$, $\bx_i\mapsto \bphi(\bx_i)$. Training fits a model that is linear in the feature space:
$\hf(\bx) = \<\ba,\bphi(\bx)\>_{\cH}$.
In this paper we will be concerned with a relatively simple method for training, ridge regression:
\begin{align}
  \hba(\lambda) :=
  \arg\min_{\ba}\Big\{\sum_{i=1}^n\big(y_i-\<\ba,\bphi(\bx_i)\>_{\cH}\big)^2+\lambda\|\ba\|_{\cH}^{2}\Big\}\,
  .\label{eq:FirstRidge}
\end{align}
Here it is implicitly assumed that $\cH$ is an Hilbert space, and therefore $\ba\in \cH$ and $\<\,\cdot\,,\,\cdot\,\>_{\cH}$,
$\|\,\cdot\,\|_{\cH}$ are the scalar product and norm in $\cH$.

It is useful to discuss a few examples of this paradigm, some of which will play a role in what follows (we refer
to Section \ref{sec:RFSetting}  for formal definitions).

\vspace{0.2cm}

\noindent\emph{Feature engineering.} We use this term to refer to the classical approach of crafting a
set of $N$ features $\bphi(\bx) = (\phi_1(\bx),\dots,\phi_N(\bx))\in\cH = \reals^N$ for a specific application,
by leveraging domain expertise. This has been the standard approach to computer vision for a long time
\cite{lowe2004distinctive,bay2008speeded},
and is still the state of the art in most of applied statistics \cite{Hastie}.

\vspace{0.2cm}

\noindent\emph{Kernel methods.} In this case $\cH$ is a reproducing kernel Hilbert space (RKHS) defined
implicitly via a positive definite kernel $H:\cX\times \cX\to \reals$ \cite{berlinet2011reproducing}.
Rather than manually constructing features, the
statistician/data analyst only needs to encode in $H(\bx_1,\bx_2)=\<\bphi(\bx_1), \bphi(\bx_2)\>_{\cH}$
a suitable notion of similarity in the input space $\cX$.
The resulting model only depends on the kernel $H$, and a crucial role is played by its eigenvalue decomposition
$H(\bx_1,\bx_2) = \sum_{\ell=1}^{\infty}\lambda_{\ell}^2\psi_{\ell}(\bx_1)\psi_{\ell}(\bx_2)$. 
Ridge regression  with RKHS featurization is referred to as kernel ridge regression (KRR).
Formally, the KRR estimator takes the form: 
\begin{align}
  \hf_{\lambda}(\bx) &= \sum_{\ell=1}^{\infty}\hf_{\lambda,\ell}\psi_{\ell}(\bx)\, ,\;\;\;\;
                       \hf_{\lambda,\ell}= \sum_{\ell'=1}^{\infty}(( \lambda /n) \cdot  \id+\bG)^{-1}_{\ell,\ell'}\lambda_{\ell}\lambda_{\ell'}\<\psi_{\ell'},y\>_n\, ,
 \label{eq:FirstKRR}\\
  G_{\ell,\ell'}&:= \lambda_{\ell}\lambda_{\ell'}\<\psi_{\ell},\psi_{\ell'}\>_n\, .
\end{align}
Here $\<f,g\>_n:= n^{-1}\sum_{i=1}^nf(\bx_i)g(\bx_i)$ denotes the scalar product with respect to the empirical measure. 

For large $n$, we can imagine to replace the empirical scalar product with the population one, and therefore
$G_{\ell,\ell'}\approx \lambda_{\ell}^2\bfone_{\ell=\ell'}$, whence
$\hf_{\lambda,\ell}\approx  (( \lambda /n) +\lambda_{\ell}^2)^{-1}\lambda^2_{\ell}\<\psi_{\ell},y\>_n $.
In words, KRR attempts to estimate accurately the projection of $f(\bx) = \E[y|\bx]$ onto the
eigenvectors of the kernel $H$, corresponding to large eigenvalues $\lambda_{\ell}$.
On the other hand, it shrinks towards $0$ the projections of $f$  onto eigenvectors corresponding to smaller eigenvalues.

\vspace{0.2cm}

\noindent\emph{Random Features (RF).} Instead of constructing the featurization map $\bphi$
on the basis of domain expertise, or, implicitly, via a kernel,
RF methods  use a random map $\bphi:\cX\to \reals^N$.
We will study a general construction that generalizes the original proposal of \cite{rahimi2008random,balcan2006kernels}.
We sample $N$ point in a space $\Omega$ via $\btheta_1$,\dots $\btheta_N\sim _{iid}\tau$
(for a certain probability measure $\tau$ on $\Omega$), and then define the mapping $\bphi$ by letting
$\bphi(\bx) = (\sigma(\bx;\btheta_1),\dots,\sigma(\bx;\btheta_N))$.
Here $\sigma: \cX\times \Omega\to\reals$ is a square integrable function. We endow the feature space $\cH_N = \R^N$  with
the inner product $\< \ba_1, \ba_2\>_{\cH_N} = \ba_1^\sT \ba_2 / N$. 

Because of the connection to two-layers neural networks (see below) we shall refer to $N$
as the `number of neurons'  (although,  `number of parameters' would be more appropriate),
and to $\sigma$ as the  `activation
function.'  The resulting function $\hf$ takes the form
\begin{align}
  \hf(\bx;\ba) := \<\ba,\bphi(\bx) \>_{\cH}= \frac{1}{N}\sum_{i=1}^N a_i\sigma(\bx;\btheta_i)\, .
\end{align}

We will refer to the procedure defined by Eq.~\eqref{eq:FirstRidge}  with $\bphi$ the random feature map defined here
as `random features ridge regression' (RFRR).
RFRR is closely related to  KRR. First of all,  we can view RFRR as an example
of KRR, with kernel 
\[
H_N(\bx_1,\bx_2) = \< \bphi(\bx_1), \bphi(\bx_2) \>_{\cH} = \frac{1}{N}\sum_{i=1}^N \sigma(\bx_1;\btheta_i)\sigma(\bx_2;\btheta_i).
\]
Notice however that the kernel $H_N$ has finite rank and is random, because of the random features
$\btheta_1,\dots,\btheta_N$.

Second, for large $N$, we can expect $H_N$ to be a good approximation of its expectation
\begin{align}
\E H_N(\bx_1,\bx_2) = H(\bx_1,\bx_2):=\int_{\Omega} \!\sigma(\bx_1;\btheta)\,\sigma(\bx_2;\btheta)\, \tau(\de\btheta)\, .\label{eq:ExpectedKernel}
\end{align}
Hence, for large $N$, we expect RFRR to have similar generalization properties as the underlying RKHS, while
possibly exhibiting lower complexity because it only  operates on $N\times n$
matrices (instead of $n\times n$ matrices as for KRR).

\vspace{0.2cm}

\noindent\emph{Neural networks in the linear (lazy) regime.} The methods described above fit the 
general paradigm of Eq.~\eqref{eq:FirstRidge}. Training does not affect the feature map $\bphi$. The
model $\hf_{\lambda}(\,\cdot\,)$  is linear in  $\by$, as a consequence of the fact that the loss is quadratic
(see also Eq.~\eqref{eq:FirstKRR}).
In contrast, neural networks aim at learning  the best feature representation of the data. The feature
map changes during training, and indeed there is no clear separation between the feature map
$\bphi(\bx)$ and the coefficients $\ba$.

Nevertheless a copious line of recent research shows that ---under certain training schemes---
neural networks are well approximated by their linearization around a random initialization \cite{jacot2018neural, li2018learning, du2018gradient, du2018gradientb, allen2018convergence, allen2018learning, arora2019fine, zou2018stochastic, oymak2019towards}.
It is useful to recall the basic argument here.
Denote by $\bx\mapsto f(\bx;\btheta)$ the neural network, with parameters (weights) $\btheta\in\reals^N$,
and by $\btheta_0$ the initialization for gradient-based training. For highly overparametrized networks,
a small change in the parameters $\btheta$ is sufficient to change significantly the
evaluations of $f$ at the data points, i.e., the vector $(f(\bx_1;\btheta),\dots,f(\bx_n;\btheta))$.
As a consequence, an empirical risk minimizer can be found in a small neighborhood of the initialization
$\btheta_0$, and it is legitimate to approximate $f$ by its first order Taylor expansion in the parameters:
\begin{align}
  f(\bx;\btheta_0+\ba) \approx  f(\bx;\btheta_0) + \<\ba,\nabla_{\btheta}f(\bx;\btheta_0)\>\, .
\end{align}
Apart from the zero-th order term $f(\bx;\btheta_0)$   (which has no free parameters, and hence plays the role of an
offset), this linearized model takes the same form $\hf(\bx) = \<\ba,\bphi(\bx)\>$. The 
featurization map is given by $\bphi(\bx) = \nabla_{\btheta} f(\bx;\btheta_0)$.
We refer to the model $\bx\mapsto  \<\ba,\nabla_{\btheta}f(\bx;\btheta_0)\>$ as the neural tangent (NT)
model.

Notice that the NT featurization map is random, because of the random initialization $\btheta_0$.
However, in general it does not take the form of the RF model, because the entries
of   $\nabla_{\btheta} f(\bx;\btheta_0)$ are not independent. 
Despite this important difference, we expect key properties of the RF model to
generalize to suitable classes of NT models. Examples of this phenomenon were studied recently
in \cite{ghorbani2019linearized,montanari2020interpolation}.

\vspace{0.2cm}
 
The present paper focuses on KRR and RFRR. We introduce a set of assumptions on the data distribution,
the choice of activation function, and the probability distribution
$\tau$ on the $\btheta_i$'s,
under which we can characterize the large $n$, $N$ behavior of the generalization (test) error.
While our results apply to an abstract input space $\cX$, our assumptions aim at capturing
the behavior observed when $\cX$ is high-dimensional, and the distribution $\nu$ on $\cX$ satisfies strong
concentration properties.
For instance, our results apply to $\cX= \S^{d-1}$ (the sphere in $d$ dimension) or
$\cX = \{+1,-1\}^d$, both endowed with the uniform measure.

Our results do not require the true regression function $f$ to belong to the associate RKHS and
they characterize the test error (with respect to the square loss) pointwise, i.e., for
any function $f$.
This characterization holds up to error terms that are negligible compared to the null risk $\E\{f(\bx)^2\}$.

In particular our results allow to answer in a quantitative way two sets of key questions that emerge
from the above discussion:
\begin{itemize}
\item[{\sf Q1.}] How does the test error of KRR depends on the sample size $n$, on the target
  function $f$, and on the kernel $H$?
  While this question has attracted considerable attention in the past (see Section \ref{sec:Related}
  for an overview), a very precise answer can be given in the present setting.
\item[{\sf Q2.}] How does the test error of RFRR depend on the sample size $n$, and the number of neurons $N$?
  In particular, for a given sample size, how big $N$ should be to achieve the same error as for the
  associated KRR (which corresponds formally to $N=\infty$)?
\item[{\sf Q3.}] How do the answers to the previous questions depend on the regularization parameter $\lambda$?
  In particular, in which cases the optimal test error is achieved by choosing $\lambda\to 0$, i.e. by
  using the minimum norm interpolator to the training data?
\end{itemize}
Let us emphasize that the second question is technically more challenging than the first one, because
it amounts to studying KRR \emph{with a random kernel}. The setting introduced here is particularly motivated
by the objective to address {\sf Q2} (and its ramifications in {\sf Q3}). Indeed, to the best of our knowledge, we provide the first set
of results on the optimal choice of the overparametrization $N/n$ under polynomial scalings of $N,n,d$.

\subsection{Summary of main results}

Before summarizing our results, it is useful to describe informally our assumptions: we refer to
Sections \ref{sec:RF_Assumptions} and \ref{sec:assumptions_Kernel} for a formal statement of the same assumptions.
We consider $(\bx_i)_{i\le n}\sim_{iid}\nu$ with $\nu$ a probability distribution of the covariates
space $\cX$, and $y_i = f(\bx_i)+\eps_i$, where $f$ is the target function and
$\eps_i \sim \normal(0,\noise^2)$ independent of $\bx_i $ is noise.

An RFRR problem is specified by $\nu,f,\noise$ (which determine the data distribution),
 $\sigma,\tau$ (which determine the RF representation), and the parameters $n,N$ (sample size and number of neurons).
The associated kernel problem is obtained by using the kernel \eqref{eq:ExpectedKernel}.
It is also useful to introduce a kernel in the $\btheta$ space via
$U(\btheta_1,\btheta_2):=\E_{\bx\sim \nu}\{\sigma(\bx;\btheta_1)\,\sigma(\bx;\btheta_2)\}$.

We will consider sequences of such problems indexed by an integer $d$, and characterize
their behavior as $N,n,d\to\infty$. In applications, $d$ typically corresponds to the dimension of the
covariates space $\cX$. In this informal summary, we drop any reference to $d$ for simplicity.

We next describe informally our key assumptions, which depends on integers $(\evn,\evN,u)$,
with $u \ge \max( \evN , \evn)$.
(For the sake of simplicity, we omit some assumptions of a more technical nature.) 
\begin{enumerate}
\item \emph{Hypercontractivity.} The top $u$ eigenvectors of $H$ are `delocalized'.
  We formalize this condition by requiring that, for any function $g\in \spn(\psi_j:j\le u)$, and for any integer
  $k$, $\E_{\nu}\{g(\bx)^{2k}\}\le C_{k,u}\E_{\nu}\{g(\bx)^{2}\}^k$.
  We assume a same condition for the eigenvectors of $U$.
\item \emph{Concentration of diagonal elements of the kernels.} Denote by $H_{>\evn}$ the kernel obtained from $H$ by setting to zero the eigenvalues $\lambda_1,\dots, \lambda_\evn$. We require that the diagonal elements
  $\{H_{>\evn}(\bx_i,\bx_i)\}_{i\le n}$ concentrate around their expectation with respect to the measure $\nu$ on $\cX$.
  Analogously, we require the diagonal elements $\{U_{>\evN}(\btheta_i,\btheta_i)\}_{i\le N}$ to concentrate around their expectation.

  This assumption amounts to a condition of symmetry: most points $\bx$ in the support of $\nu$
  are roughly equivalent, in the sense of having the same value of  $H_{>\evn}(\bx,\bx)$, and similarly for most $\btheta$
  in the support of $\nu$.
\item \emph{Spectral gap.} Recall that $(\lambda_{j}^2)_{j\ge 1}$ denote the eigenvalues of the kernel $H$
  in decreasing order. We then assume one of the following two conditions to hold:
  \begin{itemize}
  \item[] \emph{Undeparametrized regime.} We have $N\ll n$ and 
    \begin{align}
 \frac{1}{\lambda^2_\evN}\sum_{k=\evN+1}^{\infty}\lambda_k^2\ll N \ll   \frac{1}{\lambda^2_{\evN+1}}\sum_{k=\evN+1}^{\infty}\lambda_k^2\, ,\label{eq:EllDef}
      \end{align}
    \item[] \emph{Overparametrized regime.} We have $n\ll N$ and 
  \begin{align}
    \frac{1}{\lambda^2_\evn}\sum_{k=\evn+1}^{\infty}\lambda_k^2\ll n \ll
    \frac{1}{\lambda^2_{\evn+1}}\sum_{k=\evn+1}^{\infty}\lambda_k^2\, .\label{eq:QDef}
  \end{align}
\end{itemize}
This assumption is ensures a clear separation between the subspace of $\cD_d$ which is estimated  accurately
(spanned by the eigenfunction of $H$ corresponding to the top eigenvalues) and the subspace that is estimated trivially by $0$
(corresponding to the low eigenvalues of $H$.) As we will see,  a spectral gap condition holds for classical high-dimensional examples. 
On the other hand, we believe it should be possible to avoid this condition at the price of a more
complicate characterization of the risk, and indeed we do not require it for KRR. 
\end{enumerate}

As explained above, KRR attempts to estimate accurately the projection of the target function
$f_*$ onto the top eigenvectors of the kernel $H$, and shrinks to zero its other components.
RFRR behaves similarly, except that it only constructs a finite rank approximation of the kernel $H$.
How many components of the target function are estimated accurately? There are of course  two limiting
factors: the statistical error which depends on the sample size $n$, and the approximation error which depends
on the number of neurons $N$.

It turns out that, in the present setting, the interplay between $n$ and $N$ takes a particularly simple form.
In a nutshell what matters is the smaller of $n$ and $N$. If $n\ll N$, then the statistical error dominates
and ridge regression estimates correctly the projection of $f_*$ onto the top $\evn$ eigenfunctions of $H$
(where $\evn$ is defined per Eq.~\eqref{eq:QDef}). If on the other hand $N\ll n$, then the approximation error dominates
and ridge regression estimates correctly the projection of $f_*$ onto the top $\evN$ eigenfunctions of $H$
(where $\evN$ is defined per Eq.~\eqref{eq:EllDef}).

In formulas, we denote by $R_{\RF}(f_*;\lambda) = \E\{(f_*(\bx)-f_{\lambda}(\bx))^2\}$ the test error of
RFRR (for square loss) when the target function is $f_*$ and the regularization parameter equals $\lambda$.
Our main result establishes that for all $\lambda\in [0,\lambda_*]$ (with a suitable choice of $\lambda_*$), in a
certain asymptotic sense, the following hold:
\begin{align}
  R_{\RF}(f_*;\lambda) =\begin{cases}
    \E\{ (\proj_{>\evn}f_*(\bx))^2\} +o(1)\cdot \E\{f_*(\bx)^2\} & \mbox{ if $n\ll N$,}\\
    \E\{ (\proj_{>\evN}f_*(\bx))^2\} +o(1)\cdot \E\{f_*(\bx)^2\} & \mbox{ if $n\gg N$,}
  \end{cases}
                                                          \label{eq:BasicInformal}
  \end{align}
  where $\proj_{>\ell}$ is the projector onto the span of the eigenfunctions $\{\psi_j:\; j> \ell\}$.
  This statement also applies to KRR, if we interpret the latter as the $N=\infty$ case of RFRR.
  Further, no kernel machine achieves a smaller error.
  
  This characterization implies a relatively simple answer to questions {\sf Q1}, {\sf Q2}, and {\sf Q3},
  which we posed in the previous section. We summarize some of the insights that follow from this result.
  \begin{description}
    \item[KRR acts as a projection.] As mentioned above, Eq.~\eqref{eq:BasicInformal} can be restated as saying 
      that (for the special case $N=\infty$), $\hf_{\lambda}(\bx) \approx \proj_{\le \evn}f_*(\bx)$. Indeed, we will prove a
      stronger result, which does not require the spectral gap assumption of Eq.~\eqref{eq:QDef}. The KRR estimator
      $\hf_{\lambda}$ is well approximated by the KRR estimator for the population problem ($n=\infty$), but with a larger value
      of the ridge regularization $\gamma>\lambda$. In other words KRR acts as a shrinkage operator
      along the eigenfunctions of the kernel.
    \item[Effects of overparametrization.] In random features models, we are free to choose the number of
      neurons $N$. Equation \eqref{eq:BasicInformal} indicates that any choice of $N$ has roughly the same
      test error (which is also the test error of KRR) as long as $N\gg n$. This is interesting in both directions.
      First, the test error does not deteriorate as the number of parameters increases, and becomes much larger than the
      sample size. This contrasts with a naive measure of the model complexity: indeed, counting the number of parameters would naively suggest that $N\gg n$ might hurt generalization.
      Second, the error does not improve with overparametrization either, as long as $N\gg n$.
    \item[Optimal overparametrization.] At what level of overparametrization should we operate? In view of the previous
      point, it is sufficient to use a model with a number of parameters much larger than the sample size (formally,
      $N\ge n^{1+\delta}$ for some $\delta>0$, although this specific condition is mainly dictated by our proof technique).
      Further overparametrization does not improve the statistical behavior.

      Let us also note that ---as proven in \cite{mei2019generalization}--- choosing $N/n =: \psi =O(1)$
      can lead to sub-optimal test error, with the suboptimality vanishing if $\psi\to\infty$ \emph{after}, $N,n\to\infty$.
    \item[Optimality of interpolation.] Finally, the above phenomena are obtained for all $\lambda\in [0,\lambda_*]$.
      The case $\lambda=0$ corresponds to minimum norm interpolators. We also prove that the risk of any kernel
      machine is lower bounded   by $\E\{ (\proj_{>\evn}f_*(\bx))^2\} +o(1)\cdot \E\{f_*(\bx)^2\}$. We therefore conclude that,
      in the overparametrized regime $N\gg n$, min-norm interpolators are optimal among all kernel methods. 
    \end{description}
    
    \subsection{Related literature}
 \label{sec:Related}

    The test error of KRR was studied by a number of authors in the past \cite{caponnetto2007optimal, jacot2020kernel}, \cite[Theorem 13.17]{wainwright2019high}. In particular, \cite{caponnetto2007optimal}
    establishes that KRR achieves minimax optimal rates over certain subclasses of the associated RKHS. However
    these results require a strictly positive ridge regularizer (and hence do not cover interpolation) and characterize the
    decay of the error as $n\to \infty$ in fixed dimension $d$. In contrast our focus is on the case in which both
    $d$ and $n$ grow simultaneously. Further, we provide upper and lower bounds that hold pointwise (for a given target function $f_*$) while earlier work mostly establish pointwise upper bound and minimax lower bounds (for the worst case $f_*$). The recent work \cite{jacot2020kernel} also derived pointwise upper and lower bounds for kernel ridge regression (but with strictly positive ridge regularizer), which is very similar to our Theorem \ref{thm:upper_bound_KRR}. However, these results are based on a universality assumption whose validity is unclear in specific settings.

    Recently, the ridge-less (interpolation) limit of KRR was studied by Liang, Rakhlin and Zhai
    \cite{liang2020just,liang2019risk}. Again, these authors provide minimax upper bounds that hold within the RKHS,
    holding for inner product kernel, when the feature vectors $\bx$ have independent coordinates. 
    Their results are related but not directly comparable to ours.
    
    The complexity of training kernel machine scales at least quadratically in the sample size. This has motivated the
    development of randomized techniques to lower the complexity of training and testing. While our focus is on random features
    methods, alternative approaches are  based on subsampling the columns-rows of the empirical
    kernel matrix, see e.g.
    \cite{bach2013sharp,alaoui2015fast,rudi2015less}. In particular,
    \cite{rudi2015less} compares the prediction errors using the
    sketched and the full kernel matrices, and shows that ---for a fixed RKHS---
    it is sufficient to use a number of rows/columns of the order of the square root of the sample size in order to achieve
    the minimax rate over that RKHS.

    The generalization properties of random features methods have been studied in a smaller number of papers \cite{rahimi2009weighted, rudi2017generalization, ma2020towards}. 
    Rahimi and Recht \cite{rahimi2009weighted} proved an upper bound of the order $1/\sqrt{N}+1/\sqrt{n}$ on
    the generalization error. The insight provided by this bound is similar to one of our points:
    about $N\asymp n$ neurons are sufficient for the error to be of the same order as for $N\to\infty$. On the other
    hand, \cite{rahimi2009weighted} proves  only a minimax upper bound, it is limited to Lipschitz losses,
    and, crucially, requires the coefficients $\max_{i\le N}|a_i|\le C$ so that $\| \ba \|_2^2 = O(N)$. In contrast, in the present setting,
    we typically have $\| \ba \|_2^2 = \Theta(n N)$ 
    The case of square loss was considered earlier by Rudi and Rosasco \cite{rudi2017generalization} who proved that,
    for a target function $f_*$ in the RKHS, $N= C\sqrt{n} \log n$ is sufficient to learn a random features
    model with test error of order $1/\sqrt{n}$. These authors interpret this finding as implying that roughly
    $\sqrt{n}$ random features are sufficient: we will discuss the difference between their setting and ours in Section \ref{sec:RF_Theorem}.
   
    Finally, \cite{bach2015equivalence} studies optimized distributions for sampling the random features,  while
    \cite{yang2012nystrom} provides a comparison between random features approaches and
    subsampling of the kernel matrix.

    As pointed out above, we find that taking $\lambda \to 0$ yields nearly optimal test error, within our setting.
    Optimality of minimum norm interpolators has attracted considerable attention recently \cite{belkin2019reconciling,belkin2019does,hastie2019surprises,bartlett2020benign,tsigler2020benign}.
    In particular our results point in the same direction as the general analysis of ridge regression in \cite{bartlett2020benign,tsigler2020benign}.
    Note however that the general results of \cite{bartlett2020benign,tsigler2020benign} do not apply to the present setting because they require subgaussian features $\bphi(\bx_i)$.
    Further, they only provide upper and lower bounds that match up to factors depending on the condition number of
    a certain random matrix.  In contrast, our characterization is
    specialized to the random features setting, does not require subgaussianity, and holds up to additive errors that are negligible compared to the null risk.
    
    The present paper solves a number of open problems that were left open in our earlier work \cite{ghorbani2019linearized}.
    First of all, \cite{ghorbani2019linearized} only considered the cases $n=\infty$ (approximation error of random
    features models) or $N=\infty$ (generalization error of KRR). Here instead we establish the complete picture for
    both $n$ and $N$ finite. Second,  \cite{ghorbani2019linearized} assumed a special data distribution
    ($\nu$ was the uniform distribution over the $d$-dimensional sphere), a special structure for the kernel
    (inner product kernels), and a special type of activation functions (depending on the inner product $\<\btheta,\bx\>)$).
    The present paper considers  general data distribution,
    kernel, and activation functions, under a set of assumptions that covers the previous example as a special case.
    Finally, the proofs of \cite{ghorbani2019linearized}  made use of the moment method, which is difficult to
    generalize beyond special examples. Here we use a decoupling approach and matrix concentration methods which
    are significantly more flexible.

    The results of \cite{ghorbani2019linearized} were generalized to certain anisotropic distributions in \cite{ghorbani2020neural}. For the inner product activation functions on the sphere, the precise asymptotics (for  $N, n, d\to\infty$ with $N/d\to\psi_1$, $n/d\to\psi_2$,
    $\psi_1,\psi_2\in (0,\infty)$) of generalization error of random features models was calculated in \cite{mei2019generalization}.

   \subsection{Notations}

For a positive integer, we denote by $[n]$ the set $\{1 ,2 , \ldots , n \}$. For vectors $\bu,\bv \in \R^d$, we denote $\< \bu, \bv \> = u_1 v_1 + \ldots + u_d v_d$ their scalar product, and $\| \bu \|_2 = \< \bu , \bu\>^{1/2}$ the $\ell_2$ norm. Given a matrix $\bA \in \R^{n \times m}$, we denote $\| \bA \|_{\op} = \max_{\| \bu \|_2 = 1} \| \bA \bu \|_2$ its operator norm and by $\| \bA \|_{F} = \big( \sum_{i,j} A_{ij}^2 \big)^{1/2}$ its Frobenius norm. If $\bA \in \R^{n \times n}$ is a square matrix, the trace of $\bA$ is denoted by $\Tr (\bA) = \sum_{i \in [n]} A_{ii}$.

We use $O_d(\, \cdot \, )$  (resp. $o_d (\, \cdot \,)$) for the standard big-O (resp. little-o) relations, where the subscript $d$ emphasizes the asymptotic variable. Furthermore, we write $f = \Omega_d (g)$ if $g(d) = O_d (f(d) )$, and $f = \omega_d (g )$ if $g (d) = o_d (f (d))$. Finally, $f =\Theta_d (g)$ if we have both $f = O_d (g)$ and $f = \Omega_d (g)$.

We use $O_{d,\P} (\, \cdot \,)$ (resp. $o_{d,\P} (\, \cdot \,)$) the big-O (resp. little-o) in probability relations. Namely, for $h_1(d)$ and $h_2 (d)$ two sequences of random variables, $h_1 (d) = O_{d,\P} ( h_2(d) )$ if for any $\eps > 0$, there exists $C_\eps > 0 $ and $d_\eps \in \Z_{>0}$, such that
\[
\begin{aligned}
\P ( |h_1 (d) / h_2 (d) | > C_{\eps}  ) \le \eps, \qquad \forall d \ge d_{\eps},
\end{aligned}
\]
and respectively: $h_1 (d) = o_{d,\P} ( h_2(d) )$, if $h_1 (d) / h_2 (d)$ converges to $0$ in probability.  Similarly, we will denote $h_1 (d) = \Omega_{d,\P} (h_2 (d))$ if $h_2 (d) = O_{d,\P} (h_1 (d))$, and $h_1 (d) = \omega_{d,\P} (h_2 (d))$ if $h_2 (d) = o_{d,\P} (h_1 (d))$. Finally, $h_1(d) =\Theta_{d,\P} (h_2(d))$ if we have both $h_1(d) =O_{d,\P} (h_2(d))$ and $h_1(d) =\Omega_{d,\P} (h_2(d))$.

\section{Generalization error of random features ridge regression}

In this section, we present our results on the generalization error of
random features models. We begin in Section \ref{sec:RFSetting} by introducing the general abstract setting
in which we work, and some of its basic properties. We then state our assumptions in Section \ref{sec:RF_Assumptions},
and state our main theorem (Theorem \ref{thm:RFK_generalization}) in Section \ref{sec:RF_Theorem}.

Finally, Section \ref{sec:RF_Examples}
presents applications of our general theorem to $(i)$~the case of feature vectors uniformly distributed over the sphere
$\bx_i\sim\Unif(\S^{d-1}(\sqrt{d}))$, and $(ii)$~the case of feature vectors uniformly distributed over the Hamming cube
$\bx_i\sim\Unif(\{+1,-1\}^d)$. While these applications are `simple' in the sense that checking the assumptions of our general
theorem is straightforward, they are in themselves quite interesting. In particular, our result for the uniform distribution
on the sphere (cf. Proposition \ref{prop:RFK_sphere}) closes the main problem left unsolved in \cite{ghorbani2019linearized}.

\subsection{Random features models, kernels, and their spectral decomposition}
\label{sec:RFSetting}

We consider two sequences of Polish probability spaces $(\cX_d, \nu_d)$ and $(\Omega_d, \tau_d)$, indexed by an integer $d$. We denote by $L^2(\cX_d) = L^2(\cX_d, \nu_d)$ the space of square integrable
functions on $(\cX_d, \nu_d)$, and by $L^2(\Omega_d) = L^2(\Omega_d, \tau_d)$ the space of square integrable functions on $(\Omega_d, \tau_d)$. Since $(\cX_d, \nu_d)$ and $(\Omega_d, \tau_d)$ are standard probability spaces
\cite[Theorem 13.1.1]{dudley2018real}, it follows that  $L^2(\cX_d)$ and $L^2(\Omega_d)$ are separable.

More generally for $p \geq 1$, we denote $\| f \|_{L^p (\cX)} = \E_{\bx \sim \nu} [ |f(\bx)|^p ]^{1/p} $ the $L^p$ norm of $f$. We will sometimes omit $\cX$ and write directly $\| f \|_{L^2}$ and $\| f \|_{L^p}$ when clear from context. 

Given two closed linear subspaces $\cD_d \subseteq L^2(\cX_d)$, $\cV_d \subseteq L^2(\Omega_d)$, and the
activation function
$\sigma_d \in L^2( \cX_d \times \Omega_d, \nu_d\otimes \tau_d)$, we define a Fredholm integral operator
$\Top_d: \cD_d \to \cV_d $ via
\begin{align}
\Top_d g (\btheta) \equiv \int_{\cX_d} \sigma_d(\bx, \btheta) g(\bx) \nu_d(\de \bx). 
\end{align}
Note that $\Top_d$ is a compact operator by construction.
We will assume that $\Top_dg\neq 0$ for any $g\in\cD_d\setminus\{0\}$.  Also,
without loss of generality, we can assume $\cV_d = \image(\Top_d)$ (which is closed since $\Top_d$ is bounded).
With an abuse of notation, we will sometimes denote
by  $\Top_d$ the extension of this operator obtained by setting $\Top_dg=0$ for $g\in \cD_d^\perp$.
Notice that we can choose the kernel $\sigma_d$ so that $\int_{\cX_d} \sigma_d(\bx, \btheta) g(\bx) \nu_d(\de \bx)=0$ for any $g \in \cD_d^\perp$: we will assume such a choice hereafter.

While in simple examples we might assume $\cD_d= L^2(\cX_d)$, the extra flexibility
afforded by a general subspace $\cD_d\subseteq L^2(\cX_d)$ allows to model some important applications
\cite{mei2021invariant}.

The adjoint operator $\Top_d^*: \cV_d \to \cD_d$ has kernel representation 
\[
\Top_d^* f (\bx) = \int_{\Omega_d} \sigma_d(\bx, \btheta) f(\btheta) \tau_d(\de \btheta). 
\]
As before, we will sometimes extend $\Top^*_d$ to $\cL^2(\Omega_d)$ by setting $\kernel(\Top^*_d)=\cV_d^{\perp}$.

The operator $\Top_d$ induces two compact self-adjoint positive definite operators:
$\Uop_d =  \Top_d \Top_d^*: \cV_d  \to \cV_d$, and $\Hop_d =\Top_d^* \Top_d: \cD_d \to \cD_d$.
These operators admit the kernel representations:
\begin{align}
\Uop_d f (\btheta) =&~ \int_{\Omega_d} U_d(\btheta, \btheta') f(\btheta') \tau_d(\de \btheta'),\label{eq:Ukernel}\\
\Hop_d g (\bx) =&~ \int_{\cX_d} H_d(\bx, \bx') g(\bx') \nu_d(\de \bx')\, ,
\end{align}
where $U_d: \Omega_d \times \Omega_d \to \R$ and $H_d: \cX_d \times \cX_d \to \R$ be two measurable functions,
satisfying $\int_{\Omega_d} U_d(\btheta,\btheta') f(\btheta')\, \tau_d(\de\btheta') = 0$ for $f\in \cV_d^\perp$, and $\int_{\cX_d} H_d(\bx,\bx') g(\bx')\, \nu_d(\de\bx') = 0$
for $g\in \cD_d^\perp$.
We immediately have 
\begin{align}
U_d(\btheta_1, \btheta_2) =&~ \E_{\bx \sim \nu_d}[\sigma_d(\bx, \btheta_1) \sigma_d(\bx, \btheta_2)],\\
H_d(\bx_1, \bx_2) =&~ \E_{\btheta \sim \tau_d}[\sigma_d(\bx_1, \btheta) \sigma_d(\bx_2, \btheta)]. \label{eq:Hdsigma}
\end{align}
By Cauchy-Schwartz inequality, we have $U_d \in L^2(\Omega_d \times \Omega_d)$ and $H_d \in L^2(\cX_d \times \cX_d)$. 

By the spectral theorem of compact operators, there exist two orthonormal bases
$(\psi_j)_{j \ge 1}$, $\spn(\psi_j, j \ge 1) = \cD_d \subseteq L^2(\cX_d)$ and $(\phi_{j})_{j \ge 1}$, $\spn(\phi_j, j \ge 1) = \cV_d \subseteq L^2(\Omega_d)$,
and eigenvalues $(\lambda_{d, j})_{j \ge 1} \subseteq \R$, with nonincreasing absolute values $\vert\lambda_{d, 1} \vert \ge \vert \lambda_{d, 2} \vert \ge \cdots$, and $\sum_{j \ge 1} \lambda_{d, j}^2 < \infty$ such that
\[
\begin{aligned}
\Top_d = \sum_{j = 1}^\infty \lambda_{d, j} \psi_j \phi_j^*, ~~~ \Uop_d = \sum_{j = 1}^\infty \lambda_{d,j}^2 \phi_{j} \phi_{j}^*, ~~~ \Hop_d = \sum_{j = 1}^\infty \lambda_{d,j}^2 \psi_{j} \psi_{j}^*.
\end{aligned}
\]
(Here convergence holds in operator norm.) In terms of the kernel, these identities read
\begin{align}
 \sigma_d(\bx, \btheta) = \sum_{j=1}^{\infty} \lambda_{d, j} \psi_{j}(\bx) \phi_{j}(\btheta), \;\;\;\;\;\;
   U_d(\btheta_1, \btheta_2) = \sum_{j=1}^{\infty} \lambda_{d, j}^2 \phi_{j}(\btheta_1) \phi_{j}(\btheta_2), \;\;\;\;\;
    H_d(\bx_1, \bx_2)   = \sum_{j=1}^{\infty} \lambda_{d, j}^2 \psi_{j}(\bx_1) \psi_{j}(\bx_2). \label{eq:Hdecomposition}
\end{align}
Here convergence  holds in $L^2(\cX_d \times \Omega_d)$, $L^2(\Omega_d \times \Omega_d)$, and $L^2(\cX_d \times \cX_d)$.

 Associated to the operator $\Hop$, we can define a reproducing kernel Hilbert space (RKHS)  $\cH \subseteq \cD_d$ defined as 
\[
\cH = \Big\{ f \in \cD : \| f \|_{\cH}^2 = \sum_{j = 1}^\infty \lambda_{d,j}^{-2} \< f , \psi_j \>_{L^2}^2 < \infty \Big\},
\]
where $\| \cdot \|_{\cH}$ denotes the RKHS norm associated to $\cH$. In particular, $\cH$ is dense in $\cD_d$,
provided $\lambda_{d,j}^2>0$ for all $j$.

For $S\subseteq \{1,2,\dots\}$, we denote $\proj_S$ to be the projection operator from $L^2(\cX_d)$ onto
$\cD_{d,S}:=\spn(\psi_j, j \in S)$. With a little abuse of notations, we also denote $\proj_S$ to be the projection operator from $L^2(\Omega_d)$ onto $\cV_{d,S}:=\spn(\phi_j, j \in S)$. We denote $\Top_{d, S}$ and $\sigma_{d, S}$ to be the corresponding operator and kernel 
\begin{align*}
  \Top_{d, S} =&~ \sum_{j \in S} \lambda_{d, j} \psi_{j} \phi_{j}^* ,\\
  \sigma_{d, S}(\bx, \btheta) =&~ \sum_{j \in S}  \lambda_{d, j} \psi_{j}(\bx) \phi_{j}(\btheta).
\end{align*}
We define  $\Uop_{d, S} =  \Top_{d, S} \Top_{d, S}^*$ and $\Hop_{d, S} = \Top_{d, S}^* \Top_{d, S}$, and denote by $U_{d, S}$ and $H_{d, S}$ the
corresponding kernels. If $S=\{j\in \naturals :\;\; j\le \ell\}$ we will write for brevity  $\Top_{d, \le \ell}$, $\Uop_{d, \le \ell}$, $\Hop_{d, \le \ell}$,
and similarly for $S=\{j\in \naturals :\;\; j>\ell\}$.

Since $\sigma_d\in L^2(\cX_d\times\Omega_d)$, it follows that $\Uop_{d, S}$ is trace class, for any $S\subseteq \naturals$,
with trace given by
\begin{align*}
\Trace(\Uop_{d, S}) \equiv \sum_{j\in S} \lambda_{d,j}^2 = \E_{\btheta \sim \tau_d}[U_{d, S}(\btheta, \btheta)] <\infty\, .
\end{align*}
Similarly, we have 
\begin{align*}
\Trace(\Hop_{d, S}) \equiv \sum_{j\in S} \lambda_{d,j}^2 = \E_{\bx \sim \nu_d}[H_{d, S}(\bx, \bx)] <\infty\, .
\end{align*}

\subsection{Assumptions}
\label{sec:RF_Assumptions}

Let $\bTheta = (\btheta_i)_{i \in [N]} \sim_{iid} \tau_d$. We define the random features function class to be 
\[
\cF_{\RF, N}(\bTheta) = \Big\{ \hf(\bx;\ba) = \frac{1}{N}\sum_{i=1}^N a_i \sigma_d(\bx, \btheta_i): a_i \in \R, i \in [N] \Big\}.
\]
Note that the factor $1/N$ is immaterial here, and only introduced in order to match the definition of
feature map and scalar product in Section \ref{sec:IntroBackground}.

We observe pairs  $(y_i, \bx_i)_{i \in [n]}$, with $(\bx_i)_{i \in [n]} \sim_{iid} \nu_d$, and $y_i = f_d(\bx_i) +\eps_i$, $f_d \in L^2(\cX_d)$ and $\eps_i \sim \normal ( 0 ,\noise^2)$ independently.
We fit the coefficients $(a_i)_{i\le N}$ using ridge regression, cf. Eq.~\eqref{eq:FirstRidge} that we reproduce here
\begin{equation}\label{eqn:hbiota_RFRR}
\hba(\lambda) = \argmin_{\ba} \left\{ \sum_{i=1}^n\big( y_i - \hf(\bx_i;\ba) \big)^2  + \frac{\lambda}{N} \| \ba \|_2^2  \right\}\, .
\end{equation}
We allow $\lambda$ to depend on the dimension parameter $d$.
The test error is  given by
\begin{equation}\label{eqn:test_error_RFRR}
R_\RF(f_d, \bX, \bTheta, \lambda) := \E_\bx\Big[ \Big(f_d (\bx) - \hf(\bx;\hba(\lambda))\Big)^2\Big]\, .
\end{equation}

We next state our assumptions on the sequences of probability spaces $(\cX_d,\nu_d)$ and $(\Omega_d,\tau_d)$,
and on the activation functions $\sigma_d$. The first set of assumptions concerns the concentration properties of the
feature map, and are grouped in the next definition. These assumptions are quantified
by four sequences of integers $\{ ( N(d) , \evN(d), n(d) , \evn(d)  ) \}_{d \ge 1}$, where $N(d)$ and $n(d)$
are, respectively, the number of neurons and the sample size. The integers $\evN(d)$ and $\evn(d)$ play a minor role
in this definition, but will encode the decomposition of $L^2(\Omega_d)$ and $L^2(\cX_d)$
(respectively) into the span of the top eigenvectors of $\Uop_d$ and $\Hop_d$ (of dimensions $\evN(d)$ and $\evn(d)$)
and their complements.
\begin{assumption}[$\{ ( N(d) , \evN(d), n(d) , \evn(d)  ) \}_{d \ge 1}$-Feature Map Concentration Property]\label{ass:FMPCP} We say that the sequence of activation functions $\{ \sigma_d \}_{d\ge 1}$ satisfies the \textit{Feature Map Concentration Property (FMCP)} with respect to the sequence $\{ ( N(d) , \evN(d) , n(d) , \evn(d))  \}_{d \ge 1}$ if there exists a sequence $\{u(d)\}_{d\ge 1}$ with $u(d) \ge \max( \evN (d), \evn (d) )$ such that the following hold.
\begin{itemize}
\item[(a)] (Hypercontractivity of finite eigenspaces) 
\begin{itemize}
\item[(i)] (Hypercontractivity of finite eigenspaces on $\cD_d$.) For any integer $k\ge 1$, there exists $C$ such that, for any $g \in \cD_{d, \le u(d)} = \spn ( \psi_s, 1 \leq s \leq u(d) )$, we have 
\[
\begin{aligned}
\| g \|_{L^{2k} (\cX_d) }  \le&~ C \cdot \| g \|_{L^{2} (\cX_d) }. 
\end{aligned}
\]
\item[(ii)] (Hypercontractivity of finite eigenspaces on $\cV_d$.) For any integer $k \ge 2$, there exists $C'$ such that, for any $g \in \cV_{d, \le u(d)}= \spn ( \phi_s, 1 \leq s \leq u(d) )$, we have 
\[
\begin{aligned}
\| g \|_{L^{2k} ( \Omega_d) }  \le&~ C' \cdot \| g \|_{L^{2} ( \Omega_d) }. 
\end{aligned}
\]
\end{itemize}
\item[(b)]  (Properly decaying eigenvalues.) There exists a fixed $\delta_0 >0$, such that , for all $d$ large enough
\begin{equation}
\max(N(d),n(d))^{2 + \delta_0} \le
\frac{\Big(\sum_{j=u(d)+1}^{\infty} \lambda_{d,j}^2\Big)^2}{\sum_{j=u(d)+1}^{\infty} \lambda_{d,j}^4}\, .\label{eq:ConcentratioB}
\end{equation}

\item[(c)](Hypercontractivity of the high degree part.) Let $\sigma_{d, >u(d)}$ corresponds to the projection on the high degree part of $\sigma_{d}$. Then there exists a fixed $\delta_0 >0$ and an integer $k$ such that 
\[
\min(n,N)^{1 + 2\delta_0} \max(N,n)^{1/k - 1} \log(\max(N,n) ) = o_d (1),
\]
and
\[
\E_{\bx,\btheta} [ \sigma_{>u(d)} ( \bx ; \btheta )^{2k} ]^{1/(2k)} =   O_d(1) \cdot \min(n,N)^{\delta_0} \cdot \E_{\bx,\btheta} [ \sigma_{>u(d)} ( \bx ; \btheta )^{2} ]^{1/2}. 
\]

\item[(d)](Concentration of diagonal elements) For $(\bx_i)_{i \in [n(d)]} \sim_{iid} \nu_d$ and $(\btheta_i)_{i \in [N(d)]} \sim_{iid} \tau_d$, we have
\[
\begin{aligned}
\sup_{i \in [n(d)]} \Big\vert H_{d, > \evn(d)}(\bx_i, \bx_i) - \E_{\bx} [H_{d, > \evn(d)}(\bx, \bx)] \Big\vert =&~ o_{d, \P}(1) \cdot \E_{\bx}[H_{d, > \evn(d)}(\bx, \bx)]\, ,\\
\sup_{i \in [N(d)]} \Big\vert U_{d, > \evN(d)}(\btheta_i, \btheta_i) - \E_{\btheta} [U_{d, > \evN(d)}(\btheta, \btheta)] \Big\vert =&~ o_{d, \P}(1) \cdot \E_{\btheta}[U_{d, > \evN(d)}(\btheta, \btheta)].
\end{aligned}
\]
\end{itemize}
\end{assumption}
This statement formalizes three assumptions. The first one is hypercontractivity (points $(a)$ and $(c)$).
Recall that $\cD_{d, \le u(d)}$ is the eigenspace spanned by  top eigenvectors of
the operator $\Hop_d$, and $\cV_{d, \le u(d)}$ is the eigenspace spanned by  top eigenvectors of
the operator $\Uop_d$. We request that functions in these spaces have comparable norms of all orders, which roughly amount to say that they
take values of the same order as their typical value for most $\bx$ (or most $\btheta$). This typically happens when the functions in the top eigenspaces are delocalized.

The second assumption (assumption $(b)$) requires that the eigenvalues of kernel operators do not decay too rapidly.
If this is not the case, the RKHS will be very close to a low-dimensional space. For instance, if $\lambda^2_{d,k}\asymp k^{-2\alpha}$, $\alpha>0$,
then this condition holds as long as we take $u(d)\ge \max(N(d),n(d))^{2+\delta_0}$ for some $\delta_0>0$. 

Finally, assumption $(d)$ concerns the diagonal elements of the kernel matrices. They require the truncated kernel functions $H_{d,>\evn(d)}$ and $U_{d,>\evN(d)}$ evaluated on covariates and weight vectors  to have nearly constant diagonal values. 

The second set of assumptions concerns the spectrum of the kernel operator, defined by the sequence of eigenvalues $(\lambda_{d,j}^2)_{j\ge 1}$.
We require that the spectrum has a gap: the location of this gap dictates the relationship between $N(d)$ and $\evN(d)$ and
between $n(d)$ and $\evn(d)$. 
\begin{assumption}[Spectral gap at level $\{ ( N(d) , \evN(d) , n(d) , \evn(d)  ) \}_{d \ge 1}$]\label{ass:spectral_gap} We say that the sequence of activation functions $\{ \sigma_d \}_{d\ge 1}$ has a \textit{spectral gap} at level $\{ ( N(d) , \evN(d) , n(d) , \evn(d)  ) \}_{d \ge 1}$ if one of the following conditions $(a)$, $(b)$ hold for all $d$ large enough.
 \begin{itemize}
\item[(a)] (Overparametrized regime.) We have $N(d) \ge n(d)$ and
\begin{itemize}
\item[(i)](Number of samples) There exists fixed $\delta_0 > 0$ such that $\evn (d) \le n(d)^{1 - \delta_0}$ and
\begin{align}
  \frac{1}{\lambda_{d,\evn(d)}^2}\sum_{k=\evn(d)+1}^{\infty}\lambda_{d,k}^2
  \le n(d)^{1-\delta_0}\le  n(d)^{1+\delta_0}\le   \frac{1}{\lambda_{d,\evn(d)+1}^2}\sum_{k=\evn(d)+1}^{\infty}\lambda_{d,k}^2\, .
  \label{eq:NumberOfSamples}
\end{align}
\item[(ii)](Number of features)
There exists fixed $\delta_0 > 0$ such that $\evN (d) \le N(d)^{1 - \delta_0}$, $\evN (d) \ge \evn (d)$ and 
\begin{align}
N(d)^{1+\delta_0} \le&~  \frac{1}{\lambda_{d,\evN(d)+1}^2}\sum_{k=\evN(d)+1}^{\infty}\lambda_{d,k}^2.\label{eq:NumberOfFeatures}
\end{align}
\end{itemize}
\item[(b)] (Underparametrized regime) We have $n(d) \ge N(d)$ and 
\begin{itemize}
\item[(i)](Number of features) There exists fixed $\delta_0 > 0$ such that $\evN(d) \le N(d)^{1 - \delta_0}$ and 
  \begin{align*}
     \frac{1}{\lambda_{d,\evN(d)}^2}\sum_{k=\evN(d)+1}^{\infty}\lambda_{d,k}^2
    \le N(d)^{1-\delta_0}\le  N(d)^{1+\delta_0} \le  \frac{1}{\lambda_{d,\evN(d)+1}^2}\sum_{k=\evN(d)+1}^{\infty}\lambda_{d,k}^2\, . 
\end{align*}

\item[(ii)](Number of samples)
There exists fixed $\delta_0 > 0$ such that $\evn(d) \le n(d)^{1 - \delta_0}$, $\evn (d) \ge \evN (d)$ and 
\begin{align*}
n(d)^{1+\delta_0} \le&~  \cdot  \frac{1}{\lambda_{d,\evn(d)+1}^2}\sum_{k=\evn(d)+1}^{\infty}\lambda_{d,k}^2\, .
\end{align*}
\end{itemize}
\end{itemize}
\end{assumption}

The assumption of a spectral gap is useful in that it leads to a clear-cut separation
in our main statement below. For instance, in the overparametrized regime $n(d)\ll N(d)$,
the projection of the target function onto $\cD_{d,\le \evn(d)}$ is estimated with negligible error, while
the projection onto $\cD_{d,> \evn(d)}$ is estimated with $0$.
If there was no spectral gap, the transition would not be as sharp. However, we expect this to affect only target functions
with a large projection onto eigenfunctions whose indices are  close to $\evn(d)$. In this sense, while restrictive, the spectral gap assumption
can be in fact a good model for a more generic situation.

\subsection{A general theorem}
\label{sec:RF_Theorem}

We are now in position to state our main results for random features ridge regression.
\begin{theorem}[Generalization error of Random Features Ridge Regression]\label{thm:RFK_generalization}
  Let $\{ f_d \in \cD_d \}_{d \ge 1}$ be a sequence of functions, $\bX = (\bx_i)_{i \in [n(d)]}$ and $\bTheta= (\btheta_j)_{j \in [N(d)]}$ with $(\bx_i)_{i \in [n(d)]} \sim \nu_d$ and $(\btheta_j )_{j \in [N(d) ]} \sim  \tau_d$ independently. Let $y_i = f_d(\bx_i) + \eps_i$ and $\eps_i \sim_{iid} \normal(0, \noise^2)$ for some $\noise>0$. Let $\{ \sigma_d \}_{d\ge 1}$ be a sequence of activation functions satisfying $\{ ( N(d) , \evN(d) , n(d) , \evn(d)  ) \}_{d \ge 1}$-FMCP (Assumption \ref{ass:FMPCP}) and spectral gap at level
  $\{ ( N(d) , \evN(d) , n(d) , \evn(d)) ) \}_{d \ge 1}$ (Assumption \ref{ass:spectral_gap}). Then the following hold for the test error of RFRR (see Eq. \eqref{eqn:test_error_RFRR}):
\begin{itemize}
\item[(a)] (Overparametrized regime) If $N(d) \geq d^\delta \cdot n(d)$ for some $\delta >0$, , let $\lambda_\star$ be such that $\lambda_*=o_{d} (\Trace(\Hop_{d, > \evn})) $. Then, for any regularization parameter $\lambda \in [0, \lambda_\star]$, and any fixed $\eta >0$ and $\eps > 0$, with high probability we have
\begin{align}
\vert R_{\RF}(f_d, \bX, \bTheta, \lambda) - \| \proj_{> \evn } f_d \|_{L^2}^2 \vert \le \eps \cdot (\| f_d \|_{L^2}^2+\| \proj_{>\evn}f_d \|_{L^{2 + \eta} }^2 + \noise^2). 
\end{align}

\item[(b)] (Underparametrized regime) If $n(d) \geq d^\delta \cdot N(d)$ for some $\delta >0$, let $\lambda_\star$ be such that $\lambda_\star = o_{d} ( n/N \cdot  \Trace(\Uop_{d, > \evN})) $. Then, for any regularization parameter $\lambda \in [0, \lambda_\star]$, and any fixed $\eta >0$ and $\eps > 0$, with high probability we have
\begin{align}
\vert R_{\RF}(f_d, \bX, \bTheta, \lambda) - \| \proj_{> \evN } f_d \|_{L^2}^2 \vert \le \eps \cdot (\| f_d \|_{L^2}^2 + \| \proj_{>\evN} f_d \|_{L^{2 + \eta}}^2 + \noise^2). 
\end{align}
\end{itemize}  
\end{theorem}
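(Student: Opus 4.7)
The plan is to reduce RFRR to an effective low-rank ridge regression (in the covariate direction in the overparametrized regime, and in the feature direction in the underparametrized regime), perform a bias-variance decomposition of the test error, and show that the ``signal subspace'' (top $\evn$ or top $\evN$ eigenfunctions) is recovered while its complement is shrunk to zero. Concretely, write RFRR in its primal form $\hf(\bx; \hba) = \bphi(\bx)^\sT (\bPhi^\sT \bPhi / N + (\lambda/N) \id)^{-1} \bPhi^\sT \by / N$, where $\bPhi \in \R^{n \times N}$ has entries $\sigma_d(\bx_i, \btheta_j)$, and equivalently in its dual (kernel) form $\hf(\bx) = H_N(\bx, \bX)^\sT (H_N(\bX, \bX) + (\lambda/n) \id)^{-1} \by$, with $H_N$ the random features kernel. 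Decompose the target as $f_d = \proj_{\le \ell} f_d + \proj_{> \ell} f_d$ with $\ell = \evn$ (resp.\ $\evN$), and split the test error into a bias term $\| f_d - \E_{\beps}[\hf \mid \bX,\bTheta] \|_{L^2}^2$ and a noise variance term.

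For the overparametrized regime $N \ge d^\delta n$, the main technical step is to show that $H_N(\bX,\bX) \approx H(\bX,\bX)$ in operator norm, so RFRR behaves like KRR with the population kernel $H$. Writing $H(\bX,\bX) = \bPsi_{\le \evn} \bLambda_{\le \evn}^2 \bPsi_{\le \evn}^\sT + H_{d,>\evn}(\bX,\bX)$, the hypercontractivity of the top eigenspace in $\cD_d$ controls the Gram matrix $\bPsi_{\le \evn}^\sT \bPsi_{\le \evn}/n \approx \id_{\evn}$ via a matrix-Chernoff argument, while assumption (d) plus hypercontractivity on the high-degree part (assumption (c)) give $H_{d,>\evn}(\bX,\bX) \approx \bar h \id_n$ with $\bar h = \Trace(\Hop_{d,>\evn})/n$ up to an operator-norm error $o(\bar h)$. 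The spectral gap $\lambda_{d,\evn}^2 \gg \bar h n / n$, i.e.~condition (\ref{eq:NumberOfSamples}), then guarantees that the signal eigenvalues dominate both the ridge $\lambda$ and the ``implicit ridge'' $\bar h$, yielding an effective $\evn$-dimensional well-conditioned ridge regression in which $\proj_{\le \evn} f_d$ is recovered up to $o(\|f_d\|_{L^2}^2)$ and the tail $\proj_{>\evn} f_d$ contributes exactly $\|\proj_{>\evn} f_d\|_{L^2}^2$ to the bias. The noise variance term is controlled by the trace formula $\noise^2 \Trace[(\bK + \lambda'\id)^{-2} \bK^2]/n$, which the same spectral gap makes $o(1)$.

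The underparametrized regime $n \ge d^\delta N$ is treated symmetrically in the primal form: one shows the $N \times N$ matrix $\bPhi^\sT \bPhi / n$ concentrates around a block-diagonal approximation of $\Uop_d$, by the analogous hypercontractivity and diagonal-concentration hypotheses on $\cV_d$; a Woodbury identity converts the resulting $N\times N$ resolvent into the $\hf$ prediction, and an effective rank-$\evN$ ridge regression emerges with playing the role of signal/noise swapped between $\bX$ and $\bTheta$. The main obstacle throughout is establishing sharp operator-norm control of the tail kernel matrices $H_{d,>\evn}(\bX,\bX) - \bar h\id$ and $U_{d,>\evN}(\bTheta,\bTheta) - \bar u \id$; while the diagonals are handled by assumption (d), the off-diagonals require a matrix Bernstein-type inequality applied to the truncated activation $\sigma_{d,>u(d)}$ under the moment control of assumption (c) combined with the decay (\ref{eq:ConcentratioB}). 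A secondary delicate point is the ridgeless limit $\lambda \to 0$: one must show that the smallest nonzero eigenvalue of the top-$\evn$ (resp.\ top-$\evN$) Gram matrix is bounded below with high probability, which follows from hypercontractivity via a Paley-Zygmund lower bound. Once these concentration estimates are in place, the two statements of the theorem are routine consequences of the spectral gap and ridge regression algebra.
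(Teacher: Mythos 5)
Your overall architecture (low-rank ``signal'' plus an implicit ridge coming from the high-degree tail, bias--variance split, concentration via hypercontractivity and matrix-Bernstein) is in the right spirit, but the pivotal reduction you propose in the overparametrized regime --- ``show that $H_N(\bX,\bX)\approx H(\bX,\bX)$ in operator norm, so RFRR behaves like KRR with the population kernel'' --- does not hold at the scale the argument needs, and this is precisely where the difficulty of the theorem lies. Write $\kappa=\Trace(\Hop_{d,>\evn})$; this is the relevant error scale (it plays the role of the effective ridge, and with $\lambda\le\lambda_\star=o_d(\kappa)$ any kernel perturbation must be $o_d(\kappa)$ to be negligible). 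Conditionally on $\bX$, $H_N(\bX,\bX)$ is an average of $N$ i.i.d.\ rank-one matrices, and its fluctuation along a top direction $\bpsi_j/\sqrt{n}$, $j\le \evn$, is of order $n\lambda_{d,j}^2/\sqrt{N}$, because $\<\bpsi_j/\sqrt{n},\bsigma(\btheta)\>\approx \sqrt{n}\,\lambda_{d,j}\phi_j(\btheta)$. The theorem only assumes $N\ge d^{\delta}n$, so for instance on the sphere with $n\asymp d^{\lvn}$, $\lvn\ge 1$, and $\lambda_{d,1}=\Theta_d(1)$, this fluctuation is of order $\sqrt{n/d^{\delta}}\gg 1=\Theta_d(\kappa)$. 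Hence $\|H_N(\bX,\bX)-H(\bX,\bX)\|_{\op}$ is \emph{not} $o_d(\kappa)$ under the stated assumptions: the spike directions only concentrate multiplicatively, not additively, and a plain operator-norm comparison with population KRR breaks down. The same obstruction reappears in the underparametrized regime when you try to replace $\bZ^\sT\bZ/n$ by a deterministic approximation of $\Uop_d$. Moreover, the test error is not a functional of $H_N(\bX,\bX)$ alone: it involves the population quantities $\bV$ and $\bU$ (out-of-sample kernel evaluations), whose interaction with the empirical resolvent your sketch never addresses.

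This is why the paper never compares kernels directly; it analyzes the singular value decomposition of the feature matrix $\bZ=\bZ_{\le\evn}+\bZ_{>\evn}$ itself (Proposition \ref{prop:singular_values_bZ}): the top $\evn$ singular values of $\bZ/\sqrt{N}$ are $\omega_{d,\P}(\kappa^{1/2})$ with singular vectors aligned with $\bpsi_{\le\evn}$ and $\bphi_{\le\evn}$, the remaining singular values concentrate around $\kappa^{1/2}$, and --- the step with no counterpart in your proposal --- the cross terms such as $\|\bP_1^\sT\bZ_{>\evn}\bQ_2\|_{\op}/\sqrt{N}$ are shown to be $o_{d,\P}(\kappa^{1/2})$ via Wedin's $\sin\Theta$ theorem together with a dedicated null-space lemma (Lemma \ref{lem:linear_algebra}). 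These cross-term and singular-vector alignment estimates are what make the terms $T_1,\dots,T_5$ in the risk decomposition tractable while tolerating the large multiplicative fluctuations of the spikes; without them your reduction has a genuine gap rather than a routine verification. Two smaller corrections: the tail kernel diagonal concentrates around $\Trace(\Hop_{d,>\evn})$, not $\Trace(\Hop_{d,>\evn})/n$; and in the ridgeless limit $\lambda=0$ what saves the resolvent is the lower bound $\sigma_{\min}(\bZ/\sqrt{N})\ge (1-o_{d,\P}(1))\,\kappa^{1/2}$ supplied by the high-degree bulk (the self-induced regularization), not a Paley--Zygmund lower bound on the top-$\evn$ Gram matrix.
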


\begin{remark}
  The two limits $N=\infty$ and $n=\infty$ play a special role. For $N=\infty$, the random kernel $H_N(\bx_1,\bx_2) =N^{-1}\sum_{i=1}^N \sigma(\bx_1;\btheta_i)\sigma(\bx_2;\btheta_i)$ converges to its expectation, and we recover KRR. While this case is not
  technically covered by Theorem \ref{thm:RFK_generalization}, we establish the relevant characterization in
  Theorems \ref{thm:KR-general-main} and \ref{thm:upper_bound_KRR}.

  In the case $n=\infty$ the generalization error vanishes, and we are left with the approximation error. This case is covered
  separately in Appendix \ref{sec:approximation_error_RFK}. In both these limit cases we confirm the result that would have been obtained by naively setting $N=\infty$ or $n=\infty$ in the last theorem. 
\end{remark}

Notice that the sample size $n$ and the number of neurons $N$ play a nearly symmetric role in this statement,
and the smallest of the two determines the test error.
An important insight follows: in the present setting, the test error is nearly insensitive to the number of neurons as long as
we take $N\gg n$. If we want to minimize computational complexity subject to achieving nearly optimal generalization properties,
we should operate, say, at  $N\asymp n^{1+\delta}$ for some small $\delta>0$.

It is instructive to compare this result with \cite{rudi2017generalization} which instead suggests $N\asymp \sqrt{n}\log n$.
While our setting differs from the one of \cite{rudi2017generalization} in a number of technical aspects, we believe that the core difference
between the two results lies in the treatment of the target function $f_d$. Simplifying, the recommendation of
\cite{rudi2017generalization} is based on two results, the second of which proved in \cite{caponnetto2007optimal}
(with an abuse of notation, we indicate the number of neurons and sample size as arguments of $R_{\RF}(f_d) = R_{\RF}(f_d;N,n)$,
and use $N=\infty$ to denote the KRR limit case):
\begin{align}
  \sup_{\|f_d\|_{\cH}\le r}R_{\RF}(f_d;N_n,n) 
  &\le C_1(d) \frac{r^2}{\sqrt{n}}\, ,\;\;\;\; \;\;\;\; \;\;\mbox{for}\;\; N_n\asymp \sqrt{n}\log n\, ,\label{eq:RF-Rudi}\\
   \sup_{\|f_d\|_{\cH}\le r}R_{\RF}(f_d;\infty,n) 
  &\le C_2(d) r^2 \Big(\frac{\log n}{n}\Big)^{b/(b+1)}\, ,\label{eq:RKHS-Rudi}
\end{align}
where $b\in (1,\infty)$ encodes the decay of eigenvalues of the
kernel\footnote{The results of \cite{caponnetto2007optimal,rudi2017generalization} assume the weaker condition
  that $\inf_{g\in \cH}\|f_d-g\|_{L^2}$ is achieved in $\cH$: since $\cH$ is dense in $L^2(\cX_d)$ (provided the kernel is strictly positive definite), this
  is equivalent to $f_d\in\cH$.}. Now, considering the worst case decay $b\to 1$, the error rate achieved by RFRR, cf. Eq.~\eqref{eq:RF-Rudi},
is of the same order as the one achieved by KRR, cf. Eq.~\eqref{eq:RKHS-Rudi}.

Note several differences with respect to our results: $(i)$~The analysis of \cite{rudi2017generalization,caponnetto2007optimal}
is minimax, over balls in the RKHS, while our results hold \emph{pointwise}, i.e., for a given function
$f_d$; $(ii)$~Optimality in  \cite{rudi2017generalization} is established in terms of rates, i.e., up to multiplicative constant, while ours
hold up to additive errors (multiplicative constants are exactly characterized); $(iii)$~The results of
\cite{rudi2017generalization,caponnetto2007optimal} apply to a fixed RKHS (in particular, a fixed dimension $d$), while we
study the case in which $d$ is large and $N,n,d$ are polynomially related.

Some of these distinction are also relevant in comparing our work to recent results on KRR. In particular 
points $(i)$ and $(ii)$ apply when comparing with \cite{liang2020just,liang2019risk}.

\subsection{Examples: The binary hypercube and the sphere}
\label{sec:RF_Examples}

As examples we consider the case of feature vectors $\bx_i$ that are uniformly distributed over the
discrete hypercube $\Cube = \{ -1, +1\}^d$  or the sphere $\S^{d-1} (\sqrt{d}) = \{ \bx \in \R^d : \| \bx \|_2 = d\}$.
Namely, letting $\cA_d$ to be either $\Cube$ or $\S^{d-1} (\sqrt{d}) $ and $\rho_d = \Unif ( \cA_d )$,
we set $\cX_d = \cA_d$ and $\nu_d = \rho_d$. We further choose the $\btheta_i$'s to be distributed as the covariates vectors, namely
$\cV_d = \cA_d$ and $\tau_d = \rho_d$. Apart from simplifying our
analysis, this is a sensible choice: since the covariates vectors
do not align along any preferred direction, it is reasonable for the $\btheta_i$'s to be isotropic as well.

Given a function  $\barsigma_d : \R \to \R$ (which we allow to depend on the dimension $d$), we define the activation function $\sigma_d : \cA_d \times \cA_d \to \R$ by
\begin{align}
\sigma_d ( \bx ; \btheta ) = \barsigma_d ( \< \bx , \btheta \> / \sqrt{d} ).\label{eq:InnerProdActivation}
\end{align}
We denote by $\cE_{d,\leq \ell}$ the subspace of $L^2 ( \cA_d , \rho_d )$ spanned by polynomials of degree less or equal to $\ell$ and by
$\oproj_{\leq \ell}$ the orthogonal projection on $\cE_{d,\leq \ell}$ in $L^2 ( \cA_d , \rho_d )$.
The projectors $\oproj_{\ell}$ and $\oproj_{> \ell}$ are defined analogously (see Appendix \ref{sec:technical_background} for more details). Let us emphasize that the projectors $\oproj_{\leq \ell}$ are related but distinct from
the $\proj_{\le \evn}$: while $\oproj_{\leq \ell}$  projects onto eigenspaces of polynomials of degree at most
$\ell$, $\proj_{\le \evn}$ projects onto the top $\evn$-eigenfunctions\footnote{The two coincide if $\evn=\sum_{\ell'\le \ell}B(\cA_d;\ell')$, with $B(\cA_d;\ell')$ the dimension of the space of degree-$\ell'$ polynomials and the top $\evn$ eigenvalues verify $\lambda_{d,j}^2 = \Omega_{d} ( d^{-\ell} )$, see Appendix \ref{sec:proof_RFK_sphere}.}.

In order to apply Theorem \ref{thm:RFK_generalization}, we make the following assumption about $\barsigma_d$.
\begin{assumption}[Assumptions on $\cA_d$ at level $(\lvn , \lvN) \in\naturals^2$]\label{ass:activation_RFK_sphere}
  For $\{ \barsigma_d \}_{d \ge 1}$  a sequence of functions $\barsigma_d:\reals\to\reals$, we assume the following conditions to hold.

\begin{itemize}
\item[(a)] There exists an integer $k$ and constants $c_1 <1$ and $c_0 >0$, $\delta_0>1/k$  such that
  $n \le N^{1-\delta_0}$ or $N\le n^{1-\delta_0}$ and $|\barsigma_d ( x )| \leq c_0 \exp (c_1 x^2 / (4k) )$.

\item[(b)] We have
\begin{align}
 \min_{k \le \lvn}d^{\lvn-k}\|\oproj_{k}\barsigma_{d } (\< \be,\, \cdot\, \>) \|^2_{L^2(\cA_d , \rho_d)}  = & \Omega_d(1), \label{eq:ass_sphere_b1} \\
  \min_{k \le \lvN}d^{\lvN-k}\|\oproj_{k}\barsigma_{d } (\< \be,\, \cdot\, \>) \|^2_{L^2(\cA_d , \rho_d)}  = & \Omega_d(1), \label{eq:ass_sphere_b1p} \\
 \| \oproj_{> 2\max(\lvn , \lvN) + 1} \barsigma_{d } (\< \be, \,\cdot\, \>) \|_{L^2(\cA_d , \rho_d)} = & \Omega_d (1), \label{eq:ass_sphere_b2} 
\end{align}
where  $\be \in \cA_d$ is a fixed vector (it is easy to see that these quantities do not depend on $\be$).

\item[(c)] If $\cA_d = \Cube$, we have, for all $d$ large enough
\begin{align}
  \max_{k \le 2\max(\lvn , \lvN) +2 } d^{-k}\|\oproj_{d-k}\barsigma_{d } (\<
  \be, \,\cdot\, \>) \|^2_{L^2(\cA_d , \rho_d)}  \le d^{-2\max(\lvn , \lvN)-2}\,.
   \label{eq:ass_sphere_c} 
\end{align}
\end{itemize}
\end{assumption}
Assumption $(a)$ requires $n$, $N$ to be well separated and a technical integrability condition.
The latter is necessary for the hypercontractivity condition in Assumption \ref{ass:FMPCP}.$(c)$ to make sense.

Equations \eqref{eq:ass_sphere_b1} and \eqref{eq:ass_sphere_b1p} (Assumption $(b)$) are a quantitative version of a universality condition:  if
$\oproj_{k}\barsigma_{d } (\< \be, \cdot \>/\sqrt{d}) =0$ for some $k$, then
linear combinations of $\barsigma_d$ can only span a linear subspace of $L^2(\cA_d,\rho_d)$.
Equation \eqref{eq:ass_sphere_b2} (Assumption $(b)$) requires the high degree part of $ \barsigma_{d }$ to be non-vanishing (and therefore induce a non-zero regularization from the high degree non-linearity).

For $\cA_d = \Cube$, we further require Assumption $(c)$, namely that the last eigenvalues of $\barsigma_d$ decrease sufficiently fast. This is a necessary conditions to avoid pathological sequences $\{ \barsigma_{d} \}_{d \ge 1}$ which are very rapidly oscillating.
\begin{remark}
  If $\barsigma_d = \barsigma$ is independent of the dimension, then  Assumptions $(b)$, $(c)$ are easy to check:
  \begin{itemize}
  \item The first two parts of Assumption $(b)$  (Eqs.~\eqref{eq:ass_sphere_b1} and \eqref{eq:ass_sphere_b1p})  are
  satisfied if we require $\E\{\barsigma(G)\, p(G)\}\neq 0$ for all non-vanishing polynomials $p$  of degree at most $\max(\lvn , \lvN)$ (expectation being taken with respect to $G\sim \normal(0,1)$.)
  This is in turn equivalent to $\E\{\barsigma(G)\, \He_k(G)\}\neq 0$ for all $k\le \max(\lvn , \lvN)$, where
  $\He_k$ is the $k$-th Hermite polynomial.
\item The third part of Assumption $(b)$  (Eq.~\eqref{eq:ass_sphere_b2})  amounts to requiring $\barsigma$ not to be a degree-$ (2\max(\lvn , \lvN)+1)$ polynomial.
\item  In Appendix \ref{sec:hypercube_case} we check that Assumption $(c)$ holds if $\barsigma$ is smooth and there exists $c_0>0$ and $c_1 < 1$ constants such that
 the $( 2\max(\lvn , \lvN)+2)$-th derivative verifies $| \barsigma^{(2\max(\lvn , \lvN)+2)} (x) | \leq c_0 \exp ( c_1 x^2 / 4)$.
  \end{itemize}
\end{remark}

As an example, the shifted ReLu $\barsigma_d (x) = (x - c)_+$ with a a generic $c \in \R\setminus\{0\}$ verifies
Assumption \ref{ass:activation_RFK_sphere}. (The case $c=0$ violates Eq.~\eqref{eq:ass_sphere_b1}, since $\E\{\barsigma(G)
\He_k(G)\}=0$ for $k\ge 3$ odd. This is not a limitation of our result: the unshifted ReLU is not universal in the present setting.)

\begin{theorem}[Generalization error of RFRR on the sphere and hypercube]\label{prop:RFK_sphere}
Let $\{ f_d \in L^2(\cA_d, \rho_d)\}_{d \ge 1}$ be a sequence of functions. Let $\bTheta = (\btheta_i)_{i \in [N]}$ with $(\btheta_i)_{i \in [N]} \sim \rho_d$ independently and $\bX = (\bx_i)_{i \in [n]}$ with $(\bx_i)_{i \in [n]} \sim \rho_d$ independently. Let $y_i = f_d(\bx_i) + \eps_i$ and $\eps_i \sim_{iid} \normal(0, \noise^2)$ for some $\noise>0$. Assume $d^{\lvn+\delta_0} \leq n \le d^{\lvn+1-\delta_0}$ and $d^{\lvN + \delta_0 } \le N\le d^{\lvN+1-\delta_0}$ for fixed integers
  $\lvn, \lvN$ and for some $\delta_0 >0$. Let $\{ \barsigma_d \}_{d \geq 1}$ satisfy Assumption \ref{ass:activation_RFK_sphere} at level $(\lvn, \lvN)$. Then the following hold for the test error of RFRR (see Eq. \eqref{eqn:test_error_RFRR}): 
\begin{itemize}
\item[(a)] Assume $ N \ge n d^\delta$ for some $\delta >0$. Then for any regularization parameter $\lambda = O_d(1)$ (including $\lambda =0$ identically), any 
$\eta >0$ and $\eps > 0$, we have, with high probability,
\begin{align}
\vert R_{\RF}(f_d, \bX, \bTheta, \lambda) - \| \oproj_{> \lvn} f_d \|_{L^2}^2 \vert \le \eps \cdot (\| f_d \|_{L^2}^2+\| \oproj_{>\lvn} f_d \|_{L^{2 + \eta} }^2 + \noise^2). 
\end{align}
\item[(b)] Assume $ n \ge N d^\delta$ for some $\delta >0$.
  Then, for any regularization parameter $ \lambda = O_d(n/N)$ (including $\lambda =0$ identically), $\eta >0$ and $\eps > 0$, we have, with high probability,
\begin{align}
\vert R_{\RF}(f_d, \bX, \bTheta, \lambda) - \| \oproj_{> \lvN} f_d \|_{L^2}^2 \vert \le \eps \cdot (\| f_d \|_{L^2}^2 + \| \oproj_{>\lvN} f_d \|_{L^{2 + \eta} }^2 + \noise^2). 
\end{align}
\end{itemize}
\end{theorem}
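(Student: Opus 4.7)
The strategy is to deduce Theorem \ref{prop:RFK_sphere} from the general Theorem \ref{thm:RFK_generalization} by verifying Assumptions \ref{ass:FMPCP} (FMCP) and \ref{ass:spectral_gap} (spectral gap) for the sequence of inner-product activations $\sigma_d(\bx;\btheta)=\barsigma_d(\<\bx,\btheta\>/\sqrt d)$ on $\cA_d\in\{\Cube,\S^{d-1}(\sqrt d)\}$. The first step is to describe the spectral decomposition of the associated integral operator $\Top_d$. Because $\sigma_d$ is an inner-product kernel and $\nu_d=\tau_d=\rho_d$ is the uniform measure on $\cA_d$, the kernel is diagonalized in the basis of spherical harmonics (resp. Fourier--Walsh characters on the cube), with an eigenspace $V_{d,k}$ for each degree $k\ge 0$ of dimension $B(\cA_d;k)=\Theta_d(d^k/k!)$. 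The eigenvalues are, up to harmless normalizations, the Gegenbauer/Hermite coefficients of $\barsigma_d$; using the asymptotic normalization of Gegenbauer polynomials on $\cA_d$, Assumption \ref{ass:activation_RFK_sphere}(b) gives $\lambda_{d,j}^2=\Theta_d(d^{-k})$ for $j$ in the $k$-th eigenspace, for every $k\le\max(\lvn,\lvN)+1$, and Assumption \ref{ass:activation_RFK_sphere}(c) on the cube controls the very last shells $V_{d,d-k}$ so that they do not corrupt the ordering. With this, the top $\evn(d)\eqndef\sum_{k\le\lvn}B(\cA_d;k)=\Theta_d(d^{\lvn})$ eigenfunctions of $\Hop_d$ are exactly $\cE_{d,\le\lvn}$, so $\proj_{>\evn}=\oproj_{>\lvn}$; define $\evN(d)$ analogously with $\lvN$.

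Next I verify the FMCP with $u(d)\eqndef\sum_{k\le 2\max(\lvn,\lvN)+1}B(\cA_d;k)=\Theta_d(d^{2\max(\lvn,\lvN)+1})$. Part (a) (hypercontractivity on $\cD_{d,\le u(d)}=\cV_{d,\le u(d)}=\cE_{d,\le 2\max(\lvn,\lvN)+1}$) is classical: on $\Cube$ the Bonami--Beckner inequality gives $\|g\|_{L^{2k}}\le(2k-1)^{\ell/2}\|g\|_{L^2}$ for any polynomial of degree $\le\ell$, and on $\S^{d-1}(\sqrt d)$ the analogous bound for spherical harmonics is standard (and dimension-free). Part (b) (properly decaying eigenvalues) is immediate: from Assumption \ref{ass:activation_RFK_sphere}(b) the sum $\sum_{j>u(d)}\lambda_{d,j}^2$ is $\Omega_d(1)$, while $\sum_{j>u(d)}\lambda_{d,j}^4\le\|\barsigma_d\|_\infty^2\cdot o_d(1)$ using the higher-shell tail; thus the ratio is $\omega_d(\max(N,n)^{2+\delta_0})$ since $N,n\le d^{\max(\lvn,\lvN)+1}$. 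Part (c) (hypercontractivity of the high-degree part) follows from the sub-Gaussian envelope $|\barsigma_d(x)|\le c_0 e^{c_1 x^2/(4k)}$ in Assumption \ref{ass:activation_RFK_sphere}(a): the concentration of $\<\bx,\btheta\>/\sqrt d$ under $\rho_d\otimes\rho_d$ yields $\E[\sigma_d^{2k}]^{1/(2k)}=O_d(1)$, while the $L^2$-norm of the high-degree remainder is also $\Theta_d(1)$ by Assumption \ref{ass:activation_RFK_sphere}(b).

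Part (d) of the FMCP (concentration of diagonal kernel elements) is the most delicate step. Since $H_{d,>\evn}(\bx,\bx)=\sum_{j>\evn}\lambda_{d,j}^2\psi_j(\bx)^2$, I split the sum at $u(d)$. For $\evn<j\le u(d)$ the contribution is a polynomial of bounded degree in $\bx$, so the hypercontractivity of Part (a) combined with a variance computation (using that $\E\psi_j^2=1$ and that products $\psi_j^2\psi_{j'}^2$ have controlled correlations on $\cA_d$) yields a uniform $o_d(1)$ deviation. For $j>u(d)$ I use the representation $H_{d,>u(d)}(\bx,\bx)=\E_\btheta[\sigma_{d,>u(d)}(\bx;\btheta)^2]$ together with the sub-Gaussian envelope and standard concentration on $\cA_d$ (Lipschitz concentration on the sphere, or bounded differences on the cube) to show that $\sup_{i\le n}|H_{d,>u(d)}(\bx_i,\bx_i)-\E H_{d,>u(d)}|=o_{d,\P}(1)$; here Assumption \ref{ass:activation_RFK_sphere}(c) on the cube is exactly what is needed to kill the pathological $V_{d,d-k}$ shells. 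The argument for $U_{d,>\evN}$ is symmetric.

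Finally, the spectral gap (Assumption \ref{ass:spectral_gap}) is a direct computation from the eigenvalue scaling. Indeed, $\sum_{j>\evn}\lambda_{d,j}^2=\Theta_d(1)$, $\lambda_{d,\evn}^2=\Theta_d(d^{-\lvn})$ and $\lambda_{d,\evn+1}^2=\Theta_d(d^{-(\lvn+1)})$, so the ratios in \eqref{eq:NumberOfSamples} read $\Theta_d(d^{\lvn})$ and $\Theta_d(d^{\lvn+1})$; the hypothesis $d^{\lvn+\delta_0}\le n\le d^{\lvn+1-\delta_0}$ places $n$ strictly in between (after shrinking $\delta_0$), and the analogous statement for $N$ places $N$ strictly above $d^{\lvN+\delta_0}$, giving \eqref{eq:NumberOfFeatures}. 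Once both assumptions are in place, Theorem \ref{thm:RFK_generalization} applies with $\evn(d),\evN(d)$ as above and the identification $\proj_{>\evn}=\oproj_{>\lvn}$, $\proj_{>\evN}=\oproj_{>\lvN}$ yields the conclusion; the fact that $\lambda=O_d(1)$ (resp.\ $\lambda=O_d(n/N)$) is admissible follows from $\Trace(\Hop_{d,>\evn})=\Theta_d(1)$ (resp.\ $(n/N)\Trace(\Uop_{d,>\evN})=\Theta_d(n/N)$), so $\lambda_\star$ in Theorem \ref{thm:RFK_generalization} may be taken of the stated order. The main obstacle in executing this plan is the uniform diagonal concentration in FMCP (d), which requires combining hypercontractivity for the middle-degree part with a delicate envelope/concentration argument for the genuine high-degree tail.
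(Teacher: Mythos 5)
Your overall route is the same as the paper's: identify the eigendecomposition of the inner-product kernel in spherical harmonics / Fourier--Walsh characters, set $\evn,\evN$ to the dimensions of the degree-$\le\lvn$ and degree-$\le\lvN$ spaces so that $\proj_{>\evn}=\oproj_{>\lvn}$, verify Assumptions \ref{ass:FMPCP} and \ref{ass:spectral_gap}, and invoke Theorem \ref{thm:RFK_generalization}. However, there are two concrete problems in the execution.

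First, your choice $u(d)=\sum_{k\le 2\max(\lvn,\lvN)+1}B(\cA_d;k)$ does not do what you need. In Assumption \ref{ass:FMPCP}, $\cD_{d,\le u(d)}$ and $\cV_{d,\le u(d)}$ are the spans of the top $u(d)$ eigenfunctions ordered by eigenvalue \emph{magnitude}, not by degree. Assumption \ref{ass:activation_RFK_sphere}(b) only lower-bounds the Gegenbauer coefficients $\xi_{d,k}$ for $k\le\max(\lvn,\lvN)$; for degrees strictly between $\max(\lvn,\lvN)$ and $2\max(\lvn,\lvN)+1$ they may vanish. In that case the degree-$\le\max(\lvn,\lvN)$ shells supply only $\Theta_d(d^{\max(\lvn,\lvN)})$ of your $u(d)=\Theta_d(d^{2\max(\lvn,\lvN)+1})$ top slots, and the remainder is filled by eigenfunctions from shells of degree $\ge 2\max(\lvn,\lvN)+2$ (on the cube, potentially also the shells $V_{d,d-k}$ unless one invokes Assumption \ref{ass:activation_RFK_sphere}(c) at exactly this point). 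For such high-degree harmonics the hypercontractivity constant $(q-1)^{\ell}$ is not bounded, so your verification of Assumption \ref{ass:FMPCP}(a) via Beckner for degree $\le 2\max(\lvn,\lvN)+1$ does not apply to the actual space $\cD_{d,\le u(d)}$; the same ambiguity infects your splitting at $u(d)$ in parts (b) and (c), and your bound $\sum_{j>u}\lambda_{d,j}^4\le\|\barsigma_d\|_\infty^2\cdot o_d(1)$ is not meaningful since $\barsigma_d$ is only assumed to satisfy a sub-Gaussian envelope. The paper's fix is to define $u(d)$ as the number of eigenvalues of magnitude $\Omega_d(d^{-2\max(\lvn,\lvN)-2+\delta})$: the $L^2$ bound $\xi_{d,k}^2=O_d(B(\cA_d;k)^{-1})$ (plus Assumption \ref{ass:activation_RFK_sphere}(c) for the near-top cube shells) then guarantees that the top-$u$ space is contained in the degree-$\le 2\max(\lvn,\lvN)+1$ polynomials and contains all degrees $\le\max(\lvn,\lvN)$, after which hypercontractivity, the decay condition $\sup_{j>u}\lambda_{d,j}^2\le d^{-2\max(\lvn,\lvN)-2+\delta}$, and part (c) (via the decomposition $\sigma_{>u}=\sigma_S+\oproj_{>2\max(\lvn,\lvN)+1}\barsigma_d$ and the envelope lemma) all go through cleanly.

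Second, the step you single out as "the most delicate" is in fact trivial in this setting, and your proposed substitute argument is both unnecessary and not clearly sound. Because the kernel is an inner-product kernel and $Q_k^{(d)}(\<\bx,\bx\>)=Q_k^{(d)}(d)=1$ on $\S^{d-1}(\sqrt d)$ (and likewise on $\Cube$), the truncated diagonals satisfy $H_{d,>\evn}(\bx,\bx)=\sum_{k>\lvn}\xi_{d,k}^2B(\cA_d;k)=\Trace(\Hop_{d,>\evn})$ identically in $\bx$, and similarly for $U_{d,>\evN}(\btheta,\btheta)$; Assumption \ref{ass:FMPCP}(d) therefore holds with zero fluctuation. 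Your sketched variance/hypercontractivity computation for the middle degrees and the "Lipschitz concentration / bounded differences" claim for the genuine high-degree tail would need uniform-in-$d$ constants that you do not establish, so as written it would not constitute a proof---but it is also not needed once the exact constancy of the diagonal is observed.
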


As mentioned in the introduction, \cite{ghorbani2019linearized} proves this theorem 
in the cases $n=\infty$ (RF approximation error) and $N=\infty$ (generalization error of KRR),
for the uniform measure on the sphere. The general case follows here as a consequence of Theorem \ref{thm:RFK_generalization}.

To see the connection between Theorem \ref{thm:RFK_generalization} and the results given here
for the sphere and hypercube cases (see Appendix \ref{sec:proof_RFK_sphere} for details), notice that the  integral operator $\Top_d$ associated to the inner product activation function
\eqref{eq:InnerProdActivation} is in this case symmetric, and commutes with rotations in
${\rm SO}(d)$ (for the sphere) or with the action of $(\integers_2)^d$
(for the hypercube\footnote{In the $\{+1,-1\}^d$ representation, $\bz\in\{+1,-1\}^d$ acts on
  $\Cube$ via
  $\bx\mapsto \bD_{\bz}\bx$, where $\bD_{\bz}$ is the diagonal matrix with ${\rm diag}(\bD_{\bz})=\bz$.}). 
Hence, the eigenvectors of $\Top_d$ (which is self-adjoint by construction) are given by the
spherical harmonics of degree $\ell$ (for the sphere) or the homogeneous polynomials of degree $\ell$ (for the hypercube). The spaces spanned by the low degree spherical harmonics and homogeneous polynomials verify the hypercontractivity condition of Assumption \ref{ass:FMPCP}.$(a)$ (see Appendix \ref{app:hypercontractivity}). 
The corresponding distinct eigenvalues are $\xi_{d,\ell}$, with degeneracy
\begin{align}
  B(\S^{d-1};\ell) = \frac{d-2+2\ell}{d-2}\binom{d-3+\ell}{\ell}\, ,\;\;\;\;\;\;\;\; B(\Cube;\ell) = \binom{d}{\ell}\, .
\end{align}
Notice that in both cases $B(\cA_d;\ell)= (d^{\ell}/\ell!)(1+o_d(1))$ and, hence $\xi_{d,\ell}\lesssim d^{-\ell/2}$
(by construction $\Tr(\Hop_d)$ is bounded uniformly). Indeed, by Assumption \ref{ass:activation_RFK_sphere}.$(a)$,
we have $\xi_{d,\ell}\asymp d^{-\ell/2}$.

As a consequence, if we set $\evn = \sum_{\ell\le \lvn} B(\cA_d;\ell)$,  $\evN = \sum_{\ell\le \lvN} B(\cA_d;\ell)$,
we have $\sum_{k=\ell+1}^{\infty}\lambda_{k,d}^2  = \Theta(1)$ (indeed this sum is $O_d(1)$ because $\Tr(\Hop_d)$
is bounded uniformly, and it is $\Omega_d(1)$ by Assumption \ref{ass:activation_RFK_sphere}.$(b)$). 
Therefore, the conditions  \eqref{eq:EllDef} and \eqref{eq:QDef} (or, more formally, the conditions in Assumption
\ref{ass:spectral_gap}) can be rewritten as
\begin{align}
  &d^{\lvN}\asymp\frac{1}{\xi^2_\lvN}\ll N \ll   \frac{1}{\xi^2_{\lvN+1}}\asymp d^{\lvN+1}\, ,\\
  &d^{\lvn}\asymp\frac{1}{\xi^2_\lvn}\ll n \ll   \frac{1}{\xi^2_{\lvn+1}}\asymp d^{\lvn+1}\, ,
\end{align}
which matches the assumptions in Theorem \ref{prop:RFK_sphere}.

Figure \ref{fig:heatmap}  provides an illustration of Theorem \ref{prop:RFK_sphere}, for the case of the uniform distribution
over the sphere $\cA_d= \S^{d-1}(\sqrt{d})$. We fix $d=50$, and generate data $\{(\bx_i,y_i)\}_{i\le n}$
with no noise $\noise=0$. We use the target function
\begin{align}
  f_d(\bx) = g_d(\<\bv,\bx\>)\, ,\label{eq:TargetSimulations}
\end{align}
where $\bv\in\S^{d-1}(\sqrt{d})$  and $g$ is a fourth-order polynomial:
$g(z) = \frac{2}{\sqrt{5}} \widehat{Q}_{1}(z) +  \frac{2}{\sqrt{5}} \widehat{Q}_{2}(z)  +  \frac{1}{\sqrt{10}} \widehat{Q}_{3}(z) + \frac{1}{\sqrt{10}} \widehat{Q}_{4}(z)$ (here $\widehat{Q}_{\ell}$ is the $\ell$-th Gegenbauer polynomial, normalized so that
$\|\widehat{Q}_\ell(\<\bv,\,\cdot\,\>)\|_{L^2(\S^{d-1}(\sqrt{d}))}=1$). While the precise form of $f_d$
does not really matter here, we note that  $\|\oproj_{1}f_d\|_{L^2}^2= \|\oproj_{2}f_d\|_{L^2}^2= 0.4$, $\|\oproj_{3}f_d\|_{L^2}^2= \|\oproj_{4}f_d\|_{L^2}^2= 0.1$ and $\|\oproj_{>4}f_d\|_{L^2}^2=0$.
We plot the test error of RFRR using $\sigma(x) = \max( x - 0.5, 0)$ (shifted ReLu), and $\lambda=0+$ (min-norm interpolation).
We repeat this calculation for a grid of values of $n,N$, and for each point in the grid report the average risk
over $10$ realizations.

\begin{figure}[t]
\centering
\includegraphics[width=0.95\textwidth]{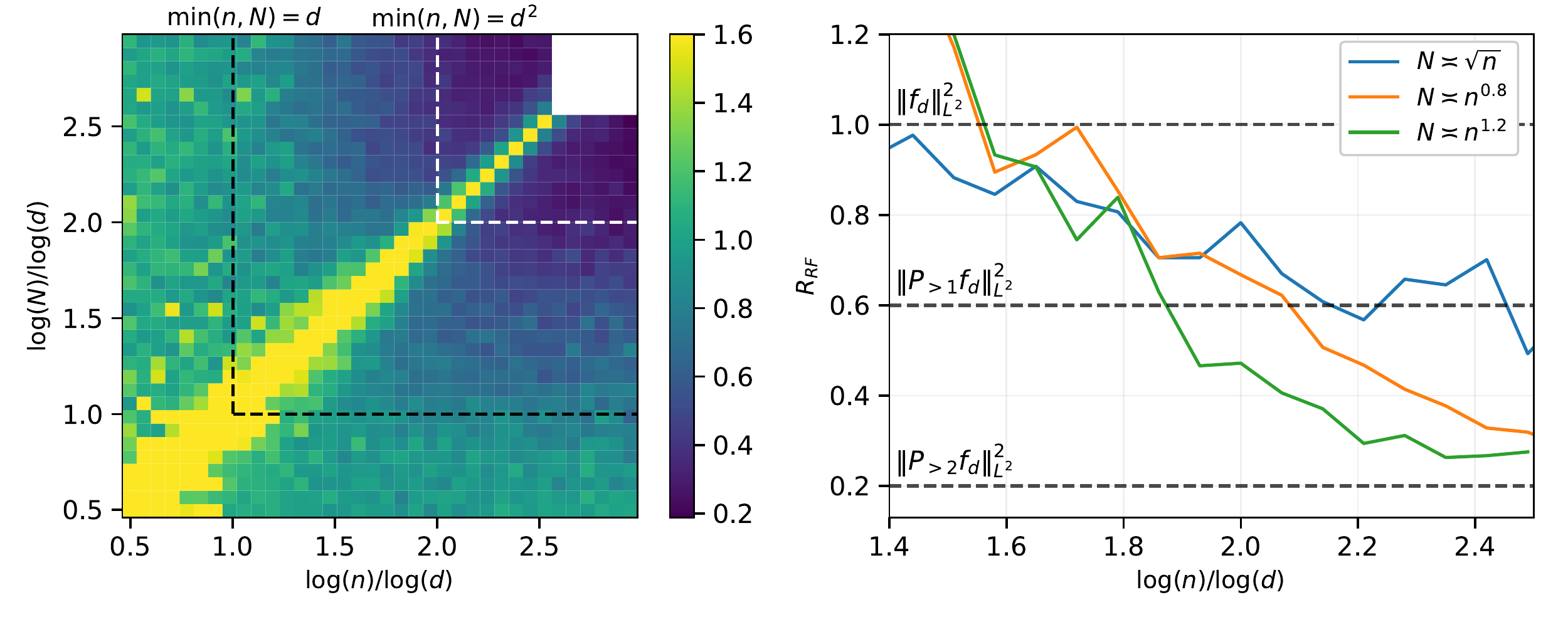} 
\caption{Learning a polynomial $f_d$ (cf. Eq.~\eqref{eq:TargetSimulations}) over the $d$-dimensional sphere, $d=50$, using a random features model and min-norm interpolation. We report the test error averaged over 10 realizations. Left: heatmap of the test error as a  function of the number of neurons $N$ and number of samples $n$. Notice the blow-up at the interpolation threshold $N\approx n$, and the symmetry around this line. Right: decrease of the test error as a function of sample size for scalings of the network size $N=n^{\alpha}$.}  
\label{fig:heatmap} 
\end{figure}

We plot the observed average risk in the sample-size/number-of-parameters plane whose axes are
$\log n/\log d$ and $\log N/\log d$ (corresponding to the exponents in the polynomial relation between $n$ and $d$,
and between $N$ and $d$). Several prominent features of this plot are worth of note:
\begin{itemize}
\item The risk has a large peak for $N\approx n$. This phenomenon was characterized precisely in the proportional regime $N\asymp d$, $n\asymp d$ in \cite{hastie2019surprises,mei2019generalization}.
\item The plot appears completely symmetric under exchange of $N$ and $n$: the number of parameters and sample size
  plays the same role in limiting the generalization abilities, as anticipated by  Theorem  \ref{thm:RFK_generalization} and Theorem \ref{prop:RFK_sphere}.
\item The risk is bounded away from zero even for $N,n\asymp d^3$. Indeed, Theorem \ref{prop:RFK_sphere} implies
  that consistent estimation would require $N,n\gg d^{4}$ in this case.
\item Finally, for a fixed $n$, near optimal test error is achieved when $N\asymp n^{1+\delta_*}$, for $\delta_*$ a small positive constant.
\end{itemize}

\section{Generalization error of kernel machines}
\label{sec:Kernel}

Formally, kernel ridge regression (KRR) corresponds to the limit $N\to\infty$ of random feature ridge regression.
Despite this, we cannot apply directly Theorem \ref{thm:RFK_generalization} with $N=\infty$.
We state therefore a separate theorems for kernel methods. As a side benefit, we establish somewhat
stronger results in this case. In particular:
\begin{itemize}
\item We simplify the set of assumptions (in particular, the assumptions concern only $H_d$ and not the
  activation function $\sigma_d$, as they should).
\item We prove a risk lower bound, Theorem \ref{thm:KR-general-main}, that holds for general kernel methods, not only
  KRR.
\item Crucially, we remove the spectral gap assumption. In this more general setting, the risk of KRR
  is not approximated by the square norm of the projection of $f_d$ orthogonal to the leading eigenfunctions of the kernel. We instead
  obtain an approximation in terms of a population-level ridge regression problem, with an effective value of the regularization parameter, which we determine. 
\end{itemize}

Throughout this section, the setting is the same as in the previous one: we observe  i.i.d. data $(y_i, \bx_i)_{i \in [n]}$,
with feature vectors $\bx_i$ from the probability space $(\cX_d, \nu_d)$.  Responses are given by
$y_i = f_d(\bx_i) +\eps_i$, $f_d \in \cD_d$ and $\eps_i \sim \normal ( 0 ,\noise^2)$ independently of $\bx_i$.

We introduce some general background in Section \ref{sec:KernelBackground}, then state our assumptions
in Section \ref{sec:assumptions_Kernel}, and formally state our results in Sections \ref{sec:lower_bound_Kernel} and \ref{sec:upper_bound_Kernel}.

\subsection{Background on kernel methods}
\label{sec:KernelBackground}

We consider a general RKHS defined on the probability space $(\cX_d, \nu_d)$, via
 $\Hop_d$ a compact self-adjoint positive definite operators: $\Hop_d : \cD_d \to \cD_d$ with kernel representation
\[
\Hop_d g ( \bx_1) = ~ \int_{\cX_d} H_d(\bx, \bx') g(\bx') \nu_d(\de \bx')\, ,
\]
where $H_d : \cX_d \times \cX_d \to \R$ is a square integrable function $H_d \in L^2( \cX_d \times \cX_d)$, with the property that $\int_{\cX_d} H_d(\bx, \bx') g(\bx') \nu_d(\de \bx') = 0 $ for $g \in \cD_d^\perp$.

Given a loss function $\ell:\reals\times\reals\to\reals_{\ge 0}$ a general kernel method
learns the function
\begin{align}
\hf_{\lambda} = \arg\min_{f}\left\{\sum_{i=1}^n\ell(y_i,f(\bx_i)) +\lambda\|f\|_{\cH}^2\right\}\, ,\label{eq:KernelMethods}
\end{align}
where $\|f\|_{\cH}$ is the RKHS norm associated to $H_d$. Kernel ridge regression corresponds to the special
case $\ell(y,\hy) = (y-\hy)^2$. As before, we will evaluate a kernel methods via their test error,
which we denote as follows in the case of KRR
\begin{align}\label{eqn:test_error_KRR}
R_\KR(f_d, \bX, \lambda) :=&~ \E_\bx\Big[ \Big(f_d(\bx) - \hf_{\lambda}(\bx)\Big)^2 \Big]\, .
\end{align}

As mentioned above any kernel method can be seen as the $N\to\infty$ limit of a RF model.
To see this, note any positive semidefinite kernel can be written in the form
$H_d(\bx_1, \bx_2) = \E_{\btheta \sim \tau_d}[\sigma_d(\bx_1, \btheta) \sigma_d(\bx_2, \btheta)]$,
for some activation function $\sigma_d$, and some probability space $(\Omega_d,\tau_d)$.
This is akin to  taking the square root of a matrix and ---as in the finite-dimensional case--- the square root is not unique.
For instance, we can let $\sigma_d$  be the symmetric square root $H_d(\bx_1, \bx_2)$
replacing $\lambda^2_{d,j}$ by $\lambda_{d,j}$ in Eq.~\eqref{eq:Hdecomposition}.

Given a choice of this square root, we can rewrite the estimator \eqref{eq:KernelMethods}
as $\hf_{\lambda}(\bx) = f(\bx;\ha_{\lambda})$, where $\ha_{\lambda}\in L^2(\Omega_d;\nu_d)$ and
\begin{align}
  \ha_{\lambda} &= \arg\min_{a}\left\{\sum_{i=1}^n\ell(y_i,f(\bx_i;a)) +\lambda\| a \|_{L^2}^2\right\}\, ,\\
  &f(\bx;a):= \int\sigma_d(\bx;\btheta)\, a(\btheta)\, \tau_d(\de\btheta)\, .
\end{align}
This can be informally seen as the $N\to\infty$ limit of Eq.~\eqref{eqn:hbiota_RFRR} if we choose the square loss function.

\subsection{Assumptions on the kernel}
\label{sec:assumptions_Kernel}

As for the case of RFRR, we collect our assumptions in two groups. The first one is mainly concerned with the
concentration properties of the kernel, which are quantified in terms of the sequences of integers $n(d)$, $\evn(d)$.
\begin{assumption}[$\{n(d),\evn(d)\}_{d \ge 1}$-Kernel Concentration Property]\label{ass:KRR} We say that the sequence of
  operators $\{ \Hop_d \}_{d\ge 1}$ satisfies the \textit{Kernel Concentration Property (KCP)} with respect to the sequence
  $\{ (n(d), \evn(d)) \}_{d \ge 1}$ if there exists a sequence of integers $\{u(d)\}_{d\ge 1}$ with $u(d) \ge \evn(d)$ such that
  the following conditions hold.
\begin{itemize}
\item[(a)] (Hypercontractivity of finite eigenspaces.) For any fixed $q \ge 1$, there exists a constant
  $C$ such that, for any $h\in \cD_{d, \le u(d)}= \spn ( \psi_s, 1 \leq s \leq u(d) )$, we have
\begin{align}\label{eqn:hypercontractivity_in_KRR}
\| h\|_{L^{2q}} \le C \cdot \| h\|_{L^2}.  
\end{align}
\item[(b)] (Properly decaying eigenvalues.) There exists fixed $\delta_0 > 0$, such that,  for all $d$ large enough,
\begin{align}
  n(d)^{2 + \delta_0} \le&~   \frac{\Big(\sum_{j=u(d)+1}^{\infty}\lambda_{d,j}^4\Big)^2}
                                                           {\sum_{j=u(d)+1}^{\infty}\lambda_{d,j}^8}, \label{eq:ass_kernel_b1} \\
  n(d)^{2 + \delta_0} \le&~    \frac{\Big(\sum_{j=u(d)+1}^{\infty}\lambda_{d,j}^2\Big)^2}
                                                           {\sum_{j=u(d)+1}^{\infty}\lambda_{d,j}^4}\, . \label{eq:ass_kernel_b2}
\end{align}

\item[(c)](Concentration of diagonal elements of kernel)
 For $(\bx_i)_{i \in [n(d)]} \sim_{iid} \nu_d$, we have: 
\begin{align}
  \max_{i \in [n(d)]} \Big\vert \E_{\bx \sim \nu_d}\big[H_{d, > \evn(d)}(\bx_i, \bx)^2 \big] - \E_{\bx, \bx' \sim \nu_d}\big[H_{d, > \evn(d)}(\bx, \bx')^2\big] \Big\vert = &o_{d, \P}(1) \cdot \E_{\bx, \bx' \sim \nu_d}\big[ H_{d, > \evn(d)}(\bx, \bx')^2\big], \label{eq:ass_kernel_e1}\\
  \max_{i \in [n(d)]} \Big\vert H_{d, > \evn(d)}(\bx_i, \bx_i) - \E_{\bx}[H_{d, > \evn(d)}(\bx, \bx)] \Big\vert =&~ o_{d, \P}(1) \cdot \E_{\bx}[H_{d, > \evn(d)}(\bx, \bx)]. \label{eq:ass_kernel_e2}
\end{align}
\end{itemize}
\end{assumption}
In the last definition, assumptions $(a)$ and $(c)$ have an interpretation that is similar to the one for RFRR.
Namely, assumption $(a)$ requires that the top eigenvectors of $\Hop_d$ are delocalized, and assumption
$(c)$ requires that `most points' in the sample space $\cX_d$ behave similarly, in the sense
of having similar values of the kernel diagonal $H_d(\bx,\bx)$. Condition $(b)$ is very mild in
high dimension, and concerns the tail of eigenvalues of $\Hop_d$.

The next condition essentially connects the sample size $n(d)$ to the eigenvalue index $\evn(d)$,
via the eigenvalues sequence.
\begin{assumption}[Eigenvalue condition at level $\{(n(d),\evn(d))\}_{d \ge 1}$]\label{ass:eig_decay} We say that the sequence of Kernel operators $\{ \Hop_d \}_{d\ge 1}$ satisfies the \textit{Eigenvalue Condition} at level $\{ (n(d), \evn(d)) \}_{d \ge 1}$ if the following conditions hold
  for all $d$ large enough.
 \begin{itemize}
\item[(a)] There exists fixed $\delta_0 > 0$, such that 
\begin{align}
n(d)^{1+\delta_0} \le&~  \frac{1}{\lambda_{d,\evn(d)+1}^4}
                     \sum_{k=\evn(d)+1}^{\infty}\lambda_{d,k}^4, \label{ass:n_parameters_KRR_lower_1}\\
  n(d)^{1+\delta_0} \le&~   \frac{1}{\lambda_{d,\evn(d)+1}^2}
\sum_{k=\evn(d)+1}^{\infty}\lambda_{d,k}^2.  \label{ass:n_parameters_KRR_lower_2}
\end{align}
\item[(b)] There exists fixed $\delta_0 > 0$, such that 
\[
\evn(d) \le n(d)^{1 - \delta_0}.
\]
\end{itemize}
\end{assumption}
Unlike in the case of RFRR, we do not require the existence of a spectral gap, but we assume two different upper bounds
$n(d)$ to hold simultaneously. In many cases of interest, the right hand
sides of \eqref{ass:n_parameters_KRR_lower_1} and \eqref{ass:n_parameters_KRR_lower_2} have roughly the same value,
which is given by the number of eigenvalues between $\lambda_{d,\evn(d)+1}$  and
$c_0\lambda_{d,\evn(d)+1}$ for a small $c_0$ (counting degeneracy).
The technical requirement $(b)$ is mild and we do not know of any interesting counterexample.
 
\subsection{Lower bound for general kernel methods}
\label{sec:lower_bound_Kernel}

Consider any regression method of the form \eqref{eq:KernelMethods}.
By the representer theorem, there exist coefficients
$\hat \zeta_1,\dots, \hat \zeta_n$ such that 
\begin{align}
\hf_{\lambda}(\bx) = \sum_{i=1}^n \hat \zeta_i \, H_d(\bx, \bx_i)\, .  \label{eq:KRRf}
\end{align}
We are therefore led to define the following data-dependent prediction risk function for kernel methods
\begin{align}\label{eqn:test_error_Kernel_Method}
R_{H}(f_d,\bX):= \min_{\bzeta} \E_\bx\Big\{ \Big(f_d(\bx) - \sum_{i=1}^n \zeta_i  H_d(\bx_i, \bx) \Big)^2 \Big\}.
\end{align}
This is a lower bound on the prediction error of any kernel methods of the form \eqref{eq:KernelMethods}.

The next theorem provides a lower bound on the generalization of kernel methods that is a consequence of the approximation bound in Theorem \ref{thm:RF_lower_upper_bound}.$(a)$ derived for the random features model, in Appendix \ref{sec:approximation_error_RFK}.

\begin{theorem}\label{thm:KR-general-main}
  Let $\{ f_d \in \cD_d \}_{d \ge 1}$ be a sequence of functions, $(\bx_i)_{i \in [n(d)]} \sim \nu_d$ independently, $\{ \Hop_d \}_{d\ge 1}$ be a sequence of kernel operators such that $\{ ( \Hop_d , n(d),\evn(d) )\}_{d \ge 1}$ satisfies Eqs.~\eqref{eqn:hypercontractivity_in_KRR},
  \eqref{eq:ass_kernel_b1}, \eqref{eq:ass_kernel_e1}, and \eqref{ass:n_parameters_KRR_lower_1}. Then we have (cf. Eq. \eqref{eqn:test_error_Kernel_Method})
\begin{align}
\Big \vert R_{H}(f_{d}, \bX) - R_{H}(\proj_{\le\evn(d)} f_d, \bX) - \| \proj_{>\evn(d)} f_d \|_{L^2}^2 \Big \vert &\le o_{d, \P}(1) \cdot \| f_d \|_{L^2} \| \proj_{>\evn(d)} f_d \|_{L^2}.
\label{eq:KRR_lower_bound}
\end{align}
\end{theorem}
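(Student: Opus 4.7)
The strategy is to reinterpret $R_H(f_d,\bX)$ as the \emph{approximation error} of a suitable random features model and then invoke the RF approximation bound of Theorem~\ref{thm:RF_lower_upper_bound}.$(a)$ from Appendix~\ref{sec:approximation_error_RFK}. Concretely, I would take the feature parameter space to be $(\Omega_d,\tau_d):=(\cX_d,\nu_d)$ and choose the activation function $\widetilde{\sigma}_d(\bx,\btheta) := H_d(\btheta,\bx)$. With this identification the RF parameters are $\bTheta=(\bx_i)_{i\le n}$ themselves, and, after absorbing the $1/n$ factor into the coefficients, the random-features approximation error coincides exactly with $R_H(f_d,\bX)$:
\begin{equation*}
R_H(f_d,\bX)\;=\;\min_{\ba}\E_{\bx}\Bigl[\bigl(f_d(\bx)-n^{-1}\textstyle\sum_{i=1}^n a_i\,\widetilde{\sigma}_d(\bx,\bx_i)\bigr)^2\Bigr].
\end{equation*}

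Next, I would match the spectral data. Writing $H_d(\bx_1,\bx_2)=\sum_j \lambda_{d,j}^2\psi_j(\bx_1)\psi_j(\bx_2)$, the integral operator associated with $\widetilde{\sigma}_d$ has singular values $\{\lambda_{d,j}^2\}_j$, and its two companion operators both coincide with an operator having eigenvalues $\{\lambda_{d,j}^4\}_j$ and shared eigenbasis $\{\psi_j\}_j$. In particular the "parameter-space" kernel restricted to the tail $j>\evn$ has diagonal
\begin{equation*}
\sum_{j>\evn}\lambda_{d,j}^4\,\psi_j(\bx)^2\;=\;\E_{\bx'\sim\nu_d}\bigl[H_{d,>\evn}(\bx,\bx')^2\bigr]\,,
\end{equation*}
so the concentration hypothesis~\eqref{eq:ass_kernel_e1} is exactly the diagonal-concentration requirement on the "parameter-space" kernel needed by the RF theorem. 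Similarly, the hypercontractivity~\eqref{eqn:hypercontractivity_in_KRR} on $\cD_{d,\le u(d)}=\spn(\psi_j\,:\,j\le u(d))$ simultaneously controls hypercontractivity on both top eigenspaces required by Theorem~\ref{thm:RF_lower_upper_bound}.$(a)$ (they share the $\psi_j$-eigenbasis), while the tail conditions~\eqref{eq:ass_kernel_b1} and~\eqref{ass:n_parameters_KRR_lower_1} are precisely the RF conditions one obtains by substituting $\lambda_{d,j}^2\mapsto\lambda_{d,j}^4$ and swapping the roles $(N,\evN)\leftrightarrow(n,\evn)$.

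Applying Theorem~\ref{thm:RF_lower_upper_bound}.$(a)$ to this reformulated problem then yields~\eqref{eq:KRR_lower_bound} directly. The main technical obstacle is the bookkeeping described above: one must carefully verify that every hypothesis required by Theorem~\ref{thm:RF_lower_upper_bound}.$(a)$ is covered by one of the four assumptions~\eqref{eqn:hypercontractivity_in_KRR}, \eqref{eq:ass_kernel_b1}, \eqref{eq:ass_kernel_e1}, \eqref{ass:n_parameters_KRR_lower_1} after the eigenvalue substitution, and that no further integrability condition on the "activation" $\widetilde{\sigma}_d=H_d$ is needed beyond what is already built into the spectral decomposition of $\Hop_d$. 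Notably, one should check that Theorem~\ref{thm:RF_lower_upper_bound}.$(a)$ does not require a spectral-gap hypothesis of the type in Assumption~\ref{ass:spectral_gap}, since Theorem~\ref{thm:KR-general-main} makes no such assumption; this is consistent with the spectral gap being needed only when converting an approximation bound into a clean projection form, not for the approximation bound itself.
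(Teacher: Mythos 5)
Your proposal is correct and is essentially the paper's own proof: the paper also obtains Theorem~\ref{thm:KR-general-main} by setting $\sigma_d(\bx,\bx')=H_d(\bx,\bx')$, noting $R_H(f_d,\bX)=R_{\App}(f_d,\bX)$, and invoking Theorem~\ref{thm:RF_lower_upper_bound}.$(a)$ (which indeed needs no spectral-gap assumption). Your bookkeeping of how \eqref{eqn:hypercontractivity_in_KRR}, \eqref{eq:ass_kernel_b1}, \eqref{eq:ass_kernel_e1}, and \eqref{ass:n_parameters_KRR_lower_1} map onto Assumptions~\ref{ass:hyper_K_stronger}.$(a)$, $(b)$, $(d)$ and \ref{ass:N_parameters}.$(a)$ under the substitution $\lambda_{d,j}^2\mapsto\lambda_{d,j}^4$ is exactly the intended verification.
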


\begin{proof}
  This follows immediately from Theorem \ref{thm:RF_lower_upper_bound} $(a)$ stated in Appendix \ref{sec:approximation_error_RFK}. Indeed, setting $\sigma_d(\bx, \bx') = H_d(\bx, \bx')$, we obtain $R_{H}(f_d, \bX) =R_{\RF}(f_d, \bX)$, whence the claim follows by applying Eq.~(\ref{eq:RC_lower_bound}).
\end{proof}

Notice that $R_{H}(\proj_{\le\evn(d)} f_d, \bX) \ge 0$ by construction and therefore this theorem immediately implies a lower bound of the test error of kernel ridge regression (cf. Eq. \eqref{eqn:test_error_KRR})
\begin{align}
  R_{\KR}(f_d;\bX,\lambda) \ge R_{H}(f_{d}, \bX)\ge  \| \proj_{>\evn(d)} f_d \|_{L^2}^2-o_{d, \P}(1) \cdot \| f_d \|_{L^2} \| \proj_{>\evn(d)} f_d \|_{L^2}.
  \end{align}
  In words, if we neglect the error term $o_{d, \P}(1) \cdot \| f_d \|_{L^2} \| \proj_{>\evn(d)} f_d \|_{L^2}$,
  no kernel method can achieve non-trivial accuracy on the projection of $f_d$ onto eigenvectors beyond the first
  $\evn(d)$. 
  
\subsection{The risk of kernel ridge regression}
\label{sec:upper_bound_Kernel}

Kernel ridge regression is one specific way of selecting the coefficients $\hbzeta$ in Eq.~\eqref{eq:KRRf}, namely
by using $\ell(\hy,y) = (\hy-y)^2$ in Eq.~\eqref{eq:KernelMethods}. Solving for the coefficients yields
\[
\hbzeta = (\bH + \lambda \id_n)^{-1} \by,
\]
where the kernel matrix $\bH = (H_{ij})_{ij \in [n]}$ is given by
$H_{ij} =  H_d(\bx_i, \bx_j)$,
and $\by = (y_1, \ldots, y_n)^\sT$. 

It is convenient to state our main results in terms of an effective ridge regression estimator 
\begin{align}
\hf^{\seff}_{\gamma} = \arg\min_{f}\Big\{\|f_d-f\|_{L^2}^2 +\frac{\gamma}{n} \|f\|_{\cH}^2\Big\}\, ,\label{eq:KernelPop}
\end{align}
This amounts to replacing the empirical risk  in Eq.~\eqref{eq:KernelMethods} by its population counterpart
$\|f_d-f\|_{L^2}^2=\E\{(f_d(\bx)-f(\bx))^2\}$. Also note that the regularization
  parameter does not coincide with $\lambda$: its precise value will be specified below.

  The solution of the population ridge problem \eqref{eq:KernelPop} can be explicitly written in terms of a
  shrinkage operator in the basis of eigenfunctions of $\Hop_d$:
  \begin{align}\label{eq:sol_population_KRR}
    f(\bx) = \sum_{\ell=1}^{\infty}c_\ell \psi_{d,\ell}(\bx)\;\;\mapsto \;\;
    \hf^{\seff}_{\gamma}(\bx) = \sum_{\ell=1}^{\infty}\frac{\lambda_{d,\ell}^2}{\lambda_{d,\ell}^2+\frac{\gamma}{n}}\, c_\ell
    \psi_{d,\ell}(\bx)\, .
  \end{align}

  \begin{theorem}\label{thm:upper_bound_KRR}
    Let $\{ f_d \in \cD_d \}_{d \ge 1}$ be a sequence of functions, $(\bx_i)_{i \in [n(d)]} \sim \nu_d$ independently, $\{ \Hop_d \}_{d\ge 1}$ be a sequence of kernel operators such that $\{ ( \Hop_d , n(d),\evn(d) )\}_{d \ge 1}$ satisfies
    $\{n(d),\evn(d) \}_{d \ge 1}$-KPCP (Assumption \ref{ass:KRR}) and eigenvalue condition at level $\{n(d),\evn(d) \}_{d \ge 1}$ (Assumption \ref{ass:eig_decay}).
Define the effective regularization
    \begin{align}
      \gamma^{\seff}:=\lambda + \Trace ( \Hop_{d, >\evn(d)})\, .
    \end{align}
      Then, for any regularization parameter $\lambda\in [0, \lambda_\star]$ where $\lambda_\star = \Trace(\Hop_{d, > \evn(d)})$,
    any $\eta> 0$, we have (cf. Eq. (\ref{eqn:test_error_KRR}))
\begin{align}
  \Big\vert R_{\KR}(f_d, \bX, \lambda) -\|f_d-\hf_{\gamma^{\seff}}^{\seff}\|_{L^2} \Big\vert = o_{d, \P}(1) \cdot ( \| f_d \|_{L^2}^2 + \| \proj_{>\evn} f_d \|_{L^{2 + \eta} }^2 +
  \noise^2). 
\end{align}
Further, the ridge regression estimator $\hf_{\lambda}$ is close to the effective estimator
$\hf_{\gamma^{\seff}}^{\seff}$, namely
\begin{align}
  \big\|\hf_{\lambda} - \hf_{\gamma^{\seff}}^{\seff}\big\|_{L^2}^2 = o_{d, \P}(1) \cdot ( \| f_d \|_{L^2}^2 + \| \proj_{>\evn} f_d \|_{L^{2 + \eta} }^2 +
  \noise^2). 
  \end{align}
\end{theorem}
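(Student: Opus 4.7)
The plan is to prove both claims through a bias--variance decomposition combined with a spectral approximation of the empirical kernel matrix $\bH = (H_d(\bx_i,\bx_j))_{i,j\in[n]}$. Write $\by = \boldf_d + \beps$ with $\boldf_d := (f_d(\bx_i))_{i\le n}$, and split
\[
\hf_\lambda(\bx) = \underbrace{\bh(\bx)^\sT (\bH + \lambda\id_n)^{-1} \boldf_d}_{=:\,\hf^{\mathrm{b}}_\lambda(\bx)} + \underbrace{\bh(\bx)^\sT (\bH + \lambda\id_n)^{-1}\beps}_{=:\,\hf^{\mathrm{v}}_\lambda(\bx)},
\]
where $\bh(\bx) := (H_d(\bx,\bx_i))_{i\le n}$. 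It suffices to show $\|\hf^{\mathrm{b}}_\lambda - \hf^{\seff}_{\gamma^{\seff}}\|_{L^2}^2 = o_{d,\P}(1) \cdot (\|f_d\|_{L^2}^2 + \|\proj_{>\evn}f_d\|_{L^{2+\eta}}^2)$ and $\|\hf^{\mathrm{v}}_\lambda\|_{L^2}^2 = o_{d,\P}(1)\cdot \noise^2$; the first claim of the theorem then follows by expanding $R_{\KR} = \|f_d-\hf_\lambda\|_{L^2}^2$ and applying Cauchy--Schwarz to the cross term.

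The central technical step is the spectral approximation
\[
\bH = \bPsi \bLambda^2 \bPsi^\sT + \Trace(\Hop_{d,>\evn})\,\id_n + \bDelta, \qquad \|\bDelta\|_{\op} = o_{d,\P}\big(\Trace(\Hop_{d,>\evn})\big),
\]
where $\bPsi \in \R^{n\times \evn}$ has entries $\Psi_{ij} = \psi_{d,j}(\bx_i)$ and $\bLambda = \diag(\lambda_{d,1},\ldots,\lambda_{d,\evn})$. The diagonal of $\bH_{>\evn}$ concentrates at $\Trace(\Hop_{d,>\evn})$ directly by Eq.~\eqref{eq:ass_kernel_e2}. The off-diagonal entries of $\bH_{>\evn}$ have mean zero and second moment $\sum_{j>\evn}\lambda_{d,j}^4$, so a high-moment trace bound yields $\|\bH_{>\evn}-\diag(\bH_{>\evn})\|_{\op} = O_{d,\P}\big(\sqrt{n\sum_{j>\evn}\lambda_{d,j}^4}\,\big)$, which is $o_{d,\P}\big(\Trace(\Hop_{d,>\evn})\big)$ by Eq.~\eqref{eq:ass_kernel_b2}. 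In parallel, hypercontractivity \eqref{eqn:hypercontractivity_in_KRR} together with $\evn \le n^{1-\delta_0}$ gives $\bPsi^\sT\bPsi/n = \id_\evn + o_{d,\P}(1)$ in operator norm by standard matrix concentration.

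Given the approximation, the Woodbury identity produces
\[
(\bH + \lambda\id_n)^{-1} \approx (\gamma^{\seff})^{-1}\Big[\id_n - \bPsi\big(\gamma^{\seff}\bLambda^{-2} + \bPsi^\sT\bPsi\big)^{-1}\bPsi^\sT\Big].
\]
Expanding $\bh(\bx) = \bPsi\bLambda^2\bpsi(\bx) + \bh_{>\evn}(\bx)$ with $\bpsi(\bx) := (\psi_{d,j}(\bx))_{j\le\evn}$, and $\boldf_d = \bPsi\bc + \boldf_{d,>\evn}$ with $\bc_j := \<f_d,\psi_{d,j}\>_{L^2}$, substituting $\bPsi^\sT\bPsi \approx n\,\id_\evn$ and collecting leading terms, $\hf^{\mathrm{b}}_\lambda(\bx)$ matches $\sum_{\ell\le \evn}[\lambda_{d,\ell}^2/(\lambda_{d,\ell}^2 + \gamma^{\seff}/n)]\bc_{\ell}\psi_{d,\ell}(\bx) = \proj_{\le\evn}\hf^{\seff}_{\gamma^{\seff}}(\bx)$. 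The complementary mass of $\hf^{\seff}_{\gamma^{\seff}}$ on high-degree eigenfunctions has $L^2$ norm $O\big(\sum_{\ell>\evn}(\lambda_{d,\ell}^2 n/\gamma^{\seff})^2 c_\ell^2\big)$, negligible by Eq.~\eqref{ass:n_parameters_KRR_lower_2}. The residual interaction between $\bh_{>\evn}(\bx)$ and $\boldf_{d,>\evn}$ through the $(\gamma^{\seff})^{-1}\id_n$ piece is controlled via hypercontractivity on $\bh_{>\evn}$ and the $L^{2+\eta}$ bound on $\proj_{>\evn}f_d$. For the variance, $\E_\beps\|\hf^{\mathrm{v}}_\lambda\|_{L^2}^2 = \noise^2\Tr((\bH+\lambda\id_n)^{-2}\bK)$ with $K_{ij} = \E_\bx[H_d(\bx_i,\bx)H_d(\bx,\bx_j)]$; the same kernel approximation reduces this to $\noise^2\cdot o_d(1)$ via Eq.~\eqref{eq:ass_kernel_b1}.

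The main obstacle is the operator-norm concentration of $\bH_{>\evn}$. Because each $\bx_i$ appears in many entries, the entries share common random factors, and standard matrix Bernstein inequalities do not apply directly. The natural tool is a high-moment trace method, bounding $\E\Tr[(\bH_{>\evn}-\diag(\bH_{>\evn}))^{2k}]$ combinatorially over closed walks and using hypercontractivity on the $u(d)$-dimensional top eigenspace of $\Hop_d$ together with the eigenvalue tail bounds \eqref{eq:ass_kernel_b1}--\eqref{eq:ass_kernel_b2} to argue that contributions from indices beyond $u(d)$ remain negligible. The threshold $u(d)$ is precisely chosen large enough for hypercontractivity to cover the non-concentrating part of the kernel, while the eigenvalue decay past $u(d)$ supplies the margin for the moment bound to yield $o_{d,\P}(\Trace(\Hop_{d,>\evn}))$ operator-norm control.
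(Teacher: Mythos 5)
Your overall architecture coincides with the paper's: the spiked approximation $\bH \approx \bPsi_{\le\evn}\bD_{\le\evn}^2\bPsi_{\le\evn}^\sT + \kappa_H\,\id_n$, the Woodbury step producing the shrinkage with effective ridge $\gamma^{\seff}/n$, the concentration $\bPsi_{\le\evn}^\sT\bPsi_{\le\evn}/n\approx\id_\evn$, the use of the second-moment matrix $\bM$ (your $\bK$) for the noise term, and hypercontractivity plus the $L^{2+\eta}$ norm of $\proj_{>\evn}f_d$ for the high-degree cross terms; your bias/variance split is just a reorganization of the paper's $T_1,\dots,T_5$ bookkeeping and also delivers the second claim directly. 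The genuine gap is that the crux of the whole argument --- operator-norm control of the off-diagonal part of $\bH_{>\evn}$ (and, equally necessary, of $\bM_{>\evn}$) --- is asserted rather than proved. You write that ``a high-moment trace bound yields'' $\|\bH_{>\evn}-\ddiag(\bH_{>\evn})\|_{\op}=O_{d,\P}(\sqrt{n\sum_{j>\evn}\lambda_{d,j}^4})$ and defer the combinatorics of closed walks to a closing remark. But carrying out a walk count here requires bounds on mixed moments of the eigenfunctions $\psi_j$ that the abstract hypercontractivity hypothesis \eqref{eqn:hypercontractivity_in_KRR} does not hand you directly (it only controls functions in the top-$u(d)$ eigenspace, not individual products across many distinct sample points), and this is precisely why the paper abandons the moment method of its precursor and instead proves the statement (Theorem \ref{prop:expression_U}, via Proposition \ref{prop:generalized_Gram}) by decoupling (Lemma \ref{lem:matrix_decoupling}) followed by conditional matrix-concentration for independent rows, applied to the kernel restricted to the top $u(d)$ eigenfunctions, with the tail beyond $u(d)$ killed by a plain Frobenius bound (Lemma \ref{lem:generalized_Gram_higher}) using \eqref{eq:ass_kernel_b2}. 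Until the trace-method step is actually executed under these assumptions, your proof is not complete.

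Two further points. First, your citation of \eqref{eq:ass_kernel_b2} for the bound $\sqrt{n\sum_{j>\evn}\lambda_{d,j}^4}=o_{d,\P}(\kappa_H)$ is a misattribution: \eqref{eq:ass_kernel_b2} concerns only the tail $j>u(d)$, and for the intermediate range $\evn<j\le u(d)$ one must instead use $\sum_{j>\evn}\lambda_{d,j}^4\le\lambda_{d,\evn+1}^2\,\kappa_H$ together with the eigenvalue condition \eqref{ass:n_parameters_KRR_lower_2}; in any case the decoupling route only yields $o_{d,\P}(1)\cdot\kappa_H$ (not your specific rate), which is all the theorem needs. Second, your variance and $T_2$-type bounds require the same structural decomposition for $\bM$, i.e.\ $\bM=\bPsi_{\le\evn}\bD_{\le\evn}^4\bPsi_{\le\evn}^\sT+\kappa_M(\id_n+\bDelta_M)$ with $\|\bDelta_M\|_{\op}=o_{d,\P}(1)$; this is where \eqref{eq:ass_kernel_e1} (concentration of the diagonal $\E_\bx[H_{d,>\evn}(\bx_i,\bx)^2]$) and \eqref{eq:ass_kernel_b1} enter, and your proposal neither proves nor invokes the diagonal condition for $\bM$, so that step too is currently unsupported.
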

The proof of Theorem \ref{thm:upper_bound_KRR} is deferred to Appendix \ref{sec:proof_KR}. 

In words, KRR behaves as ridge regression with respect to the population risk,
except that the regularization parameter is increased by $\Trace(\Hop_{d, > \evn})$.
The underlying mechanism is quite simple. The empirical kernel matrix is decomposed as $\bH = \bH_{\le \evn}+
\bH_{> \evn}$, and the second component can be approximated by a multiple of the identity:
$\bH_{> \evn}\approx \Trace ( \Hop_{d, >\evn})\cdot \id_n$. This term acts as an additional ridge regularizer.

As mentioned above, we do not assume here any eigenvalue gap condition. However,
formulas simplify if we assume an eigenvalue gap, e.g.:
\[
  n(d) = \omega_d(1) \cdot \frac{1}{\lambda_{d,\evn(d)+1}^2}
  \sum_{k=\evn(d)+1}^{\infty} \lambda^2_{d,k}\, .
\]
Under this additional assumption, Theorem \ref{thm:upper_bound_KRR}
implies the following simplified formula for the test error:
\[
\Big\vert R_{\KR}(f_d, \bX, \lambda)  - \| \proj_{> \evn(d)} f_d \|_{L^2}^2 \Big\vert = o_{d, \P}(1) \cdot ( \| f_d \|_{L^{2+ \eta}}^2 + \noise^2). 
\]
As anticipated, this coincides with the risk of RFRR, if we heuristically set $N=\infty$ in
Theorem \ref{thm:RFK_generalization}.

\section*{Acknowledgnements}

This work was supported by NSF through award DMS-2031883 and from the Simons Foundation through Award 814639 for the
Collaboration on the Theoretical Foundations of Deep Learning
We also acknowledge  NSF grants CCF-2006489, IIS-1741162 and the ONR
grant N00014-18-1-2729.

\bibliographystyle{amsalpha}

\newcommand{\etalchar}[1]{$^{#1}$}
\providecommand{\bysame}{\leavevmode\hbox to3em{\hrulefill}\thinspace}
\providecommand{\MR}{\relax\ifhmode\unskip\space\fi MR }
\providecommand{\MRhref}[2]{%
  \href{http://www.ams.org/mathscinet-getitem?mr=#1}{#2}
}
\providecommand{\href}[2]{#2}

\clearpage

\appendix

\section{Approximation error of random features model}
\label{sec:approximation_error_RFK}

In this section, we consider the approximation error of the random features function class. Formally, the approximation error can be seen as the generalization error of random features ridge regression for finite number of neurons $N < \infty$ and infinite data $n = \infty$. However, we cannot apply directly Theorem \ref{thm:RFK_generalization} with $n = \infty$. We therefore state a separate theorem. This is also used to prove the lower bound of Theorem \ref{thm:KR-general-main} on the generalization error of general kernel methods.

In Section \ref{sec:theorem_approximation}, we state our assumptions and theorem. Sections \ref{sec:RF_approx_lower} and \ref{sec:RF_approx_upper} provide a proof of the theorem, while Section \ref{sec:concentration_approx} gathers key technical concentration results that will also be used in the proofs of Theorem \ref{thm:RFK_generalization} and Theorem \ref{thm:upper_bound_KRR}.

\subsection{Assumptions and theorem}
\label{sec:theorem_approximation}

Recall the definition of the random features function class (see Section \ref{sec:RFSetting}): let $\bTheta = (\btheta_i)_{i \in [N]} \sim_{iid} \tau_d$,
\[
\cF_{\RF, N}(\bTheta) = \Big\{ \hf(\bx;\ba) = \sum_{i=1}^N a_i \sigma_d(\bx; \btheta_i): a_i \in \R, i \in [N] \Big\}.
\]
We define the approximation error of the random features function class for a target function $f_d \in L^2(\cX_d)$ as
\begin{equation}\label{eq:def_app_error}
R_{\App}(f_d, \bTheta) : = \inf_{\hat f \in \cF_{\RF, N}(\bTheta)} \E_{\bx \sim \tau_d}[(f_d(\bx) - \hat f(\bx))^2]. 
\end{equation}

Similarly to Sections \ref{sec:RF_Assumptions} and \ref{sec:assumptions_Kernel}, we will quantify our assumptions on the sequences of probability spaces $(\cX_d,\nu_d)$ and $(\Omega_d , \tau_d)$, and on the activation functions $\sigma_d \in L^2 ( \cX_d \times \Omega_d)$, in terms  of the sequences of integers $N(d) , \evN(d)$. We state the assumptions in two groups: Assumption \ref{ass:hyper_K_stronger} and Assumption \ref{ass:N_parameters} deal respectively with the concentration properties and the spectrum of the sequence of feature kernel operator $\{ U_d \}_{d\ge 1}$ defined as
\[
U_d ( \btheta_1 , \btheta_2 ) = \E_{\bx\sim \nu_d} [ \sigma_d ( \bx ; \btheta_1) \sigma_d ( \bx ; \btheta_2 ) ].
\]

\begin{assumption}[Feature kernel concentration at level $\{(N(d) , \evN (d)) \}_{d \ge 1}$]\label{ass:hyper_K_stronger}
The sequences of spaces $\{ \cV_d \}_{d\ge 1}$, operators $\{ \Uop_d \}_{d \ge 1}$ and numbers of neurons $\{ N(d) \}_{d \ge 1}$ satisfy feature kernel concentration at level $\{\evN (d) \}_{d \ge 1}$ if there exists a sequence $\{ u(d) \}_{d \ge 1}$ with $u(d) \ge \evN(d)$, such that the following hold.
\begin{itemize}
\item[(a)] (Hypercontractivity of finite eigenspaces.) For any fixed $q \ge 1$, there exists $C$ such that, for any $g \in \cV_{d, \le u(d)} = \spn(\phi_s, 1 \le s \le u(d))$, we have
\[
\begin{aligned}
\| g \|_{L^{2q} ( \Omega_d) } \le&~ C \cdot \| g \|_{L^2 ( \Omega_d) }. \\
\end{aligned}
\]
\item[(b)] (Properly decaying eigenvalues.) There exists a fixed $\delta_0 > 0$, such that 
\[
  N(d)^{2 + \delta_0} \le \frac{\Big(\sum_{j=u(d)+1}^{\infty} \lambda_{d,j}^2\Big)^2}{\sum_{j=u(d)+1}^{\infty} \lambda_{d,j}^4}\, .
\]
\item[(c)] (Upper bound on the diagonal elements of the kernel) For $(\btheta_i)_{i \in [N(d)]} \sim_{iid} \tau_d$ and any $\delta >0$, we have
\[
\begin{aligned}
\max_{i \in [N(d)]} U_{d, > \evN (d)}(\btheta_i, \btheta_i) =&~ O_{d, \P}(N(d)^\delta) \cdot \E_{\btheta}[U_{d, > \evN (d)}(\btheta, \btheta)]. \\
\end{aligned}
\]
\item[(d)] (Lower bound on the diagonal elements of the kernel) For $(\btheta_i)_{i \in [N(d)]} \sim_{iid} \tau_d$ and any $\delta >0$, we have
\[
\begin{aligned}
\min_{i \in [N(d)]} U_{d, > \evN (d)}(\btheta_i, \btheta_i) =&~ \Omega_{d, \P}(N(d)^{-\delta}) \cdot \E_{\btheta}[U_{d, > \evN (d)}(\btheta, \btheta)]. \\
\end{aligned}
\]
\end{itemize}
\end{assumption}

\begin{assumption}[Spectral gap at level $\{ (N(d) ,\evN(d)) \}_{d \ge 1}$]\label{ass:N_parameters}
The sequence of operators $\{ \Uop_d \}_{d \ge 1}$ has a spectral gap at level $\{ (N(d), \evN(d)) \}_{d \ge 1}$ if the following hold.
\begin{itemize}
\item[(a)] There exists a fixed $\delta_0 > 0$, such that 
\begin{align*}
N(d)^{1+\delta_0} \le \frac{1}{\lambda_{d,\evN(d)+1}^2}
  \sum_{j=\evN(d)+1}^{\infty}\lambda_{j,d}^2\,  .
\end{align*}
\item[(b)] There exists a fixed $\delta_0 > 0$, such that $\evN (d) \le N(d)^{1 - \delta_0}$ and
\[
N(d)^{1 - \delta_0} \ge \frac{1}{\lambda_{d,\evN(d)}^2}
  \sum_{j=\evN(d)+1}^{\infty}\lambda_{j,d}^2\, .
\] 
\end{itemize}
\end{assumption}

\begin{remark} In Assumption \ref{ass:hyper_K_stronger}.$(c)$, we can replace $U_{d, > \evN (d)}$ by $U_{d, > u (d)}$ (see Lemma \ref{lem:bound_max_hyper}).

\end{remark}


We are now in position to state our theorem on the approximation error of the random features function class. We state the lower and upper bounds and their assumptions separately.

\begin{theorem}[Approximation error of the random features function class]\label{thm:RF_lower_upper_bound}
Let $\{ f_d \in \cD_d \}_{d \ge 1}$ be a sequence of functions and $\bTheta = (\btheta_i)_{i \in [N(d)]}$ with $(\btheta_i)_{i \in [N(d)]} \sim \tau_d$ independently. Let $\{ \sigma_d \}_{d \ge 1}$ be a sequence of activation functions satisfying Assumptions \ref{ass:hyper_K_stronger}.(a) and  \ref{ass:hyper_K_stronger}.$(b)$ at level $\{ \evN (d) \}_{d \ge 1}$. Then the  following hold for the approximation error of the random features class (see Eq.~\eqref{eq:def_app_error}):
\begin{itemize}
\item[(a)] (Lower bound) If $\{ \sigma_d \}_{d \ge 1}$ satisfies further Assumptions \ref{ass:hyper_K_stronger}.$(d)$ and \ref{ass:N_parameters}.(a), then we have 
\begin{align}
\Big \vert R_{\App}(f_{d}, \bTheta) - R_{\App}(\proj_{\le\evN(d)} f_d, \bTheta) - \| \proj_{>\evN (d)} f_d \|_{L^2}^2 \Big \vert &\le o_{d, \P}(1) \cdot \| f_d \|_{L^2} \| \proj_{>\evN(d)} f_d \|_{L^2}.
\label{eq:RC_lower_bound}
\end{align}
\item[(b)] (Upper bound)  If $\{ \sigma_d \}_{d \ge 1}$ satisfies further Assumptions \ref{ass:hyper_K_stronger}.(c) and \ref{ass:N_parameters}, then we have 
\begin{align}
\Big \vert R_{\App}(\proj_{\le\evN(d)} f_d, \bTheta) \Big \vert &\le o_{d, \P}(1) \cdot \| f_d \|_{L^2} \| \proj_{\le \evN(d)} f_d \|_{L^2}.
\label{eq:RC_upper_bound}
\end{align}
\end{itemize}
\end{theorem}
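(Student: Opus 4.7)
The plan is to start from the explicit formula for the approximation error given by the normal equations:
\[
R_{\App}(f_d,\bTheta) \;=\; \|f_d\|_{L^2}^2 \;-\; \bv^\sT \bU^{+}\bv,
\]
where $\bU \in \reals^{N\times N}$ has entries $\bU_{ij}=U_d(\btheta_i,\btheta_j)$, $\bv\in\reals^N$ has entries $v_i=\langle f_d,\sigma_d(\,\cdot\,;\btheta_i)\rangle_{L^2}$, and $\bU^{+}$ denotes the Moore--Penrose pseudo-inverse (the structural estimates below imply that $\bU$ is in fact invertible w.h.p.). Splitting $\sigma_d=\sigma_{d,\le\evN}+\sigma_{d,>\evN}$ via the eigenbasis \eqref{eq:Hdecomposition} induces $\bU=\bU^{\le}+\bU^{>}$ and $\bv=\bv^{\le}+\bv^{>}$. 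Writing $\bPhi\in\reals^{N\times\evN}$ with $\bPhi_{ik}=\phi_k(\btheta_i)$, $\bLambda=\diag(\lambda_{d,k}^2)_{k\le\evN}$, $\bD=\diag(\lambda_{d,k})_{k\le\evN}$ and $\bc_k=\langle f_d,\psi_k\rangle$, one has $\bU^{\le}=\bPhi\bLambda\bPhi^\sT$ and $\bv^{\le}=\bPhi\bD\bc$. Since $\proj_{\le\evN}f_d\perp \sigma_{d,>\evN}(\,\cdot\,;\btheta)$ in $L^2$, the analog formula for $R_{\App}(\proj_{\le\evN}f_d,\bTheta)$ involves $\bv^{\le}$ only, so (a) reduces to
\[
\bigl|\,2(\bv^{\le})^\sT\bU^{-1}\bv^{>}+(\bv^{>})^\sT\bU^{-1}\bv^{>}\,\bigr| \;=\; o_{d,\P}(1)\,\|f_d\|_{L^2}\|\proj_{>\evN}f_d\|_{L^2},
\]
while (b) reduces to $(\bv^{\le})^\sT\bU^{-1}\bv^{\le}=(1-o_{d,\P}(1))\,\|\proj_{\le\evN}f_d\|_{L^2}^2$.

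Next I would establish three structural estimates. First, hypercontractivity on $\cV_{d,\le u(d)}$ (Assumption~\ref{ass:hyper_K_stronger}(a)) together with $\evN\le N^{1-\delta_0}$ yields the matrix-Bernstein-type bound $\|\bPhi^\sT\bPhi/N-\id\|_{\op}=o_{d,\P}(1)$. Second, setting $\kappa:=\Trace(\Uop_{d,>\evN})=\sum_{k>\evN}\lambda_{d,k}^2$, I claim $\bU^{>}$ is close to $\kappa\,\id$: write $\bU^{>}=\ddiag(\bU^{>})+\bE$ with $\bE$ the off-diagonal part. A second-moment estimate gives $\E\|\bE\|_F^2\lesssim N^2\sum_{k>u(d)}\lambda_{d,k}^4$ plus a middle-scale contribution from $(\evN,u(d)]$; the first is $o(\kappa^2)$ by Assumption~\ref{ass:hyper_K_stronger}(b) and upgrades to operator-norm via matrix Chebyshev, while the second is controlled by its low-rank factorization and hypercontractivity. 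Coupled with Assumptions~\ref{ass:hyper_K_stronger}(c)--(d), which control the diagonal within a factor $N^{\pm\delta}$ of $\kappa$, one obtains $\bU^{>}\succeq\kappa N^{-\delta}(1-o(1))\,\id$, so $\bU$ is positive definite w.h.p.\ with $\|\bU^{-1}\|_{\op}\le (1+o(1))N^{\delta}/\kappa$.

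With these in hand I invert $\bU$ via Sherman--Morrison--Woodbury,
\[
\bU^{-1}=(\bU^{>})^{-1}-(\bU^{>})^{-1}\bPhi\bigl(\bLambda^{-1}+\bPhi^\sT(\bU^{>})^{-1}\bPhi\bigr)^{-1}\bPhi^\sT(\bU^{>})^{-1}.
\]
For (a): the bound $\E\|\bv^{>}\|_2^2=N\,\E_\btheta[\langle\proj_{>\evN}f_d,\sigma_{d,>\evN}(\,\cdot\,;\btheta)\rangle^2]\le N\lambda_{d,\evN+1}^2\|\proj_{>\evN}f_d\|_{L^2}^2$ gives
\[
(\bv^{>})^\sT\bU^{-1}\bv^{>}\le\|\bU^{-1}\|_{\op}\|\bv^{>}\|_2^2\lesssim \frac{N^{1+\delta}\lambda_{d,\evN+1}^2}{\kappa}\|\proj_{>\evN}f_d\|_{L^2}^2=o_{d,\P}(1)\|\proj_{>\evN}f_d\|_{L^2}^2,
\]
the last step using the spectral-gap estimate $N\lambda_{d,\evN+1}^2\le N^{-\delta_0}\kappa$ from Assumption~\ref{ass:N_parameters}(a). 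The cross term is controlled by Cauchy--Schwarz combined with the a priori bound $(\bv^{\le})^\sT\bU^{-1}\bv^{\le}\le\|\proj_{\le\evN}f_d\|_{L^2}^2$ (the squared projection of $\proj_{\le\evN}f_d$ onto $\spn\{\sigma_d(\,\cdot\,;\btheta_i)\}$). For (b): replace $(\bU^{>})^{-1}\to\kappa^{-1}\id$ and $\bPhi^\sT(\bU^{>})^{-1}\bPhi\to(N/\kappa)\id_{\evN}$ using the concentration above. Assumption~\ref{ass:N_parameters}(b) gives $N/\kappa\ge N^{\delta_0}\lambda_{d,\evN}^{-2}$, hence $N/\kappa\gg\bLambda^{-1}_{kk}$ uniformly in $k\le\evN$; the algebraic identity $\kappa^{-1}\bS-\kappa^{-2}\bS(\bLambda^{-1}+\kappa^{-1}\bS)^{-1}\bS=\kappa^{-1}\bS\bLambda^{-1}(\bLambda^{-1}+\kappa^{-1}\bS)^{-1}$ with $\bS:=\bPhi^\sT\bPhi$ then collapses $(\bv^{\le})^\sT\bU^{-1}\bv^{\le}$ to $(1+o_{d,\P}(1))\,\bc^\sT\bc=(1+o_{d,\P}(1))\|\proj_{\le\evN}f_d\|_{L^2}^2$.

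The main obstacle is the operator-norm concentration of $\bU^{>}$. Assumption~\ref{ass:hyper_K_stronger}(b) is stated only beyond the larger level $u(d)\ge\evN(d)$, so I must work at two scales: split $\bU^{>}=\bU^{(\evN,u]}+\bU^{>u}$; bound the middle block via its low-rank factorization $\bPhi_u\widetilde\bLambda\bPhi_u^\sT$ and the hypercontractivity of $\cV_{d,\le u(d)}$; bound the tail block $\bU^{>u}$ by matrix Chebyshev using Assumption~\ref{ass:hyper_K_stronger}(b). These concentration lemmas (presumably the content of Section~\ref{sec:concentration_approx}) are where essentially all the technical work lies; once they are in place, the linear algebra sketched above is routine.
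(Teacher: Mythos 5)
Your overall architecture is the paper's: the same normal-equations formula $R_{\App}(f_d,\bTheta)=\|f_d\|_{L^2}^2-\bv^\sT\bU^{-1}\bv$, the same split of $\bU$ and $\bv$ at level $\evN$, the bound $\E\|\bv^{>}\|_2^2\le N\,\|\Uop_{d,>\evN}\|_{\op}\|\proj_{>\evN}f_d\|_{L^2}^2$ combined with Assumption \ref{ass:N_parameters}(a) and the diagonal lower bound of Assumption \ref{ass:hyper_K_stronger}(d) for part (a), and a Sherman--Morrison--Woodbury collapse together with $\|\bPhi^\sT\bPhi/N-\id\|_{\op}=o_{d,\P}(1)$ for part (b); these are exactly Propositions \ref{prop:expected_V_RF} and \ref{prop:lower_bound_U}, Theorem \ref{prop:expression_U} and Lemma \ref{lem:concentration_S}. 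One caveat in part (b): replacing $(\bU^{>})^{-1}$ by $\kappa^{-1}\id$ and $\bPhi^\sT(\bU^{>})^{-1}\bPhi$ by $(N/\kappa)\id_{\evN}$ is not what the concentration gives -- the diagonal of $\bU^{>}$ is only pinned within $N^{\pm\delta}$ factors of $\kappa$ under Assumptions \ref{ass:hyper_K_stronger}(c)--(d) -- but since $\delta$ may be taken smaller than the $\delta_0$ of Assumption \ref{ass:N_parameters}(b), your collapse survives with these factors tracked, which is essentially how the paper argues via $\lambda_{\min}(\bL^\sT(\bLambda+\bDelta)^{-1}\bL/\kappa_{>\evN})=\omega_{d,\P}(1)$.

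The genuine gap is precisely the step you defer: the operator-norm bound on the off-diagonal of the middle-spectrum block of $\bU^{>}$, i.e.\ the modes in $(\evN,u(d)]$, on which both parts rest (it is needed already for $\bU\succeq\kappa N^{-\delta}(1-o(1))\id$ in part (a)). Neither of the tools you name can deliver it. The Frobenius/second-moment route works only beyond $u(d)$ (Lemma \ref{lem:generalized_Gram_higher}); for the middle block, $\E\|\bE_{\rm mid}\|_F^2\asymp N^2\sum_{\evN<k\le u}\lambda_{d,k}^4$, which under Assumption \ref{ass:N_parameters}(a) is only guaranteed to be $O(\kappa^2 N^{1-\delta_0})$, far from $o(\kappa^2)$. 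A ``low-rank factorization'' argument through $\|\bPhi_u^\sT\bPhi_u/N-\id\|_{\op}$ is also unavailable, because Assumption \ref{ass:hyper_K_stronger}(b) forces $u(d)$ to be much larger than $N$ (in the sphere example it exceeds $N^2$), so the Gram matrix of the middle eigenfunctions does not concentrate. The missing idea is the decoupling argument of Proposition \ref{prop:generalized_Gram_double_new}: bound the off-diagonal by $4\sup_T\E\|\bDelta_{TT^c}\|_{\op}$ (Lemma \ref{lem:matrix_decoupling}), apply the independent-rows operator-norm bound (Lemma \ref{lem:independent_row_operator_norm}) conditionally on one side, and control the resulting quantities $\Sigma(T)$, $\Gamma(T)$ by entrywise hypercontractivity of the kernel (Lemma \ref{lem:hypercontractivity_basis_kernel}); this yields $\E\|\bDelta\|_{\op}\lesssim N\|\Uop\|_{\op}+(\|\Uop\|_{\op}\Trace(\Uop)N^{1+2/p}\log N)^{1/2}$, and only through this bound does Assumption \ref{ass:N_parameters}(a) make the off-diagonal $o(\kappa)$. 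Without supplying this (or an equivalent) argument, the structural estimate underpinning your whole proof is asserted rather than proved.
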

Point $(a)$  is proved in Section \ref{sec:RF_approx_lower}, while point $(b)$ is proved in Section \ref{sec:RF_approx_upper}. 

The lower bound on general kernel methods in Theorem \ref{thm:KR-general-main} is obtained as a direct consequence of Theorem \ref{thm:RF_lower_upper_bound}.(a), by taking $\sigma_d ( \bx , \bx ' ) = H_d ( \bx , \bx')$. Indeed, it is easy to check that Eqs.~\eqref{eqn:hypercontractivity_in_KRR} and \eqref{eq:ass_kernel_b2} imply Assumptions \ref{ass:hyper_K_stronger}.(a) and \ref{ass:hyper_K_stronger}.$(b)$, Eq.~\eqref{eq:ass_kernel_e2} implies Assumptions \ref{ass:hyper_K_stronger}.$(c)$ and \ref{ass:hyper_K_stronger}.$(d)$, and Eq.~\eqref{ass:n_parameters_KRR_lower_2} implies Assumption \ref{ass:N_parameters}.(a).

\subsection{Proof of Theorem \ref{thm:RF_lower_upper_bound}.$(a)$: lower bound on the approximation error}\label{sec:RF_approx_lower}

We denote $\E_\btheta$ to be the expectation operator with respect to $\btheta \sim \tau_d$, $\E_\bx$ to be the expectation operator with respect to $\bx \sim \nu_d$.  We will denote $\evN = \evN(d)$ and $N = N(d)$. 

Define the random vectors $\bV = (V_1, \ldots, V_N)^\sT$, $\bV_{\le \evN} = (V_{1, \le\evN}, \ldots, V_{N, \le \evN})^\sT$, $\bV_{> \evN} = (V_{1, >\evN}, \ldots, V_{N, >\evN})^\sT$, with
\begin{align*}
V_{i, \le\evN} \equiv&~  \E_{\bx \sim \nu_d}[[\proj_{\le\evN} f_d](\bx)  \sigma_d(\bx; \btheta_i) ],\\
V_{i, >\evN} \equiv&~ \E_{\bx \sim \nu_d}[[\proj_{>\evN} f_d](\bx) \sigma_d(\bx; \btheta_i) ],\\
V_i \equiv&~  \E_{\bx \sim \nu_d} [ f_d(\bx) \sigma_d (\bx; \btheta_i) ] = V_{i, \le\evN } + V_{i, >\evN}. 
\end{align*}
Define the random matrix $\bU = (U_{ij})_{i, j \in [N]}$, with 
\begin{align}
U_{ij} = \E_{\bx \sim \nu_d} [ \sigma_d ( \bx; \btheta_i)  \sigma_d ( \bx ; \btheta_j)]. 
\label{eq:KernelMatrix}
\end{align}
In what follows, we write $R_{\App}(f_d) = R_{\App} (f_d, \bTheta)$ for the approximation error of the random features model, omitting the dependence on the weights $\bTheta$. By definition and a simple calculation, we have 
\[
\begin{aligned}
R_{\App}(f_d) =& \min_{\ba \in \R^N} \Big\{ \E_{\bx}[f_d(\bx)^2] - 2 \< \ba, \bV \> + \< \ba, \bU \ba\> \Big\} = \E_{\bx}[f_d(\bx)^2] - \bV^\sT \bU^{-1} \bV,\\
R_{\App}(\proj_{\le\evN} f_d) =& \min_{\ba \in \R^N} \Big\{ \E_{\bx}[\proj_{\le\evN} f_d(\bx)^2] - 2 \< \ba, \bV_{\le\evN} \> + \< \ba, \bU \ba\> \Big\} = \E_{\bx}[\proj_{\le\evN} f_d(\bx)^2] - \bV_{\le\evN}^\sT \bU^{-1} \bV_{\le\evN}. 
\end{aligned}
\]
By orthogonality, we have
\[
\E_{\bx}[f_d(\bx)^2] = \E_{\bx}[[\proj_{\le\evN} f_d](\bx)^2] + \E_{\bx}[[\proj_{>\evN} f_d](\bx)^2], 
\]
which gives
\begin{equation}\label{eqn:decomposition_risk}
\begin{aligned}
& \Big\vert R_{\App}(f_d) - R_{\App}(\proj_{\le\evN} f_d) - \E_{\bx}[[\proj_{>\evN} f_d](\bx)^2] \Big\vert \\
=& \Big\vert \bV_{\le\evN}^\sT \bU^{-1} \bV_{\le\evN} - \bV^\sT \bU^{-1} \bV \Big\vert = \Big\vert \bV_{\le\evN}^\sT \bU^{-1} \bV_{\le\evN} - (\bV_{\le\evN} + \bV_{>\evN})^\sT \bU^{-1} (\bV_{\le\evN} + \bV_{>\evN}) \Big\vert\\
=& \Big\vert 2 \bV^\sT \bU^{-1} \bV_{>\evN} - \bV_{>\evN}^\sT \bU^{-1} \bV_{>\evN} \Big\vert \le 2 \| \bU^{-1/2} \bV_{>\evN} \|_2 \| \bU^{-1/2} \bV \|_{2} +  \| \bU^{-1} \|_{\op} \| \bV_{>\evN}\|_2^2\\
\le&  2 \| \bU^{-1/2} \|_{\op} \| \bV_{>\evN} \|_2 \| f_d \|_{L^2}+  \| \bU^{-1} \|_{\op} \| \bV_{>\evN}\|_2^2,
\end{aligned}
\end{equation}
where the last inequality used the fact that
\[
0 \le R_{\App}(f_d) = \| f_d \|_{L^2}^2 - \bV^\sT \bU^{-1} \bV, 
\]
so that
\[
\| \bU^{-1/2} \bV \|_2^2 = \bV^\sT \bU^{-1} \bV \le \| f_d \|_{L^2}^2. 
\]

By Eq. (\ref{eqn:decomposition_risk}), to prove Theorem \ref{thm:RF_lower_upper_bound}.$(a)$, we need to bound $\| \bU^{-1} \|_{\op} \| \bV_{>\evN}\|_2^2$. This is achieved in the two following propositions. 

\begin{proposition}[Expected norm of $\bV$]\label{prop:expected_V_RF}
Let $\{ f_d \in \cD_d \}$ be a sequence of target functions. Define $\cE_{> \evN}$ by
\[
\cE_{> \evN} \equiv  \E_{\btheta}\Big[ \Big( \E_\bx [ \proj_{> \evN} f_d(\bx) \sigma_d (\bx ; \btheta)  ] \Big)^2 \Big].
\]
Then we have
\[
\cE_{> \evN} \le  \|\Uop_{d, >\evN}\|_{\op} \cdot \| \proj_{>\evN} f_d \|_{L^2}^2.
\]
\end{proposition}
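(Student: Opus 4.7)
The plan is to recognize that $\cE_{>\evN}$ is simply a squared operator norm computation in disguise. Set $g = \proj_{>\evN} f_d \in \cD_d$. By the definition of the integral operator $\Top_d$ given in Section \ref{sec:RFSetting}, we have
\[
\E_{\bx}[g(\bx)\,\sigma_d(\bx;\btheta)] = \Top_d g(\btheta),
\]
so that $\cE_{>\evN} = \E_{\btheta}[(\Top_d g)(\btheta)^2] = \|\Top_d g\|_{L^2(\Omega_d)}^2$. Rewriting this via the adjoint gives $\|\Top_d g\|_{L^2}^2 = \langle g, \Top_d^*\Top_d g\rangle = \langle g, \Hop_d g\rangle$.

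Next I would use the fact that $g$ lies in $\spn(\psi_j : j > \evN)$, so that only the modes $j>\evN$ contribute: from the spectral decomposition \eqref{eq:Hdecomposition}, $\Top_d g = \Top_{d,>\evN} g$, hence $\langle g, \Hop_d g\rangle = \langle g, \Hop_{d,>\evN} g\rangle$. Bounding the quadratic form by the operator norm,
\[
\cE_{>\evN} = \langle g, \Hop_{d,>\evN} g\rangle \le \|\Hop_{d,>\evN}\|_{\op}\,\|g\|_{L^2}^2.
\]

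Finally, the operators $\Hop_{d,>\evN} = \Top_{d,>\evN}^*\Top_{d,>\evN}$ and $\Uop_{d,>\evN} = \Top_{d,>\evN}\Top_{d,>\evN}^*$ share the same nonzero spectrum $\{\lambda_{d,j}^2 : j > \evN\}$, so $\|\Hop_{d,>\evN}\|_{\op} = \|\Uop_{d,>\evN}\|_{\op}$. Substituting $g = \proj_{>\evN} f_d$ yields the claim. There is no real obstacle here: the statement is essentially an identification of $\cE_{>\evN}$ with a quadratic form in the kernel operator, followed by the standard operator-norm bound.
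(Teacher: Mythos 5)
Your proof is correct and follows essentially the same route as the paper: both identify $\cE_{>\evN}$ with the quadratic form $\<\proj_{>\evN}f_d,\Hop_{d,>\evN}\,\proj_{>\evN}f_d\>_{L^2}$ (the paper via an explicit Fubini computation with the kernel $H_d$, you via $\Top_d$ and its adjoint, which is the same identity), then bound by $\|\Hop_{d,>\evN}\|_{\op}=\|\Uop_{d,>\evN}\|_{\op}$. No gap.
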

\begin{proof}[Proof of Proposition \ref{prop:expected_V_RF}] We have
\begin{align*}
\cE_{>\evN} \equiv&~  \E_{\btheta \sim \tau_d}[\< \proj_{> \evN} f_d, \sigma_d (\,\cdot\,, \btheta) \>_{L^2(\cX_d)}^2]\\
 =&~ \E_{\btheta \sim \tau_d} \E_{\bx_1, \bx_2 \sim \nu_d}[\proj_{> \evN} f_d(\bx_1) \sigma_d (\bx_1, \btheta ) \sigma_d (\bx_2, \btheta) \proj_{> \evN} f_d(\bx_2)]\\
=&~ \E_{\bx_1, \bx_2 \sim \nu_d}[\proj_{> \evN} f_d(\bx_1) \E_{\btheta \sim \tau_d}[ \sigma_d (\bx_1, \btheta ) \sigma_d (\bx_2, \btheta )] \proj_{> \evN} f_d(\bx_2)] \\
=&~ \<\proj_{> \evN} f_d, \Hop_d\, \proj_{> \evN} f_d \>_{L^2} = \<\proj_{> \evN} f_d, \Hop_{d, > \evN} \proj_{> \evN} f_d \>_{L^2}\\
 \le&~ \|\Hop_{d, > \evN}\|_{\op} \|\proj_{> \evN} f_d \|^2_{L^2}= \| \Uop_{d, > \evN} \|_{\op} \|\proj_{> \evN} f_d \|^2_{L^2}. 
\end{align*}
This proves the proposition. 
\end{proof}

\begin{proposition}[Lower bound on the kernel matrix] \label{prop:lower_bound_U}
Let $\{\sigma_d \}_{d \ge 1}$ be a sequence of activation functions satisfying Assumptions \ref{ass:hyper_K_stronger}.$(a)$, \ref{ass:hyper_K_stronger}.$(b)$ and \ref{ass:N_parameters}.$(a)$ at level $\{ (N(d) , \evN(d) ) \}_{d \ge 1}$. Let $(\btheta_i)_{i \in [N]} \sim \tau_d$ independently and let $\bU \in \R^{N \times N}$ be the kernel matrix defined by Eq.~\eqref{eq:KernelMatrix}. Then, we have
\begin{align}\label{eqn:def_of_delta1}
\bU \succeq \kappa_{>\evN} (\bLambda + \bDelta) , 
\end{align}
with $\bLambda = \diag((U_{d, > \evN}(\btheta_i, \btheta_i) /\kappa_{>\evN})_{i \in [N]})$, $\kappa_{>\evN} = \Trace(\Uop_{d, > \evN})$, and $\bDelta$ is such that there exists some $\delta' > 0$, such that 
\[
\begin{aligned}
\E[\| \bDelta \|_{\op}] = O_{d}(N^{-\delta'}). 
\end{aligned}
\]
\end{proposition}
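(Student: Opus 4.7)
My plan begins with a three-way spectral decomposition $\sigma_d = \sigma_{d,\le\evN} + \sigma_{d,\evN<\cdot\le u} + \sigma_{d,>u}$, which induces the PSD decomposition $\bU = \bU_{\le\evN} + \bU_{\evN<\cdot\le u} + \bU_{>u}$. Discarding the positive semidefinite low-frequency block yields $\bU \succeq \bU_{>\evN} := \bU_{\evN<\cdot\le u} + \bU_{>u}$, and by construction the diagonal of $\bU_{>\evN}$ equals $\diag(U_{d,>\evN}(\btheta_i,\btheta_i)) = \kappa_{>\evN}\bLambda$. The proposition thus reduces to proving that the off-diagonal part $\bR := \bU_{>\evN} - \kappa_{>\evN}\bLambda$ satisfies $\E\|\bR\|_{\op} = O_d(N^{-\delta'}\kappa_{>\evN})$ for some $\delta'>0$, upon setting $\bDelta := \bR/\kappa_{>\evN}$. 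I would further split $\bR = \bR_{\evN<\cdot\le u} + \bR_{>u}$ and treat the two pieces by different mechanisms.

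For the high-frequency tail $\bR_{>u}$, a direct Frobenius calculation suffices: for $i\neq k$, independence of $\btheta_i,\btheta_k$ together with orthonormality of $\{\phi_j\}$ gives $\E[U_{d,>u}(\btheta_i,\btheta_k)^2] = \sum_{j>u}\lambda_{d,j}^4$, whence $\E\|\bR_{>u}\|_F^2 \le N^2\sum_{j>u}\lambda_{d,j}^4$. Assumption \ref{ass:hyper_K_stronger}(b) bounds $\sum_{j>u}\lambda_{d,j}^4 \le N^{-2-\delta_0}\kappa_{>u}^2\le N^{-2-\delta_0}\kappa_{>\evN}^2$, and therefore $\E\|\bR_{>u}\|_{\op} \le \E\|\bR_{>u}\|_F \le N^{-\delta_0/2}\kappa_{>\evN}$, which is the required rate for this summand.

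For the middle range, write $\bU_{\evN<\cdot\le u} = \bF\bF^\sT$ with $\bF\in\R^{N\times(u-\evN)}$ having i.i.d. rows of population covariance $\bLambda_\evN^2 := \diag(\lambda_{d,j}^2:\evN<j\le u)$, so $\E[\bU_{\evN<\cdot\le u}] = \kappa_{\evN<\cdot\le u}\id_N$, and decompose
\begin{equation*}
\bR_{\evN<\cdot\le u} = \bigl(\bU_{\evN<\cdot\le u} - \kappa_{\evN<\cdot\le u}\id_N\bigr) + \bigl(\kappa_{\evN<\cdot\le u}\id_N - \diag(\bU_{\evN<\cdot\le u})\bigr).
\end{equation*}
The diagonal-deviation term has operator norm $\max_i |U_{d,\evN<\cdot\le u}(\btheta_i,\btheta_i) - \kappa_{\evN<\cdot\le u}|$, which I would control by hypercontractivity (Assumption \ref{ass:hyper_K_stronger}(a)) combined with a union bound over $i\in[N]$. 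For the centered sample-covariance term I would invoke a matrix concentration argument in the form of a moment method or matrix Bernstein inequality, using hypercontractivity to obtain uniform high-moment control on rows of $\bF$; the spectral gap Assumption \ref{ass:N_parameters}(a) supplies $\|\bLambda_\evN^2\|_{\op} = \lambda_{d,\evN+1}^2 \le N^{-1-\delta_0}\kappa_{>\evN}$, which is the effective-rank bound needed for the concentration estimate to beat $\kappa_{>\evN}$ by a polynomial factor.

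The main obstacle is the centered middle-range estimate: the latent dimension $u-\evN$ may vastly exceed $N$ (Assumption \ref{ass:hyper_K_stronger}(b) permits $u$ to grow polynomially in $\max(N,n)$), so textbook sample-covariance concentration does not apply directly in the regime of interest. The essential ingredient is that hypercontractivity of $\cV_{d,\le u}$ promotes crude $L^2$ control of eigenfunctions to all-moment control, upgrading a variance-level estimate into an operator-norm bound calibrated to $\|\bLambda_\evN^2\|_{\op}$ rather than to its trace. Combined with the spectral-gap ratio $\lambda_{d,\evN+1}^2/\kappa_{>\evN}\le N^{-1-\delta_0}$, this is what delivers $\E\|\bR\|_{\op}/\kappa_{>\evN} = O_d(N^{-\delta'})$ and hence the claim.
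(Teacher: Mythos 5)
Your overall skeleton matches the paper's: discard the PSD low-frequency block, reduce to the hollow (off-diagonal) part of $\bU_{>\evN}$, split it at level $u$, and kill the tail by a Frobenius-norm computation using Assumption \ref{ass:hyper_K_stronger}.$(b)$ — that tail step is exactly the paper's Lemma \ref{lem:generalized_Gram_higher} and is correct. The gap is in the middle range. You write the hollow matrix $\bR_{\evN<\cdot\le u}$ as $(\bU_{\evN<\cdot\le u}-\kappa_{\evN<\cdot\le u}\id_N)+(\kappa_{\evN<\cdot\le u}\id_N-\diag(\bU_{\evN<\cdot\le u}))$ and bound the two pieces separately, but each piece dominates the diagonal deviation: $\|\bU_{\evN<\cdot\le u}-\kappa_{\evN<\cdot\le u}\id_N\|_{\op}\ge \max_{i}|U_{d,\evN<\cdot\le u}(\btheta_i,\btheta_i)-\kappa_{\evN<\cdot\le u}|$, and this same quantity is the norm of the second piece. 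Under the assumptions available here (hypercontractivity of $\cV_{d,\le u}$, eigenvalue decay, and $N^{1+\delta_0}\lambda_{d,\evN+1}^2=O(\kappa_{>\evN})$) there is \emph{no} diagonal concentration: hypercontractivity plus a union bound only yields $\max_i U_{d,\evN<\cdot\le u}(\btheta_i,\btheta_i)=O_{d,\P}(N^{\delta})\cdot\kappa_{\evN<\cdot\le u}$, an upper bound at the scale of the mean, not a deviation bound, and the fluctuation of the diagonal can genuinely be of order $\kappa_{\evN<\cdot\le u}$, which may be comparable to $\kappa_{>\evN}$. This is precisely why the proposition keeps the random diagonal $\bLambda$ in its statement instead of a multiple of the identity; diagonal concentration appears only in Assumptions \ref{ass:hyper_K_stronger}.$(c)$--$(d)$, which you are not allowed to use. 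The diagonal cancels only in the \emph{sum} of your two pieces, so the triangle inequality cannot give $O_d(N^{-\delta'})\kappa_{>\evN}$. (Also, matrix Bernstein does not apply directly to $\bF\bF^{\sT}$: it is $\bF^{\sT}\bF$, not the $N\times N$ Gram matrix, that is a sum of independent rank-one terms; and your identity $\E[\bU_{\evN<\cdot\le u}]=\kappa_{\evN<\cdot\le u}\id_N$ tacitly assumes the middle-range eigenfunctions have zero mean, which is not guaranteed.)

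The repair is to bound the hollow middle-range matrix directly, without recentering by a multiple of the identity. This is what the paper does in Proposition \ref{prop:generalized_Gram_double_new} (invoked through Proposition \ref{prop:generalized_Gram} and Theorem \ref{prop:expression_U}.$(a)$): decouple via Lemma \ref{lem:matrix_decoupling}, $\E\|\bDelta\|_{\op}\le 4\sup_{T}\E\|\bDelta_{TT^c}\|_{\op}$, bound each off-diagonal block by the independent-rows estimate (Lemma \ref{lem:independent_row_operator_norm}), where the second-moment term is controlled by $\Sigma(T)\le\|\overline\Uop\|_{\op}\,(\|\ddiag(\overline\bU)\|_{\op}+\|\bDelta\|_{\op})$ and the row-norm term by hypercontractive moment bounds on the kernel entries (Lemma \ref{lem:hypercontractivity_basis_kernel}). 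This produces a self-bounding quadratic inequality for $\E\|\bDelta\|_{\op}$ with solution $K(p)\{N\|\overline\Uop\|_{\op}+[\|\overline\Uop\|_{\op}\Trace(\overline\Uop)N^{1+2/p}\log N]^{1/2}\}$, and Assumption \ref{ass:N_parameters}.$(a)$, i.e.\ $N^{1+\delta_0}\|\Uop_{d,>\evN}\|_{\op}=O_d(1)\cdot\kappa_{>\evN}$, turns this into $O_d(N^{-\delta'})\cdot\kappa_{>\evN}$. Note that in this argument the diagonal is only ever needed up to a crude $N^{1/p}$-factor upper bound (which hypercontractivity does supply), never up to concentration — that is the idea your proposal is missing.
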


\begin{proof}[Proof of Proposition \ref{prop:lower_bound_U}]
This is a direct consequence of Theorem \ref{prop:expression_U}.$(a)$.
\end{proof}

By Proposition \ref{prop:expected_V_RF}, we have
\begin{align}
  \E[\| \bV_{>\evN} \|_2^2] = N \cE_{> \evN} \le&~ N  \cdot \| \Uop_{d, > \evN}\|_{\op} \cdot \| \proj_{> \evN} f_d \|_{L^2}^2.
\end{align}
Next, by Proposition \ref{prop:lower_bound_U} and Assumption \ref{ass:hyper_K_stronger}.$(d)$, for any fixed $\delta > 0$ with $\delta < \delta'$, we have 
\[
\| \bU^{-1} \|_{\op}  \cdot \Trace(\Uop_{d, > \evN}) \le  \Big[  \min_{i \in [N]} U_{d,>\evN}(\btheta_i, \btheta_i) / \Trace(\Uop_{d, > \evN}) - O_{d, \P}(N^{-\delta'}) \Big]^{-1} \le O_{d, \P}(N^\delta), 
\]
and hence by Markov inequality we have
\begin{align}\label{eqn:bound_U_V_RC_last}
 \frac{\| \bU^{-1} \|_{\op} \| \bV_{> \evN} \|_2^2}{\| \proj_{> \evN} f_d \|_{L^2}^2}
\le O_{d, \P}(N^{\delta}) \cdot N  \cdot \frac{\|\Uop_{d, > \evN}\|_{\op} }{ \Trace(\Uop_{d, > \evN})}\, .
\end{align}
By Assumption \ref{ass:N_parameters}.$(a)$, we have $N  \cdot \|\Uop_{d, > \evN}\|_{\op} / \Trace(\Uop_{d, > \evN}) = O_d(N^{-\delta_0})$ for some $\delta_0 > 0$. Plugging this equation into Eq. (\ref{eqn:bound_U_V_RC_last}) and choosing $\delta< \delta_0$, we have
\begin{align}
 \| \bU^{-1} \|_{\op} \| \bV_{>\evN} \|_2^2  =& o_{d,\P} (1) \cdot \| \proj_{>\evN} f_d \|_{L^2}^2.  \label{eqn:bound_U_V_RC}
\end{align}
Combining Eq. (\ref{eqn:bound_U_V_RC}) with Eq. (\ref{eqn:decomposition_risk}) proves Theorem \ref{thm:RF_lower_upper_bound}.$(a)$.

\subsection{Proof of Theorem \ref{thm:RF_lower_upper_bound}.$(b)$: upper bound on the approximation error}\label{sec:RF_approx_upper}

In the following, we would like to calculate the quantity $R_{\App}(\proj_{\le \evN} f_d, \bTheta)$. We have 
\[
R_{\App}(\proj_{\le \evN} f_{d}, \bTheta) = \| \proj_{\le \evN} f_{d}  \|_{L_2}^2 - \bV_{\le \evN}^\sT \bU^{-1} \bV_{\le \evN},
\]
where $\bV_{\le \evN} = (V_{\le \evN, 1}, \ldots, V_{\le \evN, N})^\sT$ and $\bU = (U_{ij})_{ij \in [N]}$ with
\[
\begin{aligned}
V_{\le \evN, i} =& \E_{\bx \sim \nu_d}[\proj_{\le \evN} f_{d}(\bx) \sigma_{d}(\bx ; \btheta_i)],\\
U_{ij} =& \E_{\bx \sim \nu_d}[\sigma_{d}(\bx ; \btheta_i) \sigma_{d}(\bx ; \btheta_j)]. \\
\end{aligned}
\]
Recall that $(\psi_k)_{k\ge 1}$ is the orthonormal eigenbasis of $\Hop_d$. We denote the decomposition of $\proj_{\le \evN} f_{d}$
in this basis by 
\[
\proj_{\le \evN} f_{d}(\bx) = \sum_{k = 1}^\evN \< f_d, \psi_k\>_{L^2} \psi_k(\bx) \equiv \sum_{k = 1}^\evN \hat f_{k} \psi_k(\bx)\, .
\]
Recall the decomposition of $\sigma_{d}$
\[
\sigma_{d}(\bx, \btheta) = \sum_{k = 1}^\infty \lambda_{d, k} \psi_k(\bx) \phi_k(\btheta). 
\]
By orthonormality of the $(\psi_k)_{k\ge 1}$, we have 
\[
V_{\le \evN, i} = \sum_{k = 1}^\evN \hat f_{k} \lambda_{d, k} \phi_k( \btheta_i). 
\]
Define
\[
\begin{aligned}
\hat \boldf =&~ ( \hat f_1, \ldots, \hat f_{\evN} )^\sT \in \R^{\evN}, \\
\bD =&~ \diag(\lambda_{d, 1}, \ldots, \lambda_{d, \evN}) \in \R^{\evN \times \evN},\\
\bPhi =&~ (\phi_k(\btheta_i))_{i \in [N], k \in [\evN]} \in \R^{N \times \evN}, \\
\bL =&~ \bPhi \bD \in \R^{N \times \evN}. 
\end{aligned}
\]
Then we have 
\[
\bV_{\le \evN} = \Big( \sum_{k = 1}^\evN \hat f_k  \lambda_{d, k} \phi_k(\btheta_i) \Big)_{i \in [N]} = \bPhi \bD \hat \boldf = \bL \hat \boldf. 
\]

By Eq.~\eqref{eqn:def_of_delta} in Theorem \ref{prop:expression_U}, there exists $\bDelta \in \R^{N \times N}$ such that
\[
 \bU = \bPhi \bD^2 \bPhi^\sT +  \kappa_{>\evN} (\bLambda + \bDelta)= \bL \bL^\sT + \kappa_{>\evN} (\bLambda + \bDelta), 
\]
where $\kappa_{>\evN} = \Trace(\Uop_{d, > \evN})$, $\bLambda = \diag((U_{d, > \evN}(\btheta_i, \btheta_i) / \kappa_{>\evN})_{i \in [N]})$. By simple algebra, we have 
\[
\bV_{\le \evN}^\sT \bU^{-1} \bV_{\le \evN} = \hat \boldf^\sT \bS \hat \boldf, 
\]
where
\[
\bS =  \bL^\sT (\bL \bL^\sT + \kappa_{>\evN} (\bLambda + \bDelta))^{-1} \bL. 
\]
Therefore, we have
\[
\begin{aligned}
R_{\App}(\proj_{\le \evN} f_d, \bW) = & \| \proj_{\le \evN} f_{d}  \|_{L_2}^2 - \bV_{\le \evN}^\sT \bU^{-1} \bV_{\le \evN} = \| \hat \boldf \|_2^2 - \< \hat \boldf, \bS \hat \boldf\> \\
\le & \| \id_{\evN} - \bS \|_{\op} \| \hat \boldf \|_2^2 = o_{d, \P}(1) \cdot \| \proj_{\le \evN} f_d \|_{L^2}^2. 
\end{aligned}
\]
The last equation is by Lemma \ref{lem:concentration_S} which is stated and proved below. This proves the theorem. 

\begin{lemma}[Concentration of $\bS$]\label{lem:concentration_S}
Let Assumptions \ref{ass:hyper_K_stronger}.$(a)$, \ref{ass:hyper_K_stronger}.$(b)$, \ref{ass:hyper_K_stronger}.$(c)$ and \ref{ass:N_parameters} hold. Then we have 
\[
\| \id_{\evN} - \bS \|_{\op} = o_{d, \P}(1). 
\]
\end{lemma}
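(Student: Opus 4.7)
The plan is to invert the $N\times N$ matrix appearing in $\bS$ via the Woodbury identity, reduce the problem to a lower bound on the smallest eigenvalue of a certain $\evN\times\evN$ random matrix, and then invoke matrix concentration to control that eigenvalue.

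\textbf{Step 1: Woodbury reduction.} Write $\bM := \kappa_{>\evN}(\bLambda+\bDelta)$. Since (as we will verify in Step 2) $\bM$ is positive definite with high probability, the Woodbury identity gives
\begin{equation*}
\bS = \bL^\sT(\bL\bL^\sT+\bM)^{-1}\bL = \bA(\id_\evN+\bA)^{-1},\qquad \bA:=\bL^\sT\bM^{-1}\bL=\bD\bPhi^\sT\bM^{-1}\bPhi\bD.
\end{equation*}
Since $\bA\succeq 0$ is symmetric, $\id_\evN-\bS=(\id_\evN+\bA)^{-1}$, whence $\|\id_\evN-\bS\|_{\op}=(1+\lambda_{\min}(\bA))^{-1}$. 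It therefore suffices to prove $\lambda_{\min}(\bA)=\omega_{d,\P}(1)$.

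\textbf{Step 2: Upper bound on $\|\bM\|_{\op}$.} By Proposition \ref{prop:lower_bound_U} and Markov's inequality, $\|\bDelta\|_{\op}=o_{d,\P}(1)$. By Assumption \ref{ass:hyper_K_stronger}.$(c)$, for any $\delta>0$, $\max_i (\bLambda)_{ii}=\max_i U_{d,>\evN}(\btheta_i,\btheta_i)/\kappa_{>\evN}=O_{d,\P}(N^\delta)$. Hence $\|\bM\|_{\op}\le\kappa_{>\evN}\cdot O_{d,\P}(N^\delta)$. In particular $\bM\succ 0$ w.h.p., justifying Step 1.

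\textbf{Step 3: Lower bound via $\bPhi^\sT\bPhi$.} Using $\bM\preceq \|\bM\|_{\op}\,\id_N$ and $\bD\succeq \lambda_{d,\evN}\,\id_\evN$ (since $\bD$ is diagonal with entries $\lambda_{d,1}\ge\dots\ge\lambda_{d,\evN}>0$),
\begin{equation*}
\bA = \bD\bPhi^\sT\bM^{-1}\bPhi\bD \succeq \frac{1}{\|\bM\|_{\op}}\bD\bPhi^\sT\bPhi\bD \succeq \frac{\lambda_{d,\evN}^2}{\|\bM\|_{\op}}\,\bPhi^\sT\bPhi.
\end{equation*}
Thus $\lambda_{\min}(\bA)\ge \lambda_{d,\evN}^2\lambda_{\min}(\bPhi^\sT\bPhi)/\|\bM\|_{\op}$.

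\textbf{Step 4: Matrix concentration for $\bPhi^\sT\bPhi/N$.} The rows of $\bPhi$ are i.i.d. copies of $\bphi(\btheta):=(\phi_1(\btheta),\dots,\phi_\evN(\btheta))^\sT$, with $\E[\bphi(\btheta)\bphi(\btheta)^\sT]=\id_\evN$ by orthonormality of the $\phi_j$. Since $\evN\le u(d)$, Assumption \ref{ass:hyper_K_stronger}.$(a)$ (hypercontractivity on $\cV_{d,\le u(d)}$) gives uniform $L^{2q}/L^2$ bounds for every coordinate of $\bphi(\btheta)$, and more generally for $\bu^\sT\bphi(\btheta)$ with $\bu\in\S^{\evN-1}$. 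Combined with Assumption \ref{ass:N_parameters}.$(b)$ (so $\evN\le N^{1-\delta_0}$), a standard matrix Bernstein (or truncated matrix Chernoff) argument yields
\begin{equation*}
\bigl\|\bPhi^\sT\bPhi/N-\id_\evN\bigr\|_{\op}=o_{d,\P}(1),\qquad\text{in particular }\lambda_{\min}(\bPhi^\sT\bPhi)\ge N/2\text{ w.h.p.}
\end{equation*}
This step, which I expect to be the main technical obstacle, is the direct analogue of (and in fact a special case of) the spectral concentration results underlying Theorem \ref{prop:expression_U}; it can be carried out by invoking that theorem with the subspace replaced by $\cV_{d,\le\evN}$, or by a self-contained truncation argument using hypercontractivity to control $\|\bphi(\btheta)\|_2$ and its $q$-th moments.

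\textbf{Step 5: Combining the bounds.} Gathering Steps 2--4, for any $\delta>0$,
\begin{equation*}
\lambda_{\min}(\bA) \ge \Omega_{d,\P}(N^{-\delta})\,\cdot\,\frac{N\lambda_{d,\evN}^2}{\kappa_{>\evN}} = \Omega_{d,\P}(N^{-\delta})\,\cdot\,\frac{N\lambda_{d,\evN}^2}{\sum_{j>\evN}\lambda_{d,j}^2}.
\end{equation*}
By Assumption \ref{ass:N_parameters}.$(b)$, $\sum_{j>\evN}\lambda_{d,j}^2\le\lambda_{d,\evN}^2 N^{1-\delta_0}$, so $N\lambda_{d,\evN}^2/\kappa_{>\evN}\ge N^{\delta_0}$. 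Choosing $\delta<\delta_0$ gives $\lambda_{\min}(\bA)=\Omega_{d,\P}(N^{\delta_0-\delta})=\omega_{d,\P}(1)$, which by Step 1 proves $\|\id_\evN-\bS\|_{\op}=o_{d,\P}(1)$.
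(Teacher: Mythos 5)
Your proposal is correct and follows essentially the same route as the paper's proof: a Sherman--Morrison--Woodbury reduction to lower bounding $\lambda_{\min}(\bL^\sT\bM^{-1}\bL)$, the bound $\|\bLambda+\bDelta\|_{\op}=O_{d,\P}(N^{\delta})$ from Assumption \ref{ass:hyper_K_stronger}.$(c)$ and Theorem \ref{prop:expression_U}.$(a)$, concentration of $\bPhi^\sT\bPhi/N$ via Theorem \ref{prop:expression_U}.$(b)$ (i.e.\ Proposition \ref{prop:YY_new}), and the gap $N\lambda_{d,\evN}^2/\kappa_{>\evN}=\Omega_d(N^{\delta_0})$ from Assumption \ref{ass:N_parameters}.$(b)$. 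The only blemish is that your Step 2 upper bound on $\|\bM\|_{\op}$ does not by itself justify $\bM\succ 0$ (that rather follows because $\bM=\bU_{>\evN}$ is a Gram matrix of a positive semidefinite kernel), but the paper glosses over the same point, so this is immaterial.
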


\begin{proof}[Proof of Lemma \ref{lem:concentration_S}]

By the Sherman-Morrison-Woodbury formula, we have 
\[
\id_{\evN} - \bS = \id_{\evN} - \bL^\sT (\bL \bL^\sT + \kappa_{>\evN} (\bLambda + \bDelta))^{-1} \bL = (\id_{\evN} + \bL^\sT (\bLambda + \bDelta)^{-1} \bL / \kappa_{>\evN})^{-1},
\]
so that 
\begin{equation}\label{eqn:norm_S_minus_I}
\| \id_{\evN} - \bS \|_{\op} \le 1/ \lambda_{\min}( \bL^\sT (\bLambda + \bDelta)^{-1} \bL / \kappa_{>\evN}). 
\end{equation}
Note that we have 
\begin{equation}\label{eqn:YY_2}
\begin{aligned}
&~\lambda_{\min}(\bL^\sT (\bLambda + \bDelta)^{-1} \bL) / \kappa_{>\evN} = \lambda_{\min}(\bD \bPhi^\sT (\bLambda + \bDelta)^{-1} \bPhi \bD) / \kappa_{>\evN}  \\
\ge&~ \lambda_{\min}(\bPhi^\sT \bPhi / N) \cdot [ N \cdot\lambda_{\min}(\bD^2) / \kappa_{>\evN}] / \|\bLambda + \bDelta \|_{\op}  \\
=&~  \lambda_{\min} (\bPhi^\sT \bPhi / N) \cdot [N \cdot \lambda_{\min}(\Uop_{d, \le \evN}) / \kappa_{>\evN} ] / \|\bLambda + \bDelta \|_{\op} .
\end{aligned}
\end{equation}
By Theorem \ref{prop:expression_U}.$(b)$, we have
\[
\lambda_{\min} (\bPhi^\sT \bPhi / N) = \Theta_{d, \P}(1). 
\]
By Assumption \ref{ass:hyper_K_stronger}.$(c)$, we have $\| \bLambda \|_{\op} = O_{d,\P} (N^\delta)$ for any $\delta>0$. Therefore, by Theorem \ref{prop:expression_U}.$(a)$, for any $\delta > 0$, we have
\[
\|\bLambda + \bDelta \|_{\op} \le \| \bLambda \|_{\op} + \| \bDelta \|_{\op} = O_d(N^{\delta}). 
\]
By Assumption \ref{ass:N_parameters}.$(b)$, there exists $\delta_0 > 0$, such that
\[
[N \cdot \lambda_{\min}(\Uop_{d, \le \evN}) / \kappa_{>\evN} ] = \Omega_d(N^{\delta_0}). 
\]
Combining the above equalities with Eq. (\ref{eqn:YY_2}) and choosing $\delta$ such that $0 < \delta < \delta_0$, we have
\[
\lambda_{\min}( \bL^\sT (\bLambda + \bDelta)^{-1} \bL / \kappa_{>\evN}) = \omega_{d, \P}(1). 
\]
Combining with Eq. (\ref{eqn:norm_S_minus_I}) proves the lemma. 
\end{proof}

\subsection{Structure of the empirical kernel matrix}
\label{sec:concentration_approx}

In this section, we present a key theorem describing the structure of the empirical kernel matrix $\bU = ( U ( \btheta_i , \btheta_j ) )_{i,j \in [N]} \in \R^{N\times N}$. The proof of this theorem relies on two propositions: Proposition \ref{prop:YY_new} shows that the matrix of the top eigenvectors evaluated on the random weights $(\btheta_i)_{i \in [N]}$ is nearly orthogonal and is presented in Section \ref{sec:concentration_low_degree}; Proposition \ref{prop:generalized_Gram} shows the concentration to zero in operator norm of the off-diagonal part of the matrix $\bU_{>\evN}$ and is presented in Section \ref{sec:vanishing_off_diag}. The proof of Proposition \ref{prop:generalized_Gram} is deferred to Section \ref{sec:GeneralizedGram}.

\begin{theorem}[Structure of the empirical kernel matrix]\label{prop:expression_U}
Let Assumptions \ref{ass:hyper_K_stronger}.$(a)$, \ref{ass:hyper_K_stronger}.$(b)$ and \ref{ass:N_parameters}.$(a)$ hold. Let $(\btheta_i)_{i \in [N]} \sim \tau_d$ independently, and define $\bU = (U_{ij})_{ij \in [N]}$ with 
\[
  U_{ij}: = U_d(\btheta_i,\btheta_j)\, .
\]
(Recall that $U_d(\btheta_i,\btheta_j) \equiv\E_{\bx \sim \nu_d}[\sigma_d( \bx, \btheta_i) \sigma_d( \bx, \btheta_j)].$)
Then, we can rewrite $\bU$  (by choosing $\bDelta \in \R^{N \times N}$)
\begin{align}\label{eqn:def_of_delta}
\bU = \bPhi \bD^2 \bPhi^\sT + \kappa_{>\evN} (\bLambda + \bDelta), 
\end{align}
with $\kappa_{>\evN} = \Trace(\Uop_{d, > \evN})$ and
\[
\bPhi =( \phi_k(\btheta_i) )_{i \in [N], k \in [\evN]}, ~~~ \bD = \diag(\lambda_{d, 1}, \ldots, \lambda_{d, \evN}), ~~~ \bLambda = \diag((U_{d, > \evN}(\btheta_i, \btheta_i) / \kappa_{>\evN})_{i \in [N]}).
\]
The following hold:
\begin{enumerate}
\item[$(a)$] There exists a fixed $\delta' > 0$, such that 
\[
\begin{aligned}
\E[\| \bDelta \|_{\op}] = O_{d}(N^{-\delta ' }). 
\end{aligned}
\]

\item[(b)] If further we assume $\evN (d) \leq N(d)^{1 - \delta_0}$ for a fixed $\delta_0 >0$, then we have
\[
\Big\| \bPhi^\sT \bPhi / N - \id_{\evN} \Big\|_{\op} = o_{d,\P}(1).
\]

\end{enumerate} 
\end{theorem}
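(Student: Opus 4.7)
The identity \eqref{eqn:def_of_delta} follows by direct algebra from the spectral expansion $U_d(\btheta,\btheta') = \sum_{k\ge 1}\lambda_{d,k}^2\phi_k(\btheta)\phi_k(\btheta')$ in \eqref{eq:Hdecomposition}: splitting the sum at the cutoff $k = \evN$, the truncated part assembles entry-wise into $\bPhi\bD^2\bPhi^\sT$, and decomposing the tail $\bU_{>\evN} = (U_{d,>\evN}(\btheta_i,\btheta_j))_{i,j}$ into its diagonal and off-diagonal parts produces $\kappa_{>\evN}\bLambda$ and $\kappa_{>\evN}\bDelta$ respectively.

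\textbf{Part (a).} The plan is to invoke Proposition \ref{prop:generalized_Gram}, which is tailored to control the operator norm of off-diagonal Gram matrices $\{K(\btheta_i,\btheta_j)\}_{i\ne j}$ under hypercontractivity combined with properly decaying eigenvalues. The natural approach is to further split $\bDelta$ via the auxiliary cutoff $u(d)\ge\evN$ as $\bDelta = \bDelta^{\text{lo}} + \bDelta^{\text{hi}}$, using eigenfunctions in the ranges $\evN<k\le u(d)$ and $k>u(d)$ respectively. The low-frequency piece has the form $\bPsi\,\diag(\lambda_{d,k}^2)_{\evN<k\le u(d)}\,\bPsi^\sT$ minus its diagonal, with $\bPsi = (\phi_k(\btheta_i))_{i\in[N],\,\evN<k\le u(d)}$; hypercontractivity of $\cV_{d,\le u(d)}$ (Assumption \ref{ass:hyper_K_stronger}(a)) controls $\|\bPsi^\sT\bPsi/N\|_{\op} = O_{d,\P}(1)$ via a trace-moment argument, and combined with $\lambda_{d,\evN+1}^2/\kappa_{>\evN}\le N^{-(1+\delta_0)}$ from Assumption \ref{ass:N_parameters}(a) yields a vanishing operator norm. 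The high-frequency piece is handled by a trace-moment estimate: at second order one computes $\E\|\bDelta^{\text{hi}}\|_F^2 \le N^2 \sum_{k>u(d)}\lambda_{d,k}^4/\kappa_{>\evN}^2 \le N^{-\delta_0}$ by Assumption \ref{ass:hyper_K_stronger}(b), with sharper operator-norm control obtained by pushing to higher-order moments of $\tr(\bDelta^{\text{hi}\,2q})$.

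\textbf{Part (b).} Since $\bPhi^\sT\bPhi/N = N^{-1}\sum_{i=1}^N \bphi(\btheta_i)\bphi(\btheta_i)^\sT$ with $\bphi(\btheta) = (\phi_1(\btheta),\dots,\phi_\evN(\btheta))^\sT$ and $\E[\bphi(\btheta)\bphi(\btheta)^\sT] = \id_\evN$ by orthonormality of the $\phi_k$, the plan is the standard trace-moment method: bound $\E\|\bPhi^\sT\bPhi/N - \id_\evN\|_{\op}^{2q} \le \E\,\tr\bigl[(\bPhi^\sT\bPhi/N - \id_\evN)^{2q}\bigr]$ and expand the right-hand side as a sum over closed walks of length $2q$ on $[\evN]$. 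Hypercontractivity of $\cV_{d,\le u(d)}$ (Assumption \ref{ass:hyper_K_stronger}(a)) reduces each expected product of $\phi_k$'s to a constant depending only on $q$, and the combinatorial count of walks together with $\evN \le N^{1-\delta_0}$ yields $o_d(1)$ for $q$ chosen large enough in terms of $\delta_0$; Markov's inequality then concludes.

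The principal obstacle lies in part (a), specifically in controlling the high-frequency piece $\bDelta^{\text{hi}}$ in operator norm rather than Frobenius norm, since the individual eigenfunctions $\phi_k$ with $k>u(d)$ need not admit any hypercontractivity bound. The sharp estimate requires a delicate higher-order trace calculation that exploits the eigenvalue decay condition \eqref{eq:ConcentratioB} combinatorially, and is precisely the technical heart of Proposition \ref{prop:generalized_Gram}.
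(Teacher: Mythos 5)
Your overall architecture does match the paper's: the algebraic decomposition \eqref{eqn:def_of_delta}, part $(b)$ via concentration of $\bPhi^\sT\bPhi/N$ using hypercontractivity of $\cV_{d,\le u(d)}$ and $\evN\le N^{1-\delta_0}$ (the paper invokes Proposition \ref{prop:YY_new}, i.e.\ Vershynin's independent-rows bound with $\E[\max_i\|\bphi_i\|_2^2]$ controlled by hypercontractivity, rather than a closed-walk moment expansion, but your route is a reasonable alternative), and part $(a)$ by reducing to Proposition \ref{prop:generalized_Gram} applied to $\Uop_{d,>\evN}$, whose hypotheses are checked from Assumptions \ref{ass:hyper_K_stronger}.$(a)$, $(b)$ and \ref{ass:N_parameters}.$(a)$. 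The problem is in your account of how that off-diagonal bound is actually obtained, in two places.

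First, for the block of frequencies $\evN<k\le u(d)$ your key estimate $\|\bPsi^\sT\bPsi/N\|_{\op}=O_{d,\P}(1)$ cannot hold in the relevant regime: $\bPsi$ has $u(d)-\evN$ columns, and Assumption \ref{ass:hyper_K_stronger}.$(b)$ typically forces $u(d)\gg N$ (for polynomially decaying spectra one needs $u(d)\gtrsim N^{2+\delta_0}$), so $\tr(\bPsi\bPsi^\sT/N)\approx u(d)-\evN$ while $\mathrm{rank}(\bPsi\bPsi^\sT)\le N$, whence $\|\bPsi^\sT\bPsi/N\|_{\op}\gtrsim (u(d)-\evN)/N\gg 1$. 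More fundamentally, any strategy that bounds the off-diagonal part by ``full low-frequency Gram matrix minus its diagonal'' via the triangle inequality is doomed: the diagonal entries of $\bPsi\,\diag(\lambda_{d,k}^2)_{\evN<k\le u}\,\bPsi^\sT$ concentrate around $\sum_{\evN<k\le u}\lambda_{d,k}^2$, which is generically of order $\kappa_{>\evN}$, so the operator norm of that full matrix is $\Omega(\kappa_{>\evN})$ and you can never extract the required $o_d(N^{-\delta'})\cdot\kappa_{>\evN}$ this way. The paper's Proposition \ref{prop:generalized_Gram_double_new} instead bounds the off-diagonal part directly: matrix decoupling (Lemma \ref{lem:matrix_decoupling}) reduces to blocks $\bDelta_{T,T^c}$ with independent rows, Lemma \ref{lem:independent_row_operator_norm} is applied conditionally with the second-moment matrix controlled by $\|\Uop_{d,>\evN}\|_{\op}$ times the kernel matrix itself, and a self-bounding quadratic inequality in $\E\|\bDelta\|_{\op}$ closes the loop; the smallness then comes precisely from $N^{1+\delta_0}\|\Uop_{d,>\evN}\|_{\op}=O(\Trace(\Uop_{d,>\evN}))$ (Assumption \ref{ass:N_parameters}.$(a)$) plus hypercontractivity for the diagonal and entrywise moments. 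None of this is supplied by your sketch. Second, your closing diagnosis inverts where the difficulty sits: the $k>u(d)$ tail requires no operator-norm refinement and no higher-order trace combinatorics — the plain Frobenius bound of Lemma \ref{lem:generalized_Gram_higher}, $\E\|\bQ_{>u}\|_{\op}^2\le N^2\,\Trace(\Uop_{d,>u}^2)=O(N^{-\delta_0})\,\Trace(\Uop_{d,>u})^2$, suffices exactly because Assumption \ref{ass:hyper_K_stronger}.$(b)$ carries the exponent $2+\delta_0$. The genuinely hard step is the low-frequency off-diagonal block that your argument handles incorrectly.
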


\begin{proof}[Proof of Theorem \ref{prop:expression_U}]
  For $S\subseteq \{1,2,3,\dots\}$, recall that
  \[
    \Uop_{d,S}\equiv\sum_{s\in S}\lambda_{d,s}^2\phi_s\phi_s^*\,,
    \]
and let $U_{d, S}$ to be the kernel associated to $\Uop_{d, S}$. Define $\bQ_{S} = (Q_{S, ij})_{i, j \in [N(d)]}$ by
\[
Q_{S, ij} = U_{d, S}(\btheta_i, \btheta_j) \ones_{i \neq j}. 
\]
By decomposing the entries of $\bU$ in the orthonormal basis $\{ \phi_j \}_{j \ge 1}$, we can write $\bU = \bU_{\leq \evN} + \bU_{>\evN}$ where
\[
\begin{aligned}
\bU_{\leq \evN} =&~ \bPhi \bD^2 \bPhi^\sT \, , \\
\bU_{> \evN} =&~ (U_{d, >\evN}(\btheta_i,\btheta_j))_{i, j \in [N]}\, .
\end{aligned}
\]

We begin by part $(b)$. By Assumption \ref{ass:hyper_K_stronger}.$(a)$ and $\evN (d) \leq N(d)^{1 - \delta_0}$, the assumptions of Proposition \ref{prop:YY_new} are satisfied with $D = \evN$ and $(\phi_1, \ldots , \phi_{\evN})$ the top $\evN$ eigenvectors of $\Uop$. Hence, there exists $C= C(q)>0$ a constant that depends only on $q$ such that 
\[
\E \Big[ \Big\| \bPhi^\sT \bPhi / N - \id_{\evN} \Big\|_{\op} \Big] \leq C \frac{\evN \log(N)}{N^{1 - 1/q}}.
\]
Taking $q > 1/\delta_0$, the right hand side become $o_d(1)$ and Theorem \ref{prop:expression_U}.$(b)$ follows by Markov's inequality. 

Next, we prove part $(a)$, namely that $\bU_{> \evN} = \kappa_{>\evN} \cdot (\bLambda + \bDelta)$ with $\| \bDelta \|_{\op} = O_{d, \P}(N^{-\delta'})$ for some $\delta' > 0$. 

Letting $\bQ \in \R^{N \times N}$ be the matrix with entries $Q_{ij} = (\bU_{> \evN})_{ij} \ones_{i \neq j}$. Then we have $\bU_{>\evN} = \kappa_{>\evN} \bLambda + \bQ$. 
We next apply Proposition \ref{prop:generalized_Gram} to the operator $\hUop_d=\Uop_{d, >\evN}$ and subspace $\what \cV_d = \cV_{d, > \evN}$. Notice that the assumptions of
Proposition \ref{prop:generalized_Gram} are satisfied by Assumptions \ref{ass:hyper_K_stronger}.$(a)$, \ref{ass:hyper_K_stronger}.$(b)$ and \ref{ass:N_parameters}.$(a)$.
We therefore conclude that $\E[\| \bQ \|_{\op}] = O_{d}(N^{-\delta'}) \cdot \Trace(\Uop_{d, > \evN}) = O_{d}(N^{-\delta'}) \cdot \kappa_{>\evN}$ for some $\delta' > 0$. This concludes the proof of Theorem \ref{prop:expression_U}.$(a)$.
\end{proof}

This theorem implies a particularly simple structure of the empirical kernel matrix $\bU$. Under the additional Assumptions \ref{ass:hyper_K_stronger}.$(c)$, \ref{ass:hyper_K_stronger}.$(d)$ and \ref{ass:N_parameters}.$(b)$, $\bU$ can be written as a sum of a `spike' $\bU_{\leq \evN}$ (of rank $\evN$ and eigenvalues $\gg  \Tr ( \Uop_{d, > \evN})$) and a full rank matrix $\bU_{>\evN}$ with eigenvalues of order $ \Tr ( \Uop_{d, > \evN})$. The `spike' matrix $\bU_{\leq \evN}$ has the following approximate diagonalization: 
\[
\bU_{\leq \evN} = \Tilde \bPhi \Tilde \bD^2 \Tilde \bPhi^\sT,
\]
where $\Tilde \bPhi = \bPhi/\sqrt{N} \in \R^{N \times \evN}$ is approximately an orthogonal matrix $\| \Tilde \bPhi^\sT \Tilde \bPhi - \id_{\evN} \|_{\op} = o_{d,\P}(1)$ and the diagonal matrix $\Tilde \bD^2 =  \diag(N \lambda_{d, 1}^2, \ldots, N \lambda_{d, \evN}^2)$ verifies $\Tilde \bD^2 \succeq N(d)^{\delta_0} \Tr ( \Uop_{d, > \evN}) \cdot \id_N$ (by Assumption \ref{ass:N_parameters}.$(b)$). Furthermore, by Assumptions \ref{ass:hyper_K_stronger}.$(c)$, and \ref{ass:hyper_K_stronger}.$(d)$, and Theorem \ref{prop:expression_U}.$(a)$, we have for any $\delta >0$,
\[
\Omega_{d,\P} ( N^{-\delta} )\cdot \Tr ( \Uop_{d, > \evN}) \cdot \id_N \preceq \bU_{>\evN} \preceq O_{d,\P} ( N^{\delta} ) \cdot \Tr ( \Uop_{d, > \evN}) \cdot \id_N.
\]

\subsubsection{Concentration of the top eigenvectors}
\label{sec:concentration_low_degree}

Here we state and prove a general matrix concentration result.
For each $d\ge 1$, let $(\Omega_d,\tau_d)$ be a (Polish) probability space, and $(\phi_k)_{k \ge 1}$ an orthonormal basis
of $L^2(\Omega_d,\tau_d)$. Define $\bphi(\btheta) \equiv ( \phi_1(\btheta), \ldots, \phi_D(\btheta) )^\sT \in \R^D$,
and let $( \btheta_i )_{i \le N} \sim_{iid} \tau_d$. The law of large numbers and orthonormality imply
that, for any fixed $D$,
\begin{align}
\lim_{N\to\infty}\frac{1}{N} \sum_{i=1}^N\bphi(\btheta_i)\bphi(\btheta_i)^{\sT} =\int_{\Omega_d} \bphi(\btheta)\bphi(\btheta)^{\sT}\, \tau_d(\de\btheta) =\id_D\, .
  \end{align}
  The next proposition establishes a generalization of this fact for the case in which both $D$ and $N$ diverge.

\begin{proposition}\label{prop:YY_new}
Let $\{ \phi_k \in L^2(\Omega, \tau) \}_{k = 1}^D$ be orthonormal functions. Let $\{ \btheta_i \}_{i \in [N]} \sim \tau$ independently. Define $\bphi_i = \bphi(\btheta_i) = ( \phi_1(\btheta_i), \ldots, \phi_D(\btheta_i) )^\sT \in \R^D$ for $i \in [N]$. We assume that, for any integer $q \ge 2$, there exists $C = C(q)$ such that we have 
\begin{align}\label{eqn:hyper_YY_new}
\sup_{k \in [D]} \| \phi_k \|_{L^{2q}} \le C(q). 
\end{align}
Then for any $q \ge 2$, there exists $K = K(q)$ that only depends on $C(q)$, such that denoting $\delta \equiv K(q) D \log (D \vee N) / N^{1 - 1/q}$, we have 
\[
\E \Big \| \frac{1}{N} \sum_{i=1}^N \bphi_i \bphi_i^\sT - \id_D \Big \|_{\op} \le (\delta \vee \sqrt \delta).  
\]
\end{proposition}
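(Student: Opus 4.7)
The plan is a standard truncation-plus-matrix-Bernstein argument. Set $\bZ_i := \bphi_i\bphi_i^\sT - \id_D$; orthonormality of the $\phi_k$ gives $\E[\bphi_i\bphi_i^\sT] = \id_D$, so the $\bZ_i$ are i.i.d.\ and mean zero, and the task is to bound $\E\|N^{-1}\sum_i \bZ_i\|_{\op}$.

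First I would establish a uniform moment bound on $\|\bphi_i\|_2^2$. Writing $\|\bphi_i\|_2^2 = \sum_{k=1}^D \phi_k(\btheta_i)^2$, Minkowski's inequality in $L^q$ together with the hypothesis $\|\phi_k\|_{L^{2q}} \le C(q)$ yields
\[
\big\|\|\bphi_i\|_2^2\big\|_{L^q} \;\le\; \sum_{k=1}^D \|\phi_k\|_{L^{2q}}^2 \;\le\; C(q)^2 D,
\]
and Markov's inequality gives $\P(\|\bphi_i\|_2 > R) \le (C(q)^2 D/R^2)^q$ for any $R>0$. I would then pick the truncation level $R^2 := C(q)^2 D\, N^{1/q}$, so that $\P(\|\bphi_1\|_2 > R) \le N^{-1}$, and split $\bZ_i = \tilde{\bZ}_i + \hat{\bZ}_i$, where $\tilde{\bZ}_i$ is the centered part on $A_i := \{\|\bphi_i\|_2 \le R\}$ and $\hat{\bZ}_i$ is the centered part on $A_i^c$.

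The tail piece is handled by triangle inequality and H\"older:
\[
\E\Big\|\frac{1}{N}\sum_{i=1}^N \hat{\bZ}_i\Big\|_{\op} \;\le\; 2\E\big[\|\bphi_1\|_2^2\, \ones_{A_1^c}\big] \;\le\; 2 C(q)^2 D\cdot \P(A_1^c)^{(q-1)/q} \;=\; O\big(D/N^{1-1/q}\big),
\]
which is absorbed by $\delta$. For the truncated centered part, the matrix Bernstein inequality applies with operator-norm bound $L = 2R^2$ and variance proxy
\[
\big\|\E[\tilde{\bZ}_i^2]\big\|_{\op} \;\le\; \big\|\E\big[\|\bphi_i\|_2^2\, \bphi_i\bphi_i^\sT\, \ones_{A_i}\big]\big\|_{\op} \;\le\; R^2 \cdot \|\id_D\|_{\op} \;=\; R^2,
\]
yielding $\E\|N^{-1}\sum_i \tilde{\bZ}_i\|_{\op} \lesssim \sqrt{R^2 \log D/N} + R^2 \log D/N \asymp \sqrt{\delta} + \delta$ for the $\delta$ of the statement. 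Combining the two contributions gives $\E\|N^{-1}\sum_i \bZ_i\|_{\op} \le K(q)\,(\delta \vee \sqrt{\delta})$, as required.

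The delicate point is the choice of truncation level: $R^2 \asymp D\, N^{1/q}$ is precisely what balances the Bernstein variance term (of order $R^2/N$) against the H\"older tail estimate (of order $D \cdot \P(A^c)^{1-1/q}$), so that both produce the exponent $N^{-(1-1/q)}$ appearing in $\delta$. Since the hypothesis provides only polynomial moments of the $\phi_k$ and not sub-exponential ones, a straight matrix Bernstein or matrix Hoeffding bound cannot be applied directly; truncation is unavoidable. Once the base estimate $\big\|\|\bphi_i\|_2\big\|_{L^{2q}}^2 \le C(q)^2 D$ is in hand, the remainder of the argument is routine.
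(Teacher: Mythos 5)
Your proof is correct, but it takes a different route from the paper. The paper's own argument is a two-line reduction: it bounds $\Gamma:=\E[\max_{i\in[N]}\|\bphi_i\|_2^2]\le N^{1/q}\,\E[\|\bphi_1\|_2^{2q}]^{1/q}\le C(q)^2\,N^{1/q}D$ (the same crude $L^q$-max trick you use implicitly) and then invokes a cited black box, Vershynin's Theorem 5.45 for independent isotropic rows, which converts the bound on $\Gamma$ directly into $\E\|\tfrac1N\sum_i\bphi_i\bphi_i^\sT-\id_D\|_{\op}\le \delta\vee\sqrt\delta$ with $\delta\asymp \Gamma\log(N\wedge D)/N$. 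You instead give a self-contained proof: truncate at $R^2=C(q)^2DN^{1/q}$ (note this is exactly the paper's bound on $\Gamma$, so the two proofs hinge on the same quantity), control the tail by H\"older using $\big\|\|\bphi_1\|_2^2\big\|_{L^q}\le C(q)^2D$ and $\P(A_1^c)\le N^{-1}$, and apply matrix Bernstein to the truncated, recentered summands with $L\asymp R^2$ and variance proxy $NR^2$; all of these steps check out (the centering term has operator norm at most $1$, and $\E[\tilde\bZ_1^2]\preceq R^2\,\E[\bphi_1\bphi_1^\sT\ones_{A_1}]\preceq R^2\id_D$). What each buys: the paper's version is shorter and gets the slightly sharper factor $\log(N\wedge D)$, while yours avoids the external citation and makes the truncation/variance trade-off explicit; your $\log D$ (and the paper's $\log(N\wedge D)$) are both dominated by the $\log(D\vee N)$ appearing in the statement, so either suffices. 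One cosmetic point: your conclusion reads $\E\|\cdot\|_{\op}\le K(q)(\delta\vee\sqrt\delta)$ with the constant outside, whereas the statement puts $K(q)$ inside $\delta$; this is equivalent after enlarging $K(q)$ (replace $K_0$ by $c^2K_0$, using that $c(\delta_0\vee\sqrt{\delta_0})\le (c^2\delta_0)\vee\sqrt{c^2\delta_0}$ for $c\ge1$), so it is not a gap, but you should say it.
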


\begin{proof}[Proof of Proposition \ref{prop:YY_new}]~
By the hypercontractivity assumption, cf. Eq.~\eqref{eqn:hyper_YY_new}, we have 
\[
\begin{aligned}
\Gamma :=&~ \E\Big[\max_{i\in [N]} \| \bphi_i \|_2^2 \Big] \le \E\Big[\max_{i\in [N]} \| \bphi_i \|_2^{2q} \Big]^{1/q} \le N^{1/q} \cdot \E[\| \bphi_i \|_2^{2q}]^{1/q} \\
=&~  N^{1/q} \cdot \Big\| \sum_{k = 1}^D \phi_k^2 \Big \|_{L^q} \le N^{1/q} D \cdot \max_{k \in [D]} \| \phi_k \|_{L^{2q}}^2 \le C(q)^2 \cdot N^{1/q} D. 
\end{aligned}
\]
Applying Lemma \ref{lem:independent_row_operator_norm_isotropic} below proves the proposition. 
\end{proof}

\begin{lemma}[\cite{vershynin2010introduction} Theorem 5.45]\label{lem:independent_row_operator_norm_isotropic}
Let $\{ \ba_i \in \R^D \}_{i \in [N]}$ be independent random vectors with $\E[\ba_i \ba_i^\sT] = \id_D$. Denote $\Gamma \equiv \E[\max_{i\in [N]} \| \ba_i \|_2^2]$. Then there exists a universal constant $C$, such that denoting $\delta \equiv C \cdot \Gamma \cdot \log (N \wedge D) / N $, we have
\[
\E\Big[ \Big\| \frac{1}{N} \sum_{i=1}^N \ba_i \ba_i^\sT - \id_D  \Big\|_{\op} \Big] \le \delta \vee \sqrt \delta. 
\]
\end{lemma}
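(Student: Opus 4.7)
The plan is to combine a standard symmetrization step, Rudelson's non-commutative Khintchine inequality, and a self-bounding quadratic argument. Set $E := \E\big\|\tfrac{1}{N}\sum_{i=1}^N \ba_i \ba_i^\sT - \id_D\big\|_{\op}$. Introducing an independent copy $(\ba_i')$ of $(\ba_i)$ and iid Rademacher signs $\eps_i$ independent of everything, the classical symmetrization trick (apply Jensen inside the norm to replace $\id_D$ by the empirical Gram of $(\ba_i')$, then contract with $\eps_i$) gives the bound
\[
E \;\le\; \frac{2}{N}\, \E\Big\|\sum_{i=1}^N \eps_i\, \ba_i \ba_i^\sT\Big\|_{\op}.
\]

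Next, conditionally on $(\ba_i)_{i\le N}$, I would invoke Rudelson's inequality (a direct consequence of the non-commutative Khintchine inequality applied to the rank-one matrices $\ba_i \ba_i^\sT$), which states
\[
\E_{\eps}\Big\|\sum_{i=1}^N \eps_i\, \ba_i \ba_i^\sT\Big\|_{\op}
\;\le\; C\,\sqrt{\log(N\wedge D)}\;\max_{i\in [N]}\|\ba_i\|_2\;\Big\|\sum_{i=1}^N \ba_i \ba_i^\sT\Big\|_{\op}^{1/2}.
\]
The factor $\log(N\wedge D)$ (rather than $\log D$) arises because the matrix in question has rank at most $\min(N,D)$, and the non-commutative Khintchine constant only depends on the rank. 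Taking outer expectation over $(\ba_i)$ and applying Cauchy--Schwarz decouples the two factors, producing $\sqrt{\Gamma}$ from $\E\max_i \|\ba_i\|_2^2$ and $\sqrt{\,\E\,\|\sum_i \ba_i \ba_i^\sT\|_{\op}\,}$ from the Gram factor.

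The self-bounding closure comes from the triangle inequality around $N\cdot \id_D = \sum_i \E[\ba_i\ba_i^\sT]$, giving $\E\|\sum_i \ba_i\ba_i^\sT\|_{\op} \le N + N\cdot E$. Putting the pieces together,
\[
E \;\le\; C'\,\sqrt{\frac{\Gamma\,\log(N\wedge D)}{N}}\;\sqrt{1+E},
\]
so with $\delta := C\,\Gamma\,\log(N\wedge D)/N$ for $C$ chosen as the square of the universal constant above, one obtains the quadratic inequality $E^2 \le \delta\,(1+E)$. Solving this elementary quadratic gives $E \le \tfrac{1}{2}(\delta+\sqrt{\delta^2+4\delta}) \le \sqrt{\delta}+\delta$, and after adjusting the constant $C$ this is absorbed into the stated bound $\delta \vee \sqrt{\delta}$.

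The main technical obstacle is Rudelson's inequality, which I would invoke as a black box from the non-commutative Khintchine literature. An alternative route is to apply the matrix Bernstein inequality after truncating $\|\ba_i\|_2$ at a level of order $\sqrt{\Gamma}\,\mathrm{polylog}$, but controlling the truncation error and recovering the clean form $\delta \vee \sqrt{\delta}$ is fiddlier than the symmetrization--Rudelson route sketched here, so the above approach is preferred.
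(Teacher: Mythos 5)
Your proposal is correct, and it is essentially the argument behind the result as it appears in the literature: the paper does not prove this lemma but imports it verbatim from Vershynin (Theorem 5.45), whose proof is exactly your chain of symmetrization, Rudelson's inequality (non-commutative Khintchine with the $\log(N\wedge D)$ factor coming from the rank of $\sum_i \ba_i\ba_i^\sT$), Cauchy--Schwarz, and the self-bounding quadratic in $E$. The only cosmetic remark is that the rank enters through the choice $p\asymp\log(\mathrm{rank})$ in the Schatten-norm comparison rather than through the Khintchine constant itself, but this does not affect the argument.
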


\subsubsection{Bounding the off-diagonal part of the matrix $U_{>\evN}$ }
\label{sec:vanishing_off_diag}

We state a key proposition) whose proof will be presented in Section \ref{sec:GeneralizedGram}.
The statement and the assumptions are self-contained. 

\begin{proposition}[Bound on the off-diagonal part of the matrix $\bU_{>\evN}$]\label{prop:generalized_Gram}
Let $(\btheta_i )_{i \in [N(d)]} \sim_{iid} \tau_d$. Let $\hUop_d$ be a self-adjoint positive definite operator $\hUop_d: \what \cV_d\to \what  \cV_d$, $\what \cV_d\subseteq L^2(\Omega_d)$ with kernel $\hU_d\in L^2(\Omega_d\times\Omega_d)$ (see Eq.~\eqref{eq:Ukernel}) satisfying $\int_{\Omega_d} \hU_d(\btheta,\btheta') f(\btheta')\, \tau_d(\de\btheta') = 0$ for any $f \in \cV_d^\perp$. Let $(\hat \phi_{j})_{j\ge 1}$ be an orthonormal basis of eigenfunctions with $\spn(\hat \phi_j, j \ge 1) = \what \cV_d \subseteq L^2(\Omega_d)$, and eigenvalues $(\hat \lambda_{d, j})_{j \ge 1} \subseteq \R$ with nonincreasing absolute values $\vert\hat \lambda_{d, 1} \vert \ge \vert \hat \lambda_{d, 2} \vert \ge \cdots$ and $\sum_{j \ge 1} \hat \lambda_{d, j}^2 < \infty$, such that
\[
\begin{aligned}
\what \Uop_d = \sum_{j = 1}^\infty \hat \lambda_{d,j}^2 \hat \phi_{j} \hat \phi_{j}^*, ~~~~~~ \hU_d(\btheta, \btheta') = \sum_{j = 1}^\infty \hat \lambda_{d, j}^2 \hat \phi_j(\btheta) \hat \phi_j(\btheta'). 
\end{aligned}
\]
When $S \subseteq \{ 1, 2, 3,  \ldots \}$, we denote 
\[
\begin{aligned}
\what \Uop_{d, S} = \sum_{j \in S} \hat \lambda_{d,j}^2 \hat \phi_{j} \hat \phi_{j}^*, ~~~~~~ \hU_{d, S}(\btheta, \btheta') = \sum_{j \in S} \hat \lambda_{d, j}^2 \hat \phi_j(\btheta) \hat \phi_j(\btheta').
\end{aligned}
\]

We make the following assumptions: 
\begin{itemize}
\item[{\rm (A1)}] There exists a sequence $\{ v(d) \}_{d \ge 1}$, such that for any fixed $q \ge 1$, there exists $C = C(q, \{ v(d) \}_{d \ge 1})$ such that, for any $f_d \in \what \cV_{d, \le v(d)} \equiv \spn(\hat \phi_s, 1 \le s \le v(d))$, we have
\[
\begin{aligned}
\| f_d \|_{L^{2q}} \le&~ C \cdot \| f_d \|_{L^2}. \\
\end{aligned}
\]
\item[{\rm (A2)}] For the same sequence $\{ v(d) \}_{d \ge 1}$ as in {\rm (A1)},  there exists fixed $\delta_0 > 0$, such that 
\[
\Trace( \hUop_{d, > v(d)}^2) \cdot N(d)^{2 + \delta_0} = O_d(1) \cdot \Trace(\hUop_{d, > v(d)})^2.
\]
\item[{\rm (A3)}] There exists $\delta_0 > 0$, such that 
\begin{equation}\label{eqn:generalized_Gram_N_parameter_assumption}
N(d)^{1 + \delta_0}  \cdot \| \hUop_d \|_{\op} = O_d(1) \cdot \Trace(\hUop_d).
\end{equation}
\end{itemize}
Consider the random matrix $\bQ = (Q_{ij})_{i, j \in [N(d)]} \in \R^{N \times N}$, with 
\[
Q_{ij} = \hU_d(\btheta_i, \btheta_j) \ones_{i \neq j}. 
\]
Then there exists $\delta' > 0$, such that 
\[
\E[\| \bQ \|_{\op}] = O_{d}(N^{- \delta'}) \cdot \Trace(\hUop_d). 
\]
\end{proposition}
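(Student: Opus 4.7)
The plan is to split the kernel operator at the index $v = v(d)$ given by assumption (A1), writing $\hUop_d = \hUop_{d,\le v} + \hUop_{d,>v}$, and correspondingly $\bQ = \bQ_L + \bQ_H$ where $\bQ_L$ and $\bQ_H$ are the off-diagonal parts of the empirical kernel matrices $(\hU_{d,\le v}(\btheta_i,\btheta_j))_{i,j}$ and $(\hU_{d,>v}(\btheta_i,\btheta_j))_{i,j}$ respectively. I would control $\bQ_H$ by a direct Frobenius-norm bound, exploiting the spectral spread encoded by (A2); and I would control $\bQ_L$ by viewing it as an off-diagonal of a low-rank PSD Gram matrix whose singular values concentrate via Proposition \ref{prop:YY_new}.

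For the tail piece $\bQ_H$, the orthonormality of $\{\hat\phi_s\}_{s\ge 1}$ under $\tau_d$ gives, for $i\neq j$, $\E[\hU_{d,>v}(\btheta_i,\btheta_j)^2] = \sum_{s>v}\hat\lambda_{d,s}^4 = \Trace(\hUop_{d,>v}^2)$, and hence
\begin{align*}
\E\|\bQ_H\|_F^2 \;=\; N(N-1)\,\Trace(\hUop_{d,>v}^2).
\end{align*}
Combining $\|\bQ_H\|_{\op}\le\|\bQ_H\|_F$, Jensen's inequality, and assumption (A2) yields $\E\|\bQ_H\|_{\op} = O_d(N^{-\delta_0/2})\cdot\Trace(\hUop_d)$.

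For the head piece $\bQ_L$, introduce $\hat\bPhi_{\le v} = (\hat\phi_s(\btheta_i))_{i\in [N],\,s\in[v]}$ and $\hat\bD_{\le v} = \diag(\hat\lambda_{d,1},\ldots,\hat\lambda_{d,v})$, so that $\bU^{\le v} := \hat\bPhi_{\le v}\hat\bD_{\le v}^2\hat\bPhi_{\le v}^\sT$ satisfies $(\bU^{\le v})_{ij} = \hU_{d,\le v}(\btheta_i,\btheta_j)$ and $\bQ_L = \bU^{\le v} - \diag(\bU^{\le v})$. Since $\bU^{\le v}$ is positive semidefinite, $(\bU^{\le v})_{ii}\le\|\bU^{\le v}\|_{\op}$, so $\|\bQ_L\|_{\op}\le 2\|\bU^{\le v}\|_{\op}$. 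The identity of nonzero singular values gives
\begin{align*}
\|\bU^{\le v}\|_{\op} \;=\; \|\hat\bD_{\le v}\hat\bPhi_{\le v}^\sT\hat\bPhi_{\le v}\hat\bD_{\le v}\|_{\op} \;\le\; \|\hUop_d\|_{\op}\cdot \|\hat\bPhi_{\le v}^\sT\hat\bPhi_{\le v}\|_{\op}.
\end{align*}
Proposition \ref{prop:YY_new} applied to the orthonormal system $(\hat\phi_1,\ldots,\hat\phi_v)$ under hypercontractivity (A1), with $q$ chosen sufficiently large, yields $\E\|\hat\bPhi_{\le v}^\sT\hat\bPhi_{\le v}/N - \id_v\|_{\op} = o_d(1)$, and assumption (A3) converts the remaining $N\|\hUop_d\|_{\op}$ factor into $O_d(N^{-\delta_0})\Trace(\hUop_d)$. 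Summing the two contributions gives $\E\|\bQ\|_{\op} = O_d(N^{-\delta'})\Trace(\hUop_d)$ with $\delta' = \delta_0/2$.

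The main technical obstacle is the application of Proposition \ref{prop:YY_new}: its bound is $o_d(1)$ only when $v(d)\log(N)/N^{1-1/q}\to 0$ for some admissible $q$, which forces $v(d)\le N^{1-\eta}$ for some $\eta>0$. In this regime the hypercontractivity constant $C(q,\{v(d)\}_{d\ge 1})$ from (A1) stays finite and the plan goes through. If the sequence $v(d)$ were allowed to be larger, one would need to replace the Gram-matrix concentration by a direct moment-method bound $\E\Trace((\bQ_L)^{2k})$ for large even $2k$, carefully tracking the combinatorics of cycle diagrams and extracting the gain from the zero-diagonal structure to recover the same $N^{-\delta'}$ scaling.
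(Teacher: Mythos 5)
Your tail bound (Frobenius norm plus (A2)) coincides with the paper's treatment of $\bQ_{>v(d)}$ and is fine. The gap is in the head piece. You bound the off-diagonal by the full matrix, $\|\bQ_L\|_{\op}\le 2\|\bU^{\le v}\|_{\op}$, and then need $\|\hat\bPhi_{\le v}^\sT\hat\bPhi_{\le v}\|_{\op}\approx N$ via Proposition \ref{prop:YY_new}. As you yourself note, that only works when $v(d)\le N^{1-\eta}$ — but nothing in (A1)--(A3) implies such a bound, and in the paper's actual use of this proposition (inside Theorem \ref{prop:expression_U}, applied to $\hUop_d=\Uop_{d,>\evN}$ with $v(d)=u(d)-\evN(d)$) one has $v(d)$ polynomially \emph{larger} than $N$: on the sphere $u(d)=\Theta_d\big(d^{2\max(\lvn,\lvN)+1}\big)$ while $N\le d^{\lvN+1-\delta_0}$. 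When $v>N$ the matrix $\hat\bPhi_{\le v}^\sT\hat\bPhi_{\le v}/N$ is rank deficient and cannot be close to $\id_v$; its operator norm is at least $\max_{i}\sum_{s\le v}\hat\phi_s(\btheta_i)^2\approx v\gg N$, so your chain degrades to $O\big(v\,\|\hUop_d\|_{\op}\big)$, which (A3) does not make small. More fundamentally, bounding the off-diagonal by the full matrix discards exactly the gain you need: the diagonal entries of $\bU^{\le v}$ concentrate around $\Trace(\hUop_{d,\le v})$, which in the applications is of the same order as $\Trace(\hUop_d)$, so $\|\bU^{\le v}\|_{\op}\ge\max_i(\bU^{\le v})_{ii}=\Theta\big(\Trace(\hUop_d)\big)$ and no estimate of the form $O(N^{-\delta'})\Trace(\hUop_d)$ can come out of this route in general.

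The paper closes this case with a genuinely different mechanism (Proposition \ref{prop:generalized_Gram_double_new}): matrix decoupling, $\E\|\bDelta\|_{\op}\le 4\sup_{T}\E\|\bDelta_{TT^c}\|_{\op}$, followed by concentration for matrices with independent rows applied to the rectangular blocks, with hypercontractivity of the kernel (Lemma \ref{lem:hypercontractivity_basis_kernel}) controlling the row norms and second moments; this yields $\E\|\bDelta\|_{\op}\le K(p)\big\{N\|\hUop_{d,\le v}\|_{\op}+[\|\hUop_{d,\le v}\|_{\op}\Trace(\hUop_{d,\le v})N^{1+2/p}\log N]^{1/2}\big\}$, a bound that never references the dimension $v$ and is then turned into $O(N^{-\delta'})\Trace(\hUop_d)$ by (A3). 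Your closing suggestion of a trace/moment-method fallback for large $v(d)$ is only gestured at, not carried out, so as written the proposal does not prove the proposition in the regime in which it is actually invoked.
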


\subsection{Proof of Proposition \ref{prop:generalized_Gram}}
\label{sec:GeneralizedGram}

We begin by stating two key estimates which are used in the proof of Proposition \ref{prop:generalized_Gram}. The notations of Lemma \ref{lem:generalized_Gram_higher} follow the notations of Proposition \ref{prop:generalized_Gram}. The notations and assumptions of Proposition \ref{prop:generalized_Gram_double_new} are self-contained. We collect a number of technical lemmas in Section \ref{sec:RF_lower_auxilliary}.

\begin{lemma}\label{lem:generalized_Gram_higher}
Consider the same setup as Proposition \ref{prop:generalized_Gram}. Let $\{ N(d) \}_{d \ge 1}$ and $\{ v(d) \}_{d \ge 1}$ be two sequences, and assume that there exists $\delta_0 > 0$ such that (this is Assumption (A2) in Proposition \ref{prop:generalized_Gram})
\begin{equation}\label{eqn:generalized_Gram_higher1}
N(d)^2 \cdot \Trace (\hUop_{d, > v(d)}^2) = O_d(N^{- \delta_0}) \cdot \Trace(\hUop_{d, > v(d)})^2. 
\end{equation}
Consider the random matrix $\bQ_{> v(d)} = (Q_{> v(d), ij})_{i, j \in [N(d)]} \in \R^{N \times N}$, with 
\[
Q_{> v(d), ij} = \hU_{d, > v(d)}(\btheta_i, \btheta_j) \ones_{i \neq j}. 
\]
Then we have 
\[
\E[\| \bQ_{> v(d)} \|_{\op}^2 ]^{1/2} = O_d(N^{- \delta_0}) \cdot \Trace(\hUop_{d, > v(d)}). 
\]
\end{lemma}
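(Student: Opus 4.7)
My plan is to bound the operator norm of $\bQ_{>v(d)}$ by its Frobenius norm and then compute the expectation of the latter directly using the orthonormality of the eigenfunctions $(\hat \phi_j)_{j \ge 1}$. Since $\bQ_{>v(d)}$ has zero diagonal, I write
\begin{equation*}
\E\bigl[\|\bQ_{>v(d)}\|_{\op}^2\bigr] \le \E\bigl[\|\bQ_{>v(d)}\|_F^2\bigr] = \sum_{i \ne j} \E\bigl[\hU_{d,>v(d)}(\btheta_i,\btheta_j)^2\bigr] = N(N-1)\,\E\bigl[\hU_{d,>v(d)}(\btheta_1,\btheta_2)^2\bigr].
\end{equation*}

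Next, using the spectral representation $\hU_{d,>v(d)}(\btheta_1,\btheta_2) = \sum_{j > v(d)} \hat\lambda_{d,j}^2 \hat\phi_j(\btheta_1)\hat\phi_j(\btheta_2)$, independence of $\btheta_1,\btheta_2$, and orthonormality $\E[\hat\phi_j(\btheta)\hat\phi_k(\btheta)] = \ones_{j=k}$, I would expand the square and obtain
\begin{equation*}
\E\bigl[\hU_{d,>v(d)}(\btheta_1,\btheta_2)^2\bigr] = \sum_{j > v(d)} \hat\lambda_{d,j}^4 = \Trace(\hUop_{d,>v(d)}^2).
\end{equation*}
Combining the two displays gives $\E[\|\bQ_{>v(d)}\|_{\op}^2] \le N^2 \Trace(\hUop_{d,>v(d)}^2)$.

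Finally, I would plug in the hypothesis \eqref{eqn:generalized_Gram_higher1}, which states exactly that $N^2 \Trace(\hUop_{d,>v(d)}^2) = O_d(N^{-\delta_0}) \cdot \Trace(\hUop_{d,>v(d)})^2$, to conclude
\begin{equation*}
\E\bigl[\|\bQ_{>v(d)}\|_{\op}^2\bigr]^{1/2} = O_d(N^{-\delta_0/2}) \cdot \Trace(\hUop_{d,>v(d)}),
\end{equation*}
which matches the claimed bound up to the replacement of $\delta_0$ by $\delta_0/2$ (a positive constant, which is all the statement requires).

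There is no genuine obstacle here: the argument is essentially a one-line Frobenius-norm moment computation, relying on nothing beyond orthonormality of the eigenbasis and the quantitative tail-decay assumption \eqref{eqn:generalized_Gram_higher1}. The only subtlety, worth noting but not difficult, is that the Frobenius bound costs a factor of $N$ (hence the factor of two in the exponent), which is why this lemma alone cannot deliver the sharper conclusion of Proposition \ref{prop:generalized_Gram}; the latter must additionally exploit the hypercontractivity of the low-degree eigenspace (A1) and the operator-norm gap (A3), presumably through a truncation/decoupling argument where this lemma controls only the high-frequency tail beyond $v(d)$.
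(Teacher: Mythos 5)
Your proof is correct and is essentially identical to the paper's: bound $\|\bQ_{>v(d)}\|_{\op}$ by the Frobenius norm, compute $\E[\hU_{d,>v(d)}(\btheta_1,\btheta_2)^2]=\Trace(\hUop_{d,>v(d)}^2)$ via orthonormality, and invoke \eqref{eqn:generalized_Gram_higher1}. Your remark about the exponent $\delta_0/2$ is a fair point of care that the paper glosses over, and it is harmless since only the existence of some positive rate is used downstream.
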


\begin{proposition}[Vanishing off-diagonal]\label{prop:generalized_Gram_double_new}
Let $\overline \Uop$ be a compact self-adjoint positive definite operator on a closed subspace $\overline \cV \subseteq L^2(\Omega, \tau)$, $\overline \Uop: \overline \cV \to \overline \cV$, with corresponding kernel $\overline U \in L^2(\Omega \times \Omega)$, satisfying $\int_{\Omega} \overline U(\btheta,\btheta') \, f(\btheta')\, \tau(\de\btheta')=0$ for all $f \in \overline \cV^{\perp}$. For any $q \ge 1$, we assume that there exists $C(q)$ such that
\begin{equation}\label{eqn:hypercontractivity_overline}
\begin{aligned}
\E_{\btheta_1, \btheta_2 \sim \tau}[\vert \overline U(\btheta_1, \btheta_2)\vert ^{2q}]^{1/(2q)} \le&~ C(q) \cdot \E_{\btheta_1, \btheta_2 \sim \tau}[\overline U(\btheta_1, \btheta_2)^{2}]^{1/2}, \\
\E_{\btheta \sim \tau}[\vert \overline U(\btheta, \btheta)\vert^{q}]^{1/q} \le&~ C(q) \cdot \E_{\btheta \sim \tau}[\overline U(\btheta, \btheta)]. 
\end{aligned}
\end{equation}
  
Moreover, let $\{ \btheta_i \}_{i \in [N]} \sim_{iid} \tau$ independently, and consider $\bDelta = (\Delta_{ij})_{i, j \in [N]} \in \R^{N \times N}$, with 
\[
\Delta_{ij} = \overline U(\btheta_i, \btheta_j) \ones_{i \neq j}. 
\]
Then for any integer $p > 0$, there exists a constant $K(p)$ which only depends on the constant $C(p)$, such that
\begin{align}
\E[\| \bDelta \|_{\op}] \le K(p) \cdot \Big\{ N \| \overline \Uop \|_{\op} + [\| \overline \Uop \|_{\op} \Trace(\overline \Uop) N^{1 + 2/p} \log N ]^{1/2}\Big\}. 
\end{align}
\end{proposition}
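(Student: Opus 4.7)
The plan is to decompose $\bDelta$ into the full (positive semidefinite) empirical kernel matrix minus its diagonal, and then apply a matrix Bernstein inequality to the full kernel.

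\textbf{Decomposition.} Write $\bDelta = \bK - \bD$, with $K_{ij} = \overline U(\btheta_i,\btheta_j)$ and $\bD = \diag\bigl(\overline U(\btheta_i,\btheta_i)\bigr)_{i \in [N]}$. By the triangle inequality,
\begin{equation*}
\E\|\bDelta\|_{\op} \le \E\|\bK\|_{\op} + \E\|\bD\|_{\op}.
\end{equation*}
For the diagonal contribution, use the moment bound together with the second inequality of \eqref{eqn:hypercontractivity_overline}:
\begin{equation*}
\E\|\bD\|_{\op} = \E\max_{i\in[N]} |\overline U(\btheta_i,\btheta_i)| \le N^{1/p}\bigl(\E|\overline U(\btheta,\btheta)|^{p}\bigr)^{1/p} \le C(p)\, N^{1/p}\Tr(\overline\Uop),
\end{equation*}
which, after AM--GM against $N\|\overline\Uop\|_{\op}$ and $\sqrt{\|\overline\Uop\|_{\op}\Tr(\overline\Uop)N^{1+2/p}\log N}$, is absorbed in the target bound.

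\textbf{Factorization of $\bK$.} Using the spectral expansion of $\overline U$, set $\bPsi_i := (\hat\lambda_k\hat\phi_k(\btheta_i))_{k\ge 1} \in \ell^{2}$ so that $\overline U(\btheta_i,\btheta_j) = \langle\bPsi_i,\bPsi_j\rangle$. Hence $\bK = \bPsi\bPsi^{\sT}$ and
\begin{equation*}
\|\bK\|_{\op} = \Bigl\|\sum_{i=1}^{N} \bPsi_i\bPsi_i^{\sT}\Bigr\|_{\op},
\end{equation*}
a sum of $N$ i.i.d.\ rank-one positive operators on $\ell^{2}$ with common mean $\overline\Uop$.

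\textbf{Matrix Bernstein.} Apply an unbounded-summand matrix Bernstein inequality (Tropp-type, in its moment form) to $\sum_i(\bPsi_i\bPsi_i^{\sT} - \overline\Uop)$. The two required inputs are:
(i) the variance proxy $\sigma^{2} = \bigl\|\,N\,\E[(\bPsi_i\bPsi_i^{\sT})^{2}]\bigr\|_{\op}$. Since $\|\bPsi_i\|_{\ell^{2}}^{2} = \overline U(\btheta_i,\btheta_i)$, for any unit $v$,
\begin{equation*}
\langle v,\E[\overline U(\btheta,\btheta)\bPsi\bPsi^{\sT}]v\rangle = \E\!\left[\overline U(\btheta,\btheta)\,\langle v,\bPsi\rangle^{2}\right] \le \E[\overline U(\btheta,\btheta)^{2}]^{1/2}\,\E[\langle v,\bPsi\rangle^{4}]^{1/2}.
\end{equation*}
Because $\langle v,\bPsi\rangle \in \overline\cV$, hypercontractivity \eqref{eqn:hypercontractivity_overline} gives $\E[\langle v,\bPsi\rangle^{4}]^{1/2} \le C^{2}\,\langle v,\overline\Uop v\rangle \le C^{2}\|\overline\Uop\|_{\op}$, and similarly $\E[\overline U(\btheta,\btheta)^{2}]^{1/2}\le C^{2}\Tr(\overline\Uop)$, so $\sigma^{2}\le C^{4}\,N\,\|\overline\Uop\|_{\op}\,\Tr(\overline\Uop)$.
(ii) the moment bound $\max_{i}\|\bPsi_i\bPsi_i^{\sT}\|_{\op}=\max_{i}\overline U(\btheta_i,\btheta_i)$, which by step~2 is controlled in $L^{p}$ by $C(p)N^{1/p}\Tr(\overline\Uop)$.

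\textbf{Combining.} Matrix Bernstein then yields
\begin{equation*}
\E\|\bK\|_{\op} \le N\|\overline\Uop\|_{\op} + K(p)\sqrt{N\|\overline\Uop\|_{\op}\Tr(\overline\Uop)\log N} + K(p)N^{1/p}\Tr(\overline\Uop)\log N,
\end{equation*}
and after bookkeeping the $N^{1/p}\log N$ factor one recovers the claimed estimate. The main obstacle is item~(ii): one must run matrix Bernstein in infinite dimensions with unbounded rank-one summands and carefully convert the moment bound on $\max_{i}\overline U(\btheta_i,\btheta_i)$ into the additive $\sqrt{N^{1+2/p}\log N}$ factor in the final bound; controlling the variance proxy via the two hypercontractivity assumptions simultaneously is the key calculation.
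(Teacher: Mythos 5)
There is a genuine gap, and it occurs at the very first step rather than in the final ``bookkeeping.'' Your decomposition $\bDelta = \bK - \bD$ followed by the triangle inequality cannot yield the stated bound, because both terms on the right are at least of order $\Trace(\overline{\Uop})$: since $\bK$ is positive semidefinite, $\|\bK\|_{\op} \ge \max_{i} K_{ii} = \max_i \overline{U}(\btheta_i,\btheta_i)$, so $\E\|\bK\|_{\op} \ge \E[\overline{U}(\btheta_1,\btheta_1)] = \Trace(\overline{\Uop})$, and likewise $\E\|\bD\|_{\op} \ge \Trace(\overline{\Uop})$. But the proposition must hold (and is only useful) in the high effective-rank regime $\Trace(\overline{\Uop})/\|\overline{\Uop}\|_{\op} \gg N^{1+2/p}\log N$ --- this is exactly the situation created by Assumption (A3) in Proposition \ref{prop:generalized_Gram} with $p>4/\delta_0$ --- and there the claimed bound $K(p)\{N\|\overline{\Uop}\|_{\op} + [\|\overline{\Uop}\|_{\op}\Trace(\overline{\Uop})N^{1+2/p}\log N]^{1/2}\}$ is $o(\Trace(\overline{\Uop}))$. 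Hence no sharpening of the matrix Bernstein step for $\bK$ can close the argument: the subtraction of the diagonal is precisely the cancellation that makes $\bDelta$ small, and it must be exploited \emph{before} the concentration step, not after. The same obstruction is visible in your own estimates: the diagonal term $C(p)N^{1/p}\Trace(\overline{\Uop})$ and the Bernstein term $K(p)N^{1/p}\Trace(\overline{\Uop})\log N$ are absorbable into the target only when $\Trace(\overline{\Uop}) \lesssim N\|\overline{\Uop}\|_{\op}\log N$, i.e.\ precisely when the proposition is not needed. A secondary issue: your variance-proxy computation invokes hypercontractivity of the linear functionals $\<\bv,\bPsi\>$, which is not granted by the hypotheses \eqref{eqn:hypercontractivity_overline} (these only control moments of the kernel at one or two independent points); it holds in the paper's application via subspace hypercontractivity, but not for the proposition as stated.

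For contrast, the paper keeps the zero-diagonal structure throughout: a decoupling step (Lemma \ref{lem:matrix_decoupling}) reduces $\E\|\bDelta\|_{\op}$ to off-diagonal blocks $\bDelta_{T,T^c}$; conditioning on $\{\btheta_j\}_{j\in T^c}$ and applying the independent-rows bound (Lemma \ref{lem:independent_row_operator_norm}) leaves two quantities, $\Sigma(T)=\|\E_{\btheta_u}[\bDelta_{T^c u}\bDelta_{u T^c}]\|_{\op}$ and $\Gamma(T)=\E[\max_{i\in T}\|\bDelta_{iT^c}\|_2^2]$, which are controlled using only the two kernel-moment hypotheses; and since $\Sigma(T)$ itself involves $\|\bDelta\|_{\op}$, the proof closes with a self-bounding quadratic inequality in $\E\|\bDelta\|_{\op}$. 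If you want to pursue a Bernstein-type route, it would have to be applied to the decoupled off-diagonal blocks (or to some diagonal-free surrogate), not to the full Gram matrix $\bK$.
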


We are now in position to prove Proposition \ref{prop:generalized_Gram}.
\begin{proof}[Proof of Proposition \ref{prop:generalized_Gram}]
We decompose the operator $\hUop_d = \hUop_{d, \le v(d)} + \hUop_{d, > v(d)}$, and the kernel $\hU_d = \hU_{d, \le v(d)} + \hU_{d, > v(d)}$. Define $\bQ_{S} = (Q_{S, ij})_{i, j \in [N(d)]}$ with 
\[
Q_{S, ij} = U_{d, S}(\btheta_i, \btheta_j) \ones_{i \neq j}. 
\]
By Assumption {\rm (A2)} and by Lemma \ref{lem:generalized_Gram_higher}, we have 
\[
\E[\| \bQ_{> v(d)} \|_{\op}^2]^{1/2} = O_d(N^{-\delta_0}) \cdot \Trace(\hUop_{d, > v(d)}). 
\]
By Assumption {\rm (A1)} and Lemma \ref{lem:hypercontractivity_basis_kernel} which is stated in Section \ref{sec:RF_lower_auxilliary} below, the assumptions of Proposition \ref{prop:generalized_Gram_double_new} are satisfied, in which we take $\bDelta = \bQ_{\le v(d)}$, $\overline \Uop = \hUop_{d, \le v(d)}$, $\overline U = \hU_{d, \le v(d)}$, and $\ocV \equiv\spn(\phi_s:\; 1 \le s\le v(d))$. Further by Assumption (A3) as in Eq. (\ref{eqn:generalized_Gram_N_parameter_assumption}), we fix some $p > 4/ \delta_0$ in Proposition \ref{prop:generalized_Gram_double_new}, then for $\delta'  = \delta_0 / 4 > 0$, we have
\[
\E[\| \bQ_{\le v(d)} \|_{\op}] = O_d(N^{- \delta'}) \cdot \Trace(\hUop_{d, \le v(d)}). 
\]
Combining the  equations in the last two displays proves the proposition. 
\end{proof}

We next prove Lemma  \ref{lem:generalized_Gram_higher} and Proposition \ref{prop:generalized_Gram_double_new}.
\begin{proof}[Proof of Lemma \ref{lem:generalized_Gram_higher}] We have 
\[
\begin{aligned}
\E[\| \bQ_{> v(d)} \|_{\op}^2] \le&~ \E[\| \bQ_{> v(d)} \|_{F}^2] = N(N-1) \cdot \E[Q_{> v(d), ij}^2] \\
=&~ N (N-1)  \cdot \Trace(\hUop_{d, > v(d)}^2) = O_d(N^{- \delta_0}) \cdot \Trace(\hUop_{d, > v(d)})^2. 
\end{aligned}
\]
where the last equation is by Eq. (\ref{eqn:generalized_Gram_higher1}). This proves the lemma. 
\end{proof}

\begin{proof}[Proof of Proposition \ref{prop:generalized_Gram_double_new}]~
With a little abuse of notation, we define $\overline \bU = (\overline U(\btheta_i, \btheta_j))_{i, j \in [N]}\in \reals^{N\times N}$. 

\noindent
{\bf Step 1. Bound $\E[\| \bDelta \|_{\op}]$ using matrix decoupling. } For $T_1, T_2 \subseteq [N]$, we denote $\bA_{T_1, T_2} = (A_{ij})_{i \in T_1, j \in T_2}$. By Lemma \ref{lem:matrix_decoupling} which is stated in Section \ref{sec:RF_lower_auxilliary} below, we have 
\begin{align}\label{eqn:IPB1}
\E[\| \bDelta\|_{\op}] \le 4 \sup_{T \subseteq [N]} \E[\| \bDelta_{T T^c} \|_{\op}]. 
\end{align}
For any $S \subseteq [N]$, we denote $\E_S$ to be the expectation with respect to $\{ \btheta_i \}_{i \in S}$ and conditional on $\{ \btheta_j \}_{j \in S^c}$. Fix $T \subseteq [N]$. Using Lemma \ref{lem:independent_row_operator_norm} (which is stated in Section \ref{sec:RF_lower_auxilliary} below) conditioning on $\{ \btheta_j \}_{j \in T^c}$, we have 
\[
\begin{aligned}
\E_T[\| \bDelta_{T T^c} \|_{\op}] \le [ \Sigma(T) \cdot N]^{1/2} + C \cdot (\Gamma(T) \cdot  \log N )^{1/2},
\end{aligned}
\]
where $\Sigma(T) \equiv \| \E_{\btheta_u}[\bDelta_{T^c u} \bDelta_{u T^c}] \|_{\op}$ (for some $u \in T$) and $\Gamma(T) \equiv \E_T[\max_{i \in T} \| \bDelta_{i T^c} \|_2^2]$. Therefore, by Holder's inequality, we have 
\begin{align}\label{eqn:IPB2}
\begin{aligned}
\E[\| \bDelta\|_{\op}] \le&~ 4 \sup_{T \subseteq [N]} \E[\| \bDelta_{T T^c} \|_{\op}] = 4 \sup_{T \subseteq [N]}\E_{T^c} \E_T[\| \bDelta_{T T^c} \|_{\op}]  \\
\le &~ 4 \sup_{T \subseteq [N]} \Big\{  [\E_{T^c}[\Sigma(T)] \cdot N]^{1/2} + C \cdot (\E_{T^c}[\Gamma(T)] \cdot \log N )^{1/2} \Big\}.
\end{aligned}
\end{align}

\noindent
{\bf Step 3. Bound $\E_{T^c}[ \Sigma(T)]$. } By the compactness of operator $\overline \Uop \vert_{\overline \cV}$, there exists orthogonal basis $\{ \phi_k \}_{k \ge 1}$ and real numbers $\{ \lambda_k \}_{k \ge 1}$, such that $\overline U(\btheta_i, \btheta_j) = \sum_{k} \lambda_k^2 \phi_k(\btheta_i) \phi_k(\btheta_j)$. Therefore, we have
\[
\begin{aligned}
\Sigma(T) =&~ \| \E_{\btheta_u}[\bDelta_{T^c u} \bDelta_{u T^c}] \|_{\op} = \sup_{\| \bz \|_2 = 1} \sum_{i, j \in T^c} \sum_{k} \lambda_{k}^4 \phi_k(\btheta_i) \phi_k(\btheta_j) z_i z_j \\
\le&~ \| \overline \Uop \|_\op \cdot \sup_{\| \bz \|_2 = 1} \sum_{i,j \in T^c} \sum_{k} \lambda_{k}^2 \phi_k(\btheta_i) \phi_k(\btheta_j) z_i z_j \\
=&~ \| \overline \Uop \|_{\op}  \cdot \| ( \overline U_{ij} )_{i, j \in T^c} \|_{\op} \le \| \overline \Uop \|_{\op}  \cdot [ \| \ddiag(\overline \bU) \|_{\op} + \| \bDelta \|_{\op}].  
\end{aligned}
\]
Note by the hypercontractivity assumption as in Eq. (\ref{eqn:hypercontractivity_overline}), we have
\[
\begin{aligned}
\E[\| \ddiag(\overline \bU) \|_{\op}] \le&~ \E\Big[\sum_{i = 1}^N \overline U_{ii}^{p} \Big]^{1/p} \le N^{1/p} \cdot \E[\overline U_{ii}^p]^{1/p} \le C(p) N^{1/p} \cdot \E[\overline U_{ii}] \le C(p) N^{1/p} \cdot \Trace(\overline \Uop). 
\end{aligned}
\]
This gives
\begin{align}\label{eqn:IPB3}
\E_{T^c}[ \Sigma(T) ] \le&~  C(p) N^{1/p} \cdot \| \overline \Uop \|_{\op} \Trace(\overline \Uop) + \| \overline \Uop \|_{\op} \E[\| \bDelta \|_{\op}].  
\end{align}

\noindent
{\bf Step 4. Bound $\E_{T^c}[\Gamma(T)]$. } By the hypercontractivity assumption as in Eq. (\ref{eqn:hypercontractivity_overline}), we have
\begin{equation}\label{eqn:IPB4}
\begin{aligned}
\E_{T^c}[\Gamma(T)] \equiv&~ \E\Big[\max_{i \in T} \| \bDelta_{i T^c} \|_2^2 \Big] \le N \cdot \E \Big[\max_{i \in T}\max_{j \in T^c} \Delta_{ij}^2 \Big] \\
\le&~ N \cdot \E\Big[\max_{i \in T, j\in T^c} \Delta_{i j}^{2p} \Big]^{1/p} \le N^{1 + 2/p} \cdot \E[ \Delta_{i j}^{2p}]^{1/p} \\
=&~ C(p)^2 N^{1 + 2/p} \cdot \E[ \Delta_{i j}^2] \le C(p)^2 N^{1 + 2/p} \cdot \| \overline \Uop\|_{\op} \Trace(\overline \Uop).  
\end{aligned}
\end{equation}
The last inequality holds since $\E[\Delta_{ij}^2] = \E\{[\sum_k \lambda_k \phi_k(\btheta_i) \phi_k(\btheta_j)]^2\} = \sum_{k} \lambda_k^4 \le \| \overline \Uop \|_{\op} \Trace(\overline \Uop)$.

\noindent
{\bf Step 5. Combining the equations. } Combining Eq. (\ref{eqn:IPB2}), (\ref{eqn:IPB3}), and (\ref{eqn:IPB4}), we have 
\[
\begin{aligned}
\E[\| \bDelta\|_{\op}] \le&~ 4 \sup_{T \subseteq [N]} \Big\{ [\E_{T^c}[\Sigma(T)] \cdot N]^{1/2} + C \cdot (\E_{T^c}[\Gamma(T)] \cdot  \log N )^{1/2} \Big\}\\
\le&~ K(p) \Big\{ \{ \| \overline \Uop \|_{\op} \Trace(\overline \Uop) N^{1 + 2/p} \log N \}^{1/2} + \{ N \| \overline \Uop \|_{\op} \E[\| \bDelta \|_{\op}] \}^{1/2}\Big\}. 
\end{aligned}
\]
Denote $\eps_1 =  K(p) (N \| \overline \Uop \|_{\op} )^{1/2} \ge 0$ and $\eps_2 =  K(p)\{ \| \overline \Uop \|_{\op} \Trace(\overline \Uop) N^{1 + 2/p} \log N \}^{1/2} \ge 0$, $x = \E[\| \bDelta\|_{\op}]^{1/2}$. The above inequality implies $x^2 - \eps_1 x - \eps_2 \le 0$, which gives $x \le [\eps_1 + (\eps_1^2 + 4 \eps_2)^{1/2} ] /2 \le (\eps_1^2 + 4 \eps_2)^{1/2}$. This concludes the proof. 
\end{proof}

\subsubsection{Auxiliary lemmas}\label{sec:RF_lower_auxilliary}

The following standard decoupling trick follows, for instance, from \cite{vershynin2010introduction}
in Lemma 5.60.
\begin{lemma}[Matrix decoupling]\label{lem:matrix_decoupling}
Let $\bA \in \R^{N \times N}$ be a real symmetric random matrix. For $T_1, T_2 \subseteq \{ 1, 2, \ldots, N\}$, we denote $\bA_{T_1, T_2} = (A_{ij})_{i \in T_2, j \in T_2}$. Then we have 
\[
\E[\| \bA - \ddiag(\bA) \|_{\op} ] \le 4 \max_{T \subseteq [N]} \E[\| \bA_{T, T^c} \|_{\op}]. 
\]
\end{lemma}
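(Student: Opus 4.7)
The plan is to use a standard Rademacher/random-subset decoupling argument to reduce the operator norm of the off-diagonal part of $\bA$ to that of its $T \times T^c$ sub-blocks. The key observation is the combinatorial identity: for a uniformly random subset $T\subseteq[N]$ (equivalently, i.i.d.\ fair coin flips assigning each index to $T$ or $T^c$), one has $\P(i\in T,\, j\in T^c) = 1/4$ when $i\ne j$, and $\P(i\in T,\, j\in T^c)=0$ when $i=j$. Consequently
\[
\mathbf{1}_{i\ne j} \;=\; 2\,\E_T\!\left[\mathbf{1}_{i\in T}\mathbf{1}_{j\in T^c} + \mathbf{1}_{i\in T^c}\mathbf{1}_{j\in T}\right]\quad\text{for all } i,j\in[N].
\]
Letting $\bP_T\in\R^{N\times N}$ denote the diagonal coordinate projection onto $T$ (so $\bP_T+\bP_{T^c}=\id_N$), this identity lifts to the matrix level as
\[
\bA - \ddiag(\bA) \;=\; 2\,\E_T\!\left[\bP_T\bA\bP_{T^c} + \bP_{T^c}\bA\bP_T\right],
\]
which holds pointwise in $\bA$ (not just in expectation over the randomness of $\bA$).

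Next I would take operator norms on both sides, apply Jensen's inequality to pull $\|\cdot\|_{\op}$ inside $\E_T$, and use the triangle inequality. The symmetry hypothesis $\bA=\bA^\sT$ then implies $\bP_{T^c}\bA\bP_T = (\bP_T\bA\bP_{T^c})^\sT$, so the two cross terms have equal operator norm, giving
\[
\|\bA-\ddiag(\bA)\|_{\op} \;\le\; 4\,\E_T\!\left[\|\bP_T\bA\bP_{T^c}\|_{\op}\right].
\]
Because $\bP_T\bA\bP_{T^c}$ has nonzero entries exactly on the block indexed by $T\times T^c$, its operator norm coincides with $\|\bA_{T,T^c}\|_{\op}$.

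Finally, I would take expectation over $\bA$, use Fubini to exchange $\E$ and $\E_T$ (valid since everything is nonnegative), and bound the average over $T$ by the maximum:
\[
\E\big[\|\bA-\ddiag(\bA)\|_{\op}\big] \;\le\; 4\,\E_T\,\E\big[\|\bA_{T,T^c}\|_{\op}\big] \;\le\; 4\max_{T\subseteq[N]}\E\big[\|\bA_{T,T^c}\|_{\op}\big],
\]
which is the desired inequality.

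There is no genuine obstacle here: the argument is essentially the one in Vershynin's notes (Lemma 5.60) and is purely algebraic/probabilistic, with all randomness of $\bA$ confined to the final expectation. The only two points requiring care are the elementary verification of the subset-averaging identity (using $\P(i\in T,j\in T^c)=1/4$ for $i\ne j$), and the use of symmetry of $\bA$ to merge the two cross-block terms so that one obtains the constant $4$ rather than an unavoidable factor involving both $\bA_{T,T^c}$ and $\bA_{T^c,T}$.
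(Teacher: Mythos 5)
Your proof is correct and is essentially the paper's own argument (the random-subset decoupling of Vershynin, Lemma 5.60): the paper phrases the key identity via quadratic forms, $\<\bx,[\bA-\ddiag(\bA)]\bx\> = 4\,\E_T\big[\sum_{i\in T, j\in T^c}A_{ij}x_ix_j\big]$, whereas you state it at the matrix level with coordinate projections, but both rest on $\P(i\in T, j\in T^c)=1/4$ for $i\neq j$, followed by Jensen and bounding the average over $T$ by the maximum. No substantive difference.
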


\begin{proof}[Proof of Lemma \ref{lem:matrix_decoupling}]

Let $T$ be a random subset of $\{ 1, 2, \ldots, N\}$, with each element selected with probability $1/2$ independently. For any $\bx \in \S^{N-1}$, we have
\[
\< \bx, [\bA - \ddiag(\bA)] \bx \> = 4 \E_T \Big[ \sum_{i \in T, j \in T^c} A_{ij} x_i x_j \Big]. 
\]
By Jensen's inequality we have 
\[
\begin{aligned}
\E[\| \bA - \ddiag(\bA) \|_{\op}] =&~ \E_\bA\Big[ \sup_{\bx \in \S^{N -1}} \< \bx, [\bA - \ddiag(\bA)] \bx\> \Big] \le 4  \E_T \E_\bA \Big[ \sup_{\bx \in \S^{N - 1}}\sum_{i \in T, j \in T^c} A_{ij} x_i x_j \Big] \\
\le&~ 4  \sup_{T \subseteq [N]} \E[ \| A_{T T^c} \|_{\op}].  
\end{aligned}
\]
This completes the proof. 
\end{proof}

\begin{lemma}[\cite{vershynin2010introduction} Theorem 5.48]\label{lem:independent_row_operator_norm}
Let $\bA \in \R^{N \times n}$ with $\bA^\sT = [\ba_1, \ldots, \ba_N]$ where $\ba_i$ are independent random vectors in $\R^n$ with the common second moment matrix $\bSigma = \E[\ba_i \ba_i^\sT]$. Let $\Gamma \equiv \E[\max_{i\in [N]} \| \ba_i \|_2^2]$. Then there exists a universal constant $C$, such that
\[
\E[\| \bA \|_{\op}^2]^{1/2} \le ( \| \bSigma \|_{\op} \cdot N)^{1/2} + C \cdot (\Gamma \cdot \log (N \wedge n))^{1/2}. 
\]
\end{lemma}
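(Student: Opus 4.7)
The plan is to reduce the operator norm bound to a concentration statement on $\bA^{\sT}\bA$, which decomposes as a sum of $N$ independent rank-one matrices. Using $\|\bA\|_{\op}^2 = \|\bA^{\sT}\bA\|_{\op}$ and $\bA^{\sT}\bA = \sum_{i=1}^{N}\ba_i\ba_i^{\sT}$ with $\E[\bA^{\sT}\bA] = N\bSigma$, the $L^2$ triangle inequality gives
\begin{align*}
\E[\|\bA\|_{\op}^2]^{1/2} \le \sqrt{N\|\bSigma\|_{\op}} + \sqrt{E}, \qquad E := \E\Big\|\sum_{i=1}^{N}(\ba_i\ba_i^{\sT} - \bSigma)\Big\|_{\op}.
\end{align*}
Thus it suffices to show $\sqrt{E} \lesssim \sqrt{\Gamma\log(N\wedge n)}$, up to a lower-order contribution that can be absorbed into the $\sqrt{N\|\bSigma\|_{\op}}$ term.

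The main step is a truncated matrix Bernstein argument applied to $\bZ_i := \ba_i\ba_i^{\sT} - \bSigma$. Conditioning on $(\ba_i)$ and setting $t := \max_i\|\ba_i\|_2^2$, we have $\|\bZ_i\|_{\op} \le 2t$ and the matrix variance proxy satisfies $\|\sum_i\E[\bZ_i^2 \mid (\ba_i)]\|_{\op} \le Nt\|\bSigma\|_{\op}$. Since the sum has range in a space of dimension at most $N\wedge n$ (passing to $\bA\bA^{\sT}$ in case $N<n$, which shares the nonzero spectrum of $\bA^{\sT}\bA$), matrix Bernstein yields the conditional bound
\begin{align*}
\E\Big[\Big\|\sum_i\bZ_i\Big\|_{\op}\,\Big|\,(\ba_i)\Big] \lesssim \sqrt{Nt\|\bSigma\|_{\op}\log(N\wedge n)} + t\log(N\wedge n).
\end{align*}
Taking expectation in $(\ba_i)$, using $\E[t] = \Gamma$ (with Jensen inside the square root) gives $E \lesssim \sqrt{N\Gamma\|\bSigma\|_{\op}\log(N\wedge n)} + \Gamma\log(N\wedge n)$. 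Taking square roots and using AM--GM via $(N\Gamma\|\bSigma\|_{\op}\log(N\wedge n))^{1/4} \le \tfrac{1}{2}\sqrt{N\|\bSigma\|_{\op}} + \tfrac{1}{2}\sqrt{\Gamma\log(N\wedge n)}$ then absorbs the cross term.

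The main obstacle is achieving the exact coefficient $1$ in front of $\sqrt{N\|\bSigma\|_{\op}}$ rather than an absolute multiple: the naive AM--GM above produces a factor $3/2$ on this leading term, which does not quite match the stated form. To recover the sharp constant, one replaces the triangle-inequality step by a symmetrization with Rademacher signs $(\eps_i)$ followed by Rudelson's noncommutative Khintchine inequality,
\begin{align*}
\E_\eps\Big\|\sum_i\eps_i\ba_i\ba_i^{\sT}\Big\|_{\op} \le C\sqrt{\log(N\wedge n)}\cdot\max_i\|\ba_i\|_2\cdot\Big\|\sum_i\ba_i\ba_i^{\sT}\Big\|_{\op}^{1/2},
\end{align*}
and takes $\E$ in $(\ba_i)$ with Cauchy--Schwarz, producing a self-bounding inequality of the form $E \le C'\sqrt{\Gamma\log(N\wedge n)}\sqrt{N\|\bSigma\|_{\op}+E}$ that solves to the sharp bound with leading constant exactly $1$ on the $\sqrt{N\|\bSigma\|_{\op}}$ term.
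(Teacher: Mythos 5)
A point of reference first: the paper does not prove this lemma at all --- it is imported verbatim from Vershynin (Theorem 5.48) --- so the relevant comparison is with Vershynin's own argument, which is precisely the symmetrization-plus-Rudelson (noncommutative Khintchine) route you relegate to your final paragraph. That closing argument is correct and is essentially the standard proof: symmetrize to get $E\le 2\,\E\,\E_\eps\|\sum_i\eps_i\ba_i\ba_i^\sT\|_{\op}$, apply the inequality you display (the factor $\sqrt{\log(N\wedge n)}$ is legitimate because all the matrices involved act on $\spn(\ba_1,\dots,\ba_N)$, whose dimension is at most $N\wedge n$), use Cauchy--Schwarz in $(\ba_i)$ together with $\E\|\sum_i\ba_i\ba_i^\sT\|_{\op}\le N\|\bSigma\|_{\op}+E$, solve the resulting quadratic inequality, and complete the square; this delivers the bound with coefficient exactly $1$ on $(N\|\bSigma\|_{\op})^{1/2}$.

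The ``truncated matrix Bernstein'' argument that occupies the body of your proposal, however, does not stand as written. Conditioning on $(\ba_i)$ leaves no randomness to which Bernstein could apply, and substituting the random quantity $t=\max_i\|\ba_i\|_2^2$ for the Bernstein parameters and then ``taking expectation'' afterwards is not a valid use of the inequality: matrix Bernstein needs an almost-sure deterministic bound on $\|\bZ_i\|_{\op}$ and a deterministic variance proxy, and here $\E[\bZ_i^2]=\E[\|\ba_i\|_2^2\,\ba_i\ba_i^\sT]-\bSigma^2$ is simply not controlled by $\Gamma\|\bSigma\|_{\op}$ (or by $t\|\bSigma\|_{\op}$) without boundedness of the rows --- this is exactly the heavy-tailed obstruction that the noncommutative Khintchine argument circumvents; a rigorous version would require truncating at a fixed level and handling the tail part separately. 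In addition, the claimed dimensional factor $\log(N\wedge n)$ is not available on that route: $\bA^\sT\bA-N\bSigma$ is an $n\times n$ matrix of potentially full rank, and the passage to $\bA\bA^\sT$ only identifies the spectra of $\bA^\sT\bA$ and $\bA\bA^\sT$ themselves, not of their recentered versions, so plain Bernstein yields $\log n$. Finally, as you note yourself, even granting all of this the AM--GM absorption spoils the unit coefficient on the leading term. None of this damages your conclusion, because the last paragraph already contains a correct proof; I would drop the Bernstein detour (or present it only as a route to a weaker statement with an absolute constant multiplying both terms) and write out the symmetrization/Khintchine argument in full.
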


\begin{lemma}\label{lem:hypercontractivity_basis_kernel}
Let $\{ \phi_k \}_{1 \le k \le Z} \subseteq L^2(\Omega, \tau)$ be a set of orthonormal functions. We assume that, for any fixed $q \ge 1$, there exists $C = C(q)$, such that for any $f \in \spn\{ \phi_k: 1 \le k \le Z \}$, we have
\[
\begin{aligned}
\| f \|_{L^{2q}} \le &~ C(q) \cdot \| f \|_{L^2}. 
\end{aligned}
\]
For $\btheta, \btheta' \in \Omega$, we denote $\overline U(\btheta, \btheta') = \sum_{k = 1}^Z \lambda_k^2 \phi_k(\btheta) \phi_k(\btheta')$ where $\{ \lambda_{k} \}_{1 \le k \le Z}$ are fixed real numbers. Then for any $q \ge 1$, we have
\begin{align}
\E_{\btheta_1, \btheta_2 \sim \tau}[\overline U(\btheta_1, \btheta_2)^{2 q}]^{1/(2q)} \le&~ C(q)^2 \cdot \E_{\btheta_1, \btheta_2 \sim \tau}[\overline U(\btheta_1, \btheta_2)^{2}]^{1/2}, \label{eqn:hypercontractivity_basis_kernel_1}\\
\E_{\btheta \sim \tau}[\overline U(\btheta, \btheta)^{q}]^{1/q} \le&~ C(q)^2 \cdot \E_{\btheta \sim \tau}[\overline U(\btheta, \btheta)].  \label{eqn:hypercontractivity_basis_kernel_2}
\end{align}
\end{lemma}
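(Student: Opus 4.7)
The plan is to derive each bound by reducing it to the assumed hypercontractivity of functions in $\spn\{\phi_k : 1 \le k \le Z\}$, using Fubini together with Minkowski's integral inequality for Eq.~\eqref{eqn:hypercontractivity_basis_kernel_1}, and a Rademacher symmetrization trick for Eq.~\eqref{eqn:hypercontractivity_basis_kernel_2}.

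For Eq.~\eqref{eqn:hypercontractivity_basis_kernel_1}, the key observation is that, for each fixed $\btheta_1$, the map $\btheta_2 \mapsto \overline U(\btheta_1,\btheta_2) = \sum_{k=1}^Z \lambda_k^2 \phi_k(\btheta_1)\phi_k(\btheta_2)$ lies in $\spn\{\phi_k\}$, so hypercontractivity gives $\E_{\btheta_2}[\overline U(\btheta_1,\btheta_2)^{2q}] \le C(q)^{2q}\, \E_{\btheta_2}[\overline U(\btheta_1,\btheta_2)^{2}]^q$. Integrating in $\btheta_1$ and applying Minkowski's integral inequality (which requires $q \ge 1$) to interchange the $L^q(\btheta_1)$ norm with the $\btheta_2$ integral yields
\[
\E_{\btheta_1}\!\left[\E_{\btheta_2}[\overline U(\btheta_1,\btheta_2)^{2}]^q\right]^{1/q} \le \E_{\btheta_2}\!\left[\E_{\btheta_1}[\overline U(\btheta_1,\btheta_2)^{2q}]^{1/q}\right].
\]
For each fixed $\btheta_2$, $\btheta_1 \mapsto \overline U(\btheta_1,\btheta_2)$ is also in $\spn\{\phi_k\}$, so a second application of hypercontractivity gives $\E_{\btheta_1}[\overline U(\btheta_1,\btheta_2)^{2q}]^{1/q} \le C(q)^2\,\E_{\btheta_1}[\overline U(\btheta_1,\btheta_2)^2]$. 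Combining yields $\E[\overline U^{2q}] \le C(q)^{4q}\,\E[\overline U^2]^q$, and taking $(2q)$-th roots gives the desired constant $C(q)^2$.

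For Eq.~\eqref{eqn:hypercontractivity_basis_kernel_2}, the obstruction is that $\overline U(\btheta,\btheta) = \sum_k \lambda_k^2 \phi_k(\btheta)^2$ is not itself an element of $\spn\{\phi_k\}$. I bypass this by symmetrization: introduce i.i.d. Rademacher signs $(\eps_k)_{k=1}^Z$ independent of $\btheta$ and set $g_\eps(\btheta) := \sum_{k=1}^Z \eps_k \lambda_k \phi_k(\btheta) \in \spn\{\phi_k\}$. Since $\E_\eps[\eps_k \eps_{k'}] = \mathbf{1}_{k=k'}$, we have $\E_\eps[g_\eps(\btheta)^2] = \overline U(\btheta,\btheta)$, and by orthonormality $\|g_\eps\|_{L^2(\tau)}^2 = \sum_k \lambda_k^2 = \E_{\btheta}[\overline U(\btheta,\btheta)]$ deterministically in $\eps$. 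Applying Jensen's inequality (using $q \ge 1$), Fubini, and then hypercontractivity to $g_\eps$:
\[
\E_\btheta[\overline U(\btheta,\btheta)^q] = \E_\btheta\!\left[\E_\eps[g_\eps(\btheta)^2]^q\right] \le \E_\eps \E_\btheta[g_\eps(\btheta)^{2q}] \le C(q)^{2q} \E_\eps\!\left[\|g_\eps\|_{L^2}^{2q}\right] = C(q)^{2q}\, \E_\btheta[\overline U(\btheta,\btheta)]^q.
\]
Taking $q$-th roots gives the claim with constant $C(q)^2$.

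The only nonroutine step is the symmetrization for the diagonal bound; once $\overline U(\btheta,\btheta)$ is expressed as the Rademacher second moment of a function in the span, everything reduces to one invocation of hypercontractivity. Neither step requires a quantitative version of any eigenvalue decay or probabilistic estimate beyond $q \ge 1$, so there are no serious obstacles.
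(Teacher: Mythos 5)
Your proof is correct, and both bounds come out with the paper's exact constant $C(q)^2$, but your route differs from the paper's in instructive ways. For \eqref{eqn:hypercontractivity_basis_kernel_1} the skeleton is the same (condition on one variable, apply hypercontractivity, handle the resulting $q$-th moment, apply hypercontractivity again), but the paper, after the first conditioning, explicitly computes $\E_{\btheta_2}[\overline U(\btheta_1,\btheta_2)^2\,\vert\,\btheta_1]=\sum_k\lambda_k^4\phi_k(\btheta_1)^2$ and then uses the finite-sum Minkowski inequality together with hypercontractivity applied to each basis function $\phi_k$ separately; you instead invoke Minkowski's integral inequality to swap the order of integration and apply hypercontractivity a second time to the whole slice $\btheta_1\mapsto\overline U(\btheta_1,\btheta_2)$, which avoids the explicit conditional-moment computation and keeps the argument symmetric in the two variables. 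For \eqref{eqn:hypercontractivity_basis_kernel_2} your argument is genuinely different: the paper again expands $\overline U(\btheta,\btheta)=\sum_k\lambda_k^2\phi_k(\btheta)^2$ and uses Minkowski plus hypercontractivity of the individual $\phi_k$'s, whereas your Rademacher symmetrization $g_\eps=\sum_k\eps_k\lambda_k\phi_k$ reduces the diagonal to a single application of hypercontractivity to a random element of the span via Jensen and Fubini, exploiting that $\|g_\eps\|_{L^2}^2=\sum_k\lambda_k^2$ is deterministic. The paper's version for the diagonal only uses the hypothesis on the basis functions themselves, while yours uses it for general span elements (which is what is assumed anyway) but dispenses with Minkowski entirely and would extend unchanged to merely orthogonal families or to infinite sums under mild convergence; the bookkeeping of constants ($C(q)^{4q}$ for the off-diagonal moment, $C(q)^{2q}$ for the diagonal, before taking roots) is identical in both treatments.
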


\begin{proof}[Proof of Lemma \ref{lem:hypercontractivity_basis_kernel}]

For any $q \ge 1$, we have
\[
\begin{aligned}
&~ \E_{\btheta_1, \btheta_2 \sim \tau}[\overline U(\btheta_1, \btheta_2)^{2 q}] =  \E_{\btheta_1 \sim \tau} \Bigg\{ \E_{\btheta_2 \sim \tau}\Big\{ \Big[\sum_{k = 1}^Z \lambda_k^2 \phi_k(\btheta_1) \phi_k(\btheta_2)\Big]^{2q} \Big\vert \btheta_1 \Big\} \Bigg\} \\
\stackrel{(a)}{\le}&~  C(q)^{2q} \cdot \E_{\btheta_1 \sim \tau} \Bigg\{ \E_{\btheta_2 \sim \tau}\Big\{ \Big[\sum_{k = 1}^Z \lambda_k^2 \phi_k(\btheta_1) \phi_k(\btheta_2)\Big]^{2} \Big\vert \btheta_1 \Big\}^q \Bigg\} \stackrel{(b)}{=}  C(q)^{2q} \cdot \E_{\btheta_1 \sim \tau} \Bigg\{ \Big[ \sum_{k = 1}^Z \lambda_k^4 \phi_k(\btheta_1)^2  \Big]^q \Bigg\} \\
\stackrel{(c)}{\le} &~  C(q)^{2q} \cdot  \Bigg\{  \sum_{k = 1}^Z  \lambda_k^4 \cdot \E_{\btheta_1 \sim \tau}[\phi_k(\btheta_1)^{2q}]^{1/q}   \Bigg\}^q \stackrel{(d)}{\le}  C(q)^{2q} \cdot  \Bigg\{  C(q)^2 \sum_{k = 1}^Z  \lambda_k^4 \cdot \E_{\btheta_1 \sim \tau}[\phi_k(\btheta_1)^2]   \Bigg\}^q \\
\stackrel{(e)}{=}&~ C(q)^{4q}\Big[ \sum_{k = 1}^Z \lambda_k^4 \Big]^q \stackrel{(f)}{=} C(q)^{4q} \cdot 
\Big\{ \E_{\btheta_1, \btheta_2 \sim \tau}[\overline U(\btheta_1, \btheta_2)^{2}] \Big\}^q. 
\end{aligned}
\]
Here, inequality $(a)$ follows by applying the hypercontractivity inequality with respect to $f(\btheta_2) = \sum_{k = 1}^Z \lambda_k^2 \phi_k(\btheta_1) \phi_k(\btheta_2)$ (and conditional on $\btheta_1$). Equality $(b)$ by the fact that $( \phi_k)_{1 \le k \le Z}$ are orthonormal functions. Inequality $(c)$ is by the Minkowski inequality. Inequality
$(d)$ follows by applying the hypercontractivity inequality with respect to $f(\btheta_1) =  \phi_k(\btheta_1)$. Equality $(e)$ holds because $( \phi_k)_{1 \le k \le Z}$ are orthonormal functions. Finally, equality $(f)$ follows by simple calculation. This proves Eq. (\ref{eqn:hypercontractivity_basis_kernel_1}). 

For any $q \ge 1$, we have
\[
\begin{aligned}
&~ \E_{\btheta \sim \tau}[\overline U(\btheta, \btheta)^{q}] = \E_{\btheta \sim \tau}\Big[ \Big(\sum_{k = 1}^Z \lambda_k^2 \phi_k(\btheta)^2 \Big)^q \Big] \stackrel{(a)}{\le} \Big[ \sum_{k = 1}^Z \lambda_k^2 \cdot \E_{\btheta \sim \tau}[\phi_k(\btheta)^{2q}]^{1/q} \Big]^q \\
\stackrel{(b)}{\le} &~C(q)^{2q} \Big[ \sum_{k = 1}^Z \lambda_k^2 \cdot \E_{\btheta \sim \tau}[\phi_k(\btheta)^{2}] \Big]^q \stackrel{(c)}{=} C(q)^{2q} \Big[ \sum_{k = 1}^Z \lambda_k^2 \Big]^q \stackrel{(d)}{=} C(q)^{2q} \Big\{ \E_{\btheta \sim \tau}[\overline U(\btheta, \btheta)] \Big\}^q. 
\end{aligned}
\]
Here, inequality $(a)$ holds by  Minkowski inequality. Inequality $(b)$ follows by applying the hypercontractivity inequality with respect to $f(\btheta) = \phi_k(\btheta)$. Equality $(c)$ holds because $( \phi_k)_{1 \le k \le Z}$ are orthonormal functions, and equality $(d)$ by a simple calculation. This proves Eq. (\ref{eqn:hypercontractivity_basis_kernel_2}). 
\end{proof}

\begin{lemma}[Bound on the maximum of diagonal]\label{lem:bound_max_hyper}
Consider a sequence of probability spaces $(\Omega_d, \tau_d)$ with $\{ \phi_{d,k} \}_{k \ge 1}$ an orthonormal basis of functions for $\cD_d \subseteq L^2 ( \Omega_d, \tau_d)$. Assume that there exists a sequence of integers $\{ u(d) \}_{d \ge 1}$ such that the subspace $\cD_{d,\leq u(d)} = \spn ( \phi_{d,k} : 1 \leq k \leq u(d))$ is hypercontractive, i.e., for any fixed $k\ge 1$, there exists a constant $C$ such that, for any $g \in \cD_{d, \leq u(d)}$, we have
\[
\| g \|_{L^{2k} ( \Omega_d)} \leq C \cdot \| g \|_{L^2 ( \Omega_d)}.
\]
Let $\{ U_d \}_{d\ge 1}$ be a sequence of positive definite kernels $U_d \in L^2 ( \Omega_d \times \Omega_d)$ with 
\[
U_d ( \btheta_1 , \btheta_2 ) = \sum_{j = 1}^\infty \lambda_{d,k}^2 \phi_{d,k} (\btheta_1) \phi_{d,k} (\btheta_2).
\]
Denote $U_{d, >\ell}$ the kernel function obtained by setting $\lambda_{d,1} = \ldots = \lambda_{d,\ell} = 0$. Letting $(\btheta_i)_{i \in [N(d)]} \sim_{iid} \tau_d$, if we assume that for any $\delta >0$,
\begin{equation}\label{eq:max_diag_u_cond}
\max_{i \in [N(d)]} U_{d,>u(d)} (\btheta_i, \btheta_i) = O_{d,\P} (N(d)^\delta) \cdot \E_{\btheta \sim \tau_d} [ U_{d,>u(d)} (\btheta, \btheta)],
\end{equation}
then for any $\delta > 0$,
\begin{equation}\label{eq:max_diag_u}
\max_{i \in [N(d)]} U_{d} (\btheta_i, \btheta_i) = O_{d,\P} (N(d)^\delta) \cdot  \E_{\btheta \sim \tau_d} [ U_{d} (\btheta, \btheta)].
\end{equation}
Furthermore, if we assume that for any $\delta >0$,
\begin{equation}\label{eq:max_diag_u2_cond}
\max_{i \in [N(d)]} \E_{\btheta \sim \tau_d} [ U_{d,>u(d)} (\btheta_i, \btheta)^2] = O_{d,\P} (N(d)^\delta) \cdot \E_{\btheta_1, \btheta_2 \sim \tau_d} [ U_{d,>u(d)} (\btheta_1, \btheta_2)^2],
\end{equation}
then for any $\delta > 0$,
\begin{equation}\label{eq:max_diag_u2}
\max_{i \in [N(d)]} \E_{\btheta \sim \tau_d} [ U_{d} (\btheta_i, \btheta)^2] = O_{d,\P} (N(d)^\delta) \cdot \E_{\btheta_1, \btheta_2 \sim \tau_d} [ U_{d} (\btheta_1, \btheta_2)^2].
\end{equation}

\end{lemma}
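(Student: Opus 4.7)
The plan is to decompose $U_d = U_{d,\le u(d)} + U_{d,>u(d)}$ and show that the low-frequency part is negligible compared with the high-frequency one, so that the assumed concentration bounds transfer from $U_{d,>u(d)}$ to $U_d$. The low-frequency contribution can be controlled uniformly in $i$ via a standard Markov plus union-bound argument, provided that the relevant random variables have bounded $L^q$ norm relative to their expectation for every fixed $q$; this is exactly what the hypercontractivity assumption on $\cD_{d,\le u(d)}$ provides.

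For \eqref{eq:max_diag_u}, write $U_{d,\le u(d)}(\btheta,\btheta) = \sum_{k=1}^{u(d)} \lambda_{d,k}^2 \phi_{d,k}(\btheta)^2$. For each $k\le u(d)$, hypercontractivity applied to $\phi_{d,k} \in \cD_{d,\le u(d)}$ with $\|\phi_{d,k}\|_{L^2}=1$ gives $\|\phi_{d,k}^2\|_{L^q} = \|\phi_{d,k}\|_{L^{2q}}^2 \le C(q)^2$. By Minkowski,
\[
\bigl\|U_{d,\le u(d)}(\btheta,\btheta)\bigr\|_{L^q(\tau_d)} \le C(q)^2 \sum_{k=1}^{u(d)} \lambda_{d,k}^2 \le C(q)^2\, \E_{\btheta}[U_d(\btheta,\btheta)].
\]
A union bound and Markov's inequality then yield, for any $\delta>0$ and $q > 1/\delta$,
\[
\P\Bigl(\max_{i\in[N(d)]} U_{d,\le u(d)}(\btheta_i,\btheta_i) > N(d)^{\delta}\, \E_\btheta[U_d(\btheta,\btheta)]\Bigr) \le N(d)^{1-q\delta}\, C(q)^{2q} = o_d(1).
\]
Combining this with hypothesis \eqref{eq:max_diag_u_cond} and the trivial bound $\E_\btheta[U_{d,>u(d)}(\btheta,\btheta)] \le \E_\btheta[U_d(\btheta,\btheta)]$ completes the proof of the first assertion.

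For \eqref{eq:max_diag_u2}, the algebraic key is that orthonormality of $\{\phi_{d,k}\}$ eliminates cross terms:
\[
\E_\btheta[U_d(\btheta',\btheta)^2] = \sum_{k\ge 1}\lambda_{d,k}^4\, \phi_{d,k}(\btheta')^2 = \E_\btheta[U_{d,\le u(d)}(\btheta',\btheta)^2] + \E_\btheta[U_{d,>u(d)}(\btheta',\btheta)^2].
\]
It therefore suffices to bound the low-frequency contribution $\max_i \sum_{k\le u(d)} \lambda_{d,k}^4 \phi_{d,k}(\btheta_i)^2$. The same Minkowski plus hypercontractivity step gives
\[
\Bigl\|\sum_{k\le u(d)}\lambda_{d,k}^4\, \phi_{d,k}(\cdot)^2 \Bigr\|_{L^q} \le C(q)^2 \sum_{k\le u(d)}\lambda_{d,k}^4 \le C(q)^2\, \E_{\btheta_1,\btheta_2}[U_d(\btheta_1,\btheta_2)^2],
\]
and another Markov plus union-bound argument with $q>1/\delta$ closes the gap with hypothesis \eqref{eq:max_diag_u2_cond}.

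There is no real obstacle beyond the need for the hypercontractivity constant $C(q)$ to be independent of $d$, which is part of the assumption; we only invoke the inequality finitely many times per basis function and then aggregate positively via Minkowski, so no $u(d)$-dependent loss is incurred. The arbitrariness of $\delta>0$ is handled by taking $q = q(\delta) > 1/\delta$ in the Markov step.
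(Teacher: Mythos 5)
Your proof is correct and follows essentially the same route as the paper's: decompose $U_d$ into the parts below and above $u(d)$, use Minkowski plus per-basis-function hypercontractivity to bound the $q$-th moment of the low-frequency diagonal (this is exactly the paper's Lemma \ref{lem:hypercontractivity_basis_kernel}), handle the maximum over $N(d)$ points with a Markov argument at $q>1/\delta$, and invoke the assumed bound on the high-frequency part. The only cosmetic differences are that you use a union bound plus Markov where the paper bounds $\E[\max_i(\cdot)]$ by $N^{1/q}$ times an $L^q$ norm, and that you spell out the orthogonality identity $\E_\btheta[U_d(\btheta',\btheta)^2]=\sum_k\lambda_{d,k}^4\phi_{d,k}(\btheta')^2$ for the second claim, which the paper leaves as ``a similar argument.''
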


\begin{proof}[Proof of Lemma \ref{lem:bound_max_hyper}]
Let us decompose $U_d$ in a high and low degree parts, $U_d = U_{d,\leq u} + U_{d, >u}$ where
\[
\begin{aligned}
U_{d, \leq u} (\btheta_1, \btheta_2 ) =&~ \sum_{k = 1}^u \lambda_{d,k}^2 \phi_{d,k} (\btheta_1) \phi_{d,k} (\btheta_2), \\
U_{d, > u} (\btheta_1, \btheta_2 ) =&~ \sum_{k = u+1}^\infty \lambda_{d,k}^2 \phi_{d,k} (\btheta_1) \phi_{d,k} (\btheta_2). \\
\end{aligned}
\] 
By Lemma \ref{lem:hypercontractivity_basis_kernel}, we have for any $q \ge 1$,
\[
\begin{aligned}
\E \Big[ \max_{i \in [N(d)]} U_{d, \leq u} (\btheta_i, \btheta_i) \Big] \leq&~ \E \Big[ \max_{i \in [N(d)]} U_{d, \leq u} (\btheta_i, \btheta_i)^{q} \Big]^{1/q} \\
\leq~& N^{1/q} \E \Big[ U_{d, \leq u} (\btheta, \btheta)^{q} \Big]^{1/q} \\
\leq~&  C(q)^2 N^{1/q} \E \Big[ U_{d, \leq u} (\btheta, \btheta) \Big].
\end{aligned}
\]
Hence, by Markov's inequality and condition \eqref{eq:max_diag_u_cond}, we get for any $\delta>0$, taking $q$ sufficiently large,
\[
\max_{i \in [N(d)]} U_{d} (\btheta_i, \btheta_i) = O_{d,\P} (N(d)^\delta) \cdot  \E_{\btheta \sim \tau_d} [ U_{d} (\btheta, \btheta)].
\]
The proof of Eq.~\eqref{eq:max_diag_u2} follows from a similar argument.
\end{proof}

\section{Generalization error of random features model: Proof of Theorem \ref{thm:RFK_generalization}}
\label{sec:proof_generalization_RFK}

In this section, we prove Theorem \ref{thm:RFK_generalization}. The proof in the overparametrized regime is presented in Section
\ref{sec:proof_RFRR_over}. The proof in the underparametrized regime follows from a very similar argument: we will omit it and
simply add comments in the overparametrized proof where they differ. 

We defer the proofs of some  technical results to later sections. Section \ref{sec:structure_Z} proves a key proposition on the structure of the feature matrix $\bZ = ( \sigma_d ( \bx_i ; \btheta_j ) )_{i \in [n], j\in [N]}$. Section \ref{sec:technical_claims_RFRR} gather some technical bounds necessary for the proof of Theorem \ref{thm:RFK_generalization}. Finally, Section \ref{sec:RFRR_ZZ_concentration} contains concentration results on the high degree part of the feature matrix.

\subsection{Proof of Theorem \ref{thm:RFK_generalization} in the overparametrized regime}
\label{sec:proof_RFRR_over}

In this section, we prove Theorem \ref{thm:RFK_generalization} in the overparametrized regime. We defer the proofs of some of the technical lemmas and matrix concentration results to Sections \ref{sec:structure_Z}, \ref{sec:technical_claims_RFRR} and \ref{sec:RFRR_ZZ_concentration}. The underparametrized case follows from the same proof with the following mapping $n \leftrightarrow N$, $\evn \leftrightarrow \evN$ and $\lambda \rightarrow \lambda_N = N \lambda/n$. We will add remarks in the proof when a difference arises.

\noindent
{\bf Step 1. Rewrite the $\by$, $\bV$, $\bU$, $\bZ$ matrices. }

We recall that the random features ridge regression solution is given by
\[
\hba(\lambda) = \argmin_{\ba} \Big\{\sum_{i=1}^n\big( y_i - \hf(\bx_i;\ba) \big)^2  +  \frac{\lambda}{N} \| \ba \|_2^2\Big\}.
\]
Solving for the coefficients yields
\[
\hba ( \lambda) = ( \bZ^\sT \bZ / N + \lambda \id_N )^{-1} \bZ^\sT \by ,
\]
where $\by = (y_1 , \ldots , y_n )$ and $\bZ = (Z_{ij})_{i \in[n], j \in [N]} \in \R^{n \times N}$ with $Z_{ij} = \sigma_d ( \bx_i ; \btheta_j )$. Hence, the prediction function at location $\bx$ is given by
\[
\hf(\bx;\hba(\lambda)) = \by^\sT \bZ  ( \bZ^\sT \bZ / N + \lambda \id_N )^{-1} \bsigma (\bx) / N,
\]
where $\bsigma (\bx) = ( \sigma_d ( \bx ; \btheta_1) , \ldots , \sigma_d ( \bx ; \btheta_N ) ) \in \R^N$.

Expanding the test error, we get
\[
\begin{aligned}
R_{\RF}(f_d, \bX, \bTheta, \lambda) \equiv& \E_\bx\Big[ \Big( f_d(\bx) - \by^\sT \bZ (\bZ^\sT \bZ/N + \lambda \id_N)^{-1} \bsigma (\bx) /N \Big)^2 \Big]\\
=& \E_\bx [ f_d(\bx)^2] - 2 \by^\sT \bZ \hbUi \bV/N  + \by^\sT \bZ \hbUi \bU \hbUi \bZ^\sT \by / N^2 ,
\end{aligned}
\]
where $\bV = (V_1, \ldots, V_N)^\sT$ and $\bU = (U_{ij})_{ij \in [N]} $with
\[
\begin{aligned}
V_i =& \E_\bx[f_d (\bx) \sigma_d(\bx ; \btheta_i  )], \\
U_{ij} =& \E_{\bx}[\sigma_d(\bx ; \btheta_i ) \sigma_d(\bx ;  \btheta_j ) ], \\
\end{aligned}
\]
and $\hbU = \bZ^\sT \bZ / N + \lambda \id_N$ is the (rescaled) regularized empirical kernel matrix 
\[
\hat{U}_{\lambda, ij} = \frac{1}{N} \sum_{k \in [n]} \sigma_d ( \bx_k ; \btheta_i ) \sigma_d ( \bx_k ; \btheta_j ) + \lambda \delta_{ij}.
\]

We recall that the eigendecomposition of $\sigma_d$ is given by
\[
\sigma_{d}( \bx ; \btheta ) = \sum_{k = 1}^\infty \lambda_{d,k}  \psi_k(\bx) \phi_k ( \btheta).
\]
We write the orthogonal decomposition of $f_{d}$ in this basis as
\[
f_{d}(\bx) = \sum_{k = 1}^\infty \hf_{d,k} \psi_k(\bx), 
\]
Define
\begin{equation}\label{eq:def_quantities_RFRR_proof}
\begin{aligned}
\bpsi_{k}  =& (\psi_{k}(\bx_1), \ldots , \psi_k ( \bx_n) )^\sT  \in \R^{n},\\
\bphi_{k} =& (\phi_{k}(\btheta_1), \ldots , \phi_k ( \btheta_N) )^\sT  \in \R^{N},\\
\bD_{\le \evn} =& \diag(\lambda_{d,1}, \lambda_{d,2} , \ldots , \lambda_{d,\evn} ) \in \R^{\evn \times \evn},  \\
\bpsi_{\le \evn} =& (\bpsi_{k } (\bx_i ) )_{i \in [n], k \in [ \evn ] } \in \R^{n \times \evn},  \\
\bphi_{\le \evn} =& (\bphi_{k } (\btheta_i ) )_{i \in [N],  k \in [\evn]} \in \R^{N \times \evn},  \\
\hbf_{\le \evn}=&  ( \hf_{d,1} , \hf_{d,2} , \ldots , \hf_{d, \evn} )^\sT \in \R^{\evn}.
\end{aligned}
\end{equation}

Recall that $\by = ( y_1 , \ldots , y_n )^\sT = \boldf + \beps$ with
\[
\begin{aligned}
\boldf = & ( f_d ( \bx_1) , \ldots , f_d ( \bx_n))^\sT \\
\beps =& ( \eps_1 , \ldots , \eps_n )^\sT.
\end{aligned}
\]
Using the above notations, we can decompose the vectors and matrices $\boldf$, $\bV$,  $\bU$, and  as
\begin{equation}\label{eq:def_leq_ell_big_ell}
\begin{aligned}
  &\boldf =\boldf_{\le \evn} +\boldf_{> \evn},\qquad
  &\boldf_{\le \evn} =  \bpsi_{\le \evn} \hbf_{\le \evn}, \qquad  &\boldf_{> \evn} = \sum_{k = \evn + 1}^\infty \hf_{d,k} \bpsi_{k}, \\
   &\bV = \bV_{\le \evn} +\bV_{> \evn},\qquad &\bV_{\le \evn} =  \bphi_{\le \evn} \bD_{\le \evn} \hbf_{\le \evn}, \qquad  &\bV_{> \evn}  =  \sum_{k = \evn + 1}^\infty \hf_{d,k} \lambda_{d,k}  \bphi_{k},\\
   &\bU = \bU_{\le \evn} +\bU_{> \evn},\qquad
   &\bU_{\le \evn} =  \bphi_{\le \evn} \bD_{\le \evn}^2 \bphi_{\le \evn}^\sT, \qquad  &\bU_{> \evn} =  \sum_{k = \evn + 1}^\infty \lambda_{d,k}^2  \bphi_{k} \bphi_k^\sT,\\
   &\bZ = \bZ_{\le \evn} +\bZ_{> \evn},\qquad
   &\bZ_{\leq \evn} = \bpsi_{\leq \evn} \bD_{\leq \evn} \bphi_{\leq \evn}^\sT, \qquad &\bZ_{>\evn} = \sum_{k \geq \evn+1}  \lambda_{d,k} \bpsi_k \bphi_k^\sT\, .
\end{aligned}
\end{equation}

\noindent
{\bf Step 2. Decompose the risk.}

We decompose the risk with respect to $\by = \boldf + \beps$ as follows
\[
\begin{aligned}
R_{\KR}( f_d, \bX, \bW, \lambda) =& \| f_d \|_{L^2}^2  - 2 T_1 + T_2 +  T_3 - 2 T_4 + 2 T_5. 
\end{aligned}
\]
where
\begin{equation}\label{eq:decomposition_risk_Ti}
\begin{aligned}
T_1 =& \boldf^\sT \bZ \hbUi  \bV/N, \\
T_2 =&  \boldf^\sT \bZ \hbUi  \bU \hbUi  \bZ^\sT \boldf /N^2, \\
T_3 =& \beps^\sT \bZ\hbUi  \bU \hbUi  \bZ^\sT \beps /N^2,\\ 
T_4 =& \beps^\sT\bZ \hbUi  \bV/N ,\\
T_5 =& \beps^\sT \bZ\hbUi  \bU \hbUi \bZ^\sT \boldf / N^2. 
\end{aligned}
\end{equation}

The proof relies on the following key result on the structure of the feature matrix $\bZ$:

\begin{proposition}[Structure of the feature matrix $\bZ$]\label{prop:singular_values_bZ}
Follow the assumptions and the notations in the proof of Theorem \ref{thm:RFK_generalization} in the overparametrized regime (note in particular that $N \ge n^{1+\delta_0}$ and $n \ge \evn^{1 + \delta_0}$ for some fixed $\delta_0 >0$). Consider the singular value decomposition of $\bZ = ( Z_{ij} )_{i \in [n], j \in [N]}$ with $Z_{ij} = \sigma_d (\bx_i ; \btheta_j)$:
\[
\bZ / \sqrt{N} = \bP \bLambda \bQ^\sT = [\bP_1 , \bP_2 ] \diag ( \bLambda_1 , \bLambda_2 ) [\bQ_1 , \bQ_2]^\sT \in \R^{n \times N},
\]
where $\bP \in \R^{n \times n}$ and $\bQ \in \R^{N \times n}$, and $\bP_1 \in \R^{n \times \evn}$ and $\bQ_1 \in \R^{N \times \evn}$ correspond to the left and right singular vectors associated to the largest $\evn$ singular values $\bLambda_1$, while $\bP_2 \in \R^{n \times (n-\evn)}$ and $\bQ_2 \in \R^{N \times (n-\evn)}$ correspond to the left and right singular vectors associated to the last $(n-\evn)$ smallest singular values $\bLambda_2$.
Define $\kappa_{>\evn} = \Tr (\Hop_{d , > \evn} )$.

Then the singular value decomposition has the following properties: 
\begin{enumerate}
\item[(a)] Define $\bLambda = \diag ( (\sigma_i ( \bZ / \sqrt{N} ) )_{i \in [n]} )$ the singular values (in non increasing order) of $\bZ / \sqrt{N}$. Then the singular values verify
\begin{align}
\sigma_{\min} ( \bLambda_1) = &~ \min_{i \in [\evn]} \sigma_i (\bZ / \sqrt{N}) =  \kappa_{>\evn}^{1/2} \cdot \omega_{d,\P} ( 1), \label{eq:min_lambda_1} \\
\| \bLambda_2 - \kappa_{>\evn}^{1/2} \cdot \id_{n - \evn} \|_{\op} =&~ \max_{i = \evn+1 , \ldots , n } \big\vert \sigma_i ( \bZ / \sqrt{N} ) - \kappa_{>\evn}^{1/2}  \big\vert = \kappa_{>\evn}^{1/2} \cdot o_{d,\P} (1).\label{eq:bound_lambda_2}
\end{align}

\item[(b)] The left and right singular vectors associated to the $(n-\evn)$ smallest singular values verify
\begin{equation}\label{eq:singular_vectors_Z}
n^{-1/2} \| \bpsi_{\leq \evn}^\sT \bP_2 \|_\op = o_{d,\P}( 1 ), \qquad  N^{-1/2} \| \bphi_{\leq \evn}^\sT \bQ_2 \|_\op = o_{d,\P}( 1 ).
\end{equation}
\item[(c)] We have
\begin{equation}\label{eq:cross_term_Z}
N^{-1/2} \| \bP_1^\sT \bZ_{>\evn} \bQ_2 \|_\op = \kappa_{>\evn}^{1/2} \cdot  o_{d,\P}( 1 ).
\end{equation}
\end{enumerate}
\end{proposition}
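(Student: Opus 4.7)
The plan is to relate the singular structure of $\bZ/\sqrt{N}$ to that of the population kernel matrix $\bH = (H_d(\bx_i,\bx_j))_{i,j\in[n]}$ and then to exploit the decomposition $\bH = \bH_{\leq \evn} + \bH_{>\evn}$, where $\bH_{\leq\evn} = \bpsi_{\leq\evn}\bD_{\leq\evn}^2\bpsi_{\leq\evn}^\sT$. I would show that
\[
\bZ\bZ^\sT/N \;=\; \bH_{\leq\evn} + \kappa_{>\evn}\,\id_n + \bE, \qquad \|\bE\|_\op = o_{d,\P}(\kappa_{>\evn}),
\]
after which parts (a)--(c) are obtained respectively from Weyl's inequality, a Davis--Kahan/Wedin argument, and the orthogonality built into the SVD.

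The first step is concentration. Conditional on $\bX$, the columns $\bz_j = \sigma_d(\bX;\btheta_j)$ are i.i.d.\ with $\E[\bz_j\bz_j^\sT\mid\bX] = \bH$, and the hypercontractivity in Assumption~\ref{ass:FMPCP}.(a),(c), together with $N\ge n^{1+\delta}$, suffices to apply a matrix Bernstein / decoupling argument (similar to Proposition~\ref{prop:generalized_Gram} on $\bZ^\sT \bZ / N$) and conclude $\|\bZ\bZ^\sT/N - \bH\|_\op = o_{d,\P}(\kappa_{>\evn})$. Next, I would decompose $\bH$ itself: Proposition~\ref{prop:YY_new} applied to the orthonormal family $\{\psi_k\}_{k\le\evn}$ gives $\|\bpsi_{\leq\evn}^\sT\bpsi_{\leq\evn}/n - \id_\evn\|_\op = o_{d,\P}(1)$, so $\bH_{\leq\evn}$ is a rank-$\evn$ PSD matrix whose nonzero eigenvalues are $n\lambda_{d,k}^2(1+o_{d,\P}(1))$ for $k\le\evn$. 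For the complementary part, Assumption~\ref{ass:FMPCP}.(d) gives $H_{d,>\evn}(\bx_i,\bx_i) = \kappa_{>\evn}(1+o_{d,\P}(1))$ uniformly, while the off-diagonal of $\bH_{>\evn}$ is bounded in operator norm by $o_{d,\P}(\kappa_{>\evn})$ via a Proposition~\ref{prop:generalized_Gram}-type estimate applied to the kernel operator $\Hop_{d,>\evn}$ (Assumption~\ref{ass:FMPCP}.(b) provides the requisite eigenvalue tail condition). Thus $\|\bH_{>\evn} - \kappa_{>\evn}\id_n\|_\op = o_{d,\P}(\kappa_{>\evn})$.

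For part (a), the spectral gap in Assumption~\ref{ass:spectral_gap}.(a) gives $n\lambda_{d,\evn}^2\ge n^{\delta_0}\kappa_{>\evn}$, so the top $\evn$ eigenvalues of $\bH_{\leq\evn} + \kappa_{>\evn}\id_n$ are $\gg\kappa_{>\evn}$ while the bottom $n-\evn$ equal $\kappa_{>\evn}$. Weyl's inequality then transfers this to $\bZ\bZ^\sT/N$, yielding \eqref{eq:min_lambda_1} and \eqref{eq:bound_lambda_2}. For part (b), the unperturbed top-$\evn$ eigenspace of $\bH_{\leq\evn} + \kappa_{>\evn}\id_n$ coincides with $\mathrm{col}(\bpsi_{\leq\evn})$; the Davis--Kahan / Wedin theorem, applied with gap of order $n\lambda_{d,\evn}^2$ and perturbation $o_{d,\P}(\kappa_{>\evn})$, shows that the bottom singular subspace $\mathrm{col}(\bP_2)$ is nearly orthogonal to $\bpsi_{\leq\evn}$, giving $n^{-1/2}\|\bpsi_{\leq\evn}^\sT\bP_2\|_\op = o_{d,\P}(1)$; the corresponding statement for $\bphi_{\leq\evn}^\sT\bQ_2$ follows symmetrically by working with $\bZ^\sT\bZ/N$.

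Part (c) uses the SVD orthogonality $\bP_1^\sT\bZ\bQ_2/\sqrt{N} = \bLambda_1\bQ_1^\sT\bQ_2 = \bzero$, so
\[
\bP_1^\sT\bZ_{>\evn}\bQ_2/\sqrt{N} \;=\; -\bP_1^\sT\bpsi_{\leq\evn}\bD_{\leq\evn}\bphi_{\leq\evn}^\sT\bQ_2/\sqrt{N}.
\]
The main technical obstacle lies here: a crude bound $\sqrt{n}\,\lambda_{d,1}\cdot\|\bphi_{\leq\evn}^\sT\bQ_2/\sqrt{N}\|_\op$ combined with a naive Wedin estimate gives only $\sqrt{\kappa_{>\evn}}\cdot(\lambda_{d,1}/\lambda_{d,\evn})\cdot o_{d,\P}(1)$, which is too weak when $\lambda_{d,1}\gg\lambda_{d,\evn}$. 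I would close this gap by writing $\bZ_{\leq\evn}=\sum_{k\le\evn}\lambda_{d,k}\bpsi_k\bphi_k^\sT$ and using a mode-by-mode quantitative Wedin bound of the form $\|\bphi_k^\sT\bQ_2/\sqrt{N}\|_2 = O_{d,\P}(\sqrt{\kappa_{>\evn}}/(\sqrt{n}\,\lambda_{d,k}))$, so that the $\lambda_{d,k}$ factors cancel and we obtain the desired $\kappa_{>\evn}^{1/2}\cdot o_{d,\P}(1)$ bound uniformly in the eigenvalue ratios. Controlling this sharp per-mode cross term is where the bulk of the technical work will sit.
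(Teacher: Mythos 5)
Your ingredient list is essentially the right one (Proposition~\ref{prop:YY_new} for the low-degree Gram matrices, a Proposition~\ref{prop:generalized_Gram}-type bound plus diagonal concentration for the high-degree part, Weyl for the singular values), but the way you package them contains a genuine gap: the claim $\bZ\bZ^\sT/N = \bH_{\leq\evn}+\kappa_{>\evn}\id_n+\bE$ with $\|\bE\|_{\op}=o_{d,\P}(\kappa_{>\evn})$ is false in general. The error in the spike block is only \emph{relatively} small: $\|\bZ_{\leq\evn}\bZ_{\leq\evn}^\sT/N-\bH_{\leq\evn}\|_{\op}\le \|\bphi_{\leq\evn}^\sT\bphi_{\leq\evn}/N-\id_\evn\|_{\op}\cdot O_{d,\P}(n\lambda_{d,1}^2)$, and under Assumption~\ref{ass:spectral_gap} one has $n\lambda_{d,k}^2\ge n^{\delta_0}\kappa_{>\evn}$ for all $k\le\evn$, so this term (and likewise the cross term $\bZ_{\leq\evn}\bZ_{>\evn}^\sT/N$, of size up to $\sqrt{n}\,\lambda_{d,1}\kappa_{>\evn}^{1/2}\cdot o_{d,\P}(1)$) can be far larger than $\kappa_{>\evn}$; e.g.\ on the sphere with $\lambda_{d,1}=\Theta(1)$, $\kappa_{>\evn}=\Theta(1)$, $n=d^{\lvn+\delta}$, the $o_{d,\P}(1)$ from Proposition~\ref{prop:YY_new} has no reason to beat $n^{-1}$. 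Consequently the plain Weyl transfer for \eqref{eq:bound_lambda_2} and the Davis--Kahan step for \eqref{eq:singular_vectors_Z}, both of which you run at additive scale $o_{d,\P}(\kappa_{>\evn})$, do not go through as written. The large errors are confined to the $\evn$-dimensional spike subspace, and exploiting that is exactly what the paper's proof does: it perturbs at the level of $\tbZ=\tbZ_{\leq\evn}+\tbZ_{>\evn}$ (so Weyl gives the upper bound in \eqref{eq:bound_lambda_2} with perturbation $\|\tbZ_{>\evn}\|_{\op}$), proves the matching lower bound by exhibiting $\tbZ\tbZ^\sT\succeq\kappa_{>\evn}(\id_n+\bDelta')$ via adding and subtracting $\bT\bT^\sT$ with $\bT=\tbZ_{>\evn}\tbphi_{\leq\evn}(\id_\evn+\bDelta_1)^{-1/2}$ (Proposition~\ref{prop:concentration_bZ} controls $\tbZ_{>\evn}\tbphi_{\leq\evn}$, not $\bZ\bZ^\sT/N-\bH$), and proves (b) by a direct quadratic-form argument on $\bu^\sT\bQ_2^\sT\tbZ^\sT\tbZ\bQ_2\bu$ rather than a subspace perturbation bound on $\bH$.

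Part (c) is also not closed, and the specific fix you propose is quantitatively insufficient. The identity $\bP_1^\sT\tbZ\bQ_2=\bzero$ and hence $\bP_1^\sT\tbZ_{>\evn}\bQ_2=-\bP_1^\sT\tbpsi_{\leq\evn}\tbD_{\leq\evn}\tbphi_{\leq\evn}^\sT\bQ_2$ is correct, but your per-mode bound $\|\bphi_k^\sT\bQ_2/\sqrt{N}\|_2=O_{d,\P}\bigl(\kappa_{>\evn}^{1/2}/(\sqrt{n}\,\lambda_{d,k})\bigr)$ (which is essentially what the paper's part-(b) argument delivers) only cancels the $\sqrt{n}\,\lambda_{d,k}$ factor and yields $O_{d,\P}(\kappa_{>\evn}^{1/2})$ per mode, whereas \eqref{eq:cross_term_Z} requires $o_{d,\P}(\kappa_{>\evn}^{1/2})$; to get the little-$o$ you would need the strictly stronger per-mode bound with an extra vanishing factor, which a Wedin-type estimate does not give without per-mode gaps (and the top eigenvalues may be degenerate). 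The paper handles this with a separate argument, Lemma~\ref{lem:linear_algebra}: after reducing to $\kappa_{>\evn}^{-1/2}\tbZ$, it combines a Wedin $\sin\Theta$ step showing $\bP_1\approx\bU_0\bR$ with an explicit parametrization of the null space of $\bQ$, and the conclusion $\|\bP_1^\sT\bM\bQ\|_{\op}=o_{d,\P}(1)$ comes from the exact relation $\bN^\sT\bQ=\bzero$ rather than from summing per-mode perturbation bounds. So while your outline correctly identifies (c) as the crux, the route you sketch stalls exactly at the $O$ versus $o$ distinction, and a genuinely different mechanism (the null-space argument or an equivalent) is needed.
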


We defer the proof of Proposition \ref{prop:singular_values_bZ} to Section \ref{sec:structure_Z}.
\begin{remark}
  Proposition \ref{prop:singular_values_bZ} shows that the feature matrix $\bZ = \bZ_{\leq \evn} + \bZ_{>\evn}$
  (cf. Eq.~\eqref{eq:def_leq_ell_big_ell}) is a spiked matrix, with $\evn$ spikes with singular values $\bLambda_1$ much larger than $\kappa_{>\evn}^{1/2}$ coming from the low-degree part $\bZ_{\leq \evn}$ (in particular, Proposition \ref{prop:singular_values_bZ}.$(b)$ shows that the left and right singular vectors of the spikes are approximately spanned by the left and right singular vectors of $\bZ_{\leq \evn}$) while the rest of the singular values are approximately constant equal to $\kappa_{>\evn}^{1/2}$. The proof of this proposition is based on the following observations:
  \begin{itemize}
  \item[$(a)$] $\bZ_{\leq \evn} / \sqrt{N} = \bpsi_{\leq \evn}  \bD_{\leq \evn}  \bphi_{\leq \evn}^\sT  / \sqrt{N} $ is a rank $\evn$ matrix with 
  \begin{itemize}
  \item[$(i)$] $\bpsi_{\leq \evn} / \sqrt{n}$ and $\bphi_{\leq \evn} / \sqrt{N}$ are approximately orthogonal matrices (see Eq.~\eqref{eq:tbpsi_tbphi_concentration}).
  
  \item[$(ii)$] $\sqrt{n } | \bD_{\leq \evn}  | = \diag ( \sqrt{n} | \lambda_{1} | , \ldots , \sqrt{n} | \lambda_{\evn} | ) \succeq  \omega_{d,\P} ( \kappa_{>\evn}^{1/2}) \cdot \id_{\evn}$ from condition \eqref{eq:NumberOfSamples} in Assumption \ref{ass:spectral_gap}.$(a)$.
  \end{itemize}

  \item[$(b)$] The high degree part $\bZ_{>\evn}/\sqrt{N}$ has nearly constant singular values $\| \bZ_{>\evn} \bZ_{>\evn}^\sT /N - \kappa_{>\evn} \id_n\|_\op = \kappa_{>\evn} \cdot o_{d,\P}(1)$ and is nearly orthogonal to the span of the right singular vectors of $\bZ_{\leq \evn}$, i.e., $\| \bZ_{>\evn} \bphi_{\leq \evn} /N \|_{\op} = \kappa_{>\evn}^{1/2} \cdot o_{d,\P}(1)$ (see Proposition \ref{prop:concentration_bZ} in Section \ref{sec:RFRR_ZZ_concentration}).
  \end{itemize}
\end{remark}

Using Proposition \ref{prop:singular_values_bZ}, we can prove the following list of bounds that will be the main tools for the rest of the proof of Theorem \ref{thm:RFK_generalization}.
\begin{proposition}\label{prop:technical_facts}
Follow the assumptions and the notations in the proof of Theorem \ref{thm:RFK_generalization} in the overparametrized regime. Then the following bounds hold. (Recall that $\kappa_{> \evn} = \Tr (\Hop_{d , > \evn} )$.)
\begin{enumerate}

\item[$(a)$]  Bounds on $\hbUi = (\bZ^\sT \bZ / N + \lambda \id_N)^{-1}$: 
\begin{align}
\bpsi_{\leq \evn}^\sT \bZ \hbUi  \bphi_{\leq \evn} \bD_{\leq \evn} / N =&~    \id_\evn + \bDelta, \label{eq:prop_a_1} \\
\| \bD_{\leq \evn}  \bphi_{\leq \evn}^\sT \hbUi   \bZ^\sT \boldf_{> \evn} /N \|_2 =&~ \| \proj_{>\evn} f_d \|_{L^{2+\eta}} \cdot o_{d,\P} (1 ),\label{eq:prop_a_2} \\
\sqrt{n} \| \bZ \hbUi \bphi_{\leq \evn} \bD_{\leq \evn} / N \|_\op =&~ O_{d,\P}(1 ),\label{eq:ZUL}
\end{align}
where $\| \bDelta \|_\op = o_{d,\P}(1 )$. Furthermore, we have
\begin{align}
\| \bZ \hbUi / \sqrt{N} \|_\op = &~ \kappa_{>\evn}^{-1/2} \cdot O_{d,\P} (   1 ).\label{eq:ZU_bound}
\end{align}  

\item[$(b)$]  Bound on $\bU_{>\evn}$: 
\[
\begin{aligned}
\frac{n}{N} \| \bU_{>\evn} \|_{\op} = \kappa_{>\evn} \cdot o_{d,\P}(1 ).
\end{aligned}
\]

\item[$(c)$]  Bounds on $\boldf$: 
\[
\begin{aligned}
\| \boldf \|_2 = & \sqrt{n} \| f_d \|_{L^2} \cdot O_{d,\P} (1), \\
\| \bpsi_{\leq \evn}^\sT \boldf_{> \evn}/n\|_2 = & \| \proj_{> \evn} f_d \|_{L^{2+\eta}} \cdot o_{d,\P}(1).
\end{aligned}
\]

\item[$(d)$]  Bound on $\bV_{> \evn}$:
\[
\sqrt{\frac{n}{N}} \| \bV_{> \evn} \|_2 =  \kappa_{>\evn}^{1/2} \| \proj_{> \evn}  f_d \|_{L^2} \cdot o_{d,\P}(1 ).
\]

\end{enumerate}

\end{proposition}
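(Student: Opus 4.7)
The plan is to leverage the SVD decomposition of $\bZ$ from Proposition \ref{prop:singular_values_bZ} together with the ridge identities
\[
\bZ\hbUi \;=\; \sqrt{N}\,\bP\bLambda(\bLambda^2+\lambda\id_n)^{-1}\bQ^\sT \;=\; \bK^{-1}\bZ,\qquad \bK:=\bZ\bZ^\sT/N+\lambda\id_n,
\]
so that every statement in (a) reduces to a spectral calculation on the spike subspace (indices $\le\evn$) and its complement. Since $\lambda\le\lambda_*=o_d(\kappa_{>\evn})$ and Proposition \ref{prop:singular_values_bZ}.(a) guarantees $\sigma_i(\bZ/\sqrt N)\ge\kappa_{>\evn}^{1/2}(1-o_{d,\P}(1))$ uniformly in $i$, the diagonal matrix $\bLambda(\bLambda^2+\lambda\id_n)^{-1}$ has every entry bounded by $(1+o_{d,\P}(1))\kappa_{>\evn}^{-1/2}$; this immediately yields \eqref{eq:ZU_bound}.

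For the more delicate identity \eqref{eq:prop_a_1}, I plan to use a Sherman--Morrison--Woodbury expansion in which the high-degree Gram matrix $\bZ_{>\evn}^\sT\bZ_{>\evn}/N$ is treated as an effective isotropic noise $\kappa_{>\evn}\id_N$ (valid up to $o_{d,\P}$-perturbations by the concentration bounds of Section \ref{sec:RFRR_ZZ_concentration}). Writing $\bS:=\bD_{\le\evn}\bphi_{\le\evn}^\sT\bphi_{\le\evn}\bD_{\le\evn}$, which satisfies $\bS\approx N\bD_{\le\evn}^2$ by Proposition \ref{prop:YY_new} applied to $\{\phi_k\}_{k\le\evn}$, the spectral-gap condition \eqref{eq:NumberOfSamples} implies $(n/N)\bS/(\lambda+\kappa_{>\evn})\succ\!\!\succ\id_\evn$, and a short SMW computation gives
\[
\hbUi\bphi_{\le\evn}\bD_{\le\evn}\;\approx\;\frac{N}{n}\,\bphi_{\le\evn}\bD_{\le\evn}\bS^{-1}.
\]
Left-multiplying by $\bZ/N$ and using $\bZ\bphi_{\le\evn}\bD_{\le\evn}=\bpsi_{\le\evn}\bS+\bZ_{>\evn}\bphi_{\le\evn}\bD_{\le\evn}$ (with the latter a small cross-term controlled by Proposition \ref{prop:singular_values_bZ}.(c)) yields $\bZ\hbUi\bphi_{\le\evn}\bD_{\le\evn}/N\approx \bpsi_{\le\evn}/n$. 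Finally, $\bpsi_{\le\evn}^\sT\bpsi_{\le\evn}/n=\id_\evn+o_{d,\P}(1)$ (once again by Proposition \ref{prop:YY_new}, now for the eigenfunctions of $\Hop_d$) proves \eqref{eq:prop_a_1}; the bound \eqref{eq:ZUL} is immediate since $\sqrt n\,\|\bpsi_{\le\evn}/n\|_\op=O_{d,\P}(1)$, and \eqref{eq:prop_a_2} follows symmetrically by transposing the identity to $\hbUi\bZ^\sT=\bZ^\sT\bK^{-1}$ and invoking the bound on $\bpsi_{\le\evn}^\sT\boldf_{>\evn}/n$ provided in part (c).

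For (b), I apply Theorem \ref{prop:expression_U} to the operator $\Uop_{d,>\evn}$ to decompose $\bU_{>\evn}=\bphi_{(\evn,u]}\bD_{(\evn,u]}^2\bphi_{(\evn,u]}^\sT+\kappa_{>u}(\bLambda+\bDelta)$; the first summand has operator norm $\lesssim N\lambda_{d,\evn+1}^2$ (using $\|\bphi_{(\evn,u]}^\sT\bphi_{(\evn,u]}/N-\id\|_\op=o_{d,\P}(1)$) and the second has operator norm $O_{d,\P}(N^\delta)\kappa_{>u}$. The spectral-gap condition \eqref{eq:NumberOfSamples} gives $n\lambda_{d,\evn+1}^2=o_d(\kappa_{>\evn})$, and the overparametrization $N\ge nd^\delta$ gives $n\kappa_{>u}/N=o_d(\kappa_{>\evn})$, whence $(n/N)\|\bU_{>\evn}\|_\op=o_{d,\P}(\kappa_{>\evn})$. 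Parts (c) and (d) are routine: $\E\|\boldf\|_2^2=n\|f_d\|_{L^2}^2$ gives the first bound in (c) by Markov, while each coordinate $(\bpsi_{\le\evn}^\sT\boldf_{>\evn}/n)_k$ is a centered empirical mean (orthogonality) with variance $\lesssim n^{-1}\|\psi_k\|_{L^{2+\eta}}^2\|\proj_{>\evn}f_d\|_{L^{2+\eta}}^2$, and summing over $k\le\evn\le n^{1-\delta_0}$ together with the hypercontractivity $\|\psi_k\|_{L^{2+\eta}}=O(1)$ from Assumption \ref{ass:FMPCP}.(a) gives the second bound. For (d), Proposition \ref{prop:expected_V_RF} bounds $\E\|\bV_{>\evn}\|_2^2\le N\|\Uop_{d,>\evn}\|_\op\|\proj_{>\evn}f_d\|_{L^2}^2$, and again the spectral-gap condition $n\|\Uop_{d,>\evn}\|_\op=n\lambda_{d,\evn+1}^2=o_d(\kappa_{>\evn})$ finishes the argument.

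The main obstacle is the SMW step in (a): one must simultaneously control the cross-term $\bZ_{\le\evn}^\sT\bZ_{>\evn}/N$ (which mixes spike and noise) and the deviation of $\bZ_{>\evn}^\sT\bZ_{>\evn}/N$ from $\kappa_{>\evn}\id_N$ on the $\evn$-dimensional subspace spanned by the columns of $\bphi_{\le\evn}$, so that the ``effective ridge'' $\lambda+\kappa_{>\evn}$ can be legitimately used. This is precisely what Proposition \ref{prop:singular_values_bZ}.(c) and the auxiliary matrix-concentration results of Section \ref{sec:RFRR_ZZ_concentration} deliver, and the remainder of the proof is careful bookkeeping of $o_{d,\P}(1)$ error terms through the SMW identity.
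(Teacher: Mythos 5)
The central step of your plan for part (a) rests on a false premise: in the overparametrized regime $N\ge n\,d^\delta$, the matrix $\bZ_{>\evn}^\sT\bZ_{>\evn}/N$ is an $N\times N$ matrix of rank at most $n\ll N$, so it cannot be close to $\kappa_{>\evn}\id_N$ in operator norm; the concentration results of Section \ref{sec:RFRR_ZZ_concentration} (Proposition \ref{prop:concentration_bZ}) only give the $n\times n$ statement $\bZ_{>\evn}\bZ_{>\evn}^\sT/N=\kappa_{>\evn}(\id_n+\bDelta_\bZ)$. Replacing the high-degree block of $\hbU$ by $\kappa_{>\evn}\id_N$ perturbs $\hbU$ by a matrix of operator norm $\asymp\kappa_{>\evn}$, which is exactly the scale of its smallest eigenvalues (for $\lambda=0$ the true $\hbU$ is singular while your surrogate is invertible), so an SMW computation built on this surrogate does not control $\hbUi$ in the directions that matter. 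A second difficulty you underestimate is the sample-side cross term: $\|\tbpsi_{\le\evn}^\sT\tbZ_{>\evn}\|_\op$ is only $O_{d,\P}(\kappa_{>\evn}^{1/2})$, not $o_{d,\P}(\kappa_{>\evn}^{1/2})$ (only the feature-side term $\tbZ_{>\evn}\tbphi_{\le\evn}$ is small), and Proposition \ref{prop:singular_values_bZ}.(c) controls $\bP_1^\sT\bZ_{>\evn}\bQ_2$, a quantity phrased in terms of the singular vectors of the full $\bZ$; invoking it effectively forces you back into the SVD/perturbation route the paper actually runs (Weyl, Wedin's $\sin\Theta$, and the identity $\tbphi_{\le\evn}\tbD_{\le\evn}=(\tbZ-\tbZ_{>\evn})^\sT(\tbpsi_{\le\evn}^\sT)^\dagger$), rather than serving as a plug-in for clean SMW bookkeeping. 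A repairable version of your idea does exist: use your own identity $\bZ\hbUi=\bK^{-1}\bZ$ with $\bK=\bZ\bZ^\sT/N+\lambda\id_n$, for which $\bK\approx\bpsi_{\le\evn}\bD_{\le\evn}^2\bpsi_{\le\evn}^\sT+(\kappa_{>\evn}+\lambda)\id_n$ up to small and cross terms, and run SMW on that $n\times n$ matrix as in the KRR proof of Appendix \ref{sec:proof_KR}; but that is a different argument from the one you describe, and \eqref{eq:prop_a_2} would still need a direct treatment (the paper's Step 2) rather than ``transpose and invoke (c)'', since the error terms get paired with $\boldf_{>\evn}$, whose norm is $\sqrt{n}\,\|\proj_{>\evn}f_d\|_{L^2}$, and a naive operator-norm bound is too lossy unless you prove the stronger rate $\|\bZ\hbUi\bphi_{\le\evn}\bD_{\le\evn}/N-\bpsi_{\le\evn}/n\|_\op=o_{d,\P}(n^{-1/2})$.

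Part (b) also has a flaw in the split point: you decompose $\bU_{>\evn}$ at level $u$, but the concentration $\|\bphi_{(\evn,u]}^\sT\bphi_{(\evn,u]}/N-\id\|_\op=o_{d,\P}(1)$ you invoke for the middle block is unavailable when $u\gg N$ (on the sphere $u\asymp d^{2\max(\lvn,\lvN)+2-\delta}\gg N$), so the claimed bound $\lesssim N\lambda_{d,\evn+1}^2$ on that block is unjustified; moreover Theorem \ref{prop:expression_U} delivers the split at the spike level, not at $u$. The paper instead writes $\bU_{>\evn}=\bphi_{\evn:\evN}\bD_{\evn:\evN}^2\bphi_{\evn:\evN}^\sT+\kappa_{>\evN}(\id_N+\bDelta)$, applies Proposition \ref{prop:YY_new} to the block of dimension $\evN-\evn\le N^{1-\delta_0}$, uses $n\|\bD_{\evn:\evN}^2\|_\op=o_d(\kappa_{>\evn})$ from Eq.~\eqref{eq:NumberOfSamples}, and $N\ge n^{1+\delta_0}$ for the tail; your argument goes through once you move the split to $\evN$. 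Parts (c) and (d) are correct and coincide with the paper's proofs.
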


The proof of Proposition \ref{prop:technical_facts} is deferred to Section \ref{sec:technical_claims_RFRR}.

\begin{remark}
In the underparametrized case, the proofs and statements of Proposition \ref{prop:singular_values_bZ} and Proposition \ref{prop:technical_facts}.$(a)$ and \ref{prop:technical_facts}.$(c)$ are symmetric under the mapping $n \leftrightarrow N$, $\evn \leftrightarrow \evN$ and $\lambda \rightarrow \lambda_N = N \lambda/n$. The bounds in Propositions \ref{prop:technical_facts}.$(b)$ and \ref{prop:technical_facts}.$(d)$ can be easily replaced by
\[
\| \bU_{>\evN} \|_{\op} = \kappa_{>\evN} \cdot O_{d,\P} (1), \qquad \| \bV_{>\evN} \|_2 = \kappa_{>\evN}^{1/2} \| \proj_{> \evn}  f_d \|_{L^2} \cdot o_{d,\P}(1).
\]
In order to bound the term $T_{22}$ in Eq.~\eqref{eqn:term_R22_RF}, we will further use the following bound
\[
\| \hbUi \bZ^\sT \boldf / n \|_{\op} = \kappa_{>\evN}^{- 1/2} \cdot \| f_d \|_{L^2} \cdot o_{d,\P}(1),
\]
that we prove in Section \ref{sec:bounds_underparametrized}. It is easy to plug the new bounds below with the aforementioned mapping and check that the underparametrized case follows indeed from the same computation.
\end{remark}

The rest of the proof amounts to controlling each term separately using the claims listed in Proposition \ref{prop:technical_facts}. We will use extensively the following (basic) properties of the operator norm: for $\bA \in \R^{m \times p}$, $\bB \in \R^{p \times q}$, $\bu \in \R^m$ and $\bv \in \R^p$, we have
\[
\begin{aligned}
\| \bA \|_{\op} = & \| \bA^\sT \bA \|_{\op}^{1/2} =  \| \bA \bA^\sT \|_{\op}^{1/2},\\
\| \bA \bB \|_{\op} \leq & \| \bA \|_{\op} \|\bB \|_{\op},\\
\bu^\sT \bA \bv \leq & \| \bu\|_2 \| \bA \|_{\op} \| \bv \|_2.
\end{aligned}
\]

\noindent
{\bf Step 3. Term $T_1$.}

Let us decompose $T_1$ into
\[
T_1 = T_{11} + T_{12} + T_{13}, 
\]
where
\[
\begin{aligned}
T_{11} =& \boldf_{\le \evn}^\sT \bZ \hbUi \bV_{\le \evn} / N , \\
T_{12} =& \boldf_{> \evn}^\sT \bZ \hbUi  \bV_{\le \evn} / N , \\
T_{13} =& \boldf^\sT \bZ \hbUi  \bV_{> \evn} / N. \\
\end{aligned}
\]
Recall that $\bV_{\leq \evn} = \bphi_{\leq \evn} \bD_{\leq \evn} \hbf_{\leq \evn}$ and $\boldf_{\leq \evn} = \bpsi_{\leq \evn} \hbf_{\leq \evn}$. Hence by Eq.~\eqref{eq:prop_a_1} in Proposition \ref{prop:technical_facts}.$(a)$, we have
\begin{equation}\label{eqn:term_R11_RF}
\begin{aligned}
  T_{11} =  &\hbf_{\leq \evn}^\sT ( \bpsi_{\leq \evn}^\sT \bZ \hbUi \bphi_{\leq \evn} \bD_{\leq \evn} / N) \hbf_{\leq \evn}\\
  = &  \hbf_{\leq \evn}^\sT  ( \id_\evn   + \bDelta )  \hbf_{\leq \evn} \\
= & \| \proj_{\leq \evn} f_d \|_{L^2}^2 + \| \proj_{\leq \evn} f_d \|_{L^2}^2 \cdot o_{d,\P}(1).
\end{aligned}
\end{equation}
Similarly by Eq.~\eqref{eq:prop_a_2} in Proposition \ref{prop:technical_facts}.$(a)$, 
\begin{equation}\label{eqn:term_R12_RF}
\begin{aligned}
  | T_{12} |= & | \boldf_{> \evn}^\sT  \bZ \hbUi \bphi_{\leq \evn} \bD_{\leq \evn} \hbf_{\leq \evn} /N | \\
  \leq &  \| \bD_{\leq \evn}  \bphi_{\leq \evn}^\sT \hbUi   \bZ^\sT \boldf_{> \evn}  /N \|_2 \| \hbf_{\leq \evn} \|_2  \\
= &   \| \proj_{>\evn} f_d \|_{L^{2+\eta} } \| \proj_{\leq \evn} f_d \|_{L^2}  \cdot o_{d,\P} (1).
\end{aligned}
\end{equation}
Using Proposition \ref{prop:technical_facts}.$(c)$ and \ref{prop:technical_facts}.$(d)$ as well as Eq.~\eqref{eq:ZU_bound} in Proposition \ref{prop:technical_facts}.$(a)$, we get
\begin{equation}\label{eqn:term_R13_RF}
\begin{aligned}
| T_{13} |=  | \boldf^\sT \bZ \hbUi \bV_{>\evn} / N  |
\leq  & \| \boldf / \sqrt{n} \|_2 \| (\bZ / \sqrt{N}) \hbUi  \|_{\op} \cdot  \sqrt{n/N}  \| \bV_{>\evn} \|_2  \\
\leq  &  O_{d,\P} ( \| f_d \|_{L^2} ) \cdot O_{d,\P} (  \kappa_{>\evn}^{-1/2}  ) \cdot o_{d,\P} ( \kappa_{>\evn}^{1/2} \| \proj_{> \evn } f_d \|_{L^2} ) \\
= & \| f_d \|_{L^2}  \| \proj_{> \evn } f_d \|_{L^2} \cdot o_{d,\P}(1).
\end{aligned}
\end{equation}

Combining Eqs. (\ref{eqn:term_R11_RF}), (\ref{eqn:term_R12_RF}) and (\ref{eqn:term_R13_RF}) yields
\begin{equation}\label{eqn:RF_term_T1}
T_1 =  \| \proj_{\le \evn} f_d \|_{L^2}^2 + o_{d, \P}(1) \cdot (\| f_d \|_{L^2}^2 +  \| \proj_{>\evn} f_d \|_{L^{2+\eta} }^2 ). 
\end{equation}

\noindent
{\bf Step 4. Term $T_2$}

Recalling $\bU = \bphi_{\leq \evn} \bD_{\leq \evn}^2 \bphi_{\leq \evn}^\sT + \bU_{>\evn} $, we can decompose $T_2$ as 
\[
T_2 = T_{21}  +  T_{22},
\]
where
\[
\begin{aligned}
T_{21} =& (\boldf^\sT \bZ \hbUi \bphi_{\leq \evn} \bD_{\leq \evn}/N) (\bD_{\leq \evn} \bphi_{\leq \evn}^\sT \hbUi \bZ^\sT \boldf /N ) ,\\
T_{22} =& \boldf^\sT \bZ \hbUi \bU_{>\evn} \hbUi \bZ^\sT \boldf /N^2.
\end{aligned}
\]
From Eqs.~\eqref{eq:prop_a_1} and \eqref{eq:prop_a_2} in Proposition \ref{prop:technical_facts}.$(a)$, we have
\[
\begin{aligned}
\bD_{\leq \evn} \bphi_{\leq \evn}^\sT \hbUi \bZ^\sT \boldf /N = & \bD_{\leq \evn} \bphi_{\leq \evn}^\sT \hbUi \bZ^\sT \bpsi_{\leq \evn} \hbf_{\leq \evn} /N  + \bD_{\leq \evn} \bphi_{\leq \evn}^\sT \hbUi \bZ^\sT \boldf_{>\evn} /N  \\
= & ( \id_\evn + \bDelta_0 ) \hbf_{\leq \evn} + \| \proj_{>\evn} f_d \|_{L^{2 + \eta}} \cdot \bDelta_1,
\end{aligned}
\]
where $\| \bDelta_1 \|_\op = o_{d,\P}(1)$, $\| \bDelta_2 \|_2 = o_{d,\P}(1)$.
Hence,
\begin{equation}\label{eqn:term_R21_RF}
\begin{aligned}
T_{21} = & \| \proj_{\leq \evn} f_d \|_{L^2}^2 + (\| f_d \|_{L^2}^2 + \| \proj_{>\evn}  f_d \|_{L^{2+\delta}}^2 ) \cdot o_{d,\P}(1).
\end{aligned}
\end{equation}
From Eq.~\eqref{eq:ZU_bound} in Proposition \ref{prop:technical_facts}.$(a)$ as well as Proposition \ref{prop:technical_facts}.$(b)$, \ref{prop:technical_facts}.$(c)$, the second term is bounded by
\begin{equation}\label{eqn:term_R22_RF}
\begin{aligned}
 | T_{22} | = & |\boldf^\sT \bZ \hbUi \bU_{>\evn} \hbUi \bZ^\sT \boldf /N^2 |\\
 \leq & \| (n/N) \bU_{>\evn} \|_\op \| (\bZ / \sqrt{N} ) \hbUi  \|_{\op}^2 \| \boldf / \sqrt{n} \|_2^2  \\
 = & o_{d,\P}(\kappa_{>\evn}  ) \cdot O_{d,\P}( \kappa_{>\evn}^{-1} )  \cdot O_{d,\P}( \| f_d \|_{L^2}^2) = \| f_d \|_{L^2}^2 \cdot o_{d,\P}(1).
\end{aligned}
\end{equation}
 As a result, combining Eqs. (\ref{eqn:term_R21_RF}) and (\ref{eqn:term_R22_RF}), we have 
\begin{equation}\label{eqn:RF_term_T2}
T_2 =  \| \proj_{\le \evn} f_d \|_{L^2}^2 + o_{d, \P}(1) \cdot(\| f_d \|_{L^2}^2 + \| \proj_{>\evn}  f_d \|_{L^{2+\eta}}^2 ). 
\end{equation}

\noindent
{\bf Step 5. Terms $T_3, T_4$ and $T_5$. } 

Let us start with the term $T_3$. Decompose $\bU =  \bphi_{\leq \evn} \bD_{\leq \evn}^2 \bphi_{\leq \evn}^\sT + \bU_{>\evn}$:
\[
\begin{aligned}
\E_\beps [T_3] / \noise^2 = &  \tr(\bZ \hbUi \bU \hbUi  \bZ^\sT ) / N^2\\
 = &  \tr(\bZ \hbUi \bphi_{\leq \evn} \bD_{\leq \evn}^2 \bphi_{\leq\evn}^\sT \hbUi  \bZ^\sT ) / N^2 +  \tr(\bZ \hbUi \bU_{>\evn } \hbUi  \bZ^\sT ) / N^2.
\end{aligned}
\]
By Eq.~\eqref{eq:ZUL} in Proposition \ref{prop:technical_facts}.$(a)$, and since $\evn\le n^{1-\delta_0}$
by Assumption \ref{ass:spectral_gap}.$(a)$, we have
\[
\begin{aligned}
 \tr(\bZ \hbUi \bphi_{\leq \evn} \bD_{\leq \evn}^2 \bphi_{\leq \evn}^\sT \hbUi  \bZ^\sT ) / N^2 
\leq & \evn \cdot \| \bZ \hbUi \bphi_{\leq \evn} \bD_{\leq \evn} / N \|_{\op}^2 = \frac{\evn}{n} \cdot O_{d,\P} (1) = o_{d,\P}(1).
\end{aligned}
\]
By Eq.~\eqref{eq:ZU_bound} in Proposition \ref{prop:technical_facts}.$(a)$ as well as Proposition \ref{prop:technical_facts}.$(b)$, the second term is bounded by
\[
\begin{aligned}
 \tr(\bZ \hbUi \bU_{>\evn} \hbUi  \bZ^\sT ) / N^2 \leq & \| (n/N) \bU_{>\evn} \|_\op   \| \bZ \hbU^{-2}  \bZ^\sT/N \|_{\op} / n \\
   = & o_{d,\P}(\kappa_{>\evn} ) \cdot O_{d,\P} ( \kappa_{>\evn}^{-1}  ) \cdot n^{-1} = o_{d,\P}(1).
\end{aligned}
\]
Combining these two bounds and using Markov's inequality, we get
\begin{align}\label{eqn:term_varT3_RF}
T_3 = o_{d, \P}(1) \cdot \noise^2. 
\end{align}

Let us consider $T_4$ term. Recall that we can decompose $\bV = \bphi_{\leq \evn} \bD_{\leq \evn} \hbf_{\leq \evn} + \bV_{>\evn}$,
\[
\begin{aligned}
\E_{\beps} [T_4^2 ]/\noise^2 =  & \tr ( \bZ\hbUi \bV \bV^\sT \hbUi \bZ^\sT ) / N^2 \\
= & \bV^\sT \hbUi  \bZ^\sT \bZ \hbUi \bV / N^2 \\
\leq & 2( \|  \bZ \hbUi \bV_{\leq \evn} / N \|_2^2 + \|  \bZ \hbUi \bV_{> \evn} / N \|_2^2 ).  
\end{aligned}
\]
We have by Eq.~\eqref{eq:ZUL} in Proposition \ref{prop:technical_facts}.$(a)$,
\[
\begin{aligned}
\|  \bZ \hbUi \bV_{\leq \evn} / N \|_2 \leq & \|   \bZ \hbUi \bphi_{\leq \evn } \bD_{\leq \evn} / N \|_\op \| \hbf_{\leq \evn} \|_2 = \| \proj_{\leq \evn} f_d \|_{L^2} \cdot o_{d,\P}(1),
\end{aligned}
\]
and by Proposition \ref{prop:technical_facts}.$(d)$,
\[
\begin{aligned}
\|  \bZ \hbUi \bV_{> \evn} / N \|_2  \leq & \|  \bZ \hbUi  / \sqrt{N} \|_2 \| \bV_{>\evn} / \sqrt{N} \|_2 \\
=& O_{d,\P}( \kappa_{>\evn}^{-1/2} ) \cdot o_{d,\P} ( \kappa_{>\evn}^{1/2} \| \proj_{>\evn} f_d \|_{L^2} n^{-1/2 } ) =  \| \proj_{>\evn} f_d \|_{L^2} \cdot o_{d,\P}(1).
\end{aligned}
\]
Combining the two above bounds, we get by Markov's inequality
\begin{align}\label{eqn:term_varT4_RF}
T_4 = o_{d, \P}(1) \cdot \noise \| f_d \|_{L^2} = o_{d,\P} (1) \cdot ( \noise^2 +  \| f_d \|_{L^2}^2 ). 
\end{align}

Let us consider the last term $T_5$. We have
\[
\begin{aligned}
\E_{\beps} [T_5^2 ]/\noise^2  = & \tr ( \bZ \hbUi  \bU \hbUi  \bZ^\sT \boldf \boldf^\sT \bZ \hbUi  \bU \hbUi \bZ^\sT  )/ N^4 \\ 
=& \| \bZ \hbUi  \bU \hbUi  \bZ^\sT \boldf  /N^2 \|_2^2 \leq  \| \bZ \hbUi  \bU \hbUi  \bZ^\sT \sqrt{n}/N^2 \|_\op^2 \| \boldf / \sqrt{n} \|_2^2.
\end{aligned}
\]
By Eq.~\eqref{eq:prop_a_2} in Proposition \ref{prop:technical_facts}.$(a)$, and
Proposition \ref{prop:technical_facts}.$(b)$,
\[
\begin{aligned}
\| \bZ \hbUi  \bU \hbUi  \bZ^\sT \sqrt{n}/N^2 \|_\op \leq & \sqrt{n} \cdot  \| \bZ \hbUi  \bphi_{\leq \evn} \bD_{\leq \evn} / N \|_\op^2 + \| \sqrt{n/N^2} \bU_{>\evn} \|_\op   \|  \bZ  \hbU^{-2} \bZ^\sT /N \|_\op = o_{d,\P}(1).
\end{aligned}
\]
Hence, by Proposition \ref{prop:technical_facts}.$(c)$,
\[
\E_{\beps} [T_5^2 ]/\noise^2 =  o_{d,\P}(1) \cdot  \| \boldf / \sqrt{n} \|_2^2 =  \| f_d \|_2^2 \cdot o_{d,\P}(1),
\]
which gives by Markov's inequality
\begin{align}\label{eqn:term_varT5_RF}
T_5 = \noise \| f_d \|_{L^2}  \cdot o_{d, \P}(1)  =   ( \noise^2 +  \| f_d \|_{L^2}^2 ) \cdot  o_{d,\P} (1).
\end{align}

\noindent
{\bf Step 6. Finish the proof. }

Combining Eqs. (\ref{eqn:RF_term_T1}), (\ref{eqn:RF_term_T2}), (\ref{eqn:term_varT3_RF}), (\ref{eqn:term_varT4_RF}) and (\ref{eqn:term_varT5_RF}), we have
\[
\begin{aligned}
R_{\RF}( f_d, \bX, \bW, \lambda) = & \| f_d \|_{L^2}^2 - 2 T_{1} + T_{2} + T_3 - 2 T_4 + 2T_5 \\
= & \| f_d \|_{L^2}^2  - 2 \| \proj_{\leq \evn} f_d \|_{L^2}^2 + \| \proj_{\leq \evn} f_d \|_{L^2}^2 + o_{d, \P}(1) \cdot (\| f_d \|_{L^2}^2 + \|  \proj_{>\evn} f_d \|_{L^{2+\eta} }^2 + \noise^2) \\
=& \| \proj_{> \evn} f_d \|_{L^2}^2 + o_{d, \P}(1) \cdot  (\| f_d \|_{L^2}^2 + \| \proj_{>\evn} f_d \|_{L^{2+\eta} }^2 + \noise^2),
\end{aligned}
\]
which concludes the proof.

\subsection{Proof of Proposition \ref{prop:singular_values_bZ}: Structure of the feature matrix $Z$}
\label{sec:structure_Z}

Recall the definition $\bZ = ( \sigma_d (\bx_i ; \btheta_j) )_{i \in [n], j \in [N]}$.
Recall the decomposition $\bZ = \bZ_{\leq \evn} + \bZ_{>\evn}$ into a low and high degree parts,
as per Eq.~\eqref{eq:def_leq_ell_big_ell}.
For convenience, we will consider the normalized  quantities
\[
\begin{aligned}
\tbZ = \bZ /\sqrt{N},& \qquad \tbZ_{\leq \evn} = \bZ_{\leq \evn} /\sqrt{N},\qquad  &\tbZ_{> \evn} = \bZ_{>\evn} /\sqrt{N},\\
 \tbphi_{\leq \evn} = \bphi_{\leq \evn} /\sqrt{N},& \qquad  \tbpsi_{\leq \evn} = \bpsi_{\leq \evn} /\sqrt{n}, \qquad  &\tbD_{\leq \evn} = \sqrt{n} \bD_{\leq \evn} .
\end{aligned}
\]
In particular, notice that $\hbU = \tbZ^\sT \tbZ +  \lambda \id_N$ and $\tbZ_{\leq \evn} = \tbpsi_{\leq \evn} \tbD_{\leq \evn} \tbphi_{\leq \evn}^\sT$.

By Proposition \ref{prop:YY_new} applied to $\tbphi_{\leq \evn}$ and $\tbpsi_{\leq \evn}$ (with assumptions satisfied by Assumption \ref{ass:FMPCP}.$(a)$ and Assumption \ref{ass:spectral_gap}.$(a)$), we get
\begin{equation}
\tbphi_{\leq \evn}^\sT \tbphi_{\leq \evn} = \id_\evn + \bDelta_1, \qquad \tbpsi_{\leq \evn}^\sT \tbpsi_{\leq \evn} = \id_\evn + \bDelta_2, \label{eq:tbpsi_tbphi_concentration}
\end{equation}
with $\| \bDelta_i \|_\op = o_{d,\P}(1)$ for $i = 1,2$. Furthermore, by Proposition \ref{prop:concentration_bZ} (stated in Section \ref{sec:RFRR_ZZ_concentration}), we have
\begin{equation}\label{eq:tbZ_concentration}
\tbZ_{>\evn}  \tbZ_{>\evn}^\sT = \kappa_{>\evn} \cdot (\id_n + \bDelta_{\bZ}), \qquad \| \tbZ_{>\evn}  \tbphi_{\leq \evn} \|_\op = \kappa_{>\evn}^{1/2} \cdot o_{d,\P}(1 ),
\end{equation}
with $\| \bDelta_\bZ \|_\op = o_{d,\P}(1)$ and where we recall $\kappa_{>\evn} = \Tr ( \Hop_{d , > \evn} )$.
Furthermore, Assumption \ref{ass:spectral_gap}.$(a)$ implies that 
\begin{equation}\label{eq:lower_spiked_values}
\sigma_{\min} ( \tbD_{\leq \evn}) = \min_{k \leq \evn} \{ \sqrt{n} |\lambda_{d,k} | \} = \omega_{d} ( 1) \cdot  \kappa_{>\evn}^{1/2}.
\end{equation} 

Hence, we expect $\tbZ = \tbZ_{\leq \evn} + \tbZ_{>\evn} $ to have $\evn$ large singular values $ \omega_{d} ( 1) \cdot  \kappa_{>\evn}^{1/2}$ associated to $\tbZ_{\leq \evn}$ with left and right singular vectors spanned approximately by $\tbpsi_{\leq \evn}$ and $\tbphi_{\leq \evn}$, and $n-\evn$ small singular values approximately equal to $\kappa_{>\evn}^{1/2}$ associated to $\tbZ_{> \evn}$.

\begin{proof}[Proof of Proposition \ref{prop:singular_values_bZ}]

\noindent
{\bf Claim $(a)$. Bound on the singular values. } 

Using Eqs.~\eqref{eq:tbpsi_tbphi_concentration} and \eqref{eq:lower_spiked_values}, we have
\[
\begin{aligned}
\tbZ_{\leq \evn} \tbZ_{\leq \evn}^\sT =&~  \tbpsi_{\leq \evn} \tbD_{\leq \evn} \tbphi_{\leq \evn}^\sT \tbphi_{\leq \evn} \tbD_{\leq \evn} \tbpsi_{\leq \evn}^\sT \\
=&~ \tbpsi_{\leq \evn}  \tbD_{\leq \evn}(\id_\evn + \bDelta) \tbD_{\leq \evn}  \tbpsi_{\leq \evn}^\sT \\
\succeq &~ \Omega_{d,\P} (1) \cdot  \tbpsi_{\leq \evn}  \tbD_{\leq \evn}^2 \tbpsi_{\leq \evn}^\sT \\
\succeq&~ \kappa_{>\evn} \cdot \omega_{d} (1 ) \cdot \tbpsi_{\leq \evn} \tbpsi_{\leq \evn}^\sT.
\end{aligned}
\]
Furthermore, by $\tbpsi_{\leq \evn}^\sT \tbpsi_{\leq \evn} = \id_\evn + \bDelta_2$, we deduce that the singular values of  $\tbZ_{\leq \evn}$ are lower bounded as follows
\begin{equation}\label{eq:sing_val_z_leq}
\min_{i \in [\evn]} \sigma_i ( \tbZ_{\leq \evn} ) = \kappa_{>\evn} \cdot \omega_{d,\P} (1 ).
\end{equation}

By Lemma \ref{lem:ineq_sing_value} stated below in Section \ref{sec:aux_structure_Z}, we have for $i \in [n]$,
\begin{equation}\label{eq:lemma_8}
| \sigma_i ( \tbZ ) - \sigma_i ( \tbZ_{\leq \evn} ) | \leq \| \tbZ_{>\evn} \|_{\op}.
\end{equation}
Recalling Eq.~\eqref{eq:tbZ_concentration}, $\| \tbZ_{>\evn} \|_{\op} = O_{d,\P} ( 1)\cdot  \kappa_{>\evn}^{1/2}$. Hence the first $\evn$ singular values verify
\begin{equation}
\sigma_i ( \tbZ ) \geq \sigma_i ( \tbZ_{\leq \evn} ) -  \kappa_{>\evn}^{1/2} \cdot O_{d,\P}(1).
\end{equation}
Using Eq.~\eqref{eq:sing_val_z_leq} implies $\sigma_{\min} ( \bLambda_1 ) = \min_{i \in [\evn]} \sigma_i ( \tbZ) = \kappa_{>\evn}^{1/2} \cdot \omega_{d,\P}(1)$. This proves Eq.~\eqref{eq:min_lambda_1}.

Using again Eq.~\eqref{eq:lemma_8}, the $n-\evn$ smallest singular values verify
\begin{equation}\label{eq:lamb2_upper}
\max_{ i = \evn+1, \ldots , n} \sigma_i ( \tbZ)\leq \kappa_{>\evn}^{1/2} \cdot (1 + o_{d,\P}(1)).
\end{equation}
In order to lower bound the $n-\evn$ smallest singular values, we lower bound the eigenvalues of $\tbZ\tbZ^\sT $. We have
\[
\tbZ \tbZ^\sT = \tbZ_{\leq \evn} \tbZ_{\leq \evn}^\sT + \tbZ_{> \evn} \tbZ_{\leq \evn}^\sT + \tbZ_{\leq \evn} \tbZ_{> \evn}^\sT + \tbZ_{> \evn} \tbZ_{> \evn}^\sT.
\]
Recalling Eq.~\eqref{eq:tbpsi_tbphi_concentration} and Eq.~\eqref{eq:tbZ_concentration}, we have
\[
\begin{aligned}
 \tbZ_{\leq \evn} \tbZ_{\leq \evn}^\sT = &~ \tbpsi_{\leq \evn} \tbD_{\leq \evn} ( \tbphi_{\leq \evn}^\sT \tbphi_{\leq \evn} ) \tbD_{\leq \evn} \tbpsi_{\leq \evn}^\sT = \tbpsi_{\leq \evn} \tbD_{\leq \evn} ( \id_\evn + \bDelta_1 ) \tbD_{\leq \evn} \tbpsi_{\leq \evn} , \\
 \tbZ_{> \evn} \tbZ_{> \evn}^\sT =&~  \kappa_{>\evn} \cdot ( \id_n + \bDelta_\bZ ),
\end{aligned}
\]
where $\| \bDelta_\bZ \|_{\op} = o_{d,\P} ( 1)$.

Denote $\bL = \tbpsi_{\leq \evn} \tbD_{\leq \evn} ( \id_\evn + \bDelta_1 )^{1/2}$ and $\bT = \tbZ_{>\evn} \tbphi_{\leq \evn} ( \id_\evn + \bDelta_1 )^{-1/2}$. By Eq.~\eqref{eq:tbZ_concentration}, we have $\|  \bT \bT^\sT \|_\op = \kappa_{>\evn} \cdot o_{d,\P}(1)$. Combining these remarks, we get
\[
\begin{aligned}
 \tbZ \tbZ^\sT = &~ \bL \bL^\sT + \bT \bL^\sT + \bL \bT^\sT + \bT \bT^\sT - \bT \bT^\sT +  \tbZ_{> \evn} \tbZ_{> \evn}^\sT\\
 =&~ ( \bL + \bT ) ( \bL + \bT)^\sT + \kappa_{>\evn} \cdot ( \id_n + \bDelta') \\
 \succeq &~ \kappa_{>\evn} \cdot ( \id_n + \bDelta'),
\end{aligned}
\]
where $\| \bDelta ' \|_{\op} = o_{d,\P} (1)$. We deduce that 
\[
\sigma_{\min} (  \tbZ ) = \min_{ i \in [n]} \sigma_i ( \tbZ)  \geq \kappa_{>\evn}^{1/2} \cdot (1 + o_{d,\P}(1)),
\]
 which combined with Eq.~\eqref{eq:lamb2_upper} yields Eq.~\eqref{eq:bound_lambda_2}.

\noindent
{\bf Part $(b)$. Left and right singular vectors. } 

Let us prove $\| \tbphi_{\leq \evn}^\sT \bQ_2 \|_{\op} = o_{d,\P}(1)$. The proof for $\tbpsi_{\leq \evn}^\sT \bP_2$ follows from the same argument by replacing $\tbZ$ by $\tbZ^\sT$ and using the bound $\| \tbZ_{>\evn} \tbphi_{\leq \evn} \|_{\op}  =  \kappa_{>\evn}^{1/2} \cdot o_{d,\P} ( 1 )$, cf. Eq.~\eqref{eq:tbZ_concentration}.

Let us consider a sequence $\bu \in \R^{n-\evn}$ (where we keep the dependency on $d$ implicit) such that $\| \bu \|_2 = 1$ and $\| \tbphi_{\leq \evn}^\sT \bQ_2 \bu \|_2 = \| \tbphi_{\leq \evn}^\sT \bQ_2 \|_{\op}$. For convenience, denote $\tbu = \tbphi_{\leq \evn}^\sT \bQ_2 \bu $. We have
\begin{equation}\label{eq:sing_vec_P2}
\begin{aligned}
 \bu^\sT \bLambda_2^2 \bu =&~ \bu^\sT \bQ_2^\sT \tbZ^\sT \tbZ \bQ_2 \bu \\
 = &~ \bu^\sT \bQ_2^\sT ( \tbZ_{\leq \evn}^\sT \tbZ_{\leq \evn} + \tbZ_{> \evn}^\sT \tbZ_{\leq \evn} + \tbZ_{\leq \evn}^\sT \tbZ_{> \evn}^\sT + \tbZ_{> \evn}^\sT \tbZ_{> \evn} )\bQ_2 \bu \\
= &~ \tbu^\sT \tbD_{\leq \evn} ( \id_\evn + \bDelta_2) \tbD_{\leq \evn} \tbu + 2 \tbu^\sT \tbD_{\leq \evn} ( \tbpsi_{\leq \evn}^\sT \tbZ_{>\evn} \bu) + \| \tbZ_{>\evn} \bQ_2 \bu \|_2^2.
\end{aligned}
\end{equation}
From step 1, we have $\bu^\sT \bLambda_2^2 \bu  =\kappa_{>\evn}  \cdot  O_{d,\P} (1 )$. Furthermore, 
\begin{equation}\label{eq:sing_vec_lower}
\begin{aligned}
\tbu^\sT \tbD_{\leq \evn} ( \id_\evn + \bDelta_2) \tbD_{\leq \evn} \tbu = &~ \Omega_{d,\P} (1) \cdot \| \tbD_{\leq \evn} \tbu \|_2^2, \\
\tbu^\sT \tbD_{\leq \evn} ( \tbpsi_{\leq \evn}^\sT \tbZ_{>\evn} \bu)  \geq  & - \| \tbD_{\leq \evn} \tbu \|_2   \| \tbpsi_{\leq \evn}^\sT \tbZ_{>\evn}  \|_{\op},\\
 \| \tbpsi_{\leq \evn}^\sT \tbZ_{>\evn}  \|_{\op}  \leq &~  \| \tbpsi_{\leq \evn} \|_{\op} \| \tbZ_{>\evn}  \|_{\op}  = \kappa_{>\evn}^{1/2} \cdot O_{d,\P} ( 1).
\end{aligned}
\end{equation}
Therefore, using the bounds \eqref{eq:sing_vec_lower} in Eq.~\eqref{eq:sing_vec_P2}, we get
\[
\Omega_{d,\P} (1) \cdot \| \tbD_{\leq \evn} \tbu \|_2^2  - 2  \| \tbD_{\leq \evn} \tbu \|_2   \| \tbpsi_{\leq \evn}^\sT \tbZ_{>\evn}  \|_{\op}  \leq \kappa_{>\evn}  \cdot  O_{d,\P} (1 ). 
\]
Hence, 
\begin{equation}\label{eq:ineq_Du}
\| \tbD_{\leq \evn} \tbu \|_2 = O_{d,\P}  \Big( \max \big( \kappa_{>\evn}^{1/2} ,  \| \tbpsi_{\leq \evn}^\sT \tbZ_{>\evn}  \|_{\op}  \big) \Big)= \kappa_{>\evn}^{1/2} \cdot O_{d,\P} (1 ).
\end{equation}
Using the bound $\| \tbD_{\leq \evn} \tbu \|_2 =   \kappa_{>\evn}^{1/2} \cdot \omega_{d} ( 1 ) \cdot \| \tbu \|_2 = \kappa_{>\evn}^{1/2} \cdot \omega_{d} ( 1 ) \cdot \| \bQ_2^\sT \tbphi_{\leq \evn} \|_{\op}$ in Eq.~\eqref{eq:ineq_Du}, we deduce that $ \| \bQ_2^\sT \tbphi_{\leq \evn} \|_{\op} = o_{d,\P}(1)$. This concludes the proof of Proposition \ref{prop:singular_values_bZ}.$(b)$.

\noindent
{\bf Part $(c)$. Cross term bound. } 

This is a direct application of Lemma \ref{lem:linear_algebra} (stated below in Section \ref{sec:aux_structure_Z}) with matrix $\kappa_{>\evn}^{- 1/2} \tbZ  =\kappa_{>\evn}^{- 1/2} \tbZ_{\leq \evn} + \kappa_{>\evn}^{- 1/2} \tbZ_{>\evn}$. Indeed, Eq.~\eqref{eq:sing_val_z_leq} implies that $\sigma_{\min} ( \kappa_{>\evn}^{- 1/2} \tbZ_{\leq \evn} ) = \omega_{d,\P}(1)$ and Eq.~\eqref{eq:tbZ_concentration} gives $\| \kappa_{>\evn}^{- 1} \tbZ_{>\evn} \tbZ_{>\evn}^\sT - \id_n  \|_{\op} = o_{d,\P} (1)$. Furthermore, the right singular vectors $\bV_0$ of $\tbZ_{\leq \evn}$ are spanned by the left singular vectors of $\tbphi_{\leq \evn}$. From Eq.~\eqref{eq:tbZ_concentration}, we have $\| \tbZ_{> \evn} \tbphi_{\leq \evn} \|_{\op} = \kappa_{>\evn}^{1/2} \cdot o_{d,\P}(1 )$. Combined with $\| \tbphi_{\leq \evn}^\sT \tbphi_{\leq \evn} - \id_\evn \|_\op = o_{d,\P}(1)$, we get $\| \kappa_{>\evn}^{-1/2} \tbZ_{>\evn} \bV_0 \|_\op = o_{d,\P}(1 )$.
\end{proof}

\subsubsection{Auxiliary lemmas}
\label{sec:aux_structure_Z}

We recall the following classical perturbation theory result.

\begin{theorem}[Sin($\Theta$) theorem for rectangular matrices \cite{wedin1972perturbation}]\label{thm:sin_theta}
Let $\bA_0$ be a $n \times N$-matrix with singular value decomposition
\[
\bA_0 = \bU_0 \bSigma_0 \bV_0^\sT,
\]
where $\bU_0 \in \R^{n \times \evn}$, $\bV_0 \in \R^{N \times \evn}$ verify $\evn \leq \min ( n , N)$ and $\bU_0^\sT \bU_0 = \bV_0^\sT \bV_0 = \id_\evn$, and $\bSigma_0 = \diag ( (\sigma_i ( \bA_0) )_{i \in [\evn]} )$ are the singular values. Let $\bM$ be a perturbation $n \times N$-matrix and consider $\bB = \bA_0 + \bM$ with singular value decomposition
\[
\bB = \bP \bSigma \bQ = [\bP_1 , \bP_2] \diag ( \bLambda_1 , \bLambda_2 ) [\bQ_1 , \bQ_2 ]^\sT,
\]
where $\bP_1 \in \R^{n \times \evn}, \bQ_1 \in \R^{N \times \evn}, \bP_2 \in \R^{n \times (n-\evn)}, \bQ_2 \in \R^{N \times (n-\evn)}$. Assume that $\sigma_{\min} ( \bLambda_1 ) > 0 $. Then 
\begin{equation}\label{eq:sin_theta}
\max ( \| ( \id_n - \bU_0 \bU_0^\sT) \bP_1 \|_\op, \| ( \id_N - \bV_0 \bV_0^\sT) \bQ_1 \|_\op ) \leq \frac{\max ( \| \bM \bQ_1 \|_\op , \| \bM^\sT \bP_1 \|_\op )}{\sigma_{\min} ( \bLambda_1 ) }.
\end{equation}
\end{theorem}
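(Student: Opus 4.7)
The plan is to exploit the defining SVD identities $\bB \bQ_1 = \bP_1 \bLambda_1$ and $\bB^\sT \bP_1 = \bQ_1 \bLambda_1$ together with the fact that, since $\bA_0 = \bU_0 \bSigma_0 \bV_0^\sT$ has rank at most $\evn$, its column space lies in $\image(\bU_0)$ and its row space in $\image(\bV_0)$. This special structure (as opposed to the general setting of Wedin's theorem, where one has two disjoint blocks of singular values) collapses the usual spectral gap to simply $\sigma_{\min}(\bLambda_1) - 0 = \sigma_{\min}(\bLambda_1)$.

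First, I write $\bB = \bA_0 + \bM$, so $\bP_1 \bLambda_1 = \bB \bQ_1 = \bU_0 \bSigma_0 \bV_0^\sT \bQ_1 + \bM \bQ_1$. Left-multiplying by the orthogonal projector $\id_n - \bU_0 \bU_0^\sT$ kills the first term since $(\id_n - \bU_0 \bU_0^\sT) \bU_0 = 0$, giving
\[
(\id_n - \bU_0 \bU_0^\sT) \bP_1 \bLambda_1 = (\id_n - \bU_0 \bU_0^\sT) \bM \bQ_1.
\]
Since $\sigma_{\min}(\bLambda_1) > 0$, $\bLambda_1$ is invertible. Right-multiplying by $\bLambda_1^{-1}$, using $\| \bLambda_1^{-1} \|_\op = 1/\sigma_{\min}(\bLambda_1)$, and the fact that the projector has operator norm at most $1$, yields
\[
\| (\id_n - \bU_0 \bU_0^\sT) \bP_1 \|_\op \leq \frac{\| \bM \bQ_1 \|_\op}{\sigma_{\min}(\bLambda_1)}.
\]

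Second, I run a symmetric argument starting from $\bB^\sT \bP_1 = \bQ_1 \bLambda_1$ with $\bB^\sT = \bV_0 \bSigma_0 \bU_0^\sT + \bM^\sT$. Left-multiplying by $\id_N - \bV_0 \bV_0^\sT$ annihilates the $\bV_0 \bSigma_0 \bU_0^\sT \bP_1$ contribution, and the same invertibility argument gives
\[
\| (\id_N - \bV_0 \bV_0^\sT) \bQ_1 \|_\op \leq \frac{\| \bM^\sT \bP_1 \|_\op}{\sigma_{\min}(\bLambda_1)}.
\]
Taking the maximum of the two bounds yields the stated inequality \eqref{eq:sin_theta}. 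There is no genuine obstacle here: the proof is a two-line manipulation once one notices that the rank-$\evn$ structure of $\bA_0$ means the "other" singular subspace of $\bA_0$ is simply its null space, and the projection identity kills the unperturbed term exactly.
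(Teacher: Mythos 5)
Your proof is correct. The paper does not prove this statement at all --- it is quoted as a known result from Wedin's 1972 paper --- so your argument is a genuinely self-contained alternative to citing the literature. The key point you exploit is that the theorem, as stated here, is a special case of Wedin's general $\sin\Theta$ theorem: because $\bA_0 = \bU_0\bSigma_0\bV_0^\sT$ has rank at most $\evn$, its entire column (resp.\ row) space lies in the range of $\bU_0$ (resp.\ $\bV_0$), so the complementary singular values of $\bA_0$ are all zero and the usual singular-value gap degenerates to $\sigma_{\min}(\bLambda_1)$. In that situation the projector identities $(\id_n - \bU_0\bU_0^\sT)\bA_0 = 0$ and $(\id_N - \bV_0\bV_0^\sT)\bA_0^\sT = 0$, combined with $\bB\bQ_1 = \bP_1\bLambda_1$ and $\bB^\sT\bP_1 = \bQ_1\bLambda_1$ and the invertibility of $\bLambda_1$, give exactly your two displayed bounds, which are even slightly stronger (term-by-term) than the stated maximum-over-maximum inequality. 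The general Wedin theorem requires more work because both the unperturbed and perturbed matrices have two nontrivial blocks of singular values and the denominator is the separation between them; your route buys a short, elementary, fully checkable proof of the precise statement the paper actually uses, at the cost of not covering that more general setting.
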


\begin{lemma}[Weyl's inequality]\label{lem:ineq_sing_value}
Consider $\bA_0 , \bM \in \R^{n \times N}$ and define $\bB = \bA_0 + \bM$. Then for any $i \in [\min(n, N)]$, we have
\begin{equation}\label{eq:ineq_sing_value}
| \sigma_i ( \bB ) - \sigma_i ( \bA_0 ) | \leq \| \bM \|_\op.
\end{equation}
\end{lemma}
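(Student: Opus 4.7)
The plan is to invoke the Courant--Fischer min-max characterization of singular values and then use the triangle inequality. Recall that for any matrix $\bC \in \R^{n \times N}$ and $i \in [\min(n,N)]$,
\[
\sigma_i(\bC) = \max_{\substack{V \subseteq \R^N \\ \dim V = i}} \min_{\substack{x \in V \\ \|x\|_2 = 1}} \|\bC x\|_2.
\]
This is the standard fact that singular values are the eigenvalues of $(\bC^\sT \bC)^{1/2}$, to which the min-max principle for symmetric matrices applies.

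First I would fix an $i$-dimensional subspace $V \subseteq \R^N$ and apply the triangle inequality on the unit sphere: for every $x \in V$ with $\|x\|_2 = 1$,
\[
\|\bB x\|_2 = \|\bA_0 x + \bM x\|_2 \le \|\bA_0 x\|_2 + \|\bM x\|_2 \le \|\bA_0 x\|_2 + \|\bM\|_\op.
\]
Taking the minimum over unit vectors $x \in V$ preserves the additive constant on the right, so
\[
\min_{x \in V, \|x\|_2 = 1} \|\bB x\|_2 \le \min_{x \in V, \|x\|_2 = 1} \|\bA_0 x\|_2 + \|\bM\|_\op.
\]
Maximizing both sides over $i$-dimensional subspaces $V$ and using the min-max formula yields $\sigma_i(\bB) \le \sigma_i(\bA_0) + \|\bM\|_\op$.

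Finally I would apply exactly the same argument with the roles of $\bA_0$ and $\bB$ exchanged, writing $\bA_0 = \bB + (-\bM)$ and using $\|-\bM\|_\op = \|\bM\|_\op$, to obtain $\sigma_i(\bA_0) \le \sigma_i(\bB) + \|\bM\|_\op$. Combining the two inequalities gives \eqref{eq:ineq_sing_value}. There is no real obstacle here; the only minor subtlety is that the min-max formula for singular values has two equivalent forms (max over $i$-dimensional subspaces with an inner min, or min over $(N-i+1)$-dimensional subspaces with an inner max), and either one works identically for this argument.
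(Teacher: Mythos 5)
Your proof is correct: the Courant--Fischer min-max characterization of singular values combined with the triangle inequality, applied in both directions, is the standard argument for Weyl's inequality for singular values. The paper itself states this lemma as a classical fact without proof, so there is nothing to compare against; your argument fills that in correctly, and the one subtlety you flag (the two equivalent forms of the min-max formula) is indeed immaterial here.
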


The next lemma implies  that the projection of the noise matrix $\bM$ on the top left singular vectors of the full matrix is approximately in the  space orthogonal to the right singular vectors.
\begin{lemma}[Null space of right singular vectors]\label{lem:linear_algebra}
Let $\{ N(d) \}_{d \geq 1}$, $\{ n(d) \}_{d \geq 1}$ and $\{ \evn(d) \}_{d \geq 1}$ be three sequences of integers. For convenience, we denote $N = N(d)$, $n = n(d)$ and $\evn = \evn(d)$. Assume that $N \geq n+\evn$ and $n \geq \evn$. Consider the following sequence of random spiked matrices:
\[
\bB := \bB(d) = \bA_0 + \bM =  \bU_0 \bSigma_0 \bV_0^\sT + \bM \in \R^{n \times N},
\]
where $\bU_0 \bSigma_0 \bV_0^\sT$ is the singular value decomposition of the rank $\evn$ matrix $\bA_0$ with $\bU_0 \in \R^{n \times \evn}$, $\bV_0 \in \R^{N \times \evn}$ and $\bU_0^\sT \bU_0 = \bV_0^\sT \bV_0 = \id_\evn$, and $\bSigma_0 = \diag ( (\sigma_{0, i} (\bA_0) )_{i \in [\evn] } ) \in \R^{\evn \times \evn}$ are the singular values. Further assume that
\begin{itemize}
\item[(a)] $\sigma_{\min} (\bA_0 ) = \min_{i \in [\evn]} \sigma_{0,i} ( \bA_0) = \omega_{d,\P} (1 )$, 
\item[(b)] $\| \bM \bV_0 \|_\op = o_{d,\P} (1 )$,
\item[(c)] $\| \bM \bM^\sT - \id_n  \|_{\op} = o_{d,\P} (1)$.
\end{itemize}

Denote $\bB = \bP \bLambda \bQ^\sT = [\bP_1 , \bP_2] \diag ( \bLambda_1 , \bLambda_2 ) [ \bQ_1 , \bQ_2]^\sT$ the singular value decomposition of $\bB$ where $\bP_1 \in \R^{n \times \evn}$ and $\bQ_1 \in \R^{N \times \evn}$ correspond to the left and right singular vectors associated to the first $\evn$ singular values $\bLambda_1$, while $\bP_2 \in \R^{n \times (n-\evn)}$ and $\bQ_2 \in \R^{N \times (n-\evn)}$ correspond to the left and right singular vectors associated to the last $(n-\evn)$ singular values $\bLambda_2$.

Then we have
\begin{equation}
 \| \bP_1^\sT \bM \bQ \|_\op = o_{d,\P}(1).
 \label{eq:cross_svd_algebra}
\end{equation}
\end{lemma}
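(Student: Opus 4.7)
My strategy has three main steps, all revolving around the fact that the spike $\bA_0$ is well-separated from the noise level and that $\bM$ is nearly annihilated by $\bV_0$ while acting as a right isometry.

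\textbf{Step 1 (control of the principal subspaces via sin--$\Theta$).} First I would verify that $\sigma_{\min}(\bLambda_1) = \omega_{d,\P}(1)$. By Weyl's inequality (Lemma~\ref{lem:ineq_sing_value}), $\sigma_{\min}(\bLambda_1) \geq \sigma_{\min}(\bA_0) - \|\bM\|_{\op}$, and $\|\bM\|_{\op}^2 = \|\bM\bM^\sT\|_{\op} = 1 + o_{d,\P}(1)$ by assumption (c), while $\sigma_{\min}(\bA_0) = \omega_{d,\P}(1)$ by assumption (a). Plugging $\max(\|\bM\bQ_1\|_\op, \|\bM^\sT\bP_1\|_\op) \leq \|\bM\|_{\op} = O_{d,\P}(1)$ into Theorem~\ref{thm:sin_theta}, I obtain $\|(\id_n - \bU_0\bU_0^\sT)\bP_1\|_{\op} = o_{d,\P}(1)$ and $\|(\id_N - \bV_0\bV_0^\sT)\bQ_1\|_{\op} = o_{d,\P}(1)$.

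\textbf{Step 2 (the $\bQ_1$ block).} I would bound $\|\bP_1^\sT\bM\bQ_1\|_\op$ by decomposing $\bQ_1 = \bV_0(\bV_0^\sT\bQ_1) + (\id_N - \bV_0\bV_0^\sT)\bQ_1$. The first piece is controlled by $\|\bM\bV_0\|_{\op} \cdot \|\bV_0^\sT\bQ_1\|_{\op} \leq o_{d,\P}(1) \cdot 1$ using assumption (b), while the second is controlled by $\|\bP_1^\sT\bM\|_{\op} \cdot \|(\id - \bV_0\bV_0^\sT)\bQ_1\|_{\op} \leq (1+o(1)) \cdot o(1)$ using Step 1 and the bound $\|\bP_1^\sT\bM\|_{\op}^2 = \|\bP_1^\sT\bM\bM^\sT\bP_1\|_{\op} = 1 + o_{d,\P}(1)$. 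This gives $\|\bP_1^\sT\bM\bQ_1\|_{\op} = o_{d,\P}(1)$.

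\textbf{Step 3 (the $\bQ_2$ block).} This is the hardest step. I would extend $[\bQ_1,\bQ_2]$ to a full orthogonal basis $[\bQ_1,\bQ_2,\bQ_3]$ of $\R^N$, so that $\bQ_3\in\R^{N\times(N-n)}$ spans the null space of $\bB$. Since $\bB\bQ_3 = 0$, one has the exact identity $\bM\bQ_3 = -\bA_0\bQ_3 = -\bU_0\bSigma_0(\bV_0^\sT\bQ_3)$. Combining $\bP_1^\sT\bM\bM^\sT\bP_1 = \id_\evn + o_{d,\P}(1)$ with the decomposition $\id_N = \bQ_1\bQ_1^\sT + \bQ_2\bQ_2^\sT + \bQ_3\bQ_3^\sT$ yields
\begin{equation*}
\bP_1^\sT\bM\bQ_2\bQ_2^\sT\bM^\sT\bP_1 = \id_\evn + o_{d,\P}(1) - \bP_1^\sT\bM\bQ_1\bQ_1^\sT\bM^\sT\bP_1 - \bR\,\bSigma_0(\bV_0^\sT\bQ_3)(\bV_0^\sT\bQ_3)^\sT\bSigma_0\,\bR^\sT,
\end{equation*}
where $\bR := \bP_1^\sT\bU_0$. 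Step 2 kills the second term on the right. For the third, using $\bSigma_0(\bV_0^\sT\bQ_3) = -\bU_0^\sT\bM\bQ_3$ (obtained by multiplying the key identity by $\bU_0^\sT$) and $\bU_0^\sT\bM\bM^\sT\bU_0 = \id_\evn + o(1)$, I would show
\begin{equation*}
\bSigma_0(\bV_0^\sT\bQ_3)(\bV_0^\sT\bQ_3)^\sT\bSigma_0 = \bU_0^\sT\bM(\id_N - \bQ\bQ^\sT)\bM^\sT\bU_0 = \id_\evn + o_{d,\P}(1) - \bU_0^\sT\bM\bQ(\bU_0^\sT\bM\bQ)^\sT,
\end{equation*}
then argue, using the sin--$\Theta$ estimate $\bP_1 = \bU_0\bR + \bC$ with $\|\bC\|_{\op} = o_{d,\P}(1)$, that the right-hand side conjugated by $\bR$ equals $\id_\evn + o_{d,\P}(1) - \bP_1^\sT\bM\bQ\bQ^\sT\bM^\sT\bP_1 + o_{d,\P}(1)$. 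Substituting back and solving for $\bP_1^\sT\bM\bQ\bQ^\sT\bM^\sT\bP_1$ yields $\|\bP_1^\sT\bM\bQ\|_{\op}^2 = o_{d,\P}(1)$, concluding the proof. The main obstacle is propagating the $o(1)$ estimate cleanly through the conjugation by $\bR\bSigma_0$: naive bounds that split $\|\bR\bSigma_0\bK_i\|$ into $\|\bSigma_0\|_{\op}\cdot\|\bK_i\|$ lose a factor of the condition number of $\bSigma_0$; the trick is to keep $\bR\bSigma_0\bK_i$ grouped and exploit the identity $\bLambda_1\bK_1^\sT = \bR\bSigma_0 + \bP_1^\sT\bM\bV_0$ (which follows from $\bP_1^\sT\bB\bV_0 = \bLambda_1\bQ_1^\sT\bV_0$) so that the $\bSigma_0$-dependence is absorbed into quantities already controlled by Step 1 or by assumption (b).
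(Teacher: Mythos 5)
Your Steps 1 and 2 are fine: Weyl plus the sin-$\Theta$ theorem give $\sigma_{\min}(\bLambda_1)=\omega_{d,\P}(1)$, $\|(\id_n-\bU_0\bU_0^\sT)\bP_1\|_\op=o_{d,\P}(1)$, $\|(\id_N-\bV_0\bV_0^\sT)\bQ_1\|_\op=o_{d,\P}(1)$, and hence $\|\bP_1^\sT\bM\bQ_1\|_\op=o_{d,\P}(1)$. The gap is in Step 3, which is where the whole difficulty of the lemma sits: the proposed chain of identities is circular. Write $X_j:=\bP_1^\sT\bM\bQ_j\bQ_j^\sT\bM^\sT\bP_1$ and $X:=X_1+X_2=\bP_1^\sT\bM\bQ\bQ^\sT\bM^\sT\bP_1$. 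Your key identity gives, exactly, $\bP_1^\sT\bM\bQ_3=-\bR\bSigma_0\bV_0^\sT\bQ_3=\bR\,\bU_0^\sT\bM\bQ_3$ with $\bR=\bP_1^\sT\bU_0$, which is nothing more than an exact form of the approximation $\bP_1^\sT\approx\bR\bU_0^\sT$ that you already use; consequently, after conjugating by $\bR$ the whole Step 3 only encodes $\bP_1^\sT\bM\bM^\sT\bP_1\approx\bR\,\bU_0^\sT\bM\bM^\sT\bU_0\,\bR^\sT$, i.e.\ $\id_\evn+o_{d,\P}(1)=\id_\evn+o_{d,\P}(1)$. Concretely, substituting your conjugated expression for the third term into your first display gives $X_2=\id_\evn+o_{d,\P}(1)-X_1-\big(\id_\evn+o_{d,\P}(1)-X\big)=X-X_1+o_{d,\P}(1)$, and since $X=X_1+X_2$ holds identically, this collapses to $X_2=X_2+o_{d,\P}(1)$: the unknown cancels on both sides and cannot be ``solved for''. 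The auxiliary identity $\bLambda_1\bQ_1^\sT\bV_0=\bR\bSigma_0+\bP_1^\sT\bM\bV_0$ does not repair this: the term it produces, $\bLambda_1(\bQ_1^\sT\bV_0)(\bV_0^\sT\bQ_2)$, pairs a factor of size $\|\bLambda_1\|_\op$ against a sin-$\Theta$ bound of order $1/\sigma_{\min}(\bSigma_0)$, so you are again facing the condition number of $\bSigma_0$, which the hypotheses do not control (only $\sigma_{\min}(\bSigma_0)$ is assumed large).

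What is missing is a way to exploit $\ker(\bB)\perp\mathrm{col}(\bQ)$ quantitatively in the specific directions relevant to $\bP_1^\sT\bM\bQ_2$, and this is exactly what the paper's proof constructs. After rotating coordinates so that $\bV_0=[\id_\evn;\bzero]$ and $\bM=[\bM_1,\bM_2]$ (so $\|\bM_1\|_\op=o_{d,\P}(1)$ by (b) and $\bM_2\bM_2^\sT\approx\id_n$ by (c)), one solves $\bB\bN=\bzero$ explicitly for $\evn$ null directions with top block $\bN_1=-(\bSigma_0+\bU_0^\sT\bM_1)^{-1}$, which is $o_{d,\P}(1)$ precisely because $\sigma_{\min}(\bSigma_0)=\omega_{d,\P}(1)$, and bottom block $\bN_2\approx\bM_2^\sT\bU_0$ (using $\bM_2^{-1}\approx\bM_2^\sT$). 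In other words, the subspace $(\id_N-\bV_0\bV_0^\sT)\bM^\sT\bU_0$ lies within $o_{d,\P}(1)$ of $\ker(\bB)$. Since $\bN^\sT\bQ=\bzero$ exactly, this yields $\bU_0^\sT\bM_2$ times the corresponding block of $\bQ$ is $o_{d,\P}(1)$, and combining with $\bP_1^\sT\bM\approx\bR\bU_0^\sT\bM\approx[\bzero,\ \bR\bU_0^\sT\bM_2]$ gives $\|\bP_1^\sT\bM\bQ\|_\op=o_{d,\P}(1)$, including the $\bQ_2$ block that your argument leaves uncontrolled. You would need to add this construction of approximate null vectors (or an equivalent non-circular use of the null space) to complete Step 3.
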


\begin{proof}[Proof of Lemma \ref{lem:linear_algebra}]

\noindent
{\bf Step 1. Simplification of the problem. } 

Without loss of generality, we can choose an orthonormal basis in $\reals^N$ so that,
in that basis
    \begin{equation}\label{eq:M_rotated}
      \bV_0 = \begin{bmatrix}\id_\evn \\ \bzero_{N-\evn, \evn} \end{bmatrix}\, ,\qquad
\bM = \begin{bmatrix}
\bM_{1} & \bM_{2}  & \bzero_{n, N-(n+\evn)}\\
\end{bmatrix},
\end{equation}
where $\bM_{1} \in \R^{n \times \evn}$ and $\bM_{2} \in \R^{n \times n}$. Because the space corresponding to the last $N - (n+\evn)$ coordinates of the row is in the right null space of both $\bA_0$ and $\bM$ we can forget about them and consider --without loss of generality--
$\bM = [ \bM_1 , \bM_2] \in \R^{n \times (n + \evn)}$, $N=n+\evn$.

From the assumption $\| \bM \bV_0 \|_\op = o_{d,\P}(1 )$, we have 
\begin{equation}\label{eq:M1_bound}
\| \bM_{1} \|_\op = o_{d,\P}(1 ).
\end{equation}
Furthermore, from the assumption $\| \bM \bM^\sT - \id_n  \|_{\op} = o_{d,\P} (1)$, we have
\begin{equation}\label{eq:M2_bound}
\| \bM_{2} \bM_2^\sT - \id_n \|_\op = o_{d,\P}(1 ).
\end{equation}

\noindent
{\bf Step 2. There exists an orthogonal matrix $\bR \in \R^{\evn \times \evn}$ such that $\| \bP_1 - \bU_0 \bR \|_\op = o_{d,\P}(1 )$. } 

Recall that $\bLambda_1 = \diag ((\sigma_{1,i} (\bB) )_{i \in [\evn]}  )$. By Lemma \ref{lem:ineq_sing_value}, we have for any $i\in [\evn]$,
\[
| \sigma_{1,i} (\bB) - \sigma_{0,i} (\bA_0) | \leq \| \bM \|_\op.
\]
Using the assumption $(a)$ that $\sigma_{\min} ( \bA_0 ) = \omega_{d,\P} (1 )$ and assumption $(c)$ $\| \bM \|_{\op} = O_{d,\P}(1)$, we deduce that
\begin{equation}\label{eq:lambda1_big}
\sigma_{\min} ( \bLambda_1 ) = \omega_{d,\P}(1).
\end{equation}
Furthermore $\| \bM \bQ_1 \|_\op \leq \| \bM \|_{\op} = O_{d,\P}(1)$ and similarly $\| \bM^\sT \bP_1 \|_\op= O_{d,\P}(1)$. We can therefore apply Theorem \ref{thm:sin_theta} which gives
\[
\| ( \id_n - \bU_0 \bU_0^\sT ) \bP_1 \|_\op = o_{d,\P}(1).
\]
Denote by $\bU_{0,\perp} \in \R^{n \times (n - \evn)}$ a matrix such that $[\bU_0 , \bU_{0, \perp} ]$ is orthogonal, the last equation implies $\| \bU_{0,\perp}^\sT \bP_1 \|_\op = o_{d,\P}(1 )$. Further,
\[
\bP_1^\sT \bU_0 \bU_0^\sT \bP_1 = \id_{\evn} - \bP_1^\sT(\id_{n} -\bU_{0}  \bU_{0}^\sT ) \bP_1,
\]
which shows that $\| \bP_1^\sT \bU_0 \bU_0^\sT \bP_1 - \id_{\evn} \|_{\op} =  o_{d,\P}(1 )$.
This implies $\bU_0^\sT \bP_1$ is an approximately orthogonal matrix. Namely,
let its singular value decomposition be $\bU_0^\sT \bP_1= \bR_1\bS\bR_2^{\sT}$. Then,
by defining  the orthogonal matrix $\bR:= \bR_1\bR_2^{\sT} \in \R^{\evn \times \evn}$, we have $\| \bP_1 - \bU_0 \bR \|_\op = o_{d,\P}(1 )$.

\noindent
{\bf Step 3. The null space of the right eigenvectors $\bQ$. } 

Let us explicitly describe the null space of $\bQ \in \R^{ (n+\evn) \times n}$ (recall that we removed the $N-(n+\evn)$ last coordinates of the columns). Consider $\bN_1 \in \R^{\evn \times \evn}$ a rank $\evn$ matrix and write $\bN_2 \in \R^{n \times \evn}$ as a function of $\bN_1$ such that $\ker (\bQ)$ is spanned by the columns of the matrix $\bN = \begin{bmatrix} \bN_1 \\ \bN_2
  \end{bmatrix}\in \R^{(n + \evn) \times \evn}$, i.e., $\bB \bN = \bzero$, that is
\[
\begin{bmatrix} \bU_0 \bSigma_0 + \bM_1 & \bM_2 \end{bmatrix} \begin{bmatrix}
\bN_1 \\
\bN_2 
\end{bmatrix}  = ( \bU_0 \bSigma_0 + \bM_1) \bN_1 +  \bM_2 \bN_2 = \bfzero\, .
\]
Projecting on the two orthogonal subspaces $\bU_{0}$ and $\bU_{0,\perp}$, this is equivalent to
\begin{equation}\label{eq:equ_ker_Q}
\bN_1 = - ( \bSigma_0 + \bU_0^\sT \bM_{1} )^{-1} \bU_{0}^\sT \bM_2 \bN_2, \qquad \bU_{0,\perp}^\sT \bM_1 \bN_1 = - \bU_{0,\perp}^\sT \bM_{2} \bN_2.
\end{equation}
Let us do the following reparametrization $\bN_2 = \bM_2^{-1} \tbN_2$ and fix $\bN_1 = - ( \bSigma_0 + \bU_0^\sT \bM_{1} )^{-1}$. Then Eq.~\eqref{eq:equ_ker_Q} gives
  \[
  \bU_0^\sT \tbN_2 = \id_{\evn} , \qquad \bU_{0,\perp}^\sT \tbN_2 = \bU_{0,\perp}^\sT \bM_1 ( \bSigma_0 + \bU_0^\sT \bM_{1} )^{-1},
  \]
which gives $\tbN_2 =   \bU_0+  \bU_{0,\perp} \bU_{0,\perp}^\sT \bM_1 ( \bSigma_0 + \bU_0^\sT \bM_{1} )^{-1} $, and
\[
\begin{aligned}
\bN_1    = &~ - ( \bSigma_0 + \bU_0^\sT \bM_{1} )^{-1} , \\
\bN_2 = &~  \bM_2^{-1} \bU_0  +  \bM_2^{-1}  \bU_{0,\perp}  \bU_{0,\perp}^\sT \bM_1 ( \bSigma_0 + \bU_0^\sT \bM_{1} )^{-1}.
\end{aligned}
\]
By the assumption $\lambda_{\min} ( \bSigma_0 ) = \omega_{d,\P} (1 )$ and Eq.~\eqref{eq:M1_bound}, we have $\| ( \bSigma_0 + \bU_0^\sT \bM_{1} )^{-1} \|_{\op} = o_{d,\P} ( 1 )$. Furthermore, from Eq.~\eqref{eq:M2_bound}, we have $\| \bM_2^{-1} - \bM_2^\sT \|_{\op} = o_{d,\P}(1)$. We deduce that
\begin{equation}
\begin{aligned}\label{eq:N12_bounds}
\| \bN^\sT -  \begin{bmatrix}
\bzero_{n,\evn} & \bU_0^\sT \bM_2
\end{bmatrix}\|_{\op}  =  o_{d,\P}(1).
\end{aligned}
\end{equation}

\noindent
{\bf Step 4. Concluding the proof. } 

By construction, $\bN^\sT \bQ = \bzero$ and using Eq.~\eqref{eq:N12_bounds}, we get
\begin{equation}\label{eq:null_space_equ}
\begin{aligned}
\| \bN^\sT \bQ -  \begin{bmatrix}
\bzero_{n,\evn} & \bU_0^\sT \bM_{2}
\end{bmatrix} \bQ \|_{\op}  =  \|  \begin{bmatrix}
\bzero_{n,\evn} & \bU_0^\sT \bM_{2}
\end{bmatrix} \bQ \|_{\op}  = o_{d,\P}(1 ).
\end{aligned}
\end{equation}
Furthermore using step $2$ and recalling that $\| \bM_{1} \|_\op  = o_{d,\P}(1)$,
\begin{equation}\label{eq:null_space_equ2}
\| \bP_1^\sT \bM  - \begin{bmatrix}
\bzero_{n,\evn} & \bR^\sT \bU_0^\sT \bM_{2}
\end{bmatrix} \|_{\op} = o_{d,\P}(1).
\end{equation}
Combining Eqs.~\eqref{eq:null_space_equ} and \eqref{eq:null_space_equ2}, we get
\[
\| \bR \bP_1^\sT \bM \bQ -  \begin{bmatrix}
\bzero_{n,\evn} & \bU_0^\sT \bM_{2}
\end{bmatrix} \bQ \|_\op = o_{d,\P}(1 ),
\]
and $\|  \bP_1^\sT \bM \bQ  \|_{\op} = \| \bR \bP_1^\sT \bM \bQ  \|_{\op} = o_{d,\P} (1)$, which concludes the proof.
\end{proof}

\subsection{Proof of Proposition \ref{prop:technical_facts}: technical bounds in the overparametrized regime}
\label{sec:technical_claims_RFRR}

We prove the claims of this proposition in a different order than stated.

\subsubsection{Proof of claim $(c)$}
\label{sec:claim_c}

First, notice that $\E [ \| \boldf \|_2^2 ] = n \| f_d \|_{L^2}^2$. Hence, by Markov's inequality, $ \| \boldf \|_2^2 = n \| f_d \|_{L^2}^2 \cdot O_{d,\P} (1)$. 

Let us now consider $\bpsi_{\leq \evn}^\sT \boldf_{> \evn} /n $. For any $\eta >0$, we have
\[
\begin{aligned}
\E \left[ \Vert \bpsi_{\leq \evn}^\sT  \boldf_{> \evn} \Vert_2^2 \right] / n^2 = & \E_{\bx} \Big[ \Big( \sum_{u \ge \evn +1}    \hf_{u} \bpsi_{u}^\sT \Big)  \bpsi_{\leq \evn }  \bpsi_{\leq \evn }^\sT   \Big( \sum_{v \ge \evn +1} \hf_v \bpsi_{v}   \Big) \Big] / n^2 \\
= &\sum_{u , v \geq \evn +1} \sum_{s = 0}^\evn \sum_{i,j \in [n]}  \Big\{  \E \Big[ \psi_u (\bx_i ) \psi_s (\bx_i) \psi_s (\bx_j) \psi_v ( \bx_j) \Big] / n^2 \Big\}\hf_u \hf_v \\
= &\sum_{u , v \geq \evn +1} \sum_{s = 0}^\evn \sum_{i \in [n]}  \Big\{  \E \Big[ \psi_u (\bx_i ) \psi_s (\bx_i) \psi_s (\bx_i) \psi_v ( \bx_i) \Big] / n^2 \Big\}\hf_u \hf_v \\
= & \frac{1}{n}\sum_{s =0 }^\evn \E_{\bx} \Big[ \big( \proj_{>\evn} f_d ( \bx ) \big)^2 \psi_s (\bx)^2 \Big] \le \frac{1}{n}\sum_{s = 0}^\evn \| \proj_{> \evn} f_d \|_{L^{2 + \eta}}^2 \| \psi_s \|_{L^{ (4 + 2\eta)/\eta}}^2 \\
\le&~ \Tilde{C} (\eta) \frac{\evn}{n} \| \proj_{> \evn} f_d \|_{L^{2 + \eta}}^2,
\end{aligned}
\]
where the last inequality uses the hypercontractivity assumption of Assumption \ref{ass:FMPCP}.$(a)$:
\[
 \| \psi_s \|_{L^{ (4 + 2\eta)/\eta}}^2 = \E_{\bx} [ \psi_s ( \bx)^{2\cdot \frac{2 + \eta}{\eta}} ]^{\frac{2\eta}{4+2\eta} } \leq C ((2+ \eta )/\eta) \E_\bx [  \psi_s ( \bx)^2 ] = C ((2+ \eta )/\eta),
\]
and $ \Tilde{C} (\eta) = C ((2+ \eta )/\eta)$. By Markov's inequality (using $\evn \leq n^{1 - \delta_0}$ in Assumption \ref{ass:spectral_gap}.$(a)$ for some fixed $\delta_0 >0$), we get
\[
\Vert \bpsi_{\leq \evn}^\sT   \boldf_{>\evn} / n \Vert_2 = o_{d,\P}( 1 ) \cdot \| \proj_{> \evn} f_d \|_{L^{2+\eta}}.
\]

\subsubsection{Proof of Proposition \ref{prop:technical_facts}.$(a)$}

Throughout the proof, we will generically denote $\bDelta$ any matrix with $\| \bDelta \|_\op = o_{d,\P}(1)$.  In particular, $\bDelta$ can change from line to line. For convenience, we will use the notations introduced in Section \ref{sec:structure_Z}.

\noindent
{\bf Step 0. Bound $\| \tbZ \hbUi \|_\op = \kappa_{>\evn}^{-1/2} \cdot O_{d,\P} (   1 )$. } 

Recall the definition $\hbU = \tbZ^\sT \tbZ + \lambda \id_N$ and the singular value decomposition $\tbZ   = \bP \bLambda \bQ^\sT$. Hence, we can rewrite
\[
 \tbZ \hbUi  = \bP \frac{\bLambda}{\bLambda^2 + \lambda} \bQ^\sT,
\] 
where we denoted by a slight abuse of notation $\bLambda / (\bLambda^2 + \lambda) := \diag ( ( \Lambda_i / ( \Lambda_i^2 + \lambda ) )_{i \in [n]} )$. From Proposition \ref{prop:singular_values_bZ}.$(a)$, $\sigma_{\min} ( \bLambda) = \kappa_{>\evn}^{1/2} \cdot (1 + o_{d,\P}(1) )$. We deduce that
\[
\| \tbZ \hbUi  \|_\op = \kappa_{>\evn}^{-1/2} \cdot O_{d,\P} (   1).
\]

\noindent
{\bf Step 1. Bound $\| \tbpsi_{\leq \evn}^\sT \tbZ \hbUi \tbphi_{\leq \evn} \tbD_{\leq \evn} -  \id_{\evn}   \|_\op =  o_{d,\P}(1)$. }

First notice that $\tbphi_{\leq \evn} \tbD_{\leq \evn} = \tbZ_{\leq \evn}^\sT ( \tbpsi_{\leq \evn}^\sT )^\dagger = (\tbZ - \tbZ_{>\evn} )^\sT ( \tbpsi_{\leq \evn}^\sT )^\dagger $. Furthermore, by Eq.~\eqref{eq:tbpsi_tbphi_concentration}, we have $( \tbpsi_{\leq \evn}^\sT )^\dagger = \tbpsi_{\leq \evn} + \bDelta$. Hence,
\begin{equation}\label{eq:two_parts}
\tbpsi_{\leq \evn}^\sT \tbZ \hbUi \tbphi_{\leq \evn} \tbD_{\leq \evn} = \tbpsi_{\leq \evn}^\sT \tbZ \hbUi \tbZ^\sT  ( \tbpsi_{\leq \evn}^\sT )^\dagger - \tbpsi_{\leq \evn}^\sT \tbZ \hbUi \tbZ_{>\evn}^\sT ( \tbpsi_{\leq \evn}^\sT )^\dagger   .
\end{equation}
Let us decompose the first term along the large singular values $\bLambda_1$ and small singular values $\bLambda_2$: 
\[
\begin{aligned}
\tbpsi_{\leq \evn}^\sT \tbZ \hbUi\tbZ^\sT  ( \tbpsi_{\leq \evn}^\sT )^\dagger = &~ \tbpsi_{\leq \evn}^\sT\bP \frac{\bLambda^2}{\bLambda^2 +  \lambda}  \bP^\sT ( \tbpsi_{\leq \evn}^\sT )^\dagger \\
= &~ \tbpsi_{\leq \evn}^\sT\bP_1 \frac{\bLambda_1^2}{\bLambda_1^2 +  \lambda}  \bP_1^\sT ( \tbpsi_{\leq \evn}^\sT )^\dagger +\tbpsi_{\leq \evn}^\sT\bP_2 \frac{\bLambda_2^2}{\bLambda_2^2 +  \lambda}  \bP_2^\sT ( \tbpsi_{\leq \evn}^\sT )^\dagger. 
\end{aligned}
\]
From Eqs.~\eqref{eq:min_lambda_1} and \eqref{eq:singular_vectors_Z} in Proposition \ref{prop:singular_values_bZ} and the assumption in the theorem $\lambda = O_{d} (1) \cdot \kappa_{>\evn}$, we have
\[
\Big\| \frac{\bLambda_1^2}{\bLambda_1^2 +  \lambda} - \id_{\evn} \Big\|_{\op} = o_{d,\P} ( 1 ), \qquad \| \tbpsi_{\leq \evn}^\sT\bP_2  \|_{\op} = o_{d,\P} ( 1 ).
\]
Hence,
\[
\Big\| \tbpsi_{\leq \evn}^\sT\bP_2 \frac{\bLambda_2^2}{\bLambda_2^2 +  \lambda}  \bP_2^\sT ( \tbpsi_{\leq \evn}^\sT )^\dagger \Big\|_{\op} \leq \| \tbpsi_{\leq \evn}^\sT\bP_2  \|_{\op}  \| ( \tbpsi_{\leq \evn}^\sT )^\dagger \|_{\op} = o_{d,\P} ( 1),
\]
and 
\[
\tbpsi_{\leq \evn}^\sT\bP_1 \frac{\bLambda_1^2}{\bLambda_1^2 +  \lambda}  \bP_1^\sT ( \tbpsi_{\leq \evn}^\sT )^\dagger = \tbpsi_{\leq \evn}^\sT\bP_1 \bP_1^\sT ( \tbpsi_{\leq \evn}^\sT )^\dagger  + \bDelta =  \tbpsi_{\leq \evn}^\sT\bP \bP^\sT ( \tbpsi_{\leq \evn}^\sT )^\dagger  + \bDelta '   = \id_\evn + \bDelta ',
\]
where $\| \bDelta \|_{\op}, \| \bDelta ' \|_{\op} = o_{d,\P} ( 1 )$. We deduce
\begin{equation}\label{eq:first_l_part}
\begin{aligned}
\Big\| \tbpsi_{\leq \evn}^\sT \tbZ \hbUi \tbZ^\sT  ( \tbpsi_{\leq \evn}^\sT )^\dagger - \id_\evn \Big\|_{\op} = o_{d,\P} ( 1 ).
\end{aligned}
\end{equation}

Consider the second term in Eq.~\eqref{eq:two_parts}:
\[
\begin{aligned}
\tbpsi_{\leq \evn}^\sT \tbZ \hbUi \tbZ_{>\evn}^\sT ( \tbpsi_{\leq \evn}^\sT )^\dagger = &~   \tbpsi_{\leq \evn}^\sT \bP_1 \frac{\bLambda_1}{\bLambda_1^2 +  \lambda}  \bQ_1^\sT \tbZ_{>\evn}^\sT ( \tbpsi_{\leq \evn}^\sT )^\dagger +  \tbpsi_{\leq \evn}^\sT \bP_2 \frac{\bLambda_2}{\bLambda_2^2 +  \lambda}  \bQ_2^\sT \tbZ_{>\evn}^\sT ( \tbpsi_{\leq \evn}^\sT )^\dagger.
\end{aligned}
\]
Using Eq.~\eqref{eq:min_lambda_1} in Proposition \ref{prop:singular_values_bZ}, we have $\sigma_{\min}(\bLambda_1) = \kappa_{>\evn}^{1/2} \cdot \omega_{d,\P} ( 1 )$. Then, recalling that $ \|  \tbZ_{>\evn} \|_\op = \kappa_{>\evn}^{1/2} \cdot O_{d,\P} ( 1 )$, we have
\[
\begin{aligned}
\Big\|    \tbpsi_{\leq \evn}^\sT \bP_1 \frac{\bLambda_1}{\bLambda_1^2 +  \lambda}  \bQ_1^\sT \tbZ_{>\evn}^\sT ( \tbpsi_{\leq \evn}^\sT )^\dagger \Big\|_\op \leq & ~\|    \tbpsi_{\leq \evn}^\sT \|_\op \| \bLambda_1/ (\bLambda_1^2 +  \lambda) \|_\op \|  \tbZ_{>\evn} \|_\op \| \tbpsi_{\leq \evn} + \bDelta \|_{\op} \\
= &~ O_{d,\P}(1) \cdot o_{d,\P} (  \kappa_{>\evn}^{-1/2}  ) \cdot O_{d,\P}( \kappa_{>\evn}^{1/2} ) \cdot O_{d,\P}(1) = o_{d,\P}(1 ).
\end{aligned}
\]
By Eqs.~\eqref{eq:bound_lambda_2} and \eqref{eq:singular_vectors_Z} in Proposition \ref{prop:singular_values_bZ}, we get
\[
\begin{aligned}
\Big\|    \tbpsi_{\leq \evn}^\sT \bP_2 \frac{\bLambda_2}{\bLambda_2^2 +  \lambda}  \bQ_2^\sT \tbZ_{>\evn}^\sT ( \tbpsi_{\leq \evn}^\sT )^\dagger \Big\|_\op \leq &~ \|    \tbpsi_{\leq \evn}^\sT  \bP_2 \|_\op \| \bLambda_2/ (\bLambda_2^2 +  \lambda) \|_\op \|  \tbZ_{>\evn} \|_\op \| \tbpsi_{\leq \evn} + \bDelta \|_{\op} \\
= &~ o_{d,\P}(1 ) \cdot  O_{d,\P} ( \kappa_{>\evn} ^{-1/2}  ) \cdot O_{d,\P}( \kappa_{>\evn}^{1/2} ) \cdot O_{d,\P}(1) = o_{d,\P}(1 ).
\end{aligned}
\]
We deduce that
\begin{equation}\label{eq:second_l_part}
\| \tbpsi_{\leq \evn}^\sT \tbZ \hbUi \tbZ_{>\evn}^\sT ( \tbpsi_{\leq \evn}^\sT )^\dagger \|_\op = o_{d,\P}(1 ).
\end{equation}
Combining Eqs.~\eqref{eq:first_l_part} and \eqref{eq:second_l_part} into Eq.~\eqref{eq:two_parts} yields
\[
\tbpsi_{\leq \evn}^\sT \tbZ \hbUi \tbphi_{\leq \evn} \tbD_{\leq \evn}  = \id_\evn + \bDelta,
\]
where $\| \bDelta \|_{\op} = o_{d,\P} (1 )$.

\noindent
{\bf Step 2. Bound $\| \tbD_{\leq \evn}  \tbphi_{\leq \evn}^\sT \hbUi   \tbZ^\sT \boldf_{> \evn} / \sqrt{n} \|_2  = \| \proj_{>\evn} f_d \|_{L^{2+\eta} } \cdot o_{d,\P} (1)$. } 

Let us denote $\tbf_{>\evn} = \boldf_{> \evn} / \sqrt{n}$ for convenience. 
Let us use again that $\tbphi_{\leq \evn} \tbD_{\leq \evn} = (\tbZ - \tbZ_{>\evn} )^\sT ( \tbpsi_{\leq \evn}^\sT )^\dagger $:
\begin{equation}\label{eq:Detc}
\begin{aligned}
\tbD_{\leq \evn}  \tbphi_{\leq \evn}^\sT \hbUi   \tbZ^\sT \tbf_{> \evn}  = &~ ( \tbpsi_{\leq \evn} )^\dagger \tbZ  \hbUi   \tbZ^\sT \tbf_{> \evn}  - ( \tbpsi_{\leq \evn} )^\dagger \tbZ_{>\evn}  \hbUi   \tbZ^\sT \tbf_{> \evn} .
\end{aligned}
\end{equation}
First notice that because $\| \tbpsi_{\leq \evn}^\sT \tbpsi_{\leq \evn} - \id_\evn \|_{\op} = o_{d,\P}(1)$, we have $\| \tbpsi_{\leq \evn}^\sT \bP_2 \|_{\op} = o_{d,\P} (1 )$ in Proposition \ref{prop:singular_values_bZ}.$(b)$ that
implies $\| ( \tbpsi_{\leq \evn} )^\dagger \bP_2 \|_{\op} = o_{d,\P} ( 1 )$ (for example by looking at the singular value decomposition of $\tbpsi_{\leq \evn} $). Similarly $\| \tbpsi_{\leq \evn}^\sT \tbf_{>\evn} \|_2 =  \| \proj_{>\evn} f_d \|_{L^{2+\eta} }  \cdot o_{d,\P} ( 1 )$
(Proposition \ref{prop:technical_facts}.$(c)$) implies $\| ( \tbpsi_{\leq \evn} )^\dagger \tbf_{>\evn} \|_2 =  \| \proj_{>\evn} f_d \|_{L^{2+\eta} }  \cdot o_{d,\P} ( 1 )$.
Using the same argument as in the proof of Eq.~\eqref{eq:first_l_part}, we have
\begin{equation}\label{eq:2_step_bound_1}
\begin{aligned}
&~\| ( \tbpsi_{\leq \evn} )^\dagger \tbZ  \hbUi   \tbZ^\sT \tbf_{> \evn}  \|_2 \\
= &~ \Big\|  (\tbpsi_{\leq \evn} )^{\dagger}   \bP \frac{\bLambda^2}{\bLambda^2 +  \lambda} \bP^\sT  \tbf_{> \evn} \Big\|_2  \\
\le &~  \| ( \tbpsi_{\leq \evn} )^\dagger  \tbf_{>\evn} \|_2 +
o_{d,\P}(1 ) \cdot \| ( \tbpsi_{\leq \evn} )^\dagger\|_{\op}\|  \tbf_{>\evn} \|_2 + \| \proj_{>\evn} f_d \|_{L^2} \cdot O_{d,\P}(1) \cdot \| ( \tbpsi_{\leq \evn} )^\dagger \bP_2 \|_{\op} \\
=&~ \| \proj_{>\evn} f_d \|_{L^{2+\eta} } \cdot o_{d,\P} ( 1 ).
\end{aligned}
\end{equation}

The second term \eqref{eq:Detc}  can be decomposed as
\[
( \tbpsi_{\leq \evn} )^\dagger \tbZ_{>\evn}  \hbUi   \tbZ^\sT \tbf_{> \evn}  = ( \tbpsi_{\leq \evn} )^\dagger \tbZ_{>\evn} \bQ_1 \frac{\bLambda_1}{\bLambda_1^2 +  \lambda} \bP_1^\sT  \tbf_{> \evn} +( \tbpsi_{\leq \evn} )^\dagger \tbZ_{>\evn} \bQ_2 \frac{\bLambda_2}{\bLambda_2^2 +  \lambda} \bP_2^\sT  \tbf_{> \evn}.
\]
Using that $\sigma_{\min}(\bLambda_1) = \kappa_{>\evn}^{1/2}  \cdot \omega_{d,\P} (1 )$ and $\| \tbZ_{>\evn} \|_{\op} = \kappa_{>\evn}^{1/2}  \cdot O_{d,\P} (1)$ yields 
\begin{equation}\label{eq:2_step_bound_2}
\begin{aligned}
\Big\|   ( \tbpsi_{\leq \evn} )^\dagger \tbZ_{>\evn} \bQ_1 \frac{\bLambda_1}{\bLambda_1^2 +  \lambda} \bP_1^\sT  \tbf_{> \evn}  \Big\|_\op \leq &~ \|   ( \tbpsi_{\leq \evn} )^\dagger \tbZ_{>\evn} \bQ_1 \|_\op \| \bLambda_1/ (\bLambda_1^2 +  \lambda) \|_\op \|  \bP_1^\sT  \tbf_{> \evn}  \|_\op  \\
= &~ O_{d,\P}( \kappa_{>\evn}^{1/2}  ) \cdot  o_{d,\P} ( \kappa_{>\evn}^{-1/2}  ) \cdot  O_{d,\P}(\| \proj_{>\evn} f_d \|_{L^2} )\\
 = &~ \| \proj_{>\evn} f_d \|_{L^2} \cdot o_{d,\P} ( 1 ).
\end{aligned}
\end{equation}
For the second term, recall that $( \tbpsi_{\leq \evn} )^\dagger = \tbpsi_{\leq \evn}^\sT + \bDelta$ and introduce $\bP \bP^\sT = \bP_1 \bP_1^\sT + \bP_2 \bP_2^\sT = \id_n$:
\begin{equation}\label{eq:2_step_bound_3}
\begin{aligned}
& \Big\| ( \tbpsi_{\leq \evn} )^\dagger \tbZ_{>\evn} \bQ_2 \frac{\bLambda_2}{\bLambda_2^2 +  \lambda} \bP_2^\sT  \tbf_{> \evn} \Big\|_\op \\
= &~ \Big\| ( \tbpsi_{\leq \evn} )^\dagger [  \bP_1 \bP_1^\sT + \bP_2 \bP_2^\sT  ] \tbZ_{>\evn} \bQ_2 \frac{\bLambda_2}{\bLambda_2^2 +  \lambda} \bP_2^\sT  \tbf_{> \evn} \Big\|_\op \\
 \leq &~ \| ( \tbpsi_{\leq \evn} )^\dagger  \bP_1 \|_\op \| \bP_1^\sT  \tbZ_{>\evn} \bQ_2  \|_\op \| \bLambda_2/(\bLambda_2^2 +  \lambda) \|_\op \|  \bP_2^\sT  \tbf_{> \evn} \|_\op  \\
 &~ + \|( \tbpsi_{\leq \evn} )^\dagger  \bP_2 \|_\op \| \bP_2^\sT  \tbZ_{>\evn} \bQ_2  \|_\op \| \bLambda_2/(\bLambda_2^2 +  \lambda)\Big\|_\op \|  \bP_2^\sT  \tbf_{> \evn} \|_\op \\
 = &~  o_{d,\P} ( \kappa_{>\evn}^{1/2} ) \cdot O_{d,\P} ( \kappa_{>\evn}^{1/2}  ) \cdot \| \proj_{>\evn} f_d \|_{L^2} \\
 =&~\| \proj_{>\evn} f_d \|_{L^2} \cdot o_{d,\P} ( 1 ).
\end{aligned}
\end{equation}
where we used Eq.~\eqref{eq:cross_term_Z} in Proposition \ref{prop:singular_values_bZ},
and $\sigma_{\min}(\bLambda_2)=\kappa_{>\evn}^{-1/2} \cdot \Omega_{d,\P}(1)$
to obtain the second to last line. Combining Eqs.~\eqref{eq:2_step_bound_1}, \eqref{eq:2_step_bound_2} and \eqref{eq:2_step_bound_3} yields the result.

\noindent
{\bf Step 3. Bound $\sqrt{n} \| \bZ \hbUi \bphi_{\leq \evn} \bD_{\leq \evn} / N \|_\op = O_{d,\P}(1 )$. } 

First notice that $\| \tbZ_{>\evn} \tbphi_{\leq \evn} \|_{\op} = \kappa_{>\evn}^{1/2} \cdot o_{d,\P} (1 )$ implies $\| \tbZ_{>\evn} (\tbphi^\sT_{\leq \evn} )^\dagger\|_{\op} = \kappa_{>\evn}^{1/2} \cdot o_{d,\P} (1 )$, where we used that $\| \tbphi^\sT_{\leq \evn}  \tbphi_{\leq \evn}  - \id_{\evn} \|_\op = o_{d,\P}(1)$.

Using $\tbphi_{\leq \evn} \tbD_{\leq \evn} = ( \tbZ - \tbZ_{>\evn}) (\tbphi^\sT_{\leq \evn} )^\dagger$, we have
\[
\begin{aligned}
\| \tbZ \hbUi \tbphi_{\leq \evn} \tbD_{\leq \evn} \|_{\op} \leq &~\| \tbZ \hbUi\tbZ (\tbphi^\sT_{\leq \evn} )^\dagger\|_{\op} + \| \tbZ \hbUi\tbZ_{>\evn} (\tbphi^\sT_{\leq \evn} )^\dagger\|_{\op} \\
\leq &~ \| \bLambda^2 / ( \bLambda^2 +  \lambda ) \|_\op \| (\tbphi^\sT_{\leq \evn} )^\dagger\|_{\op} + \| \tbZ \hbUi \|_\op \| \tbZ_{>\evn} (\tbphi^\sT_{\leq \evn} )^\dagger\|_{\op} \\
= &~ O_{d,\P}(1) + O_{d,\P} ( \kappa_{>\evn}^{-1/2}  ) \cdot  o_{d,\P} (\kappa_{>\evn}^{1/2}  ) \\
=&~ O_{d,\P}(1),
\end{aligned}
\]
which concludes the proof of the claims in Proposition \ref{prop:technical_facts}.$(a)$.

\subsubsection{Proof of Proposition \ref{prop:technical_facts}.$(b)$}

Denote 
\[
\begin{aligned}
\bD_{\evn:\evN} =& \diag(\lambda_{d,\evn+1}, \lambda_{d,\evn+2} , \ldots , \lambda_{d,\evN} ) \in \R^{(\evN - \evn) \times (\evN-\evn)},  \\
\bphi_{\evn:\evN} =& (\bphi_{k } (\btheta_i ) )_{i \in [N],  k =\evn+1, \ldots , \evN} \in \R^{N \times (\evN - \evn)}.
\end{aligned}
\]
Applying Theorem \ref{prop:expression_U} to $\bU_{>\evn}$ (where the assumptions are satisfied by Assumptions \ref{ass:FMPCP}.$(a)$ and $(b)$ and Assumption \ref{ass:spectral_gap}.$(a)$), we get with Assumption \ref{ass:FMPCP}.$(d)$,
\[
\bU_{>\evn} = \bphi_{\evn:\evN} \bD_{\evn:\evN}^2 \bphi_{\evn:\evN}^\sT + \kappa_{>\evN} ( \id_N + \bDelta ),
\]
where $\| \bDelta \|_\op = o_{d,\P}(1)$ and $\kappa_{>\evN} = \Trace ( \Hop_{d,>\evN} )$.
By assumption, we have $N \geq n^{1 + \delta_0}$ for some fixed $\delta_0 >0$ and therefore
\begin{equation}\label{eq:b_technical_1}
\frac{n}{N }\| \kappa_{>\evN} ( \id_N + \bDelta ) \|_{\op}  =  \kappa_{>\evN} \cdot o_{d,\P} (1 ).
\end{equation}

By Proposition \ref{prop:YY_new} (assumptions satisfied by Assumptions \ref{ass:FMPCP}.$(a)$ and \ref{ass:spectral_gap}.$(a)$), we get
\[
\| \bphi_{\evn:\evN}^\sT \bphi_{\evn:\evN}/N - \id_{\evN-\evn} \|_\op = o_{d,\P}(1).
\]
Furthermore, by Assumption \ref{ass:spectral_gap}.$(a)$, we have $n^{1 + \delta_0} \cdot \| \Hop_{d,>\evn} \|_\op = O_{d}(1) \cdot \kappa_{>\evn}$ for a fixed $\delta_0 >0$. Therefore $n  \| \bD_{\evn:\evN}^2  \|_\op = \kappa_{>\evn} \cdot o_{d} (1)$. Hence,
\begin{equation}\label{eq:b_technical_2}
\frac{n}{N} \|  \bphi_{\evn:\evN} \bD_{\evn:\evN}^2 \bphi_{\evn:\evN}^\sT \|_\op \leq \| \bphi_{\evn:\evN} / \sqrt{N} \|_\op^2 \| n \bD_{\evn:\evN}^2 \|_\op  = \kappa_{>\evn} \cdot o_{d,\P}(1 ).
\end{equation}
Combining Eqs.~\eqref{eq:b_technical_1} and \eqref{eq:b_technical_2} yields
\[
\frac{n}{N} \| \bU_{>\evn} \|_\op = \kappa_{>\evn} \cdot  o_{d,\P}(1 ).
\]

\subsubsection{Proof of Proposition \ref{prop:technical_facts}.$(d)$}

Recall 
\[
\bV_{>\evn} = \sum_{k = \evn+1}^\infty \hf_{d,k} \lambda_{d,k} \bphi_k.
\]
Taking the expectation over $(\btheta_1 , \ldots , \btheta_N)$, we get
\[
\frac{n}{N}\E [\| \bV_{> \evn} \|_2^2 ]  = n \sum_{k \ge \evn+1} \lambda_{d,k}^2 \hf_{d,k}^2 \leq n \cdot \| \Hop_{d,>\evn} \|_\op \cdot \| \proj_{>\evn} f_d \|_{L^2}^2.
\]
From condition \eqref{eq:NumberOfSamples} in Assumption \ref{ass:spectral_gap}.$(a)$, we have $n^{1+\delta_0}  \| \Hop_{d,>\evn} \|_\op = O_d (1) \cdot \kappa_{>\evn}$, and we conclude with Markov's inequality that
\[
\sqrt{\frac{n}{N}} \| \bV_{> \evn} \|_2 = \sqrt{\kappa_{>\evn} } \|\proj_{> \evn} f_d\|_{L^2} \cdot o_{d,\P} (1 ).
\]

\subsubsection{Bounds in the underparametrized regime}
\label{sec:bounds_underparametrized}

In the underparametrized case, we further prove the following lemma.

\begin{lemma}\label{lem:under_bound}
Follow the assumptions of Theorem \ref{thm:RFK_generalization} in the underparametrized case as well as the notations in Section \ref{sec:proof_RFRR_over}. Then, we have
\begin{align}
\| \bZ_{>\evN}^\sT \boldf_{>\evN} / n \|_2 =&~ \kappa_{>\evN}^{1/2} \cdot \| \proj_{> \evN} f_d \|_{L^{2+\eta} } \cdot o_{d,\P}(1), \label{eq:under_bound_1}\\
 \| \hbUi \bZ^\sT \boldf / n \|_{\op} =&~ \kappa_{>\evN}^{- 1/2}  \cdot o_{d,\P}(1) \cdot( \| f_d \|_{L^2} +  \| \proj_{> \evN} f_d \|_{L^{2+\eta} }).\label{eq:under_bound_2}
\end{align}
\end{lemma}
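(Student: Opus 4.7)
The plan is to prove the two bounds separately, exploiting the fact that in the underparametrized regime ($n \gg N$) the empirical feature kernel $\bZ^\sT \bZ/n$ concentrates around the population kernel $\bU$, and the structure of $\bZ^\sT/\sqrt{n}$ mirrors that of $\bZ/\sqrt{N}$ in the overparametrized case under the symmetry $n \leftrightarrow N$, $\evn \leftrightarrow \evN$.

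For Eq.~\eqref{eq:under_bound_1}, I would compute $\E_\bX \|\bZ_{>\evN}^\sT \boldf_{>\evN}/n\|_2^2$ directly and conclude by Markov. Conditional on $\bTheta$, each coordinate of $\bZ_{>\evN}^\sT\boldf_{>\evN}/n$ is an empirical mean of i.i.d.\ terms $\sigma_{d,>\evN}(\bx_i;\btheta_j)\proj_{>\evN}f_d(\bx_i)$, so its conditional second moment decomposes as $V_{>\evN,j}^2 + n^{-1}\E_\bx[\sigma_{d,>\evN}(\bx;\btheta_j)^2(\proj_{>\evN}f_d(\bx))^2]$, with $V_{>\evN,j} = \sum_{k>\evN}\lambda_{d,k}\hat f_{d,k}\phi_k(\btheta_j)$. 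Summing over $j$ and taking $\E_\bTheta$ gives
\[
\E\|\bZ_{>\evN}^\sT\boldf_{>\evN}/n\|_2^2 \leq N\sum_{k>\evN}\lambda_{d,k}^2\hat f_{d,k}^2 + \frac{N}{n}\E_\bx[H_{d,>\evN}(\bx,\bx)(\proj_{>\evN}f_d(\bx))^2].
\]
The first summand is controlled by the underparametrized spectral gap $N\|\Hop_{d,>\evN}\|_\op \leq N^{-\delta_0}\kappa_{>\evN}$ from Assumption~\ref{ass:spectral_gap}.$(b)$, mirroring the proof of Proposition~\ref{prop:technical_facts}.$(d)$. For the second summand, Hölder with exponents $(2+\eta)/2$ and $(2+\eta)/\eta$ isolates $\|\proj_{>\evN}f_d\|_{L^{2+\eta}}^2$; the remaining moment $\|H_{d,>\evN}(\bx,\bx)\|_{L^{(2+\eta)/\eta}}$ is handled by splitting $H_{d,>\evN}=H_{d,\evN:u}+H_{d,>u}$. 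The low-degree part is bounded by $O_d(\kappa_{>\evN})$ via hypercontractivity of finite eigenspaces (Assumption~\ref{ass:FMPCP}.$(a)$) applied termwise to $\psi_k^2$; the high-degree part is bounded by Jensen's inequality $\E_\bx[(\E_\btheta\sigma_{>u}^2)^p]\le \E_{\bx,\btheta}\sigma_{>u}^{2p}$ combined with Assumption~\ref{ass:FMPCP}.$(c)$. Multiplying by $N/n = o_d(1)$ closes the bound.

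For Eq.~\eqref{eq:under_bound_2}, I would perform a spectral analysis of $\hbU^{-1}\bZ^\sT$. Writing the thin SVD $\bZ^\sT/\sqrt{n}=\bP\bLambda\bQ^\sT$ with $\bP\in\R^{N\times N}$ orthogonal and $\bQ\in\R^{n\times N}$ satisfying $\bQ^\sT\bQ=\id_N$, a direct computation yields $\hbU = \bP[(n/N)\bLambda^2+\lambda\id_N]\bP^\sT$ and hence
\[
\hbU^{-1}\bZ^\sT/\sqrt{n} = \bP\,\diag\!\left(\frac{\lambda_i}{(n/N)\lambda_i^2+\lambda}\right)\bQ^\sT.
\]
By the symmetric analog of Proposition~\ref{prop:singular_values_bZ} in the underparametrized regime, the $\evN$ spike singular values satisfy $\sigma_{\min}(\bLambda_{\text{spike}}) = \omega_{d,\P}(\kappa_{>\evN}^{1/2})$, while the $N-\evN$ bulk singular values concentrate on $\kappa_{>\evN}^{1/2}$. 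In either regime, $\lambda_i/((n/N)\lambda_i^2+\lambda) \le (N/n)\kappa_{>\evN}^{-1/2}\cdot O_{d,\P}(1)$ for $\lambda\le\lambda_\star = o_d((n/N)\kappa_{>\evN})$, so $\|\hbU^{-1}\bZ^\sT/\sqrt{n}\|_\op = O_{d,\P}((N/n)\kappa_{>\evN}^{-1/2})$. Combined with $\|\bQ^\sT\boldf/\sqrt{n}\|_2 \le \|\boldf\|_2/\sqrt{n} = O_{d,\P}(\|f_d\|_{L^2})$, we obtain $\|\hbU^{-1}\bZ^\sT\boldf/n\|_2 = O_{d,\P}((N/n)\kappa_{>\evN}^{-1/2})\|f_d\|_{L^2}$, which is $o_{d,\P}(\kappa_{>\evN}^{-1/2})\|f_d\|_{L^2}$ since $N/n\le d^{-\delta}$.

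The main obstacle is the moment bound $\|H_{d,>\evN}(\bx,\bx)\|_{L^p} = O_d(\kappa_{>\evN})$ up to a sub-polynomial factor in the first part: Assumption~\ref{ass:FMPCP}.$(c)$ controls $\sigma_{>u}$ but introduces a factor $\min(n,N)^{2\delta_0}$, so one must verify that the precise quantitative relation in Assumption~\ref{ass:FMPCP}.$(c)$ makes this factor negligible against $N/n$. The spectral computation in Part~2 is routine once the symmetric analog of Proposition~\ref{prop:singular_values_bZ} is in hand (and this analog is obtained by exactly the same argument developed in Section~\ref{sec:structure_Z} with the roles of features and samples exchanged).
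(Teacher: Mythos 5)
Your treatment of \eqref{eq:under_bound_1} has the right skeleton (a second-moment computation plus Markov, with the ``signal'' term killed by the spectral gap $N\|\Uop_{d,>\evN}\|_{\op}\le N^{-\delta_0}\Trace(\Uop_{d,>\evN})$), but the way you control the variance term $\frac{N}{n}\E_\bx\big[H_{d,>\evN}(\bx,\bx)(\proj_{>\evN}f_d(\bx))^2\big]$ has a genuine gap. After H\"older you need $\|H_{d,>u}(\bx,\bx)\|_{L^{(2+\eta)/\eta}}$, and via Jensen this requires the $2p$-th moment of $\sigma_{>u}$ with $p=(2+\eta)/\eta$; Assumption \ref{ass:FMPCP}.$(c)$ supplies such a moment only at the single order $2k$, with $k$ tied to $n,N$ by the displayed condition. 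Hence your argument only closes when $(2+\eta)/\eta\le k$, i.e.\ $\eta\ge 2/(k-1)$, and fails for small fixed $\eta$ --- the obstacle is the moment \emph{order}, not the polynomial factor $\min(n,N)^{2\delta_0}$ that you did check. The paper's proof avoids this entirely: it takes $\E_\bTheta$ conditionally on $\bX$, splits the diagonal at $\evn$ rather than at $u$, applies hypercontractivity (Assumption \ref{ass:FMPCP}.$(a)$, valid at every order) only to the finite-eigenspace piece $H_{d,\evN:\evn}$, and controls the tail by $\max_{i\le n}H_{d,>\evn}(\bx_i,\bx_i)=O_{d,\P}(1)\cdot\Trace(\Hop_{d,>\evn})$ from Assumption \ref{ass:FMPCP}.$(d)$, paired with the plain $L^2$ norm of $\proj_{>\evN}f_d$ on that piece. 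You never invoke Assumption \ref{ass:FMPCP}.$(d)$, which is exactly the tool designed for this step.

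The problem with your proof of \eqref{eq:under_bound_2} is more serious: all of its smallness comes from the prefactor $N/n$ generated by normalizing $\hbU=\bZ^\sT\bZ/N+\lambda\id_N$ while dividing by $n$, i.e.\ from a normalization bookkeeping rather than from the random-matrix structure. At the scale at which the bound is actually used in the underparametrized risk computation (to control $T_{22}$ one needs $\|\hbUi\bZ^\sT\boldf/N\|_2=o_{d,\P}(1)\cdot\kappa_{>\evN}^{-1/2}\cdot(\cdots)$, which is the bound in the paper's mapped normalization $\bZ^\sT\bZ/n+\lambda_N\id_N$, $\lambda_N=N\lambda/n$), your crude estimate $\sigma_{\min}(\bZ/\sqrt{n})^{-1}\|\boldf\|_2/\sqrt{n}$ only gives $O_{d,\P}(1)\cdot\kappa_{>\evN}^{-1/2}\|f_d\|_{L^2}$, because the bulk singular values are of order exactly $\kappa_{>\evN}^{1/2}$; the required $o_{d,\P}(1)$ is precisely what the lemma must supply and your argument never produces it. The paper obtains it by splitting along the SVD: the spike block contributes $o_{d,\P}(1)$ because $\sigma_{\min}(\bLambda_1)=\omega_{d,\P}(\kappa_{>\evN}^{1/2})$, and the bulk block contributes $o_{d,\P}(1)$ because $\boldf$ has asymptotically vanishing alignment with the bulk singular vectors --- $\|\bP_2^\sT\boldf_{\le\evN}/\sqrt{n}\|_2=o_{d,\P}(1)\|f_d\|_{L^2}$ from the underparametrized analog of Proposition \ref{prop:singular_values_bZ}.$(b)$, and $\|\bP_2^\sT\boldf_{>\evN}/\sqrt{n}\|_2=o_{d,\P}(1)\|\proj_{>\evN}f_d\|_{L^{2+\eta}}$, which is exactly where \eqref{eq:under_bound_1} is consumed. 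That your proof of \eqref{eq:under_bound_2} uses neither \eqref{eq:under_bound_1} nor the singular-vector information is the symptom: the statement you establish is, once the normalizations are made consistent, strictly weaker than the lemma's intended content and insufficient for its downstream use.
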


\begin{proof}[Proof of Lemma \ref{lem:under_bound}]

\noindent
{\bf Step 1. Bound $\| \bZ_{>\evN}^\sT \boldf_{>\evN} / n \|_2 =\kappa_{>\evN}^{1/2} \cdot \| \proj_{> \evN} f_d \|_{L^{2 + \eta} } \cdot o_{d,\P}(1)$. } 

Recall the decomposition of $\bZ_{>\evN}$ in the eigenbasis of functions:
\[
\bZ_{>\evN} = \sum_{k = \evN +1}^\infty \lambda_{d,k} \bpsi_{k} \bphi_{k}^\sT.
\]
Consider the expected square norm (with respect to $\bTheta = (\btheta_j)_{j \in [N]}$)
\[
\begin{aligned}
\E_{\bTheta} \big[ \| \bZ_{>\evN}^\sT \boldf_{>\evN} \|_2^2 \big] = &~ \sum_{k, \ell = \evN+1}^\infty \lambda_{d,k} \lambda_{d,\ell} \E_{\bTheta} [ \boldf_{>\evN}^\sT \bpsi_{d,k} \bphi_{d,k}^\sT \bphi_{d,\ell} \bpsi_{d,\ell}\boldf_{>\evN} ] \\
=&~ N  \sum_{k = \evN+1}^\infty \lambda_{d,k}^2   (\boldf_{>\evN}^\sT \bpsi_{d,k})^2\\
\end{aligned}
\]
where we used that $\E_{\bTheta} [ \bphi_{d,k}^\sT \bphi_{d,\ell} ] = N \delta_{k,\ell}$ by orthonormality of $\{ \phi_{d,k} \}_{k \ge 1}$. Expanding with respect to the $\bx_i$'s, we get
\[
\begin{aligned}
\E_{\bTheta} \big[ \| \bZ_{>\evN}^\sT \boldf_{>\evN} \|_2^2 \big] =&~ N \sum_{i \in [n]} \big\{ H_{d,> \evN:\evn} (\bx_i , \bx_i) [\proj_{>\evN} f_d (\bx_i)]^2 +H_{d,>\evn}  (\bx_i , \bx_i) [\proj_{>\evN} f_d (\bx_i)]^2 \Big\} \\
&~+ N\sum_{i \neq j \in [n]}  \sum_{k = \evN+1}^\infty \lambda_{d,k}^2 \psi_{d,k} (\bx_i) \proj_{>\evN}f_d (\bx_i) \cdot \psi_{d,k} (\bx_j) \proj_{>\evN}f_d (\bx_j) ,
\end{aligned}
\]
where we recall
\[
\begin{aligned}
H_{d,\evN:u} (\bx_i , \bx_i) = &~ \sum_{k = \evN+1}^u \lambda_{d,k}^2\psi_{d,k} (\bx_i)^2,\\
H_{d,>u} (\bx_i , \bx_i) = &~ \sum_{k = u+1}^\infty \lambda_{d,k}^2\psi_{d,k} (\bx_i)^2.
\end{aligned}
\]

Consider the first term depending on $H_{d,\evN:\evn} $. Using the same computation as in the proof of Proposition \ref{prop:technical_facts}.$(c)$ and Lemma \ref{lem:hypercontractivity_basis_kernel} (with the hypercontractivity assumption up to $u \geq \evn$ of Assumption \ref{ass:FMPCP}.$(a)$), by H\"older's inequality we have for the $q$
\[
\begin{aligned}
\E \big[ H_{d,\evN:\evn} (\bx , \bx) [\proj_{>\evN} f_d (\bx)]^2 \big] \leq&~ \|  H_{d,\evN:\evn} \|_{L^{1+2/\eta}} \| \proj_{>\evN} f_d \|^2_{L^{2+ \eta}} \\
\leq&~ C ( 1 + 2/\eta )^2 \cdot \E_{\bx} [ H_{d,\evN:\evn} (\bx , \bx) ] \cdot \| \proj_{>\evN} f_d \|^2_{L^{2+ \eta}}.
\end{aligned}
\]
We deduce by Markov's inequality that the first term is bounded by 
\begin{equation}\label{eq:under_step1_1}
\sum_{i \in [n]}  H_{d,\evN:\evn} (\bx_i , \bx_i) [\proj_{>\evN} f_d (\bx_i)]^2 = O_{d,\P} (1) \cdot n \cdot \Trace ( \Hop_{d,\evN:\evn} ) \cdot \| \proj_{>\evN} f_d \|^2_{L^{2+ \eta}}.
\end{equation}
For the second term, recall that by Assumption \ref{ass:FMPCP}.$(d)$, we have
\[
\max_{\bx_i \in [n]} H_{d,>\evn} ( \bx_i , \bx_i ) = O_{d,\P}(1) \cdot \Trace ( \Hop_{d,>\evn} ).
\]
Hence
\[
\sum_{i \in [n]} H_{d,>\evn}  (\bx_i , \bx_i) [\proj_{>\evN} f_d (\bx_i)]^2 = O_{d,\P} (1) \cdot \Trace ( \Hop_{d,>\evn} ) \cdot \sum_{i \in [n]}  [\proj_{>\evN} f_d (\bx_i)]^2,
\]
and by Markov's inequality 
\begin{equation}\label{eq:under_step1_2}
\sum_{i \in [n]} H_{d,>\evn}  (\bx_i , \bx_i) [\proj_{>\evN} f_d (\bx_i)]^2 = O_{d,\P} (1) \cdot n \cdot \Trace ( \Hop_{d,>\evn} ) \cdot \| \proj_{>\evN} f_d  \|_{L^2}^2.
\end{equation}
Taking the expectation of the third term gives
\begin{equation}\label{eq:under_step1_3}
\begin{aligned}
n(n-1) \sum_{k = \evN+1}^\infty \lambda_{d,k}^2 \E \big[ \psi_{d,k} (\bx) [\proj_{>\evN} f_d (x)] \big]^2 
=&~ n(n-1)  \sum_{k = \evN+1}^\infty \lambda_{d,k}^2 \hf_{d,k}^2 \\
\leq&~ n(n-1) \| \Hop_{d,>u}  \|_{\op} \| \proj_{ > \evN} f_d \|_{L^2}^2 .
\end{aligned}
\end{equation}
Merging Eqs.~\eqref{eq:under_step1_1}, \eqref{eq:under_step1_2} and \eqref{eq:under_step1_3}, we get
\[
\begin{aligned}
\E_{\bTheta} \big[ \| \bZ_{>\evN}^\sT \boldf_{>\evN} / n \|_2^2 \big]  \leq&~ \frac{N}{n} \cdot O_{d,\P} (1)  \cdot \Trace (\Hop_{d,>\evN} ) \cdot \| \proj_{>\evN} f_d \|^2_{L^{2 + \eta}} + N\| \Hop_{d,>u}  \|_{\op} \cdot \| \proj_{ > \evN} f_d \|_{L^2}^2  \\
=&~ o_{d} (1 ) \cdot \Trace (\Hop_{d,>\evN} )\cdot \| \proj_{>\evN} f_d \|^2_{L^{2 + \eta}} ,
\end{aligned}
\]
where we used Assumption \ref{ass:spectral_gap}.$(b)$ ($N \cdot  \| \Hop_{d,>u}  \|_{\op} =
O_{d,\P} (N^{-\delta_0})  \Trace(\Hop_{d,>u})$ as well as  $n \geq N^{1 + \delta_0}$ for a fixed $\delta_0>0$). Using Markov's inequality proves Eq.~\eqref{eq:under_bound_1}.

\noindent
{\bf Step 2. Bound on $ \| \hbUi \bZ^\sT \boldf / n \|_{2}$. } 

By Proposition \ref{prop:singular_values_bZ}.$(a)$ in the underparametrized case, we have
\begin{equation}
\hbUi \bZ^\sT \boldf / n  = \bQ_1 \frac{\bLambda_1}{\bLambda_1^2 + \lambda} \bP_1^\sT \boldf/\sqrt{n} + \bQ_2 \frac{\bLambda_2}{\bLambda_2^2 + \lambda} \bP_2^\sT \boldf/\sqrt{n},\label{eq:UZf}
\end{equation}
where $\sigma_{\min} ( \bLambda_1 ) = \omega_{d,\P} (1) \cdot \kappa_{>\evN}^{1/2}$ and $\sigma_{\min} ( \bLambda_2 ) = \kappa_{>\evN}^{1/2} \cdot (1 + o_{d,\P}(1))$. In particular, this shows that
\begin{equation}\label{eq:under_pro_1}
\Big\| \bQ_1 \frac{\bLambda_1}{\bLambda_1^2 + \lambda} \bP_1^\sT \boldf/\sqrt{n} \Big\|_{2} \leq \sigma_{\min} ( \bLambda_1 )^{-1} \| \boldf/\sqrt{n} \|_2 \leq o_{d,\P}  (1 ) \cdot \kappa_{>\evN}^{-1/2} \cdot \| f_d \|_{L^2}.
\end{equation}
For the second term \eqref{eq:UZf}, decompose $\boldf = \boldf_{\leq \evN} + \boldf_{>\evN}$. Recall $\boldf_{\leq \evN} = \bpsi_{\leq \evN} \hat \boldf_{\leq \evN}$. By Proposition \ref{prop:singular_values_bZ}.$(b)$, we have $\| \bP_2^\sT \bpsi_{\leq \evn} / \sqrt{n} \|_{\op} = o_{d,\P} (1)$. Furthermore, using Eq.~\eqref{eq:under_bound_1},
namely $\| \bZ_{>\evN}^\sT \boldf_{>\evN} / n \|_2 =\kappa_{>\evN}^{1/2} \cdot \| \proj_{> \evN} f_d \|_{L^{2+\eta}} \cdot o_{d,\P}(1)$, we get $ \| \bP_2^\sT \boldf_{>\evN} / \sqrt{n} \|_{\op} = \| \proj_{> \evN} f_d \|_{L^{2+\eta}} \cdot o_{d,\P}(1)$. We deduce 
\begin{equation}\label{eq:under_pro_2}
\begin{aligned}
\Big\| \bQ_2 \frac{\bLambda_2}{\bLambda_2^2 + \lambda} \bP_2^\sT \boldf/\sqrt{n} \Big\|_{2} \leq&~ \sigma_{\min} ( \bLambda_2 )^{-1} ( \| \bP_2^\sT \boldf_{\leq \evN} / \sqrt{n} \|_2 + \| \bP_2^\sT \boldf_{> \evN} / \sqrt{n} \|_2) \\
=&~ O_{d,\P} ( \kappa_{>\evN}^{-1/2} )  \cdot o_{d,\P}(1) \cdot (\|  f_d \|_{L^2} + \| \proj_{> \evN} f_d \|_{L^{2+\eta}}) \\
=&~  \kappa_{>\evN}^{-1/2}(\|  f_d \|_{L^2} + \| \proj_{> \evN} f_d \|_{L^{2+\eta}}) \cdot o_{d,\P}(1).
\end{aligned}
\end{equation}
Combining Eqs.~\eqref{eq:under_pro_1} and \eqref{eq:under_pro_2} yields Eq.~\eqref{eq:under_bound_2}.
\end{proof}

\subsection{Concentration of the random features kernel matrix $Z^\sT Z$}
\label{sec:RFRR_ZZ_concentration}

We recall the following standard result on concentration of random matrices with independent rows:

\begin{lemma}[\cite{vershynin2010introduction} Theorem 5.45]\label{lem:vershynin_matrix_conc}
Let $\bA$ be a $p \times q$ matrix whose rows $\ba_i$ are independent random vectors in $\R^q$ with common second moment matrix $\bSigma = \E [ \ba_i \otimes \ba_i ]$. Let $\Gamma := \E [ \max_{i \in [p]} \| \ba_i \|_2^2 ]$. Then 
\[
\E \big[ \| \bA^\sT \bA / p - \bSigma \|_\op \big] \leq \max ( \| \bSigma \|_\op^{1/2}  \eta, \eta^2 ),
\]
where $\eta = C \sqrt{ \frac{\Gamma \log ( \min (p,q)) }{p} }$ and $C$ is an absolute constant.
\end{lemma}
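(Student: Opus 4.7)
\emph{Plan.} The statement is the standard sample-covariance concentration result for matrices with independent rows, and the natural route is via symmetrization plus the noncommutative Khintchine (matrix Rudelson) inequality. Set $\bM := \bA^\sT\bA/p - \bSigma = (1/p)\sum_{i=1}^p (\ba_i\ba_i^\sT - \bSigma)$. Since the summands are independent and centered, the first step is a symmetrization argument: introduce an independent ghost sample $(\ba_i')$ and Rademacher signs $(\eps_i)$ to obtain
\[
\E\|\bM\|_\op \;\le\; 2\,\E\Big\|\frac{1}{p}\sum_{i=1}^p \eps_i\,\ba_i\ba_i^\sT\Big\|_\op .
\]

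\emph{Main step.} Conditionally on $\{\ba_i\}_{i\le p}$, I would apply the noncommutative Khintchine inequality for Rademacher sums of rank-one self-adjoint matrices (Rudelson's inequality):
\[
\E_\eps\Big\|\sum_{i=1}^p \eps_i\,\ba_i\ba_i^\sT\Big\|_\op \;\le\; C\sqrt{\log\min(p,q)}\;\cdot\;\max_{i\le p}\|\ba_i\|_2\;\cdot\;\Big\|\sum_{i=1}^p \ba_i\ba_i^\sT\Big\|_\op^{1/2}.
\]
Taking expectation over $(\ba_i)$ and using Cauchy--Schwarz together with the definition $\Gamma=\E[\max_i\|\ba_i\|_2^2]$ and $\|\sum_i \ba_i\ba_i^\sT\|_\op\le p\|\bSigma\|_\op + p\|\bM\|_\op$, one obtains
\[
\E\|\bM\|_\op \;\le\; C'\sqrt{\frac{\Gamma\log\min(p,q)}{p}}\;\cdot\;\bigl(\|\bSigma\|_\op + \E\|\bM\|_\op\bigr)^{1/2}.
\]

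\emph{Conclusion.} Write this inequality as $x \le \eta\sqrt{\|\bSigma\|_\op + x}$ with $x=\E\|\bM\|_\op$ and $\eta = C'\sqrt{\Gamma\log\min(p,q)/p}$. Squaring and solving the resulting quadratic yields $x\le \max(\|\bSigma\|_\op^{1/2}\eta,\eta^2)$ up to an absolute constant absorbed into $C$, giving the stated bound.

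\emph{Main obstacle.} The nontrivial ingredient is the matrix Khintchine/Rudelson inequality invoked above; all other steps (symmetrization, Cauchy--Schwarz, the self-bounding quadratic) are routine. Since Rudelson's inequality is a standard tool (proved via noncommutative Khintchine or, equivalently, via matrix Bernstein combined with a truncation-and-iteration argument), I would cite it rather than reprove it. The only care needed in the plan is to ensure the $\sqrt{\log\min(p,q)}$ factor comes out correctly: for $q\le p$ one uses the ambient dimension $q$ of the matrices, while for $q>p$ one exploits the fact that $\bA^\sT\bA - p\bSigma$ has the same nonzero spectrum as $\bA\bA^\sT - $ (analogous quantity), whose ambient dimension is $p$, which is the standard trick used in Vershynin to produce the $\min(p,q)$.
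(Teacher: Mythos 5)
Your proposal is correct and coincides with the standard argument behind this statement: the paper does not prove the lemma at all, but imports it verbatim as Theorem 5.45 of \cite{vershynin2010introduction}, whose proof proceeds exactly as you outline --- symmetrization, Rudelson's noncommutative Khintchine inequality with the $\sqrt{\log\min(p,q)}$ factor, Cauchy--Schwarz against $\Gamma$, and the self-bounding quadratic $x\le \eta\sqrt{\|\bSigma\|_{\op}+x}$. Nothing further is needed.
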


We will also use the following corollary for asymmetric matrices:

\begin{corollary}\label{cor:asym_conc}
Let $\bA$ be a $n \times N$ matrix whose rows $\ba_i$ are independent random vectors in $\R^N$ with common second moment matrix $\bSigma_\ba = \E [ \ba_i \otimes \ba_i ]$. Let $\bB$ be a $n \times \evn$ matrix whose rows $\bb_i$ are independent random vectors in $\R^\evn$ with common second moment matrix $\bSigma_\bb = \E [ \bb_i \otimes \bb_i ]$. Let $\Gamma_\ba := \E [ \max_{i \in [n]} \| \ba_i \|_2^2 ]$ and $\Gamma_\bb := \E [ \max_{i \in [n]} \| \bb_i \|_2^2 ]$. Denote $\bSigma_{\ba\bb} = \E [ \ba_i \otimes \bb_i ]$. Then,
\begin{align}\label{eq:asym_conc}
\E \big[ \| \bA^\sT \bB / n - \bSigma_{\ba\bb} \|_\op \big] \leq \max \big( (\| \bSigma_\ba \|_\op + \| \bSigma_\bb \|_\op )^{1/2}  \eta, \eta^2 \big),
\end{align}
where $\eta = C \sqrt{ \frac{(\Gamma_\ba + \Gamma_\bb) \log ( \min (n,N,\evn)) }{n} }$ and $C$ is an absolute constant.
\end{corollary}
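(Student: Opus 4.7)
The plan is to deduce Corollary \ref{cor:asym_conc} from Lemma \ref{lem:vershynin_matrix_conc} by a simple concatenation trick. Specifically, I would form the matrix $\bC = [\bA,\bB] \in \R^{n \times (N+\evn)}$, whose $i$-th row is the concatenated vector $\bc_i = (\ba_i,\bb_i) \in \R^{N+\evn}$. Since the $\ba_i$'s and $\bb_i$'s are each independent across $i$, the $\bc_i$'s are independent, with common second-moment matrix having the block form
\[
\bSigma_{\bc} = \E[\bc_i \bc_i^\sT] = \begin{bmatrix} \bSigma_\ba & \bSigma_{\ba\bb} \\ \bSigma_{\ba\bb}^\sT & \bSigma_\bb \end{bmatrix}.
\]
The empirical Gram matrix $\bC^\sT \bC/n$ inherits the same block structure, with $\bA^\sT \bB/n$ occupying the upper-right block. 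Because the operator norm of any submatrix is bounded by the operator norm of the ambient matrix,
\[
\| \bA^\sT \bB / n - \bSigma_{\ba\bb} \|_\op \le \| \bC^\sT \bC/n - \bSigma_\bc \|_\op,
\]
so it suffices to control the right-hand side.

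The second step is to apply Lemma \ref{lem:vershynin_matrix_conc} to $\bC$, which gives
\[
\E\big[ \| \bC^\sT \bC/n - \bSigma_\bc \|_\op \big] \le \max\big( \|\bSigma_\bc\|_\op^{1/2}\tilde\eta,\, \tilde\eta^2 \big),
\qquad \tilde\eta = C\sqrt{\Gamma_\bc \log(\min(n, N+\evn))/n},
\]
with $\Gamma_\bc = \E[\max_i \|\bc_i\|_2^2]$. The required translation into $\ba$-$\bb$ quantities is elementary: $\|\bc_i\|_2^2 = \|\ba_i\|_2^2 + \|\bb_i\|_2^2$ gives $\Gamma_\bc \le \Gamma_\ba + \Gamma_\bb$, and for any unit vector $(\bv_1,\bv_2)$ one has $\bv^\sT \bSigma_\bc \bv = \E[(\bv_1^\sT \ba_i + \bv_2^\sT \bb_i)^2] \le 2\|\bSigma_\ba\|_\op + 2\|\bSigma_\bb\|_\op$, whence $\|\bSigma_\bc\|_\op \le 2(\|\bSigma_\ba\|_\op + \|\bSigma_\bb\|_\op)$.

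The only mildly fussy point is the logarithmic factor: we obtain $\log\min(n,N+\evn)$ whereas the target bound is phrased with $\log\min(n,N,\evn)$. Since $\min(n,N+\evn) \le 2\min(n,\max(N,\evn))$, these two quantities differ by at most a logarithm of a constant under the mild assumption that $N$ and $\evn$ are polynomially comparable (as is the case in all applications of the corollary in this paper), so the discrepancy is absorbed into the universal constant $C$. Combining the three ingredients (the submatrix bound, the Vershynin estimate for $\bC$, and the parameter comparisons) yields the stated inequality. I do not foresee any substantive obstacle; the argument is purely a symmetrization reduction and the only delicate point is tracking constants in the log factor.
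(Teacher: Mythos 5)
Your proposal is correct and is essentially the paper's own proof: concatenate into $\bC=[\bA,\bB]$, apply Lemma \ref{lem:vershynin_matrix_conc} to $\bC$, bound $\Gamma_\bc\le\Gamma_\ba+\Gamma_\bb$ and $\|\bSigma_\bc\|_\op\le C(\|\bSigma_\ba\|_\op+\|\bSigma_\bb\|_\op)$, and use that the off-diagonal block's operator norm is dominated by that of the full matrix. You even address the $\log\min(n,N+\evn)$ versus $\log\min(n,N,\evn)$ discrepancy, which the paper's proof silently glosses over, so nothing is missing.
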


\begin{proof}[Proof of Corollary \ref{cor:asym_conc}]
Define $\bC = [\bA , \bB] \in \R^{n \times (N+\evn) }$ whose rows $\bc_i = [ \ba_i, \bb_i]$ are independent random vectors in $\R^{N+\evn}$ with common second matrix $\bSigma_\bc = \begin{bmatrix}\bSigma_\ba &\bSigma_{\ba\bb} \\ \bSigma_{\bb\ba} & \bSigma_\bb\end{bmatrix}$. By Lemma \ref{lem:vershynin_matrix_conc}, we have
\[
\E \big[ \| \bC^\sT \bC / n - \bSigma_\bc \|_\op \big] \leq \max ( \| \bSigma_\bc \|_\op^{1/2}  \eta, \eta^2 ),
\]
where $\eta = C \sqrt{ \frac{\Gamma \log ( \min (n,N+\evn)) }{n} }$ with
\[
\Gamma = \E [ \max_{i \in [n]} \| \bc_i \|_2^2 ] \leq  \E [ \max_{i \in [n]} \| \ba_i \|_2^2 ] +  \E [ \max_{i \in [n]} \| \bb_i \|_2^2 ] \leq \Gamma_\ba + \Gamma_\bb.
\]
Notice that $\| \bSigma_\bc \|_{\op} \leq C ( \| \bSigma_\ba \|_\op + \| \bSigma_\bb \|_\op )$, and
\[
\| \bA^\sT \bB / n - \bSigma_{\ba\bb} \|_\op \leq \| \bC^\sT \bC / n - \bSigma_\bc \|_\op.
\]
Combining these bounds yields Eq.~\eqref{eq:asym_conc}.
\end{proof}

Consider the feature matrix $\bZ = (\sigma_d ( \bx_i ; \btheta_j ) )_{i \in [n], j \in [N]}$. We recall the decomposition $\bZ = \bZ_{\leq \evn} + \bZ_{>\evn}$ into a low and high degree parts:
\[
\bZ_{\leq \evn} = \bpsi_{\leq \evn} \bD_{\leq \evn} \bphi_{\leq \evn}^\sT, \qquad \bZ_{>\evn} = \sum_{k \geq \evn+1}  \lambda_{d,k} \bpsi_k \bphi_k^\sT.
\]
We prove the following concentration result on $\bZ_{>\evn}$.

\begin{proposition}[Concentration $\bZ$ matrix]\label{prop:concentration_bZ}
Consider the overparametrized case $N(d) \geq n(d)^{1+ \delta_0}$ for some fixed $\delta_0>0$. Let $\{ \sigma_d \}_{d\ge 1}$ be a sequence of activation functions satisfying the feature map concentration (Assumption \ref{ass:FMPCP}) and the spectral gap (Assumption \ref{ass:spectral_gap}) at level $\{ (N(d), \evN (d) , n(d), \evn (d) ) \}_{d \ge 1}$. Then, we have
\begin{equation}
\frac{\bZ_{>\evn} \bZ_{>\evn}^\sT }{N} = \kappa_{>\evn } \cdot (\id_n + \bDelta_{\bZ} ),
\end{equation}
where $\kappa_{> \evn} = \Trace ( \Hop_{d , > \evn}) $ and $\| \bDelta_{\bZ} \|_{\op} = o_{d,\P}( 1 )$. Furthermore, 
\begin{align}
\Big\| \frac{\bZ_{>\evn} \bphi_{\leq \evn} }{N} \Big\|_\op = \kappa_{>\evn}^{1/2} \cdot o_{d,\P}(1 ).
\end{align}
\end{proposition}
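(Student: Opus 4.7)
I would condition on the sample $\bX = (\bx_i)_{i \in [n]}$ and exploit the fact that, given $\bX$, the columns $\bz_{>\evn,j} := (\sigma_{d,>\evn}(\bx_i,\btheta_j))_{i \in [n]}$ of $\bZ_{>\evn}$ are i.i.d.\ random vectors in $\btheta_j$ with conditional covariance equal to the empirical kernel matrix $\bK_{>\evn} := (H_{d,>\evn}(\bx_i,\bx_{i'}))_{i,i' \in [n]}$. The key identity to exploit is
\[
\frac{1}{N}\bZ_{>\evn}\bZ_{>\evn}^\sT - \kappa_{>\evn}\id_n = \Bigl(\frac{1}{N}\bZ_{>\evn}\bZ_{>\evn}^\sT - \bK_{>\evn}\Bigr) + \bigl(\bK_{>\evn} - \kappa_{>\evn}\id_n\bigr),
\]
and I would bound each piece separately in operator norm.

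For the first piece, I would apply Lemma \ref{lem:vershynin_matrix_conc} conditionally on $\bX$; the main inputs are $\|\bK_{>\evn}\|_\op$ and the maximum squared column norm $\Gamma := \E[\max_j \|\bz_{>\evn,j}\|_2^2 \mid \bX]$. The expected squared column norm $\E\|\bz_{>\evn,j}\|_2^2 = \sum_i H_{d,>\evn}(\bx_i,\bx_i) = n\kappa_{>\evn}(1+o_{d,\P}(1))$ is immediate from Assumption \ref{ass:FMPCP}.$(d)$, while the maximum over $j$ is handled via the moment-to-max inequality $\E[\max_j X_j] \le N^{1/k}\E[X^k]^{1/k}$, after splitting $\sigma_{d,>\evn} = \sigma_{d,(\evn,u(d)]} + \sigma_{d,>u(d)}$ so that the low-frequency piece is hypercontractive by Assumption \ref{ass:FMPCP}.$(a)$ (its components live in $\cD_{d,\le u(d)}$ and $\cV_{d,\le u(d)}$) and the high-frequency tail is controlled via Assumption \ref{ass:FMPCP}.$(c)$. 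Combined with the overparametrization $N \ge n^{1+\delta_0}$, this yields $\|\bZ_{>\evn}\bZ_{>\evn}^\sT/N - \bK_{>\evn}\|_\op = \kappa_{>\evn}\cdot o_{d,\P}(1)$. For the second piece, the diagonal estimate $\|\ddiag(\bK_{>\evn}) - \kappa_{>\evn}\id_n\|_\op = \kappa_{>\evn}\cdot o_{d,\P}(1)$ follows directly from Assumption \ref{ass:FMPCP}.$(d)$, while the off-diagonal part is controlled by applying Proposition \ref{prop:generalized_Gram} to the operator $\Hop_{d,>\evn}$ on $(\cX_d,\nu_d)$ with eigenfunctions $(\psi_k)_{k>\evn}$; its hypotheses (A1)-(A3) are verified (with $v(d) = u(d)$) from Assumptions \ref{ass:FMPCP}.$(a)$, \ref{ass:FMPCP}.$(b)$, and \ref{ass:spectral_gap}.$(a)$ respectively.

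For the cross-term $\bZ_{>\evn}\bphi_{\leq \evn}/N$, I would apply Corollary \ref{cor:asym_conc} conditionally on $\bX$ with $\ba_j = \bz_{>\evn,j}$ and $\bb_j = (\phi_k(\btheta_j))_{k\in[\evn]}$. Crucially, the cross-covariance $\bSigma_{\ba\bb} = \E[\ba_j\bb_j^\sT \mid \bX]$ vanishes by the orthogonality relation $\E[\phi_k(\btheta)\phi_\ell(\btheta)] = 0$ for $k \le \evn < \ell$. The bounds on $\|\bSigma_\ba\|_\op$ and $\Gamma_\ba$ are as in the previous paragraph, while $\|\bSigma_\bb\|_\op = 1$ by orthonormality, and $\Gamma_\bb = o(N)$ follows from the hypercontractivity of $\cV_{d,\le u(d)}$ in Assumption \ref{ass:FMPCP}.$(a)$ together with $\evn \le n^{1-\delta_0}$. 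Plugging these into the corollary yields $\|\bZ_{>\evn}\bphi_{\leq \evn}/N\|_\op = \kappa_{>\evn}^{1/2}\cdot o_{d,\P}(1)$ via Markov's inequality. The main technical obstacle throughout is the control of $\Gamma$: since the finite-eigenspace hypercontractivity of Assumption \ref{ass:FMPCP}.$(a)$ only reaches index $u(d)$, the analysis of $\sigma_{d,>\evn}$ forces one to patch together Assumptions \ref{ass:FMPCP}.$(a)$ and \ref{ass:FMPCP}.$(c)$, carefully balancing the $N^{1/k}$ factor from the moment-to-max inequality against the polynomial blow-up $\min(n,N)^{\delta_0}$ in \ref{ass:FMPCP}.$(c)$ through an appropriate choice of $k$ dictated by the small parameter $\delta_0$ there.
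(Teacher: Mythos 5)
Your plan is correct and follows essentially the same route as the paper's proof: condition on $\bX$, treat the $\btheta$-indexed vectors of $\bZ_{>\evn}$ as i.i.d.\ with conditional second moment equal to the empirical kernel matrix $(H_{d,>\evn}(\bx_i,\bx_{i'}))_{i,i'}$, reduce that matrix to $\kappa_{>\evn}(\id_n+o_{d,\P}(1))$ via the diagonal concentration of Assumption \ref{ass:FMPCP}.$(d)$ together with Proposition \ref{prop:generalized_Gram} (the paper packages this step as an application of Theorem \ref{prop:expression_U} to $\Hop_{d,>\evn}$), control $\Gamma$ by splitting $\sigma_{d,>\evn}$ at $u(d)$ exactly as you describe, and treat the cross term with Corollary \ref{cor:asym_conc} and the vanishing cross-covariance. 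The only cosmetic difference is that the paper rescales $\bphi_{\le\evn}$ by $\kappa_{>\evn}^{1/2}$ before invoking the corollary so that the resulting bound is homogeneous in $\kappa_{>\evn}$; you would want to do the same (or track $\kappa_{>\evn}$ explicitly) since $\kappa_{>\evn}$ is not normalized to be of order one in the general setting.
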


\begin{proof}[Proof of Proposition \ref{prop:concentration_bZ}]
For convenience, we will drop the subscript $d$.

\noindent
{\bf Step 1. Bound on  $\| \bZ_{>\evn} \bZ_{>\evn}^\sT /N - \kappa_{>\evn} \id_n\|_\op$. } 

Denote $\bA^\sT = \bZ_{>\evn} = [ \ba_1 , \ldots , \ba_N ] \in \R^{n\times N}$ with $\ba_i = ( \sigma_{>\evn} ( \bx_1 ; \btheta_i ) , \ldots , \sigma_{>\evn} (\bx_n ; \btheta_i ) ) \in \R^n$. Conditioned on $(\bx_1 , \ldots , \bx_n)$, the rows $\ba_i$ are independent with common second moment matrix
\[
\E_{\bx} [ \ba_i \otimes \ba_i ] = \bH_{>\evn},
\]
where $\bH_{> \evn} = (H_{>\evn, ij})_{1\leq i,j \leq N}$ with $H_{>\evn ,ij} = \E_\btheta [ \sigma_{>\evn} ( \bx_i ; \btheta ) \sigma_{>\evn} (\bx_j ; \btheta ) ]$. By applying Theorem \ref{prop:expression_U} to the kernel matrix $\bH_{>\evn}$ (assumptions satisfied by Assumptions \ref{ass:FMPCP} and \ref{ass:spectral_gap}), we have $\bH_{>\evn } = \kappa_{>\evn} \cdot ( \id_n + \bDelta_H )$ where $\| \bDelta_H \|_{\op} = o_{d,\P}(1)$. Therefore it is sufficient to show that 
\[
\Big\| \frac{\bZ_{>\evn} \bZ_{>\evn}^\sT }{N} - \bH_{> \evn} \Big\|_{\op} = o_{d,\P}( 1 ). 
\]
Let us decompose $\sigma_{>\evn}$ into a low and high degree parts $\sigma_{>\evn} = \sigma_{\evn:u} + \sigma_{>u}$ (recall that $u(d) > \evn(d)$):
\[
\begin{aligned}
\sigma_{\evn:u} ( \bx ; \btheta) = & \sum_{k = \evn+1}^{u} \lambda_{d,k} \psi_k ( \bx) \phi_k (\btheta), \\
\sigma_{>u} ( \bx ; \btheta) = & \sum_{k = u+1}^{\infty} \lambda_{d,k} \psi_k ( \bx) \phi_k (\btheta).
\end{aligned}
\]
Let $\uba_i =  ( \sigma_{\evn:u} ( \bx_1 ; \btheta_i ) , \ldots , \sigma_{\evn:u} (\bx_n ; \btheta_i ) ) \in \R^n$ and $\oba_i =  ( \sigma_{>u} ( \bx_1 ; \btheta_i ) , \ldots , \sigma_{>u} (\bx_n ; \btheta_i ) ) \in \R^n$,
$\ba_i=\uba_i+\oba_i$. Then
\[
\Gamma = \E_{\btheta} [ \max_{i \in [N]} \| \ba_i \|_2^2 ] \leq 2 \E_\btheta [ \max_{i \in [N]} \| \oba_i \|_2^2 ] + 2 \E_\btheta [ \max_{i \in [N]} \| \uba_i \|_2^2 ].
\]
Let $q>0$ be an integer as in Assumption \ref{ass:FMPCP}.$(c)$. We have
\[
\begin{aligned}
 \E_{\btheta} \Big[ \max_{i \in [N]} \| \oba_i \|_2^2 \Big] \leq  \E_\btheta \Big[ \max_{i \in [N]} \| \oba_i \|_2^{q} \Big]^{1/q} \leq & N^{1/q} \E_\btheta [ \| \uba_i \|_2^{2q} ]^{1/q}.
 \end{aligned}
\]
By Jensen's inequality and Assumption \ref{ass:FMPCP}.$(c)$, there exists a fixed $\delta_0 >0$ such that
\[
\begin{aligned}
 \E_{\bx, \btheta} \Big[ \| \uba_i \|_2^{2q} \Big] = &~  \E_{\bx, \btheta} \Big[ \Big( \sum_{j \in [n]} \sigma_{>u} ( \bx_j ; \btheta )^2  \Big)^q \Big] \\
 \leq &~ n^{q-1}  \E_{\bx, \btheta} \Big[  \sum_{j \in [n]} \sigma_{>u} ( \bx_j ; \btheta )^{2q} \Big]\\
 \leq &~ n^q \E_{\bx, \btheta} [ \sigma_{>u} ( \bx_j ; \btheta )^{2q} ] = O_{d} ( 1 ) \cdot n^{q(1 +2\delta_0)} \cdot \kappa_{>u}^{q},
 \end{aligned}
\]
where $\kappa_{>u} = \Trace ( \Hop_{>u} ) = \sum_{k = u+1}^\infty \lambda_k^2$.
Hence, by Markov's inequality, we get
\begin{equation}\label{eq:bound_Gam_1}
 \E_{\btheta} [ \max_{i \in [N]} \| \oba_i \|_2^2 ]  = O_{d,\P}(1) \cdot N^{1/q} n^{1 + 2\delta_0} \cdot \kappa_{>u}.
\end{equation}
Similarly, by the hypercontractivity assumption (Assumption \ref{ass:FMPCP}.$(a)$), we have
\[
 \E_{\bx} \Big[ \E_{\btheta} \Big[ \max_{i \in [N]} \| \uba_i \|_2^2 \Big] \Big] \leq C_{q} N^{1/q} \E_{\bx, \btheta} \big[ \| \uba_i \|_2^2 \big] =  C_{q} N^{1/q} n  \cdot \kappa_{\evn:u},
\]
where $\kappa_{\evn:u} = \sum_{k = \evn+1}^u \lambda_k^2$.
Hence, by Markov's inequality, we get 
\begin{equation}\label{eq:bound_Gam_2}
 \E_{\btheta} \Big[ \max_{i \in [N]} \| \uba_i \|_2^2 \Big] = O_{d,\P}(1) \cdot N^{1/q} n  \cdot \kappa_{\evn:u} .
\end{equation}
Combining Eqs.~\eqref{eq:bound_Gam_1} and \eqref{eq:bound_Gam_2}, we get
\[
\Gamma_\ba =  O_{d,\P}(1) \cdot  N^{1/q} n^{1 + 2\delta_0}  \kappa_{>\evn}.
\]
We can therefore apply Lemma \ref{lem:vershynin_matrix_conc}. Recalling $\| \bH_{>\evn} \|_\op = O_{d,\P} (1)  \cdot \kappa_{>\evn}$, we have
\[
\E_{\btheta} \Big[ \big\| \bZ_{>\evn} \bZ_{>\evn}^\sT /N - \bH_{>\evn} \big\|_\op \Big] \leq O_{d,\P} (1) \cdot \max ( \kappa_{>\evn}^{1/2} \eta, \eta^2),
\]
with $\eta = \big( \kappa_{>\evn} N^{1/q - 1} n^{1 + 2 \delta_0} \log (N) \big)^{1/2} = \kappa_{>\evn}^{1/2} \cdot o_{d,\P}(  1 )$ by the choice of $q$ in Assumption \ref{ass:FMPCP}.$(c)$. We conclude 
\[
\big\| \bZ_{>\evn} \bZ_{>\evn}^\sT /N - \bH_{>\evn} \big\|_\op = \kappa_{>\evn} \cdot o_{d,\P}(  1 ).
\]

\noindent
{\bf Step 2. Bound on  $\| \bZ_{>\evn} \bphi_{ \leq \evn} /N \|_\op$. } 

Consider $\bB = \kappa_{>\evn}^{1/2} \bphi_{\leq \evn} = [\bb_1 , \ldots , \bb_N ]^\sT \R^{N \times \evn}$ where $\bb_i = \kappa_{>\evn}^{1/2} [ \bphi_1 ( \btheta_i) , \ldots , \bphi_\evn ( \btheta_i ) ] \in \R^\evn$ are independent rows with second moment matrix $\bSigma_\bb = \E [ \bb_i \otimes \bb_i ] = \kappa_{>\evn} \id_\evn$. Furthermore, by the hypercontractivity assumption (Assumption \ref{ass:FMPCP}.$(a)$), we have
\[
\Gamma_\bb = \E_{\btheta} \Big[ \max_{i \in [N]} \| \bb_i \|_2^2 \Big] \leq C_q N^{1/q} \E_{\btheta} \Big[ \| \bb_i \|_2^2 \Big] = C_q N^{1/q} \evn \cdot \kappa_{>\evn} .
\]
Notice that $\E [ \ba_i \otimes \bb_i ] = 0$. Furthermore, recalling the previous step, we have $\| \bSigma_\ba \|_\op = \| \bH_{>\evn} \|_{\op} = O_{d,\P} ( 1 )  \cdot  \kappa_{>\evn}$ and
\[
\begin{aligned}
\| \bSigma_\ba \|_\op + \| \bSigma_\bb \|_\op = & O_{d,\P} ( 1 )  \cdot  \kappa_{>\evn},\\
\Gamma_\ba + \Gamma_\bb  = & O_{d,\P}(1) \cdot  N^{1/q}  n^{1 + 2 \delta_0} \cdot \kappa_{>\evn}.
\end{aligned}
\]
Then by Corollary \ref{cor:asym_conc} applied to $\bA^\sT \bB / N$ and recalling the assumption on $q$ in Assumption \ref{ass:FMPCP}.$(c)$, we have
\[
\E_\btheta [ \| \bZ_\evn \bphi_{\leq \evn} / N  \|_\op ] = o_{d,\P} ( 1 ) \cdot \kappa_{>\evn}^{1/2},
\]
which concludes the proof by Markov's inequality.

\end{proof}

\section{Generalization error of kernel ridge regression: Proof of Theorem \ref{thm:upper_bound_KRR}}\label{sec:proof_KR}

In this section, we prove Theorem \ref{thm:upper_bound_KRR}. We will then prove a different version of the same theorem
in Section \ref{sec:KRR_weaker_gap}, under
somewhat different assumptions. Namely, we will relax Assumption \ref{ass:KRR}.$(c)$ and instead impose a gap condition on the eigenvalues
of the kernel.


\subsection{Proof of Theorem \ref{thm:upper_bound_KRR}}
\label{sec:proof_KRR_app}

In this section, we prove Theorem \ref{thm:upper_bound_KRR_app}. 
Throughout the proof, we will denote $\bDelta$ any matrix with $\| \bDelta \|_\op = o_{d,\P}(1)$. In particular, $\bDelta$ can change from one line to line. We defer the proofs of some more technical results to Section \ref{sec:KRR_upper_auxiliary}.

\noindent
{\bf Step 1. Expressing the risk in terms of empirical kernel matrix. }

Recall that the KRR estimator is given by
\[
\hf_{\lambda} ( \bx) = \by^\sT    ( \bH + \lambda \id_N )^{-1} \bh (\bx),
\]
where $\by = (y_1 , \ldots , y_n )$ and $\bH = (H ( \bx_i , \bx_j)
)_{i, j \in[n]}$, $\bh (\bx) = ( H_d ( \bx , \bx_1) , \ldots , H_d ( \bx , \bx_n ) ) \in \R^n$.
The resulting test error  is 
\[
\begin{aligned}
R_{\KR}(f_d, \bX, \lambda) \equiv&~ \E_\bx\Big[ \Big( f_d(\bx) - \by^\sT (\bH + \lambda \id_n)^{-1} \bh(\bx) \Big)^2 \Big]\\
=&~ \E_\bx [ f_d(\bx)^2] - 2 \by^\sT (\bH + \lambda \id_n)^{-1} \bE + \by^\sT (\bH + \lambda \id_n)^{-1} \bM (\bH + \lambda \id_n)^{-1} \by,
\end{aligned}
\]
where $\bE = (E_1, \ldots, E_n)^\sT$, $\bM = (M_{ij})_{ij \in [n]}$ and $\bH = (H_{ij})_{ij \in [n]}$ are defined by
\[
\begin{aligned}
E_i =&~ \E_\bx[f_d(\bx) H_d(\bx, \bx_i)], \\
M_{ij} =&~ \E_{\bx}[H_d(\bx_i, \bx) H_d(\bx_j, \bx)], \\
H_{ij} = &~ H_d(\bx_i, \bx_j).
\end{aligned}
\]

We recall that the eigendecomposition of $H_d$ is given by
\[
H_d(\bx, \by) = \sum_{k=1}^\infty \lambda_{d, k}^2 \psi_k(\bx) \psi_k(\by).
\]
We write the orthogonal decomposition of $f_{d}$ in the basis $\{ \psi_k \}_{k\ge 1}$ as
\[
f_{d}(\bx) = \sum_{k = 1}^\infty \hat f_{d, k} \psi_k(\bx). 
\]
Define
\[
\begin{aligned}
\bpsi_k =&~ (\psi_k(\bx_1), \ldots, \psi_k(\bx_n))^\sT \in \R^n, \\
\bD_{\le \evn} =&~ \diag(\lambda_{d, 1}, \lambda_{d, 2}, \ldots, \lambda_{d, \evn}) \in \R^{\evn \times \evn},\\
\bPsi_{\le \evn} =&~ (\psi_{k}(\bx_i))_{i \in [n],  k \in [\evn]} \in \R^{n \times \evn}, \\
\hat \boldf_{\le \evn} =&~ (\hat f_{d, 1}, \hat f_{d, 2}, \ldots, \hat f_{d, \evn})^\sT \in \R^{\evn}. 
\end{aligned}
\]

We decompose the vectors and matrices $\boldf$, $\bE$, $\bH$, and $\bM$ in terms of orthogonal basis
\begin{equation}\label{eq:DecompositionFEHM}
\begin{aligned}
  &\boldf =\boldf_{\le \evn}+\boldf_{>m},\qquad
  &\boldf_{\le \evn}= \bPsi_{\le \evn} \hat \boldf_{\le \evn},\qquad
  &\boldf_{>\evn} = \sum_{k = \evn + 1}^\infty \hat f_{d, k} \bpsi_k , \\
  &\bE =\bE_{\le \evn}+\bE_{>m},\qquad
  &\bE_{\le \evn}=~ \bPsi_{\le \evn} \bD_{\le \evn}^2 \hat \boldf_{\le \evn} ,\qquad
  & \bE_{>\evn}=\sum_{k = \evn + 1}^\infty \lambda_{d, k}^2 \hat f_{d, k} \bpsi_k ,\\
  &\bH =\bH_{\le \evn}+\bH_{>m},\qquad
  &\bH_{\le\evn} =~ \bPsi_{\le \evn} \bD_{\le \evn}^2 \bPsi_{\le\evn}^\sT,\qquad
  &\bH_{>\evn}\sum_{k=\evn + 1}^\infty  \lambda_{d, k}^2 \bpsi_k \bpsi_k^\sT, \\
   &\bM =\bM_{\le \evn}+\bM_{>m},\qquad
   &\bM_{\le \evn} =~ \bPsi_{\le \evn} \bD_{\le \evn}^4 \bPsi_{\le \evn}^\sT,\qquad
   &\bM_{>\evn} =\sum_{k=\evn + 1}^\infty  \lambda_{d, k}^4 \bpsi_k \bpsi_k^\sT. 
\end{aligned}
\end{equation}

Applying Theorem \ref{prop:expression_U} with respect to the operator $\Hop_d$ and $\Hop_d^2$  where the assumptions are satisfied by Assumptions \ref{ass:KRR}.$(a)$, \ref{ass:KRR}.$(b)$, cf. Eqs.~\eqref{eqn:hypercontractivity_in_KRR} and \eqref{eq:ass_kernel_b2}, and \ref{ass:eig_decay}.$(a)$, cf. Eq.~\eqref{ass:n_parameters_KRR_lower_2}, and using Assumption \ref{ass:KRR}.$(c)$, the kernel matrices $\bH$ and $\bM$ can be rewritten as
\begin{equation}\label{eq:HMdecompositionKRR}
\begin{aligned}
\bH =&~ \bPsi_{\le \evn} \bD_{\le \evn}^2 \bPsi_{\le \evn}^\sT + \kappa_H (\id+ \bDelta_H), \\
\bM =&~ \bPsi_{\le \evn} \bD_{\le \evn}^4 \bPsi_{\le \evn}^\sT + \kappa_M (\id + \bDelta_M),
\end{aligned}
\end{equation}
where 
\[
\begin{aligned}
\kappa_H =&~ \Trace(\Hop_{d, > \evn})= \sum_{k \ge \evn + 1} \lambda_{d, k}^2,\\
\kappa_M =&~ \Trace(\Hop_{d, > \evn}^2) =\sum_{k \ge \evn + 1} \lambda_{d, k}^4,
\end{aligned}
\]
and
\begin{equation}\label{eq:DeltaSmall}
\max\{ \| \bDelta_M \|_{\op}, \| \bDelta_H \|_{\op} \} = o_{d, \P}(1).
\end{equation}

Let us introduce the shrinkage matrix
\begin{align}\label{eq:shrinkage_matrix}
\bS_{\leq \evn} = \Big( \id_\evn + \frac{ \kappa_H + \lambda }{ n}  \bD_{\leq \evn}^{-2}  \Big)^{-1} = \diag ( ( s_j )_{j \in [\evn]} ) \qquad \text{where } s_j = \frac{\lambda_{d,j}^2 }{\lambda_{d,j}^2 +  \frac{ \kappa_H + \lambda }{ n}  } \, .
\end{align}

\noindent
{\bf Step 2. Decompose the risk}

Recalling $\by = \boldf + \beps$, we decompose the risk as follows
\[
\begin{aligned}
R_{\KR}(f_d, \bX, \lambda) =&~ \| f_d \|_{L^2}^2  - 2 T_1 + T_2 +  T_3 - 2 T_4 + 2 T_5. 
\end{aligned}
\]
where
\[
\begin{aligned}
T_1 =&~ \boldf^\sT (\bH + \lambda \id_n)^{-1} \bE, \\
T_2 =&~  \boldf^\sT (\bH + \lambda \id_n)^{-1} \bM (\bH + \lambda \id_n)^{-1} \boldf, \\
T_3 =&~ \beps^\sT (\bH + \lambda \id_n)^{-1} \bM (\bH + \lambda \id_n)^{-1} \beps,\\ 
T_4 =&~ \beps^\sT (\bH + \lambda \id_n)^{-1} \bE,\\
T_5 =&~ \beps^\sT (\bH + \lambda \id_n)^{-1} \bM (\bH + \lambda \id_n)^{-1} \boldf. 
\end{aligned}
\]

\noindent
{\bf Step 3. Term $T_2$}

Note we have 
\[
T_2 = T_{21}  + T_{22} + T_{23}, 
\]
where
\begin{equation}\label{eq:T2i_def}
\begin{aligned}
T_{21} =&~\boldf_{\le\evn}^\sT (\bH + \lambda \id_n)^{-1} \bM (\bH + \lambda \id_n)^{-1} \boldf_{\le \evn},\\
T_{22} =&~ 2 \boldf_{\le\evn}^\sT (\bH + \lambda \id_n)^{-1} \bM (\bH + \lambda \id_n)^{-1} \boldf_{> \evn},\\
T_{23} =&~\boldf_{> \evn}^\sT (\bH + \lambda \id_n)^{-1} \bM (\bH + \lambda \id_n)^{-1} \boldf_{> \evn}.\\
\end{aligned}
\end{equation}
By Lemma \ref{lem:key_H_U_H_bound} which is stated  in Section \ref{sec:KRR_upper_auxiliary} below, we have 
\begin{equation}\label{eqn:H_U_H_bound}
\| n (\bH + \lambda \id_n)^{-1} \bM (\bH + \lambda \id_n)^{-1} - \bPsi_{\le \evn} \bS_{\leq \evn}^2 \bPsi_{\le \evn}^\sT / n\|_{\op} = o_{d, \P}(1),
\end{equation}
hence
\[
\begin{aligned}
T_{21} =&~ \hat \boldf_{\le \evn}^\sT \bPsi_{\le \evn}^\sT (\bH + \lambda \id_n)^{-1} \bM (\bH + \lambda \id_n)^{-1} \bPsi_{\le \evn} \hat \boldf_{\le \evn} \\
=&~ \hat \boldf_{\le \evn}^\sT\bPsi_{\le \evn}^\sT \bPsi_{\le \evn}^\sT \bS_{\leq\evn} ^2 \bPsi_{\le \evn}  \bPsi_{\le \evn} \hat \boldf_{\le \evn} / n^2 + [\| \bPsi_{\le \evn} \hat \boldf_{\le \evn} \|_2^2  / n] \cdot o_{d, \P}(1). 
\end{aligned}
\]
By Assumption \ref{ass:KRR}.$(a)$, the conditions of Theorem \ref{prop:expression_U}.$(b)$ are satisfied, and we have (with $\| \bDelta \|_{\op} = o_{d, \P}(1)$)
\[
\begin{aligned}
\hat \boldf_{\le \evn}^\sT \bPsi_{\le \evn}^\sT \bPsi_{\le \evn} \bS_{\leq\evn} ^2 \bPsi_{\le \evn}^\sT\bPsi_{\le \evn} \hat \boldf_{\le \evn} / n^2 =&~ \hat \boldf_{\le \evn}^\sT(\id + \bDelta) \bS_{\leq\evn} ^2  (\id + \bDelta) \hat \boldf_{\le \evn}  = \| \bS_{\leq\evn} \hat \boldf_{\le \evn} \|_2^2 + o_{d, \P}(1) \cdot  \| \hat \boldf_{\le \evn} \|_2^2. 
\end{aligned}
\]
Moreover, we have 
\[
\| \bPsi_{\le \evn} \hat \boldf_{\le \evn} \|_2^2 / n = \hat \boldf_{\le \evn}^\sT(\id + \bDelta) \hat \boldf_{\le \evn}  = \| \hat \boldf_{\le \evn} \|_2^2 (1 + o_{d, \P}(1)). 
\]
As a result, we have
\begin{align}\label{eqn:term_R21}
T_{21} =&~ \| \bS_{\leq\evn} \hat \boldf_{\le \evn} \|_2^2 + o_{d, \P}(1) \cdot  \| \hat \boldf_{\le \evn} \|_2^2  =  \| \bS_{\leq\evn} \hat \boldf_{\le \evn} \|_2^2 +  o_{d, \P}(1) \cdot \| \proj_{\le \evn} f_d \|_{L^2}^2. 
\end{align}

By Eq. (\ref{eqn:H_U_H_bound}) again, we have 
\[
\begin{aligned}
T_{23} =&~ \Big(\sum_{k\ge \evn+1} \hat f_k \bpsi_k^\sT \Big) (\bH + \lambda \id_n)^{-1} \bM (\bH + \lambda \id_n)^{-1}\Big(\sum_{k\ge \evn+1} \bpsi_k \hat f_k \Big)\\
=&~ \Big(\sum_{k\ge \evn+1} \hat f_k \bpsi_k^\sT \Big) \bPsi_{\leq \evn}\bS_{\leq\evn}^2 \bPsi_{\leq \evn}^\sT \Big(\sum_{k\ge \evn+1} \bpsi_k \hat f_k \Big) / n^2 +  \Big[\Big\|\sum_{k\ge \evn+1} \bpsi_k \hat f_k \Big\|_2^2 / n\Big] \cdot o_{d, \P}(1).
\end{aligned}
\]
Note that $\bS_{\leq\evn} \preceq \id_{\evn}$ and we have 
\[
\begin{aligned}
&~ \E\Big[\Big(\sum_{k\ge \evn+1} \hat f_k \bpsi_k^\sT \Big) \bPsi_{\le \evn} \bS_{\leq\evn}^2 \bPsi_{\le \evn}^\sT \Big(\sum_{k\ge \evn+1} \bpsi_k \hat f_k \Big) \Big] /n^2  \\
\leq &~ \E\Big[\Big(\sum_{k\ge \evn+1} \hat f_k \bpsi_k^\sT \Big) \bPsi_{\le \evn} \bPsi_{\le \evn}^\sT \Big(\sum_{k\ge \evn+1} \bpsi_k \hat f_k \Big) \Big] /n^2  \\
=&~ \sum_{u, v \ge \evn+1} \sum_{s = 1}^\evn \sum_{i, j \in [n]}  \Big\{ \E \Big[ \psi_u(\bx_i)  \psi_{s}(\bx_i) \psi_{s}(\bx_j)  \psi_v(\bx_j) \Big] /n^2 \Big\} \hat f_v \hat f_u \\
=& \sum_{u, v \ge \evn+1} \sum_{s = 1}^\evn \sum_{i \in [n]}  \Big\{ \E \Big[\psi_u(\bx_i)  \psi_{s}(\bx_i) \psi_{s}(\bx_i)  \psi_v(\bx_i) \Big] /n^2 \Big\} \hat f_v \hat f_u \\
=&~  \frac{1}{n}\sum_{s = 1}^\evn \E_\bx \Big[ \big(\proj_{> \evn}f_d(\bx) \big)^2  \psi_{s}(\bx)^2 \Big] \le \frac{1}{n}\sum_{s = 1}^\evn \| \proj_{> \evn} f_d \|_{L^{2 + \eta}}^2 \| \psi_s \|_{L^{ (4 + 2\eta)/\eta}}^2 \\
\le&~ C(\eta) \frac{\evn}{n} \| \proj_{> \evn} f_d \|_{L^{2 + \eta}}^2,
\end{aligned}
\]
where the last inequality used the hypercontractivity assumption as in Assumption \ref{ass:KRR}.$(a)$.  Moreover
\[
  \E \Big[\frac{1}{n}\Big\|\sum_{k\ge \evn+1} \bpsi_k \hat f_k \Big\|_2^2
  \Big] = \sum_{k = \evn+ 1}^\infty \hat f_k^2 = \| \proj_{> \evn} f_d \|_{L^2}^2. 
\]
Using the last two displays, and the fact that $\evn(d)\le n(d)^{1-\delta}$ by Assumption \ref{ass:eig_decay}.$(b)$,
\begin{align}\label{eqn:term_R23}
T_{23} = o_{d, \P}(1) \cdot \| \proj_{> \evn} f_d \|_{L^{2 + \eta}}^2. 
\end{align}
Using Cauchy-Schwarz inequality for $T_{22}$, we get
\begin{align}\label{eqn:term_R22}
T_{22} \le 2 (T_{21} T_{23})^{1/2} = o_{d, \P}( 1) \cdot \| \proj_{\le \evn} f_d \|_{L^2}  \| \proj_{> \evn} f_d \|_{L^{2 + \eta}} .  
\end{align}
As a result, combining Eqs. (\ref{eqn:term_R21}), (\ref{eqn:term_R23}) and (\ref{eqn:term_R22}), we have 
\begin{equation}\label{eqn:KRR_term_T2}
T_2 = \| \bS_{\leq\evn} \hat \boldf_{\le \evn} \|_2^2 + o_{d, \P}(1) \cdot (\| f_d \|_{L^2}^2 + \| \proj_{>\evN} f_d \|_{L^{2 + \eta} }^2). 
\end{equation}

\noindent
{\bf Step 4. Term $T_{1}$. }

We have 
\[
T_1 = T_{11} + T_{12} + T_{13}, 
\]
where
\[
\begin{aligned}
T_{11} =&~ \boldf_{\le \evn}^\sT (\bH + \lambda \id_n)^{-1} \bE_{\le \evn}, \\
T_{12} =&~ \boldf_{> \evn}^\sT (\bH + \lambda \id_n)^{-1} \bE_{\le \evn}, \\
T_{13} =&~ \boldf^\sT (\bH + \lambda \id_n)^{-1} \bE_{> \evn}. \\
\end{aligned}
\]
By Lemma \ref{lem:lem_for_error_bound_R11} stated in Section \ref{sec:KRR_upper_auxiliary} below, we have 
\[
\| \bPsi_{\le \evn}^\sT( \bH + \lambda \id_n)^{-1}\bPsi_{\le \evn} \bD_{\le \evn}^2 - \bS_{\leq \evn} \|_{\op} = o_{d, \P}(1), 
\]
so that 
\begin{align}
  T_{11} =&~ \hat \boldf_{\le \evn}^\sT  \bPsi_{\le \evn}^\sT( \bH + \lambda \id_n)^{-1}\bPsi_{\le \evn} \bD_{\le \evn}^2 \hat \boldf_{\le \evn} \nonumber\\
  =&~ \| \bS_{\leq \evn}^{1/2} \hat \boldf_{\le \evn} \|_2^2  + o_{d, \P}(1) \cdot \|\hat \boldf_{\le \evn} \|_2^2 =  \| \bS_{\leq \evn}^{1/2} \hat \boldf_{\le \evn} \|_2^2  + o_{d, \P}(1) \cdot  \| \proj_{\le \evn} f_d \|_2^2 . \label{eqn:term_R11}
\end{align}

Using Cauchy-Schwarz inequality for $T_{12}$, and by the expression $\bM = \bPsi_{\le \evn} \bD_{\le \evn}^4 \bPsi_{\le \evn}^\sT +
\kappa_M(\id_M + \bDelta_M)$, cf. Eq.~\eqref{eq:HMdecompositionKRR}, we get with high probability
\begin{equation}\label{eqn:term_R12}
\begin{aligned}
\vert T_{12} \vert =&~ \Big\vert \sum_{k=\evn + 1}^\infty \hat f_k \bpsi_k^\sT (\bH + \lambda \id_n)^{-1} \bPsi_{\le \evn} \bD_{\le \evn}^2  \hat \boldf_{\le \evn} \Big\vert \\
\stackrel{(a)}{\le}&~ \Big\| \sum_{k=\evn + 1}^\infty \hat f_k \bpsi_k^\sT (\bH + \lambda \id_n)^{-1} \bPsi_{\le \evn} \bD_{\le \evn}^2 \Big\|_2 \| \hat \boldf_{\le \evn} \|_2 \\
\stackrel{(b)}{=}&~ \Big[ \Big( \sum_{k=\evn + 1}^\infty \hat f_k \bpsi_k^\sT \Big) (\bH + \lambda \id_n)^{-1} \bPsi_{\le \evn} \bD_{\le \evn}^4 \bPsi_{\le \evn}^\sT (\bH + \lambda \id_n)^{-1} \Big( \sum_{k=\evn + 1}^\infty \hat f_k \bpsi_k \Big)\Big]^{1/2} \| \hat \boldf_{\le \evn} \|_2\\
\stackrel{(c)}{\le} &~ \Big[ \Big( \sum_{k=\evn + 1}^\infty \hat f_k \bpsi_k^\sT \Big) (\bH + \lambda \id_n)^{-1} \bM (\bH + \lambda \id_n)^{-1} \Big( \sum_{k=\evn + 1}^\infty \hat f_k \bpsi_k \Big) \Big]^{1/2} \| \hat \boldf_{\le \evn} \|_2\\
\stackrel{(d)}{=}&~  T_{23}^{1/2} \| \hat \boldf_{\le \evn} \|_2 \stackrel{(e)}{=} o_{d, \P}(1) \cdot \| \proj_{\le \evn} f_d \|_{L^2} \| \proj_{> \evn} f_d \|_{L^{2 + \eta}}.
\end{aligned}
\end{equation}
Here $(a)$ follows by Cauchy-Schwarz; $(b)$ by the definition of norm; $(c)$ because
$\bM \succeq \bPsi_{\le \evn} \bD_{\le \evn}^4 \bPsi_{\le \evn}^\sT + \kappa_M(\id + \bDelta_M) \succeq \bPsi_{\le \evn} \bD_{\le \evn}^4 \bPsi_{\le \evn}^\sT$; $(d)$ by the definition of $T_{23}$ as in Eq. \eqref{eq:T2i_def}; $(e)$ by Eq.~\eqref{eqn:term_R23}.

For term $T_{13}$, we have 
\[
\begin{aligned}
\vert T_{13} \vert =&~  \vert \boldf^\sT (\bH + \lambda \id_n)^{-1} \bE_{> \evn}\vert \le  \| \boldf \|_2 \| (\bH + \lambda \id_n)^{-1} \|_{\op} \| \bE_{> \evn} \|_2. 
\end{aligned}
\]
Note that we have $\E[\| \boldf \|_2^2] = n \| f_d \|_{L^2}^2$. Further by Eq.~\eqref{eq:HMdecompositionKRR}, we have $\| (\bH + \lambda \id_n)^{-1} \|_{\op} \le 2/(\kappa_H + \lambda)$ with high probability. Finally, we have
\[
\E [ \| \bE_{> \evn} \|_2^2 ] = n \sum_{k = \evn + 1}^\infty \lambda_{d, k}^4 \hat f_k^2 \le n \Big[ \max_{k \ge \evn + 1} \lambda_{d, k}^4 \Big] \| \proj_{> \evn} f_d \|_{L^2}^2. 
\]
As a result, we have
\begin{equation}\label{eqn:term_R13}
\begin{aligned}
\vert T_{13} \vert \le&~ O_{d, \P}(1) \cdot  \| \proj_{> \evn} f_d \|_{L^2} \| f_d \|_{L^2} \Big[n^2 \max_{k \ge \evn + 1} \lambda_{d, k}^4 \Big]^{1/2} / (\kappa_H + \lambda) \\
=&~ O_{d, \P}(1) \cdot \| \proj_{> \evn} f_d \|_{L^2} \| f_d \|_{L^2} \Big[ n \max_{k \ge \evn + 1} \lambda_{d, k}^2 \Big]/ \Big( \sum_{k \ge \evn + 1} \lambda_{d, k}^2 + \lambda \Big)\\
=&~ o_{d, \P}(1) \cdot \| \proj_{> \evn} f_d \|_{L^2} \| f_d \|_{L^2},
\end{aligned}
\end{equation}
where the last equality used Eq.~\eqref{ass:n_parameters_KRR_lower_2} in Assumption \ref{ass:eig_decay}.$(a)$ and the fact that $\lambda \in [0, \Trace(\Hop_{d, > \evn})]$. Combining Eqs. (\ref{eqn:term_R11}), (\ref{eqn:term_R12}) and (\ref{eqn:term_R13}), we get 
\begin{equation}\label{eqn:KRR_term_T1}
T_1 = \| \bS_{\leq \evn}^{1/2} \hat \boldf_{\le \evn} \|_2^2  + o_{d, \P}(1) \cdot (\| f_d \|_{L^2}^2 + \| \proj_{>\evN} f_d \|_{L^{2 + \eta} }^2). 
\end{equation}

\noindent
{\bf Step 5. Terms $T_3$. } 

By Lemma \ref{lem:key_H_U_H_bound} again, we have 
\[
\begin{aligned}
\frac{1}{\noise^2}\E_\beps[T_3]  =&~ \Trace((\bH + \lambda \id_n)^{-1} \bM (\bH + \lambda \id_n)^{-1}) = \Trace(\bPsi_{\le \evn} \bS_{\leq \evn}^2 \bPsi_{\le \evn}^\sT / n^2) + o_{d, \P}(1), 
\end{aligned}
\]
By Proposition \ref{prop:YY_new} and noting that $\bS_{\leq \evn} \preceq \id_{\evn}$, we have 
\[
   \frac{1}{n^2}\Trace(\bPsi_{\le \evn} \bS_{\leq \evn}^2 \bPsi_{\le \evn}^\sT ) \leq \frac{1}{n^2}\Trace(\bPsi_{\le \evn} \bPsi_{\le \evn}^\sT ) = \frac{1}{n^2}\Trace(\bPsi_{\le \evn}^\sT \bPsi_{\le \evn})  = \frac{1}{n^2}n \evn \big(1 + o_{d, \P}(1)
  \big)= o_{d, \P}(1). 
\]
This gives
\begin{align}\label{eqn:term_varT3}
T_3 = o_{d, \P}(1) \cdot \noise^2. 
\end{align}

\noindent
{\bf Step 6. Terms $T_4$. }  

Note that
\[
\begin{aligned}
  \frac{1}{\noise^2}\E_{\beps} [T_4^2 ]= &~ \frac{1}{\noise^2}
  \E_{\beps} [ \beps^\sT (\bH + \lambda \id_n)^{-1} \bE \bE^\sT  (\bH + \lambda \id_n)^{-1} \beps ]\\
= &~ \bE^\sT  (\bH + \lambda \id_n)^{-2 }  \bE.
\end{aligned}
\]
Notice that $\bM\succeq \bPsi_{\leq L } \bD_{\leq L}^4  \bPsi_{\leq L }^{\sT}$ for any $L\in\naturals$,
by the decomposition of Eq.~\eqref{eq:DecompositionFEHM}. Therefore:
\begin{equation}
\begin{aligned}
\| \bD_{\leq L}^2 \bPsi_{\leq L }^{\sT} (\bH + \lambda \id_n)^{-2 } \bPsi_{\leq L } \bD_{\leq L}^2 \|_{\op} = &~   \| (\bH + \lambda \id_n)^{-1  }\bPsi_{\leq L } \bD_{\leq L}^4  \bPsi_{\leq L }^{\sT} (\bH + \lambda \id_n)^{-1  } \|_{\op} \\
\leq&~ \| (\bH + \lambda \id_n )^{-1} \bM ( \bH + \lambda \id_n )^{-1}  \|_{\op}.\label{eq:UBDPsi}
\end{aligned}
\end{equation}
Further notice that, using  Lemma \ref{lem:key_H_U_H_bound} (stated below) followed by
Proposition \ref{prop:YY_new}, we get
\begin{equation}\label{eq:HMH}
\begin{aligned}
  \| (\bH + \lambda \id_n)^{-1} \bM (\bH + \lambda \id_n)^{-1} \|_{\op} = & 
  \| \bPsi_{\leq \evn} \bS_{\leq \evn}^2 \bPsi_{\leq \evn}^{\sT} / n \|_{\op}/n + o_{d, \P}(1/n ) \\
  \leq &  \| \bPsi_{\leq \evn} \bPsi_{\leq \evn}^{\sT} / n \|_{\op}/n + o_{d,\P}(1) = o_{d,\P} (1)\, .
  \end{aligned}
  \end{equation}
Hence, 
\[ 
\begin{aligned}
\bE^\sT  (\bH + \lambda \id_n)^{-2 }  \bE \stackrel{(a)}{=} &~ \lim_{L \to \infty} \bE^\sT_{\leq L}  (\bH + \lambda \id_n)^{-2 }  \bE_{\leq L} \\
\stackrel{(b)}{=} &~ \lim_{L \to \infty} \hat \boldf^\sT_{\leq L}  [\bD_{\leq L}^2 \bPsi_{\leq L }^{\sT} (\bH + \lambda \id_n)^{-2 } \bPsi_{\leq L } \bD_{\leq L}^2] \hat \boldf_{\leq L} \\
\stackrel{(c)}{\leq} &~ \limsup_{L \to \infty}  \| \bD_{\leq L}^2 \bPsi_{\leq L }^{\sT} (\bH + \lambda \id_n)^{-2 } \bPsi_{\leq L } \bD_{\leq L}^2  \|_{\op} \cdot \lim_{L \to \infty} \| \hat \boldf_{\leq L} \|_2^2 \\
\stackrel{(d)}{\leq} &~ \| (\bH + \lambda \id_n )^{-1} \bM ( \bH + \lambda \id_n )^{-1}  \|_{\op}\cdot \| f_d \|_{L^2}^2\\
\stackrel{(e)}{\leq} &~  o_{d,\P}(1)\, \cdot \| f_d \|_{L^2}^2,
\end{aligned}
\]
where the limits for $L\to\infty$ exist with high probability. In particular, $(a)$ holds with high probability
since $\| \bE^\sT_{\leq L} -\bE\|_2\to 0$ as $L\to\infty$, and $\| (\bH + \lambda \id_n)^{-2 } \|_{\op}\le 1/\lambda_{\min}(\bH)^2\le
(2/\kappa_H)^2$, by the decomposition \eqref{eq:HMdecompositionKRR}, together with the fact that
$\| \bDelta_H \|_{\op}=o_{d,\P}(1)$, cf. Eq.~\eqref{eq:DeltaSmall}. Further, $(b)$ is by definition of $\bE_{\leq L}$; $(c)$
by definition of operator norm; $(d)$ by Eq.~\eqref{eq:UBDPsi}; $(e)$ by Eq.~\eqref{eq:HMH}.

We thus obtain
\begin{align}\label{eqn:term_varT4}
T_4 = o_{d, \P}(1) \cdot \noise \cdot \| f_d \|_{L^2} = o_{d,\P} (1) \cdot ( \noise^2 +  \| f_d \|_{L^2}^2 ). 
\end{align}

\noindent
{\bf Step 7. Terms $T_5$. } 

We decompose $T_5$ using $\boldf = \boldf_{\leq \evn} + \boldf_{> \evn}$,
\[
T_5 = T_{51} + T_{52},
\]
where
\[
\begin{aligned}
T_{51} = &~ \beps^\sT (\bH + \lambda \id_n )^{-1} \bM ( \bH + \lambda \id_n )^{-1}  \boldf_{\leq \evn}, \\
T_{52} = &~ \beps^\sT (\bH + \lambda \id_n )^{-1} \bM ( \bH + \lambda \id_n )^{-1}  \boldf_{> \evn} .
\end{aligned}
\]
First notice that, by Eq.~\eqref{eq:HMH},
\[
\begin{aligned}
\| \bM^{1/2} ( \bH + \lambda \id_n)^{-2} \bM^{1/2} \|_{\op} =&~ \| (\bH + \lambda \id_n)^{-1} \bM (\bH + \lambda \id_n)^{-1} \|_{\op} =  o_{d,\P} (1) .
\end{aligned}
\]
Then by Lemma \ref{lem:key_H_U_H_bound}, we get
\[
\begin{aligned}
\frac{1}{\noise^2} \E_{\beps} [T_{51}^2 ] = &~ \frac{1}{\noise^2} \E_{\beps} [ \beps^\sT (\bH + \lambda \id_n)^{-1} \bM (\bH + \lambda \id_n)^{-1} \boldf_{\leq \evn} \boldf_{\leq \evn}^\sT (\bH + \lambda \id_n)^{-1} \bM (\bH + \lambda \id_n)^{-1} \beps ] \\
= &~ \boldf^\sT_{\leq \evn} [ (\bH + \lambda \id_n)^{-1} \bM (\bH + \lambda \id_n)^{-1} ]^2 \boldf_{\leq \evn} \\
\leq &  \| \bM^{1/2} ( \bH + \lambda \id_n)^{-2} \bM^{1/2} \|_{\op} \| \bM^{1/2} ( \bH + \lambda \id_n)^{-1 } \boldf_{\leq \evn} \|_{2}^2 \\
= &~  o_{d,\P}(1)  \cdot T_{21} \\
= &~  o_{d,\P} (1)  \cdot \| \proj_{\leq \evn } f_d \|_{L^2}^2.
\end{aligned}
\]
where the last equality follows by Eq.~\eqref{eqn:term_R21}. Similarly, we get 
\[
\begin{aligned}
\E_{\beps} [T_{52}^2 ]/\noise^2   = &~   o_{d,\P} (1) \cdot T_{23}  = o_{d,\P}  (1) \cdot \| \proj_{> \evn} f_d \|_{L^2}^2.
\end{aligned}
\]
By Markov's inequality, we deduce that
\begin{align}\label{eqn:term_varT5}
T_5 = o_{d, \P}(1) \cdot \noise ( \| \proj_{\leq \evn} f_d \|_{L^2} + \| \proj_{> \evn} f_d \|_{L^2} )= o_{d,\P} (1) \cdot ( \noise^2 +  \| f_d \|_{L^2}^2 ). 
\end{align}

\noindent
{\bf Step 8. Finish the proof. }

Combining Eqs.~(\ref{eqn:KRR_term_T2}), (\ref{eqn:KRR_term_T1}), (\ref{eqn:term_varT3}), (\ref{eqn:term_varT4}) and (\ref{eqn:term_varT5}), we have
\[
\begin{aligned}
R_{\KR}( f_d, \bX, \lambda) = &~ \| f_d \|_{L^2}^2 - 2 T_{1} + T_{2} + T_3 - 2 T_4 + 2T_5 \\
= &~ \| \hat \boldf_{\le \evn} \|_2^2  - 2 \| \bS_{\leq\evn}^{1/2} \hat \boldf_{\le \evn} \|_2^2 + \| \bS_{\leq\evn} \hat \boldf_{\le \evn} \|_2^2 + \| \proj_{> \evn} f_d \|_{L^2}^2 \\
&~ + o_{d, \P}(1) \cdot (\| f_d \|_{L^2}^2 + \| \proj_{>\evN} f_d \|_{L^{2 + \eta} }^2 + \noise^2)  \\
= &~ \| ( \id - \bS_{\leq \evn} ) \hat \boldf_{\le \evn} \|_2^2  + \| \proj_{> \evn} f_d \|_{L^2}^2 + o_{d, \P}(1) \cdot (\| f_d \|_{L^2}^2 + \| \proj_{>\evN} f_d \|_{L^{2 + \eta} }^2 + \noise^2).
\end{aligned}
\]
Recall the expression \eqref{eq:sol_population_KRR} of $\hf_{\gamma^{\seff}}^{\seff}$:
\[
    \hf^{\seff}_{\gamma^{\seff} } = \sum_{j=1}^{\infty}\frac{\lambda_{d,j}^2}{\lambda_{d,j}^2+\frac{\gamma^{\seff} }{n}}\, \hat f_{d,k} \psi_{d,j}\, ,
    \]
 with $\gamma^{\seff} = \lambda + \kappa_H$. From Assumption \ref{ass:eig_decay}.$(a)$, we have $\max_{j > \evn} \lambda_{d,j}^2 = o_{d} (1) \cdot \kappa_H/ n$ and we deduce
 \[
\| ( \id - \bS_{\leq \evn} ) \hat \boldf_{\le \evn} \|_2^2  + \| \proj_{> \evn} f_d \|_{L^2}^2 = \| f_d -  \hf^{\seff}_{\gamma^{\seff} } \|_{L^2}^2 + \| f_d \|_{L^2}^2 \cdot o_{d,\P}(1).
 \]
 We conclude
 \[
 R_{\KR}( f_d, \bX, \lambda)  = \| f_d -\hf^{\seff}_{\gamma^{\seff} } \|_{L^2}^2+ o_{d, \P}(1) \cdot (\| f_d \|_{L^2}^2 + \| \proj_{>\evN} f_d \|_{L^{2 + \eta} }^2 + \noise^2).
 \]
Proceeding analogously (with $\hf^{\seff}_{\gamma^{\seff} }$ replacing $f_d$) we obtain
\[
\| \hat f_{\lambda} - \hf^{\seff}_{\gamma^{\seff} } \|_{L^2}^2 = o_{d, \P}(1) \cdot (\| f_d \|_{L^2}^2 + \| \proj_{>\evN} f_d \|_{L^{2 + \eta} }^2 + \noise^2).
\]

\subsubsection{Auxiliary lemmas}\label{sec:KRR_upper_auxiliary}

\begin{lemma}\label{lem:key_H_U_H_bound}
Follow the assumptions and notations in the proof of Theorem \ref{thm:upper_bound_KRR}. We have 
\[
\| n (\bH + \lambda \id_n)^{-1} \bM (\bH + \lambda \id_n)^{-1} - \bPsi_{\le \evn} \bS_{\leq \evn}^2 \bPsi_{\le \evn}^\sT / n \|_{\op} = o_{d, \P}(1). 
\]
where $\bS_{\leq \evn}$ is the shrinkage matrix defined in Equation \eqref{eq:shrinkage_matrix}.
\end{lemma}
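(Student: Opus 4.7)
\textbf{Proof proposal for Lemma \ref{lem:key_H_U_H_bound}.} The plan is to invert $\bG := \bH+\lambda\id_n$ via the Sherman–Morrison–Woodbury identity applied to the decomposition $\bG = \bA + \bPsi_{\le\evn}\bD_{\le\evn}^2\bPsi_{\le\evn}^{\sT}$ with $\bA := (\lambda+\kappa_H)\id_n + \kappa_H\bDelta_H$. Since $\kappa_H\|\bDelta_H\|_{\op}=o_{d,\P}(\lambda+\kappa_H)$, one obtains $\bA^{-1} = (\lambda+\kappa_H)^{-1}(\id_n+\bDelta_H')$ with $\|\bDelta_H'\|_{\op}=o_{d,\P}(1)$. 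Writing $\bK := \bPsi_{\le\evn}^{\sT}\bA^{-1}\bPsi_{\le\evn}$, Woodbury gives the clean identity
\begin{equation*}
\bG^{-1}\bPsi_{\le\evn} = \bA^{-1}\bPsi_{\le\evn}\,(\bD_{\le\evn}^{-2}+\bK)^{-1}\bD_{\le\evn}^{-2},
\end{equation*}
from which
\begin{equation*}
\bG^{-1}\bPsi_{\le\evn}\bD_{\le\evn}^{4}\bPsi_{\le\evn}^{\sT}\bG^{-1} \;=\; \bA^{-1}\bPsi_{\le\evn}\,(\bD_{\le\evn}^{-2}+\bK)^{-2}\,\bPsi_{\le\evn}^{\sT}\bA^{-1}.
\end{equation*}

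Next I would control $\bK$ using Proposition \ref{prop:YY_new} (whose hypotheses hold by Assumption \ref{ass:KRR}.(a) and $\evn\le n^{1-\delta_0}$): $\bPsi_{\le\evn}^{\sT}\bPsi_{\le\evn}/n = \id_\evn + o_{d,\P}(1)$, hence $\bK = \tfrac{n}{\lambda+\kappa_H}(\id_\evn + \bDelta_\bK)$ with $\|\bDelta_\bK\|_{\op}=o_{d,\P}(1)$. Then
\begin{equation*}
\bD_{\le\evn}^{-2}+\bK \;=\; \frac{n}{\lambda+\kappa_H}\bigl(\bS_{\le\evn}^{-1}+\bDelta_\bK\bigr), \qquad (\bD_{\le\evn}^{-2}+\bK)^{-1} \;=\; \frac{\lambda+\kappa_H}{n}\,\bS_{\le\evn}(\id_\evn+\bDelta'),
\end{equation*}
so $(\bD_{\le\evn}^{-2}+\bK)^{-2} = \tfrac{(\lambda+\kappa_H)^2}{n^2}(\bS_{\le\evn}^{2}+\bDelta'')$ with $\|\bDelta''\|_{\op}=o_{d,\P}(1)$, where I repeatedly use $\|\bS_{\le\evn}\|_{\op}\le 1$. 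Substituting back,
\begin{equation*}
n\,\bG^{-1}\bPsi_{\le\evn}\bD_{\le\evn}^{4}\bPsi_{\le\evn}^{\sT}\bG^{-1} \;=\; \frac{1}{n}(\id_n+\bDelta_H')\bPsi_{\le\evn}(\bS_{\le\evn}^{2}+\bDelta'')\bPsi_{\le\evn}^{\sT}(\id_n+\bDelta_H'),
\end{equation*}
and expanding the product yields the target $\bPsi_{\le\evn}\bS_{\le\evn}^{2}\bPsi_{\le\evn}^{\sT}/n$ plus cross terms, each of which is $o_{d,\P}(1)$ in operator norm because $\|\bPsi_{\le\evn}/\sqrt n\|_{\op}=O_{d,\P}(1)$ multiplies a small factor ($\bDelta_H'$ or $\bDelta''$).

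It remains to handle the $\kappa_M(\id_n+\bDelta_M)$ component of $\bM$. Using $\|\bG^{-1}\|_{\op} = O_{d,\P}(1/(\lambda+\kappa_H))$ (which follows from $\lambda_{\min}(\bG)\ge (\lambda+\kappa_H)(1-o_{d,\P}(1))$),
\begin{equation*}
\bigl\|n\,\bG^{-1}\kappa_M(\id_n+\bDelta_M)\bG^{-1}\bigr\|_{\op} \;=\; O_{d,\P}(1)\cdot\frac{n\kappa_M}{(\lambda+\kappa_H)^2}.
\end{equation*}
The bound $\kappa_M = \sum_{k>\evn}\lambda_{d,k}^{4}\le \lambda_{d,\evn+1}^{2}\kappa_H$ combined with Eq.~\eqref{ass:n_parameters_KRR_lower_2} of Assumption \ref{ass:eig_decay}.(a) gives $\kappa_M/\kappa_H^{2}\le \lambda_{d,\evn+1}^{2}/\kappa_H \le n^{-1-\delta_0}$, hence $n\kappa_M/(\lambda+\kappa_H)^2 \le n^{-\delta_0}=o_d(1)$, and the claim follows. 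The only delicate point, which I would verify carefully, is the propagation of the non-commuting error matrices $\bDelta_H'$ and $\bDelta_\bK$ through the two Woodbury inverses, but since all the error factors are bounded and shrink in operator norm while $\bS_{\le\evn}$ is a contraction, this is essentially a routine estimation.
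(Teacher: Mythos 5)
Your proposal is correct and follows essentially the same route as the paper's proof: split $\bM$ into $\bPsi_{\le\evn}\bD_{\le\evn}^4\bPsi_{\le\evn}^\sT$ plus $\kappa_M(\id_n+\bDelta_M)$, invert $\bH+\lambda\id_n$ by Sherman--Morrison--Woodbury around $(\lambda+\kappa_H)(\id_n+\bDelta_H')$, use Proposition \ref{prop:YY_new} to replace $\bPsi_{\le\evn}^\sT\bPsi_{\le\evn}/n$ by $\id_\evn$ so that the Woodbury core becomes $\bS_{\le\evn}^2$ up to $o_{d,\P}(1)$ errors controlled by $\|\bS_{\le\evn}\|_{\op}\le 1$, and kill the remaining term via $n\kappa_M/\kappa_H^2=O_d(n^{-\delta_0})$ from Eq.~\eqref{ass:n_parameters_KRR_lower_2} (your bound $\kappa_M\le\lambda_{d,\evn+1}^2\kappa_H$ is the same estimate the paper writes as $\kappa_M/\kappa_H^2\le\|\Hop_{d,>\evn}\|_{\op}/\Trace(\Hop_{d,>\evn})$). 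The propagation of the non-commuting error matrices that you flag as the delicate point is exactly what the paper carries out explicitly via the comparison $\|\bR-\bS_{\le\evn}\|_{\op}\le\|\bR\|_{\op}\|\bS_{\le\evn}\|_{\op}\|\bR^{-1}-\bS_{\le\evn}^{-1}\|_{\op}$, and it goes through as you expect.
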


\begin{proof}[Proof of Lemma \ref{lem:key_H_U_H_bound}]
We simplify the notations by defining $\bpsi_k = (\psi_k(\bx_i))_{i \in [n]} \in \R^{n}$
$\bPsi = \bpsi_{\le \evn} \in \R^{n \times \evn}$,
$\bD =\bD_{\le \evn }= \diag(\lambda_{d, 1}, \ldots, \lambda_{d, \evn}) \in \R^{\evn\times \evn}$.

Then recalling Eq.~\eqref{eq:HMdecompositionKRR}, we have
\begin{equation}\label{eq:HM_decompo}
\begin{aligned}
\bH =&~ \bPsi \bD^2 \bPsi^\sT + \kappa_H (\id + \bDelta_H), \\
\bM =&~ \bPsi \bD^4 \bPsi^\sT + \kappa_M (\id + \bDelta_M), 
\end{aligned}
\end{equation}
where $\kappa_H = \Trace(\Hop_{d, > \evn})$ and $\kappa_M = \Trace(\Hop_{d, > \evn}^2)$, and
 \begin{equation}\label{eq:DeltaSmall2}
\max\{ \| \bDelta_M \|_{\op}, \| \bDelta_H \|_{\op} \} = o_{d, \P}(1). 
\end{equation}

As a result, we have
\[
\begin{aligned}
n (\bH + \lambda \id_n)^{-1} \bM (\bH + \lambda \id_n)^{-1} =&~ T_1 + T_2, 
\end{aligned}
\]
where
\[
\begin{aligned}
T_1 =&~  n   \kappa_M (\bH + \lambda \id_n)^{-1}  (\id_M + \bDelta_M) (\bH + \lambda \id_n)^{-1}, \\
T_2 =&~ n (\bH + \lambda \id_n)^{-1} \bPsi \bD^4 \bPsi^\sT  (\bH + \lambda \id_n)^{-1}.
\end{aligned}
\] 

\noindent
{\bf Step 1. Bound term $T_1$. } 

For $T_1$, by Eqs.~\eqref{eq:HM_decompo} and \eqref{eq:DeltaSmall2}, we have,
\begin{equation}\label{eqn:key_H_U_H_bound_T1_1}
\begin{aligned}
\| T_1 \|_{\op} \le&~ n \kappa_M \| (\bH + \lambda \id_n)^{-1} \|_{\op}^2 \| \id + \bDelta_M \|_{\op} = O_{d, \P}(1) \cdot  n [ \kappa_M / \kappa_H^2].
\end{aligned}
\end{equation}
By Eq.~\eqref{ass:n_parameters_KRR_lower_2} in Assumption \ref{ass:eig_decay}.$(a)$, we have
\[
 \frac{\kappa_M}{\kappa_H^2} = \frac{\Trace(\Hop_{d, > \evn}^2)}{\Trace(\Hop_{d, > \evn})^2} \leq \frac{\|\Hop_{d, > \evn} \|_{\op} }{ \Trace ( \Hop_{d, > \evn} )} = O_d ( n^{-1 - \delta_0} ).
\]
We conclude that
\[
\| T_1 \|_{\op} = o_{d, \P}(1). 
\]

\noindent
{\bf Step 2. Bound term $T_2$. } 

For $T_2$, by the Sherman-Morrison-Woodbury formula, we have, setting   $\bDelta_H ' =  \kappa_H\bDelta_H/(\lambda+\kappa_H)$, 
\[
\begin{aligned}
T_2 =&~  n (\id + \bDelta_H ' )^{-1} \bPsi ( (\kappa_H + \lambda) \bD^{-2} + \bPsi^\sT (\id + \bDelta_H ' )^{-1} \bPsi)^{-2} \bPsi^\sT (\id + \bDelta_H ' )^{-1} \\
=& \bE \bPsi \bR^{2} \bPsi^\sT \bE / n, 
\end{aligned}
\]
where
\[
\begin{aligned}
\bE =&~ (\id + \bDelta_H ' )^{-1}, \\
\bR =&~ [ (\kappa_H + \lambda) (n \bD^2 )^{-1} + \bPsi^\sT \bE \bPsi / n ]^{-1}. 
\end{aligned}
\]
Denote $\bS : = \bS_{\leq \evn} = [ \id_\evn + ( \kappa_H + \lambda) ( n \bD^2)^{-1} ]^{-1}$. We have 
\[
\begin{aligned}
\| T_2 - \bPsi^\sT \bS^2 \bPsi / n \|_{\op} \le&~ (1 + \| \bE \|_{\op}) \| \bE - \id \|_{\op} \| \bPsi \bR^{2} \bPsi^\sT / n\| +  \| \bPsi \bR^{-2} \bPsi^\sT / n - \bPsi \bS^2\bPsi^\sT / n\|_{\op} \\
\le&~ (1 + \| \bE \|_{\op}) \| \bE - \id \|_{\op} \| \bPsi \bR^{2} \bPsi^\sT / n\|_{\op} + \| \bPsi \bPsi^\sT / n\|_{\op}  \| \bR^2 - \bS^2 \|_{\op}. \\
\end{aligned}
\]
Recalling Eq.~\eqref{eq:DeltaSmall2}, we have $\| \bE - \id \|_{\op} = o_{d, \P}(1)$ and by Theorem \ref{prop:expression_U}.$(b)$, we have $\| \bPsi^\sT \bPsi/ n - \id \|_{\op} = o_{d, \P}(1)$. Furthermore
\[
\| \bR^2 - \bS^2 \|_{\op} = \| \bR - \bS \|_\op (\| \bR \|_{\op}+ \|\bS \|_{\op} 0\leq \| \bR \|_\op \| \bS \|_\op (\| \bR \|_\op + \| \bS \|_\op ) \| \bR^{-1}  - \bS^{-1} \|_\op.
\]
We have
\[
\begin{aligned}
\| \bR^{-1} - \bS^{-1} \|_{\op} \le&~  \| \bPsi^\sT \bE \bPsi / n - \bPsi^\sT \bPsi / n \|_{\op} + \| \bPsi^\sT \bPsi / n - \id \|_{\op} \\
\le&~ \| \bPsi^\sT \bPsi / n \|_{\op} \| \bE - \id \|_{\op} + \| \bPsi^\sT \bPsi / n - \id \|_{\op} = o_{d, \P}(1). 
\end{aligned}
\]
Furthermore $\| \bS \|_\op \leq 1$ and by the above computation, $\| \bR \|_\op \leq 1 + o_{d,\P} (1)$.
Combining the above inequalities, we have 
\[
\| T_2 - \bPsi^\sT \bS^2 \bPsi / n \|_{\op} = o_{d, \P}(1). 
\]
This gives 
\[
\| n (\bH + \lambda \id_n)^{-1} \bM (\bH + \lambda \id_n)^{-1} - \bPsi \bS^2 \bPsi^\sT / n \|_{\op} \le \| T_1 \|_{\op} + \| T_2 - \bPsi \bS^2 \bPsi^\sT / n  \|_{\op} = o_{d, \P}(1). 
\]
This completes the proof. 
\end{proof}

\begin{lemma}\label{lem:lem_for_error_bound_R11}
Follow the assumptions and notations in the proof of Theorem \ref{thm:upper_bound_KRR}. We have
\[
\|  \bS_{\leq\evn} - \bPsi_{\le \evn}^\sT( \bH + \lambda \id_n)^{-1}\bPsi_{\le \evn} \bD_{\le \evn}^2 \|_{\op} = o_{d, \P}(1),
\]
where $\bS_{\leq \evn}$ is the shrinkage matrix defined in Equation \eqref{eq:shrinkage_matrix}.
\end{lemma}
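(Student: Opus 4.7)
The plan is to mirror the Sherman--Morrison--Woodbury computation used in the proof of Lemma \ref{lem:key_H_U_H_bound}. First, I would use the decomposition $\bH + \lambda\id_n = \bPsi_{\le\evn}\bD_{\le\evn}^2\bPsi_{\le\evn}^\sT + (\kappa_H + \lambda)\bE_0$ where $\bE_0 := \id + \frac{\kappa_H}{\kappa_H+\lambda}\bDelta_H$, so that $\|\bE_0 - \id\|_{\op} = o_{d,\P}(1)$ by Eq.~\eqref{eq:DeltaSmall}. Applying SMW with $\bA = (\kappa_H+\lambda)\bE_0$ and the rank-$\evn$ perturbation $(\bPsi_{\le\evn}\bD_{\le\evn})(\bPsi_{\le\evn}\bD_{\le\evn})^\sT$, together with the standard identity $\bU^\sT(\bA+\bU\bU^\sT)^{-1}\bU = \id - (\id + \bU^\sT\bA^{-1}\bU)^{-1}$, yields
\[
\bD_{\le\evn}\bPsi_{\le\evn}^\sT(\bH+\lambda\id_n)^{-1}\bPsi_{\le\evn}\bD_{\le\evn} = \id_{\evn} - (\id_{\evn} + \bX)^{-1},
\]
with $\bX := \bD_{\le\evn}\bPsi_{\le\evn}^\sT\bE_0^{-1}\bPsi_{\le\evn}\bD_{\le\evn}/(\kappa_H+\lambda)$.

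Next, I would introduce the idealized counterpart $\bX_0 := (n/(\kappa_H+\lambda))\bD_{\le\evn}^2$, which is diagonal. A direct diagonal computation gives $\id_{\evn} - (\id_{\evn} + \bX_0)^{-1} = \bS_{\leq\evn}$. Since $\bS_{\leq\evn}$ and $\bX_0$ both commute with $\bD_{\le\evn}$, the difference of interest may be written as
\[
\bPsi_{\le\evn}^\sT(\bH+\lambda\id_n)^{-1}\bPsi_{\le\evn}\bD_{\le\evn}^2 - \bS_{\leq\evn} = \bD_{\le\evn}^{-1}\bigl[(\id_{\evn}+\bX_0)^{-1} - (\id_{\evn}+\bX)^{-1}\bigr]\bD_{\le\evn}.
\]
Invoking the resolvent identity and using again that $\bX_0$ commutes with $\bD_{\le\evn}^{-1}$, I would rewrite this as
\[
(\id_{\evn}+\bX_0)^{-1}\cdot \frac{1}{\kappa_H+\lambda}\bigl(\bPsi_{\le\evn}^\sT\bE_0^{-1}\bPsi_{\le\evn} - n\id_{\evn}\bigr)\cdot \bD_{\le\evn}(\id_{\evn}+\bX)^{-1}\bD_{\le\evn}.
\]

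The proof then reduces to bounding the operator norm of each of the three factors separately. The outer factor is controlled by $\|(\id_{\evn}+\bX_0)^{-1}\|_{\op}\le 1$. The middle factor is $o_{d,\P}(n)/(\kappa_H+\lambda)$ by a triangle inequality combining $\|\bE_0^{-1}-\id\|_{\op}=o_{d,\P}(1)$ with the concentration bound $\|\bPsi_{\le\evn}^\sT\bPsi_{\le\evn}/n - \id_{\evn}\|_{\op}=o_{d,\P}(1)$ from Theorem \ref{prop:expression_U}.$(b)$ (whose hypotheses hold under Assumptions \ref{ass:KRR}.$(a)$ and \ref{ass:eig_decay}.$(b)$). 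The key remaining piece is the bound
\[
\|\bD_{\le\evn}(\id_{\evn}+\bX)^{-1}\bD_{\le\evn}\|_{\op} = O_{d,\P}\bigl((\kappa_H+\lambda)/n\bigr),
\]
which follows by rewriting $\bD_{\le\evn}(\id_{\evn}+\bX)^{-1}\bD_{\le\evn} = \bigl(\bD_{\le\evn}^{-2} + \bPsi_{\le\evn}^\sT\bE_0^{-1}\bPsi_{\le\evn}/(\kappa_H+\lambda)\bigr)^{-1}$ and noting that $\bPsi_{\le\evn}^\sT\bE_0^{-1}\bPsi_{\le\evn} \succeq (n/2)\id_{\evn}$ with high probability (again via the concentration of $\bPsi_{\le\evn}^\sT\bPsi_{\le\evn}$ and $\bE_0^{-1}\succeq \frac12\id$). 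Multiplying the three bounds gives $o_{d,\P}(1)$ as required.

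The main subtlety is that a naive triangle-inequality bound produces a factor of $\lambda_{d,1}^2/\lambda_{d,\evn}^2$ which need not be controlled (no spectral-gap assumption is imposed here). The grouping above is therefore essential: the factor $\bD_{\le\evn}^{-1}$ must be absorbed into $\bX-\bX_0$ via the diagonal commutation, and the two remaining $\bD_{\le\evn}$'s must stay sandwiched around $(\id_{\evn}+\bX)^{-1}$ so that one can exploit the positive-definite lower bound $\bD_{\le\evn}^{-1}\bX\bD_{\le\evn}^{-1}\succeq (n/2)(\kappa_H+\lambda)^{-1}\id_{\evn}$ to cancel the $\lambda_{d,j}^2$ factors uniformly in $j$.
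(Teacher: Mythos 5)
Your proof is correct and follows essentially the same route as the paper's: both hinge on the Sherman--Morrison--Woodbury reduction for $\bH+\lambda\id_n = \bPsi_{\le\evn}\bD_{\le\evn}^2\bPsi_{\le\evn}^\sT+(\kappa_H+\lambda)(\id_n+\bDelta_H')$, the exact cancellation of the ill-conditioned diagonal term when comparing with the shrinkage matrix (so no spectral gap or condition-number factor enters), and the same inputs $\|\bDelta_H\|_{\op}=o_{d,\P}(1)$ and $\|\bPsi_{\le\evn}^\sT\bPsi_{\le\evn}/n-\id_\evn\|_{\op}=o_{d,\P}(1)$ from Theorem \ref{prop:expression_U}.$(b)$. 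Your symmetrized packaging via $\id_\evn-(\id_\evn+\bX)^{-1}$ and the resolvent identity is just an algebraic rearrangement of the paper's comparison of $\bR=[(\kappa_H+\lambda)(n\bD_{\le\evn}^2)^{-1}+\bPsi_{\le\evn}^\sT\bE^{-1}\bPsi_{\le\evn}/n]^{-1}$ with $\bS_{\le\evn}$ (indeed $\bD_{\le\evn}(\id_\evn+\bX)^{-1}\bD_{\le\evn}=\tfrac{\kappa_H+\lambda}{n}\bR$), so the two arguments coincide in substance.
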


\begin{proof}[Proof of Lemma \ref{lem:lem_for_error_bound_R11}]~ We will follow the notations in the proof of Proposition \ref{lem:key_H_U_H_bound}. Applying Theorem \ref{prop:expression_U}  with respect to operator $\Hop_d$ and by Eq.~\eqref{ass:concentration_orthogonal_KRR_lower_e2} in Assumption \ref{ass:KRR}.$(c1)$, we have 
\[
\bH + \lambda \id_n = \bPsi \bD^2 \bPsi^\sT + \kappa_H (\id_n + \bDelta_H) + \lambda \id_n = \bPsi \bD^2 \bPsi^\sT + ( \kappa_H + \lambda) (\id_n + \bDelta_H '),
\]
 where $\| \bDelta_H \|_{\op}, \| \bDelta_H ' \|_\op = o_{d, \P}(1)$. By the Sherman-Morrison-Woodbury formula, we have
\[
\bPsi^\sT[ \bPsi \bD^2 \bPsi^\sT + (\kappa_H + \lambda) ( \id_n + \bDelta_H') ]^{-1}\bPsi \bD^2 = \bPsi^\sT \bE^{-1} \bPsi  \bR /n,
\]
where $\bE = \id_n + \bDelta_H$ and $\bR = [ ( \kappa_H+\lambda) (n \bD^2)^{-1} + \bPsi^\sT \bE^{-1} \bPsi / n ]^{-1}$. We have
\[
\begin{aligned}
\| \bS - \bPsi^\sT \bE^{-1} \bPsi  \bR /n  \|_{\op} \leq \| \bR \|_\op \| \bPsi^\sT \bE^{-1} \bPsi / n - \id \|_\op + \| \bR - \bS \|_\op.
\end{aligned}
\]
In the proof of Lemma \ref{lem:key_H_U_H_bound}, we already showed that $\| \bPsi^\sT \bE^{-1} \bPsi / n - \id \|_\op = o_{d,\P} (1)$, $\| \bR - \bS \|_\op = o_{d,\P}(1)$ and $\| \bR \|_\op = O_{d,\P}(1)$, which concludes the proof.
\end{proof}

\subsection{Kernel ridge regression under relaxed assumptions on the diagonal}
\label{sec:KRR_weaker_gap}

In this section, we state and prove a version of Theorem \ref{thm:upper_bound_KRR} that holds under weaker  assumptions.
Namely, instead of the concentration bound in Assumption \ref{ass:KRR}.$(c)$ we only require that the diagonal terms
are upper bounded by a sub-polynomial factors times their expectation. Instead, we assume a spectral gap condition that
was not required in the previous section.

We will first describe the modified assumption, then state the new version of the theorem.
The proof is very similar to the one in the previous section. We will therefore use the same notations and only sketch the differences.

\begin{assumption}[Relaxed kernel concentration at level $\{ (n(d) , \evn (d)) \}_{d \ge 1}$]\label{ass:KRR_app} 
  We assume the kernel concentration property at level $\{ (n(d) , \evn (d)) \}_{d \ge 1}$, as stated in
  Assumption \ref{ass:KRR}, with condition $(c)$ replaced by the following 
\begin{itemize}
\item[(c')] (Upper bound on the diagonal elements of the kernel) For $(\bx_i)_{i \in [n(d)]} \sim_{iid} \nu_d$ and any $\delta>0$, we have
\begin{align}
  \max_{i \in [n(d)]} \E_{\bx \sim u(d) }\big[H_{d, > u(d) }(\bx_i, \bx)^2\big]  = &~O_{d, \P}(n(d)^\delta) \cdot \E_{\bx, \bx' \sim \nu_d}\big[H_{d, > u(d) }(\bx, \bx')^2\big], \label{ass:concentration_orthogonal_KRR_lower_e1}\\
  \max_{i \in [n(d)]}  H_{d, > u(d) }(\bx_i, \bx_i)  =&~ O_{d, \P}(n(d)^\delta)\cdot \E_{\bx \sim \nu_d}[H_{d, > u(d) }(\bx, \bx)].  \label{ass:concentration_orthogonal_KRR_lower_e2}
\end{align}
\end{itemize}
\end{assumption}

\begin{assumption}[Eigenvalue condition at level $\{ (n(d) , \evn (d)) \}_{d \ge 1}$]\label{ass:eig_cond_app}
  We assume the eigenvalue condition Assumption \ref{ass:eig_decay} and, in addition, the following to hold
  \begin{itemize}
\item[(c)] There exists a fixed $\delta_0 > 0$, such that 
  \[
    n^{1-\delta_0}\ge \frac{1}{\lambda_{d,\evn(d)}^2} \sum_{k=\evn(d)+1} \lambda_{d,k}^2\, .
\]
\end{itemize}
\end{assumption}

\begin{assumption}[Regularization and lower bound on diagonal elements]\label{ass:KRR_lower_diagonal} Consider the regularization parameter $\lambda \in \R_{\ge 0}$. 
We assume that one of the following holds:
\begin{itemize}
\item[(i)] For $(\bx_i)_{i \in [n(d)]} \sim_{iid} \nu_d$ and any $\delta>0$, we have
\begin{align}
  \min_{i \in [n(d)]} \E_{\bx \sim \nu_d}[H_{d, > \evn(d)}(\bx_i, \bx)^2]  = &\Omega_{d, \P}(n(d)^{-\delta} ) \cdot \E_{\bx, \bx' \sim \nu_d}[H_{d, > \evn (d)}(\bx, \bx')^2], \label{ass:lower_bound_KRR_diagonal_1}\\
  \min_{i \in [n(d)]}  H_{d, > \evn (d)}(\bx_i, \bx_i)  =&~ \Omega_{d, \P}(n(d)^{-\delta} ) \cdot \E_{\bx}[H_{d, > \evn(d)}(\bx, \bx)], \label{ass:lower_bound_KRR_diagonal_2}
\end{align}
and $\lambda = O_d (1) \cdot \Trace(\Hop_{d, > \evn(d)})$ (in particular, taking $\lambda = 0$ is fine).
\item[(ii)] We have $\lambda = \Theta_d (1) \cdot \Trace(\Hop_{d, > \evn(d)})$.
\end{itemize} 
\end{assumption}

\begin{theorem}\label{thm:upper_bound_KRR_app}
    Let $\{ f_d \in \cD_d \}_{d \ge 1}$ be a sequence of functions, $(\bx_i)_{i \in [n(d)]} \sim \nu_d$ independently, $\{ \Hop_d \}_{d\ge 1}$ be a sequence of kernel operators such that $\{ ( \Hop_d , n(d),\evn(d) )\}_{d \ge 1}$ and the regularization parameter $\lambda$ satisfy
    Assumptions \ref{ass:KRR_app},  \ref{ass:eig_cond_app}, and \ref{ass:KRR_lower_diagonal}. Then for any fixed $\eta >0$, we have
\begin{align}
\vert R_{\KR}(f_d, \bX, \bTheta, \lambda) - \| \proj_{> \evn } f_d \|_{L^2}^2 \vert = o_{d,\P}(1)\cdot (\| f_d \|_{L^2}^2+\| \proj_{>\evn}f_d \|_{L^{2 + \eta} }^2 + \noise^2). 
\end{align}
\end{theorem}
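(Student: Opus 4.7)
The plan is to mirror the structure of the proof of Theorem \ref{thm:upper_bound_KRR} (Section \ref{sec:proof_KRR_app}), with two modifications: (i) the kernel matrix decomposition \eqref{eq:HMdecompositionKRR} must be weakened to accommodate the relaxed Assumption \ref{ass:KRR_app}.$(c')$, and (ii) the new spectral gap Assumption \ref{ass:eig_cond_app}.$(c)$ is exploited to drive the shrinkage matrix to the identity, so that the limiting risk collapses from $\|f_d - \hat f^{\seff}_{\gamma^{\seff}}\|_{L^2}^2$ to $\|\proj_{>\evn}f_d\|_{L^2}^2$.

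First, I would establish a modified version of Eq.~\eqref{eq:HMdecompositionKRR}. Under the relaxed Assumption \ref{ass:KRR_app}.$(c')$, I can no longer write $\bH_{>\evn} \approx \kappa_H\, \id_n$; instead, I would apply Proposition \ref{prop:generalized_Gram} directly to $\Hop_{d,>\evn}$ and $\Hop_{d,>\evn}^2$ (whose assumptions (A1)--(A3) are implied by Assumption \ref{ass:KRR}.$(a)$, $(b)$, and Assumption \ref{ass:eig_decay}.$(a)$) to kill only the off-diagonal part, obtaining
\begin{align*}
\bH = \bPsi_{\le\evn}\bD_{\le\evn}^2\bPsi_{\le\evn}^\sT + \bLambda_H + \bDelta_H,\qquad \bM = \bPsi_{\le\evn}\bD_{\le\evn}^4\bPsi_{\le\evn}^\sT + \bLambda_M + \bDelta_M,
\end{align*}
where $\bLambda_H = \diag(H_{d,>\evn}(\bx_i,\bx_i))$, $\bLambda_M$ is the analogous diagonal for $\Hop_{d,>\evn}^2$, and $\|\bDelta_H\|_\op = o_{d,\P}(\kappa_H)$, $\|\bDelta_M\|_\op = o_{d,\P}(\kappa_M)$. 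Assumption \ref{ass:KRR_app}.$(c')$ gives the upper bound $\bLambda_H \preceq O_{d,\P}(n^\delta)\,\kappa_H\,\id_n$ for any $\delta>0$, and under either alternative of Assumption \ref{ass:KRR_lower_diagonal} I get the matching lower bound $\lambda\,\id_n + \bLambda_H \succeq \Omega_{d,\P}(n^{-\delta})(\kappa_H+\lambda)\,\id_n$ (under case (i) via \eqref{ass:lower_bound_KRR_diagonal_2}, under case (ii) directly from $\lambda \asymp \kappa_H$). Consequently $\bH+\lambda\,\id_n$ is invertible with $\|(\bH+\lambda\,\id_n)^{-1}\|_\op \le O_{d,\P}(n^\delta)/(\kappa_H+\lambda)$.

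Second, I would re-derive the analogues of Lemmas \ref{lem:key_H_U_H_bound} and \ref{lem:lem_for_error_bound_R11}. Writing $\bE := \lambda\,\id_n + \bLambda_H + \bDelta_H$ and applying the Sherman--Morrison--Woodbury identity exactly as in the original proof produces an effective shrinkage matrix
\begin{align*}
\bS^{\rm new}_{\le\evn} \;=\; \bigl(\bD_{\le\evn}^{-2} + n^{-1}\bPsi_{\le\evn}^\sT\bE^{-1}\bPsi_{\le\evn}\bigr)^{-1}\cdot n^{-1}\bPsi_{\le\evn}^\sT\bE^{-1}\bPsi_{\le\evn}.
\end{align*}
The crucial use of Assumption \ref{ass:eig_cond_app}.$(c)$ is here: $n\,\lambda_{d,\evn}^2 \ge n^{\delta_0}(\kappa_H+\lambda)$ implies $\bD_{\le\evn}^{-2} = o_{d,\P}(n^{-\delta_0})(\kappa_H+\lambda)^{-1}\id_\evn$, while combining $\bE^{-1} \succeq \Omega_{d,\P}(n^{-\delta})(\kappa_H+\lambda)^{-1}\id_n$ with Theorem \ref{prop:expression_U}.$(b)$ gives $n^{-1}\bPsi_{\le\evn}^\sT\bE^{-1}\bPsi_{\le\evn} \succeq \Omega_{d,\P}(n^{-\delta})(\kappa_H+\lambda)^{-1}\id_\evn$. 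Choosing $\delta<\delta_0/2$, the gap dominates and yields $\|\bS^{\rm new}_{\le\evn} - \id_\evn\|_\op = o_{d,\P}(1)$, i.e., no shrinkage on the signal subspace. The analogue of Lemma \ref{lem:key_H_U_H_bound} then reads $\|n(\bH+\lambda\,\id_n)^{-1}\bM(\bH+\lambda\,\id_n)^{-1} - \bPsi_{\le\evn}\bPsi_{\le\evn}^\sT/n\|_\op = o_{d,\P}(1)$, where the extra $\bLambda_M$ contribution is absorbed into the $o_{d,\P}(1)$ exactly as $T_1$ was bounded in step 1 of Lemma \ref{lem:key_H_U_H_bound} (via $\kappa_M/\kappa_H^2 = O_d(n^{-1-\delta_0})$, which still holds).

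Third, with these adapted matrix identities, I would run through the five terms $T_1,\dots,T_5$ (Steps 2--7 of Section \ref{sec:proof_KRR_app}) verbatim, replacing $\bS_{\le\evn}$ everywhere by $\bS^{\rm new}_{\le\evn}$. Because $\bS^{\rm new}_{\le\evn} \to \id_\evn$, the pieces $T_{21}$ and $T_{11}$ now converge to $\|\hat\boldf_{\le\evn}\|_2^2$ (rather than to shrunken versions), and the final telescoping yields
\begin{align*}
R_{\KR}(f_d,\bX,\lambda) = \|f_d\|_{L^2}^2 - 2\|\hat\boldf_{\le\evn}\|_2^2 + \|\hat\boldf_{\le\evn}\|_2^2 + o_{d,\P}(1)\cdot(\|f_d\|_{L^2}^2 + \|\proj_{>\evn}f_d\|_{L^{2+\eta}}^2 + \noise^2),
\end{align*}
which is precisely $\|\proj_{>\evn}f_d\|_{L^2}^2$ plus a negligible error. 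The main obstacle is the bookkeeping in step two: the non-scalar diagonal $\bLambda_H$ breaks the clean Sherman--Morrison--Woodbury calculation of Lemma \ref{lem:key_H_U_H_bound} and propagates the $n^{\pm\delta}$ slack through every bound on $(\bH+\lambda\,\id_n)^{-1}$; the entire estimate survives only because the new spectral gap $n^{\delta_0}$ in Assumption \ref{ass:eig_cond_app}.$(c)$ provides headroom strictly larger than this slack, after which every intermediate term remains $o_{d,\P}(1)$.
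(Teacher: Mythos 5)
Your overall route is the same as the paper's (keep the non-constant diagonal in the decomposition of $\bH$ and $\bM$, get two-sided $n^{\pm\delta}$ bounds on it, use the new gap condition of Assumption \ref{ass:eig_cond_app}.$(c)$ to force the shrinkage matrix to the identity, and rerun the $T_1,\dots,T_5$ bookkeeping), but your second step contains an overclaim that would fail. You assert the operator-norm approximation $\| n(\bH+\lambda\,\id_n)^{-1}\bM(\bH+\lambda\,\id_n)^{-1} - \bPsi_{\le\evn}\bPsi_{\le\evn}^\sT/n\|_{\op} = o_{d,\P}(1)$. Under the relaxed Assumption \ref{ass:KRR_app}.$(c')$ the diagonal $\bLambda_H$ does \emph{not} concentrate around a multiple of the identity (its entries may fluctuate by constant or even $n^{\pm\delta}$ factors), and after Sherman--Morrison--Woodbury the non-scalar matrix $\bA^{-1}$ (with $\bA=\bLambda_H+\bDelta_H+(\lambda/\kappa_H)\id_n$) sits on \emph{both} sides of the spike term: one gets $\bG \approx \bA^{-1}\bpsi_{\le\evn}\bigl(\bpsi_{\le\evn}^\sT\bA^{-1}\bpsi_{\le\evn}/n\bigr)^{-2}\bpsi_{\le\evn}^\sT\bA^{-1}/n$, whose range is spanned by $\bA^{-1}\bpsi_{\le\evn}$, not $\bpsi_{\le\evn}$; the angle between these need not vanish, so the operator-norm distance to $\bPsi_{\le\evn}\bPsi_{\le\evn}^\sT/n$ can be of order one. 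This is precisely why the paper's Lemma \ref{lem:key_H_U_H_bound_c2} proves only the weaker trio that the term-by-term analysis actually needs: $\|\bpsi_{\le\evn}^\sT\bG\bpsi_{\le\evn}/n-\id_\evn\|_{\op}=O_{d,\P}(n^{-\delta_0})$ (the $\bA^{-1}$ factors cancel inside this quadratic form), $\boldf_{>\evn}^\sT\bG\boldf_{>\evn}/n = O_{d,\P}(n^{-\delta_0})\,\|\proj_{>\evn}f_d\|_{L^{2+\eta}}^2$ (which requires a separate weighted computation with the diagonal weights $s_i^{-2}\le n^{\delta}$ and hypercontractivity, not a corollary of an operator-norm statement), and the crude bound $\|\bG\|_{\op}=O_{d,\P}(n^{\delta})$ for the noise terms. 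Your plan to "run through the five terms verbatim" goes through once you substitute these three bounds for the false uniform approximation.

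Two secondary quantitative points. First, writing $\|\bDelta_H\|_{\op}=o_{d,\P}(\kappa_H)$ is not enough: since the lower bound on the diagonal is only $\Omega_{d,\P}(n^{-\delta})\kappa_H$ for every fixed $\delta>0$, an unspecified $o_{d,\P}(1)$ error (say $1/\log n$) could swamp it, and your claimed bound $\|(\bH+\lambda\,\id_n)^{-1}\|_{\op}\le O_{d,\P}(n^{\delta})/(\kappa_H+\lambda)$ would not follow. You need the polynomial rate $\|\bDelta_H\|_{\op}=O_{d,\P}(n^{-\delta_0})\,\kappa_H$ with a \emph{fixed} $\delta_0$, which Proposition \ref{prop:generalized_Gram} (via Theorem \ref{prop:expression_U}) does deliver, and then take $\delta<\delta_0$; the same comment applies everywhere you trade an $O_{d,\P}(n^{\delta})$ factor against a decay. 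Second, Assumption \ref{ass:KRR_app}.$(c')$ is stated for $H_{d,>u(d)}$, not $H_{d,>\evn(d)}$, so the bound $\bLambda_H\preceq O_{d,\P}(n^{\delta})\,\id_n$ requires passing through Lemma \ref{lem:bound_max_hyper} (hypercontractivity of the eigenspaces between $\evn$ and $u$) rather than being a direct consequence of the assumption.
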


\subsubsection{Proof outline for Theorem \ref{thm:upper_bound_KRR_app}}

Throughout this section, we will denote $\delta_0 >0$ a fixed constant and $\delta >0$ a constant that can be arbitrarily small. The value of $\delta_0$ is allowed to change from line to line. 

By the spectral gap condition (Assumption \ref{ass:eig_cond_app}), the population estimator $\hf_{\gamma^{\seff}}^{\seff}$ defined in Eq.~\eqref{eq:sol_population_KRR} is approximately given by
\[
\| \hf_{\gamma^{\seff}}^{\seff} -  \proj_{\leq \evn} f_d \|_{L^2}^2 = o_{d,\P} (1) \cdot \| f_d \|_{L^2}^2.
\]
Similarly, the shrinkage matrix defined in Eq.~\eqref{eq:shrinkage_matrix} verifies
\[
\| \bS_{\leq \evn } - \id_{\evn} \|_{\op} = O_{d,\P} ( n^{-\delta_0} ).
\]
From Theorem \ref{prop:expression_U} applied to the operator $\Hop_d$ and $\Hop_d^2$, the kernel matrices $\bH$ and $\bM$ can be rewritten as
\begin{equation}\label{eq:HMdecompositionKRR_c2}
\begin{aligned}
\bH =&~ \bPsi_{\le \evn} \bD_{\le \evn}^2 \bPsi_{\le \evn}^\sT + \kappa_H (\bLambda_H + \bDelta_H), \\
\bM =&~ \bPsi_{\le \evn} \bD_{\le \evn}^4 \bPsi_{\le \evn}^\sT + \kappa_M (\bLambda_M + \bDelta_M),
\end{aligned}
\end{equation}
where 
\[
\begin{aligned}
\kappa_H =&~ \Trace(\Hop_{d, > \evn})= \sum_{k \ge \evn + 1} \lambda_{d, k}^2,\\
\kappa_M =&~ \Trace(\Hop_{d, > \evn}^2) =\sum_{k \ge \evn + 1} \lambda_{d, k}^4,
\end{aligned}
\]
and
\[
\begin{aligned}
\bLambda_H = & \diag \big( \big\{  H_{d, > \evn} ( \bx_i , \bx_i)/\kappa_H \big\}_{i \in [n]} \big), \\
  \bLambda_M = & \diag \Big( \Big\{ \E_{\bx} [ H_{d, > \evn} ( \bx_i , \bx)^2 ]/\kappa_M \Big\}_{i \in [n]} \Big),
\end{aligned}
\]
and there exists a fixed $\delta_0 >0$ such that 
\begin{equation}\label{eq:DeltaSmall22}
\max\{ \| \bDelta_M \|_{\op}, \| \bDelta_H \|_{\op} \} = O_{d, \P} (n^{-\delta_0 } ).
\end{equation}
From Lemma \ref{lem:bound_max_hyper} applied to $\bLambda_H$ and $\bLambda_M$ with Assumptions \ref{ass:KRR}.$(a)$ and \ref{ass:KRR_app}.$(c')$, we have
\begin{equation}\label{eq:upper_bound_diag_HM}
\begin{aligned}
\bLambda_H \preceq & \, O_{d,\P} ( n^\delta) \cdot  \id_n, \\ \bLambda_M \preceq & \, O_{d,\P} ( n^\delta)  \cdot  \id_n.
\end{aligned}
\end{equation}
Furthermore from Assumption \ref{ass:KRR_lower_diagonal} and Eq.~\eqref{eq:DeltaSmall22}, we have for any $\delta < \delta  '$,
\begin{equation}\label{eq:lower_bound_diag_H}
\bH + \lambda \id_n \succeq \kappa_H (\bLambda_H + \bDelta_H) + \lambda \id_n \succeq \Omega_{d,\P} (n^{-\delta}) \cdot \kappa_H \cdot \id_n.
\end{equation}

The handling of the bounds on $T_1$, $T_2$, $T_3$, $T_4$ and $T_5$ follows from the same computation as Section \ref{sec:proof_KRR_app} where every $o_{d,\P}(1)$ is replaced by $O_{d,\P}( n^{-\delta_0})$ for some fixed $\delta_0>0$ while every $O_{d,\P}(1)$ is replaced by $O_{d,\P}(n^\delta)$ with $\delta >0$ arbitrary small. In particular, bounds of the form  $O_{d,\P}(1) \cdot o_{d,\P}(1)$ should be replaced by $ O_{d,\P}(n^\delta) \cdot O_{d,\P}( n^{-\delta_0})$, and taking $\delta >0$ sufficiently small yields a bound $o_{d,\P}(1)$ (see the proofs bellow for some examples).

Below we detail the proof of the updated auxiliary lemmas from Section \ref{sec:KRR_upper_auxiliary}. Eq.~\eqref{eq:c2_bound_1} is used to bound the term $T_{21}$, Eq.~\eqref{eq:c2_bound_2} is used to bound the term $T_{23}$, while Eq.~\eqref{eq:c2_bound_3} is used to bound the term $T_{3}$, $T_4$ and $T_5$.

\begin{lemma}\label{lem:key_H_U_H_bound_c2}
Follow the assumptions of Theorem \ref{thm:upper_bound_KRR_app} and the same notations as in Section \ref{sec:proof_KRR_app}. Define
\[
\bG = n (\bH + \lambda \id_n)^{-1} \bM (\bH + \lambda \id_n)^{-1}.
\]
Then, there exists a fixed $\delta_0>0$ such that for any $\delta >0$,
\begin{align}
\|  \bpsi_{\leq \evn}^\sT \bG \bpsi_{\leq \evn} / n  - \id_{\evn} \|_{\op} =&~ O_{d, \P}(n^{-\delta_0} ), \label{eq:c2_bound_1} \\
\boldf_{>\evn}^\sT \bG \boldf_{>\evn}/n = &~ O_{d,\P} (n^{-\delta_0} ) \cdot \| \proj_{>\evn} f_d \|_{L^{2 + \eta}}^2,\label{eq:c2_bound_2} \\
\| \bG \|_{\op} \leq &~ O_{d,\P} (n^\delta).\label{eq:c2_bound_3} 
\end{align}
\end{lemma}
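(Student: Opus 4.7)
\medskip

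\noindent\textbf{Proof proposal for Lemma \ref{lem:key_H_U_H_bound_c2}.}
The plan is to mirror the structure of the proof of Lemma \ref{lem:key_H_U_H_bound} in Section \ref{sec:KRR_upper_auxiliary}, but to track polynomial factors in $n^{\delta}$ (for arbitrary small $\delta>0$) wherever the earlier proof invoked the $o_{d,\P}(1)$ concentration of diagonal terms. The reason this still works is that Assumption \ref{ass:eig_cond_app}.$(c)$ and Assumption \ref{ass:eig_decay}.$(a)$ together supply a polynomial slack $n^{-\delta_0}$ with $\delta_0>0$ fixed, which absorbs any $n^{\delta}$ loss once $\delta<\delta_0/C$ is chosen small enough. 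Concretely, write $\bA:=\kappa_H\bLambda_H+\kappa_H\bDelta_H+\lambda\id_n$ so that $\bH+\lambda\id_n=\bPsi_{\le\evn}\bD_{\le\evn}^2\bPsi_{\le\evn}^\sT+\bA$, and use \eqref{eq:upper_bound_diag_HM}--\eqref{eq:lower_bound_diag_H} to get $\Omega_{d,\P}(n^{-\delta})\kappa_H\id_n\preceq\bA\preceq O_{d,\P}(n^{\delta})\kappa_H\id_n$. Split $\bG=T_1+T_2$ as in the original proof, with
\[
T_1=n\kappa_M(\bH+\lambda\id_n)^{-1}(\bLambda_M+\bDelta_M)(\bH+\lambda\id_n)^{-1},\quad T_2=n(\bH+\lambda\id_n)^{-1}\bPsi_{\le\evn}\bD_{\le\evn}^4\bPsi_{\le\evn}^\sT(\bH+\lambda\id_n)^{-1}.
\]

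For $T_1$, bound $\|T_1\|_\op\le n\kappa_M\,\|\bA^{-1}\|_\op^{2}\,O_{d,\P}(n^{\delta})=O_{d,\P}(n^{3\delta})\cdot n\kappa_M/\kappa_H^2$. Since $n\kappa_M/\kappa_H^2\le n\|\Hop_{d,>\evn}\|_\op/\kappa_H$, condition \eqref{ass:n_parameters_KRR_lower_2} in Assumption \ref{ass:eig_decay}.$(a)$ gives $n\kappa_M/\kappa_H^2=O_d(n^{-\delta_0})$, so $\|T_1\|_\op=O_{d,\P}(n^{-\delta_0/2})$ for $\delta$ small. For $T_2$, apply the push-through identity $(\bH+\lambda\id_n)^{-1}\bPsi_{\le\evn}\bD_{\le\evn}^{2}=\bA^{-1}\bPsi_{\le\evn}\bT^{-1}$ with $\bT:=\bD_{\le\evn}^{-2}+\bPsi_{\le\evn}^\sT\bA^{-1}\bPsi_{\le\evn}$, yielding $T_2=n\bA^{-1}\bPsi_{\le\evn}\bT^{-2}\bPsi_{\le\evn}^\sT\bA^{-1}$. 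The crucial observation is that $\bD_{\le\evn}^{-2}$ is negligible compared to $\bPsi_{\le\evn}^\sT\bA^{-1}\bPsi_{\le\evn}$: using $\|\bPsi_{\le\evn}^\sT\bPsi_{\le\evn}/n-\id_\evn\|_\op=o_{d,\P}(1)$ (from Theorem \ref{prop:expression_U}.$(b)$) together with the lower bound on $\bA^{-1}$,
\[
\bPsi_{\le\evn}^\sT\bA^{-1}\bPsi_{\le\evn}\succeq \Omega_{d,\P}(n^{1-\delta})/\kappa_H\cdot\id_\evn,
\]
whereas $\|\bD_{\le\evn}^{-2}\|_\op=1/\lambda_{d,\evn}^2\le O_d(n^{1-\delta_0}/\kappa_H)$ by Assumption \ref{ass:eig_cond_app}.$(c)$. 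Therefore $\|\bT^{-1}\|_\op\le O_{d,\P}(n^{\delta-1})\kappa_H$ and $\|\bD_{\le\evn}^{-2}\bT^{-1}\|_\op=O_{d,\P}(n^{\delta-\delta_0})$.

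With these bounds in hand, the three conclusions follow. For \eqref{eq:c2_bound_3}, combine $\|\bA^{-1}\bPsi_{\le\evn}\|_\op^{2}=O_{d,\P}(n^{1+2\delta}/\kappa_H^2)$ with the bound on $\|\bT^{-1}\|_\op$ to get $\|T_2\|_\op=O_{d,\P}(n^{4\delta})$, which together with the $T_1$ bound yields $\|\bG\|_\op=O_{d,\P}(n^{\delta})$ for any $\delta>0$. For \eqref{eq:c2_bound_1}, write $\bK:=\bPsi_{\le\evn}^\sT\bA^{-1}\bPsi_{\le\evn}=\bT-\bD_{\le\evn}^{-2}$, so that
\[
\bPsi_{\le\evn}^\sT T_2 \bPsi_{\le\evn}/n=\bK\bT^{-2}\bK=(\id_\evn-\bD_{\le\evn}^{-2}\bT^{-1})(\id_\evn-\bT^{-1}\bD_{\le\evn}^{-2}),
\]
whose distance to $\id_\evn$ is $O_{d,\P}(n^{\delta-\delta_0})$. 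The $T_1$ contribution to $\bPsi_{\le\evn}^\sT\bG\bPsi_{\le\evn}/n$ is controlled by $\|T_1\|_\op\cdot\|\bPsi_{\le\evn}^\sT\bPsi_{\le\evn}/n\|_\op=O_{d,\P}(n^{-\delta_0/2})$, establishing \eqref{eq:c2_bound_1}.

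For \eqref{eq:c2_bound_2}, the $T_1$ contribution is bounded by $\|T_1\|_\op\|\boldf_{>\evn}\|_2^2/n=O_{d,\P}(n^{-\delta_0/2})\|\proj_{>\evn}f_d\|_{L^2}^2$ via Markov's inequality. For the $T_2$ piece, use the analogue of Proposition \ref{prop:technical_facts}.$(c)$ in the KRR setting, which (by the same hypercontractivity-plus-spectral-gap computation done in Section \ref{sec:claim_c}, relying on Assumption \ref{ass:KRR}.$(a)$ and $\evn\le n^{1-\delta_0}$) gives
\[
\|\bPsi_{\le\evn}^\sT\boldf_{>\evn}/n\|_2=O_{d,\P}(n^{-\delta_0/2})\cdot\|\proj_{>\evn}f_d\|_{L^{2+\eta}}.
\]
Combined with $\|\bA^{-1}\|_\op\le O_{d,\P}(n^{\delta})/\kappa_H$ and the $\|\bT^{-1}\|_\op$ bound, one obtains $\boldf_{>\evn}^\sT T_2\boldf_{>\evn}/n=O_{d,\P}(n^{4\delta-\delta_0})\|\proj_{>\evn}f_d\|_{L^{2+\eta}}^2$, which is $O_{d,\P}(n^{-\delta_0/2})$ for $\delta<\delta_0/8$. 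The main technical difficulty is the tracking in paragraph two: without the relaxed-diagonal hypothesis, $\bA$ is only pinched between two multiples of $\id_n$ up to $n^{\pm\delta}$ factors, and one must verify that every operator-norm bound produced by Sherman--Morrison--Woodbury manipulations can be compensated by the polynomial gap $n^{-\delta_0}$ supplied by Assumption \ref{ass:eig_cond_app}.$(c)$.
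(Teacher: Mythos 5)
Your skeleton is the same as the paper's: split $\bG=T_1+T_2$, kill $T_1$ via $n\kappa_M/\kappa_H^2=O_d(n^{-\delta_0})$, and control $T_2$ through Woodbury with $\bA$ pinched between $\Omega_{d,\P}(n^{-\delta})\kappa_H\id_n$ and $O_{d,\P}(n^{\delta})\kappa_H\id_n$. Your treatment of \eqref{eq:c2_bound_1} is actually a little slicker than the paper's: the exact identity $\bPsi_{\le\evn}^\sT T_2\bPsi_{\le\evn}/n=(\id_\evn-\bD_{\le\evn}^{-2}\bT^{-1})(\id_\evn-\bT^{-1}\bD_{\le\evn}^{-2})$ together with $\|\bD_{\le\evn}^{-2}\bT^{-1}\|_{\op}=O_{d,\P}(n^{\delta-\delta_0})$ avoids the paper's intermediate step of replacing $\bA$ by the diagonal matrix $\bS=\bLambda_H+(\lambda/\kappa_H)\id_n$; \eqref{eq:c2_bound_3} is also fine.

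However, there is a genuine gap in your argument for \eqref{eq:c2_bound_2}. The $T_2$ contribution is
\[
\boldf_{>\evn}^\sT T_2\boldf_{>\evn}/n=\bigl\|\bT^{-1}\bPsi_{\le\evn}^\sT\bA^{-1}\boldf_{>\evn}\bigr\|_2^2,
\]
and your claimed chain — combining $\|\bPsi_{\le\evn}^\sT\boldf_{>\evn}/n\|_2=O_{d,\P}(n^{-\delta_0/2})\|\proj_{>\evn}f_d\|_{L^{2+\eta}}$ with $\|\bA^{-1}\|_{\op}$ and $\|\bT^{-1}\|_{\op}$ — does not produce the stated $O_{d,\P}(n^{4\delta-\delta_0})$ factor, because $\bA^{-1}$ sits \emph{between} $\bPsi_{\le\evn}^\sT$ and $\boldf_{>\evn}$. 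Operator-norm submultiplicativity only gives $\|\bPsi_{\le\evn}^\sT\bA^{-1}\boldf_{>\evn}\|_2\le\|\bPsi_{\le\evn}\|_{\op}\|\bA^{-1}\boldf_{>\evn}\|_2$ or $\le\|\bPsi_{\le\evn}^\sT\bA^{-1}\|_{\op}\|\boldf_{>\evn}\|_2$, and either choice destroys the near-orthogonality between the top eigenfunctions and $\proj_{>\evn}f_d$ that produces the $n^{-\delta_0}$ gain; you cannot write $\|\bPsi_{\le\evn}^\sT\bA^{-1}\boldf_{>\evn}\|_2\le\|\bA^{-1}\|_{\op}\|\bPsi_{\le\evn}^\sT\boldf_{>\evn}\|_2$. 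This is exactly the point where the relaxed assumptions bite: under Assumption \ref{ass:KRR_app}.$(c')$ the diagonal of the truncated kernel can fluctuate by sub-polynomial factors, so $\bA$ is \emph{not} within $o_{d,\P}(1)$ of a multiple of the identity, and the weighting it induces must be handled explicitly. The paper's proof does this by replacing $\bA$ by the diagonal matrix $\bS$ (an $O_{d,\P}(n^{-\delta_0})$ operator-norm error, which is harmless when paired with the crude bound $\|\boldf_{>\evn}\|_2^2/n=O_{d,\P}(1)\|\proj_{>\evn}f_d\|_{L^2}^2$) and then redoing the fourth-moment/hypercontractivity computation of Section \ref{sec:claim_c} with the diagonal weights $s_i^{-1}=O_{d,\P}(n^{\delta})$, which again reduces the bound to $\frac{\evn}{n}\|\proj_{>\evn}f_d\|_{L^{2+\eta}}^2$ up to an $n^{\delta}$ loss. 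Your proposal needs this (or an equivalent weighted version of the claim-$(c)$ computation) inserted; as written, the step bounding $\boldf_{>\evn}^\sT T_2\boldf_{>\evn}/n$ would at best yield $O_{d,\P}(n^{4\delta})\|\proj_{>\evn}f_d\|_{L^2}^2$, with no $n^{-\delta_0}$ decay.
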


\begin{proof}[Proof of Lemma \ref{lem:key_H_U_H_bound_c2}]
Recall that we denote $\delta_0 >0$ a fixed constant and $\delta >0$ a constant that can be arbitrarily small. The value of $\delta_0$ is allowed to change from line to line.

Following the notations as in the proof of Lemma \ref{lem:key_H_U_H_bound}, we have
\[
\begin{aligned}
\bH =&~ \bPsi \bD^2 \bPsi^\sT + \kappa_H (\bLambda_H + \bDelta_H), \\
\bM =&~ \bPsi \bD^4 \bPsi^\sT + \kappa_M (\bLambda_M + \bDelta_M).
\end{aligned}
\]
Consider the same decomposition $\bG = \bT_1 + \bT_2$ as in the proof of Lemma \ref{lem:key_H_U_H_bound}, where
\[
\begin{aligned}
\bT_1 =&~  n   \kappa_M (\bH + \lambda \id_n)^{-1}  (\bLambda_M + \bDelta_M) (\bH + \lambda \id_n)^{-1}, \\
\bT_2 =&~ n (\bH + \lambda \id_n)^{-1} \bPsi \bD^4 \bPsi^\sT  (\bH + \lambda \id_n)^{-1}.
\end{aligned}
\] 

\noindent
{\bf Step 1. Bound term $\bT_1$. } 

For $\bT_1$, by Eqs.~\eqref{eq:upper_bound_diag_HM} and \eqref{eq:lower_bound_diag_H}, we have for any $\delta >0$,
\begin{equation}\label{eqn:key_H_U_H_bound_T1_1_c2}
\begin{aligned}
\| \bT_1 \|_{\op} \le&~ n \kappa_M \| (\bH + \lambda \id_n)^{-1} \|_{\op}^2 \| (\bLambda_M + \bDelta_M) \|_{\op} \\
\le&~ O_{d, \P}(1) \cdot  n \kappa_M \cdot n^{2\delta} \kappa_H^{-2} \cdot n^{2\delta} \\
\le &~ O_{d, \P}(1) \cdot n^{1+ 4\delta} \frac{\kappa_M}{\kappa_H^2}.   
\end{aligned}
\end{equation}
By Eq.~\eqref{ass:n_parameters_KRR_lower_2} in Assumption \ref{ass:eig_decay}.$(a)$, we have
\[
 \frac{\kappa_M}{\kappa_H^2} = \frac{\Trace(\Hop_{d, > \evn}^2)}{\Trace(\Hop_{d, > \evn})^2} \leq \frac{\|\Hop_{d, > \evn} \|_{\op} }{ \Trace ( \Hop_{d, > \evn} )} = O_d ( n^{-1 - \delta_0} ).
\]
Hence, taking $\delta$ sufficiently small in Eq.~\eqref{eqn:key_H_U_H_bound_T1_1} yields 
\[
\| \bT_1 \|_{\op} = O_{d, \P}(n^{-\delta_0} ). 
\]

\noindent
{\bf Step 2. Simplifying the term $\bT_2$. } 

Introduce $\bA = \bLambda_H + \bDelta  + (\lambda / \kappa_H) \cdot \id_n$ so that
\[
\bH + \lambda \id_n = \bPsi \bD^2 \bPsi^\sT + \kappa_H \bA.
\]
By the Sherman-Morrison-Woodbury formula, we have 
\[
\begin{aligned}
\bT_2 =&~ \bA^{-1} \bpsi \big( \kappa_H (n \bD^2)^{-1} + \bpsi^\sT \bA^{-1} \bpsi /n \big)^{-2} \bpsi^\sT \bA^{-1} /n.
\end{aligned}
\]
From Assumption \ref{ass:eig_cond_app}.$(b)$, we have $\kappa_H (n \bD^2)^{-1} \preceq \Omega_{d,\P} ( n^{-\delta_0} ) \cdot \id_n$. Furthermore, recalling that $\| \bpsi^\sT \bpsi - \id_{\evn} \|_{\op} = o_{d,\P}(1)$ and $\bA^{-1} \succeq O_{d,\P}(n^{-\delta}) \id_n$ for any $\delta >0$, we deduce (for example from Lemma \ref{lem:ineq_sing_value})
\[
\big\| \bT_2 -  \bA^{-1} \bpsi \big(  \bpsi^\sT \bA^{-1} \bpsi /n \big)^{-2} \bpsi^\sT \bA^{-1} /n \big\|_{\op} = O_{d,\P} ( n^{-\delta_0} ).
\]
Denote $\bS = \bLambda_H + (\lambda / \kappa_H) \cdot \id_n$ the diagonal matrix such that we have $\| \bA - \bS \|_{\op} = O_{d,\P} (n^{-\delta_0} )$. We have $ \Omega_{d} (n^{-\delta}) \cdot \id_n \preceq \bS \preceq O_{d} (n^\delta) \cdot \id_n$. Similarly to the previous line, we get
\[
\big\| \bA^{-1} \bpsi \big(  \bpsi^\sT \bA^{-1} \bpsi /n \big)^{-2} \bpsi^\sT \bA^{-1} /n - \bS^{-1}  \bpsi \big(  \bpsi^\sT \bS^{-1} \bpsi /n \big)^{-2} \bpsi^\sT \bS^{-1} /n \big\|_{\op} = O_{d,\P} (n^{-\delta_0}).
\]
Denote $\bR =  \bS^{-1}  \bpsi \big(  \bpsi^\sT \bS^{-1} \bpsi /n \big)^{-2} \bpsi^\sT \bS^{-1} /n$.

\noindent
{\bf Step 3. Proving the bounds. } 

First notice that because $ \Omega_{d} (n^{-\delta}) \cdot \id_n \preceq \bS \preceq O_{d} (n^\delta) \cdot \id_n$ and $\| \bpsi^\sT \bpsi - \id_{\evn} \|_{\op} = o_{d,\P}(1)$, we have
\[
\| \bG \|_{\op} \leq \|\bT_1\|_{\op} + \| \bT_2 - \bR\|_{\op} + \| \bR\|_{\op} = O_{d,\P} (n^\delta),
\]
for any $\delta >0$, which proves Eq.~\eqref{eq:c2_bound_3}. Similarly,
\[
\| \bpsi^\sT \bG \bpsi/n - \id_{\evn} \|_{\op} \leq  (  \|\bT_1\|_{\op} + \| \bT_2 - \bR\|_{\op} ) \| \bpsi/\sqrt{n} \|_{\op}^2 + \| \bpsi^\sT \bR \bpsi/n - \id_{\evn} \|_{\op} = O_{d,\P} ( n^{-\delta_0}),
\]
which proves Eq.~\eqref{eq:c2_bound_1}.

Notice that 
\[
\bR = \bS^{-1}  \bpsi \big(  \bpsi^\sT \bS^{-1} \bpsi /n \big)^{-2} \bpsi^\sT \bS^{-1} /n \preceq O_{d,\P} (n^\delta) \cdot. \bS^{-1}  \bpsi \bpsi^\sT \bS^{-1} /n 
\]
Denote $\bS = \diag ( (s_i)_{i \in [n]} )$ and recall the decomposition
\[
\boldf_{>\evn} = \sum_{k = \evn +1}^\infty \hat f_k  \bpsi_k.
\]
We have
\[
\begin{aligned}
\E \Big[ \boldf_{>\evn}^\sT \bS^{-1}  \bpsi \bpsi^\sT \bS^{-1} \boldf_{>\evn} \Big]/n^2 = &~\E\Big[\Big(\sum_{k\ge \evn+1} \hat f_k \bpsi_k^\sT \Big)  \bS^{-1}  \bPsi_{\le \evn}  \bPsi_{\le \evn}^\sT  \bS^{-1}  \Big(\sum_{k\ge \evn+1} \bpsi_k \hat f_k \Big) \Big] /n^2 \\
=&~ \sum_{u, v \ge \evn+1} \sum_{t = 1}^\evn \sum_{i, j \in [n]}  \Big\{  s_i^{-1} s_j^{-1} \E \Big[ \psi_u(\bx_i)  \psi_{t}(\bx_i) \psi_{t}(\bx_j)  \psi_v(\bx_j) \Big] /n^2 \Big\} \hat f_v \hat f_u \\
=& \sum_{u, v \ge \evn+1} \sum_{t = 1}^\evn \sum_{i \in [n]} s_i^{-2}  \Big\{ \E \Big[\psi_u(\bx_i)  \psi_{t}(\bx_i) \psi_{t}(\bx_i)  \psi_v(\bx_i) \Big] /n^2 \Big\} \hat f_v \hat f_u \\
=&~  O_{d} (n^\delta) \cdot \frac{1}{n}\sum_{s = 1}^\evn \E_\bx \Big[ \big(\proj_{> \evn}f_d(\bx) \big)^2  \psi_{s}(\bx)^2 \Big] \\
\le&~ O_{d} (n^\delta) \cdot  \frac{1}{n}\sum_{t = 1}^\evn \| \proj_{> \evn} f_d \|_{L^{2 + \eta}}^2 \| \psi_t \|_{L^{ (4 + 2\eta)/\eta}}^2 \\
\le&~ O_{d} (n^\delta) \cdot \frac{\evn}{n} \| \proj_{> \evn} f_d \|_{L^{2 + eta}}^2 = O_{d} (n^{-\delta_0} ) \cdot \| \proj_{> \evn} f_d \|_{L^{2 + \eta}}^2.
\end{aligned}
\]
We deduce by Markov's inequality that
\[
| \boldf_{>\evn}^\sT \bR\boldf_{>\evn}/ n | \leq |\boldf_{>\evn}^\sT \bS^{-1}  \bpsi \bpsi^\sT \bS^{-1} \boldf_{>\evn} / n | = O_{d} (n^{-\delta_0} ) \cdot \| \proj_{> \evn} f_d \|_{L^{2 + \eta}}^2.
\]
We deduce that
\[
|\boldf_{>\evn}^\sT \bG \boldf_{>\evn} | \leq ( \| \bT_1\|_{\op} + \| \bT_2 - \bR \|_{\op} ) \| \boldf_{>\evn} /\sqrt{n} \|_2^2 + | \boldf_{>\evn}^\sT \bR\boldf_{>\evn}/ n |  = O_{d} (n^{-\delta_0} ) \cdot \| \proj_{> \evn} f_d \|_{L^{2 + \eta}}^2,
\]
which concludes the proof.
\end{proof}

\begin{lemma}\label{lem:lem_for_error_bound_R11_c2}
Follow the assumptions of Theorem \ref{thm:upper_bound_KRR_app} and the same notations as in Section \ref{sec:proof_KRR_app}. There exists a fixed $\delta_0>0$ such that
\[
\|  \id_{\leq\evn} - \bPsi_{\le \evn}^\sT( \bH + \lambda \id_n)^{-1}\bPsi_{\le \evn} \bD_{\le \evn}^2 \|_{\op} = O_{d, \P}(n^{-\delta_0}).
\]
\end{lemma}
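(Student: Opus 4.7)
The plan is to mirror the Sherman--Morrison--Woodbury computation used in the proof of Lemma~\ref{lem:lem_for_error_bound_R11} (and analogous to Step 2 in the proof of Lemma~\ref{lem:key_H_U_H_bound_c2}), carefully tracking every polynomial factor in $n$ so that the spectral-gap condition in Assumption~\ref{ass:eig_cond_app}.$(c)$ strictly dominates the $n^{\delta}$ slack coming from the relaxed diagonal bounds \eqref{eq:upper_bound_diag_HM}--\eqref{eq:lower_bound_diag_H}. The key difference relative to the original lemma is that the shrinkage matrix $\bS_{\le\evn}$ must be replaced by $\id_{\evn}$ and the rate strengthened from $o_{d,\P}(1)$ to $O_{d,\P}(n^{-\delta_0})$; both improvements will be free consequences of the spectral gap $\kappa_H/(n\lambda_{d,\evn(d)}^2)\le n^{-\delta_0}$.

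First, I will use the decomposition \eqref{eq:HMdecompositionKRR_c2} to write
\[
\bH + \lambda\id_n = \bPsi_{\le\evn}\bD_{\le\evn}^2\bPsi_{\le\evn}^{\sT} + \kappa_H\bA,\qquad \bA := \bLambda_H+\bDelta_H+(\lambda/\kappa_H)\id_n,
\]
and apply the push-through identity $(\bC+\bU\bV^{\sT})^{-1}\bU = \bC^{-1}\bU(\id+\bV^{\sT}\bC^{-1}\bU)^{-1}$ with $\bC=\kappa_H\bA$ and $\bU=\bV=\bPsi_{\le\evn}\bD_{\le\evn}$. Setting $\bM_1:=\bPsi_{\le\evn}^{\sT}(\kappa_H\bA)^{-1}\bPsi_{\le\evn}$ and using the elementary identity $\bD_{\le\evn}(\id_{\evn}+\bD_{\le\evn}\bM_1\bD_{\le\evn})^{-1}\bD_{\le\evn}=(\bD_{\le\evn}^{-2}+\bM_1)^{-1}$, a short manipulation yields the clean representation
\[
\bPsi_{\le\evn}^{\sT}(\bH+\lambda\id_n)^{-1}\bPsi_{\le\evn}\bD_{\le\evn}^2 = \id_{\evn} - \bD_{\le\evn}^{-2}(\bD_{\le\evn}^{-2}+\bM_1)^{-1}.
\]

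Second, I will control the error term via the submultiplicative bound $\|\bD_{\le\evn}^{-2}(\bD_{\le\evn}^{-2}+\bM_1)^{-1}\|_{\op}\le \|\bD_{\le\evn}^{-2}\|_{\op}\|\bM_1^{-1}\|_{\op}$, which uses $\bD_{\le\evn}^{-2}+\bM_1\succeq\bM_1$. Clearly $\|\bD_{\le\evn}^{-2}\|_{\op}=1/\lambda_{d,\evn(d)}^2$. For $\bM_1^{-1}$, Assumption~\ref{ass:KRR_lower_diagonal} gives $\lambda/\kappa_H=O_d(1)$, which combined with Eq.~\eqref{eq:upper_bound_diag_HM} and \eqref{eq:DeltaSmall22} yields $\bA\preceq O_{d,\P}(n^{\delta})\id_n$ for any $\delta>0$, hence $\bA^{-1}\succeq\Omega_{d,\P}(n^{-\delta})\id_n$; combining with the two-sided concentration $\|\bPsi_{\le\evn}^{\sT}\bPsi_{\le\evn}/n-\id_{\evn}\|_{\op}=o_{d,\P}(1)$ from Theorem~\ref{prop:expression_U}.$(b)$ (applicable by Assumption~\ref{ass:eig_decay}.$(b)$ and hypercontractivity in Assumption~\ref{ass:KRR}.$(a)$) gives $\bM_1\succeq\Omega_{d,\P}(n^{1-\delta}/\kappa_H)\id_{\evn}$, i.e.\ $\|\bM_1^{-1}\|_{\op}\le O_{d,\P}(\kappa_H n^{\delta-1})$.

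Collecting these estimates produces
\[
\big\|\id_{\evn}-\bPsi_{\le\evn}^{\sT}(\bH+\lambda\id_n)^{-1}\bPsi_{\le\evn}\bD_{\le\evn}^2\big\|_{\op} \le O_{d,\P}(n^{\delta})\cdot\frac{\kappa_H}{n\lambda_{d,\evn(d)}^2} \le O_{d,\P}(n^{\delta-\delta_0}),
\]
where the last inequality is precisely Assumption~\ref{ass:eig_cond_app}.$(c)$; choosing $\delta<\delta_0$ delivers the claimed polynomial rate. The only genuinely delicate point is this final bookkeeping: the relaxation from Assumption~\ref{ass:KRR}.$(c)$ to Assumption~\ref{ass:KRR_app}.$(c')$ forces every previously $O_{d,\P}(1)$ estimate on the diagonals to become $O_{d,\P}(n^{\delta})$, and without the new spectral-gap Assumption~\ref{ass:eig_cond_app}.$(c)$ these factors could not be absorbed, reproducing at best the $o_{d,\P}(1)$ bound of Lemma~\ref{lem:lem_for_error_bound_R11}, which would be insufficient for the subsequent arguments controlling the terms $T_1$ and $T_2$ in Theorem~\ref{thm:upper_bound_KRR_app}.
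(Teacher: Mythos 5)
Your proposal is correct and follows essentially the same route as the paper: the push-through identity you use is just the Sherman--Morrison--Woodbury rewriting in the paper's proof (your $\id_{\evn}-\bD_{\le\evn}^{-2}(\bD_{\le\evn}^{-2}+\bM_1)^{-1}$ is, after rescaling by $\kappa_H/n$, exactly the paper's expression $\kappa_H(n\bD_{\le\evn}^2)^{-1}\bigl(\kappa_H(n\bD_{\le\evn}^2)^{-1}+\bPsi_{\le\evn}^\sT\bA^{-1}\bPsi_{\le\evn}/n\bigr)^{-1}$), and the final bookkeeping — the $n^{-\delta_0}$ gain from Assumption \ref{ass:eig_cond_app}.$(c)$ against the $n^{\delta}$ loss from the relaxed diagonal bounds and $\bPsi_{\le\evn}^\sT\bPsi_{\le\evn}/n\approx\id_\evn$, with $\delta<\delta_0$ — matches the paper's argument.
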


\begin{proof}[Proof of Lemma \ref{lem:lem_for_error_bound_R11_c2}]~ We follow the same argument as in Lemma \ref{lem:lem_for_error_bound_R11}. We have 
\[
\bH + \lambda \id_n = \bPsi \bD^2 \bPsi^\sT + \kappa_H \cdot \bA,
\]
where we denoted $\bA = \bLambda_H + \bDelta  + (\lambda / \kappa_H) \cdot \id$. By the Sherman-Morrison-Woodbury formula, we have
\[
\bPsi^\sT[  \bPsi \bD^2 \bPsi^\sT + \bA ]^{-1} \bPsi \bD^2 = \bPsi^\sT \bA^{-1} \bPsi  [\kappa_H (n \bD^2)^{-1} +   \bPsi^\sT \bA^{-1} \bPsi   /n]^{-1}  /n.
\]
Hence
\[
\begin{aligned}
\| \id_{\evn} -\bPsi^\sT \bA^{-1} \bPsi  [\kappa_H (n \bD^2)^{-1} +   \bPsi^\sT \bA^{-1} \bPsi   /n]^{-1}  /n \|_{\op} =  \| \kappa_H (n \bD^2)^{-1}   ( \kappa_H (n \bD^2)^{-1} +   \bPsi^\sT \bA^{-1} \bPsi   /n)^{-1}    \|_\op.
\end{aligned}
\]
We have by Assumption \ref{ass:eig_cond_app}, $n \bD^2 \succeq \Omega_{d} ( n^{\delta_0} ) \cdot \kappa_H \cdot \id_\evn$. Furthermore, by Eq.~\eqref{eq:lower_bound_diag_H}, we have $\bA^{-1} \succeq \Omega_{d} ( n^{- \delta} ) \cdot \kappa_H \cdot \id_n$. Using that $\|\bPsi^\sT \bPsi - \id_\evn \|_{\op} = o_{d,\P}(1)$, we deduce that for any $\delta >0$,
\[
\| ( \kappa_H (n \bD^2)^{-1} +   \bPsi^\sT \bA^{-1} \bPsi   /n)^{-1}    \|_\op = O_{d,\P} (n^{\delta}).
\]
We deduce that 
\[
\| \id_{\evn} -\bPsi^\sT \bA^{-1} \bPsi  [\kappa_H (n \bD^2)^{-1} +   \bPsi^\sT \bA^{-1} \bPsi   /n]^{-1}  /n \|_{\op} = O_{d,\P} ( n^{-\delta_0} ) \cdot O_{d,\P} (n^\delta).
\]
Taking $\delta$ sufficiently small concludes the proof.
\end{proof}

\section{Proof of Theorem \ref{prop:RFK_sphere}: generalization error of RFRR on the sphere and hypercube}\label{sec:proof_RFK_sphere}

We check that Assumption \ref{ass:activation_RFK_sphere} implies the assumptions of Theorem \ref{thm:RFK_generalization} on the sphere (Section \ref{sec:sphere_case}) and on the hypercube (Section \ref{sec:hypercube_case}).

\subsection{On the sphere}\label{sec:sphere_case}

\begin{proof}[Proof of Theorem \ref{prop:RFK_sphere} on the sphere]

Consider the spherical case $\btheta, \bx \sim \Unif ( \S^{d-1} ( \sqrt{d} ))$ and $ d^{\lvn+\delta_0}  \leq n \leq d^{\lvn +1 - \delta_0}$ and $d^{\lvN+\delta_0} \leq N \leq d^{\lvN +1 - \delta_0}$. Take $\sigma_d ( \bx; \btheta) = \barsigma_d ( \< \bx , \btheta \> / \sqrt{d})$ for some activation function $\barsigma_d : \R \to \R$ satisfying Assumption \ref{ass:activation_RFK_sphere} at level $(\lvn , \lvN)$ (see Section \ref{sec:RF_Examples} in the main text).

\noindent
{\bf Step 1. Diagonalization of the activation function and choosing $\evn = \evn (d)$, $\evN = \evN(d)$. }

By rotational invariance, we can decompose $\barsigma$ in the basis of spherical harmonics (see Section \ref{sec:functions_sphere})
\[
\sigma_d ( \bx ; \btheta ) = \barsigma_d ( \<\bx , \btheta \> / \sqrt{d}) = \sum_{k=0}^\infty  \xi_{d,k}  B(\S^{d-1} ; k) Q_k^{(d)} ( \< \bx , \btheta \> ) =  \sum_{k = 0}^\infty  \xi_{d,k} \sum_{s \in [B(d,k)] } Y_{ks} ( \bx) Y_{ks} (\btheta)\, ,
\]
where the distinct eigenvalues are $\xi_{d,k}$ with degeneracy 
\[
B(\S^{d-1} ; k) = \frac{d-2+2k}{d-2}\binom{d-3+k}{k} \, .
\]
We have for fixed $k$, $ B(\S^{d-1} ; k) = \Theta_d (  d^{k} )$. Furthermore, we have uniformly $\sup_{k \ge \ell} B(\S^{d-1} ; k)^{-1} = O_d ( d^{-\ell} )$ (see Lemma 1 in \cite{ghorbani2019linearized}). Notice that by Assumption \ref{ass:activation_RFK_sphere} (see for example Lemma 5 in \cite{ghorbani2019linearized}), there exists a constant $C>0$ such that
\begin{equation}
\| \barsigma_d \|_{L^2}^2 = \sum_{k=0}^\infty \xi_{d,k}^2  B(\S^{d-1} ; k)  \leq C,\label{eq:BoundL2Sigma}
\end{equation}
which implies that $\xi_{d,k}^2 = O_d ( B(\S^{d-1} ; k)^{-1}  )$. In particular,
\begin{align}
\sup_{k > \lvn} \xi_{d,k}^2 = &~ O_{d} ( d^{-\lvn -1}), \label{eq:Uppers} \\
\sup_{k > \lvN} \xi_{d,k}^2 = &~ O_{d} ( d^{-\lvN -1}), \label{eq:UpperS}
\end{align}
Furthermore, by noting that $\xi^2_{d,k} =  B(\S^{d-1} ; k)^{-1} \|\oproj_{k}\barsigma_{d } (\< \be,\, \cdot\, \>) \|^2_{L^2}$, conditions \eqref{eq:ass_sphere_b1}, \eqref{eq:ass_sphere_b1p} and \eqref{eq:ass_sphere_b2} can be rewritten
as follows in terms of the coefficients $(\xi_{d,k})_{k\ge 0}$:
\begin{align}
 \min_{k \le \lvn}\xi_{d,k}^2  = &~ \Omega_d(d^{-\lvn}),  \label{eq:AssXi1}\\
  \min_{k \le \lvN}\xi_{d,k}^2  = &~ \Omega_d(d^{-\lvN}),  \label{eq:AssXi1p}\\
  \sum_{k=2\max(\lvn,\lvN)+2}^\infty \xi_{d,k}^2  B(\S^{d-1} ; k) = &~ \Omega_d (1). \label{eq:AssXi2}
\end{align}

Denote $\{ \lambda_{d,j} \}_{j \ge 1}$ the eigenvalues $\{  \xi_{d,k}  \}_{k \ge 0}$ with their degeneracy in non increasing order of their absolute value. Set $\evN$ and $\evn$ to be the number of eigenvalues associated to spherical harmonics of degree less or equal to $\lvN$ and $\lvn$ respectively, i.e.,
\begin{equation}\label{eq:bound_m_M}
\evN = \sum_{k = 0}^{\lvN} B(\S^{d-1} ; k) = \Theta_d ( d^{\lvN} )\, , \qquad \evn = \sum_{k= 0}^{\lvn} B(\S^{d-1} ; k) = \Theta_d ( d^{\lvn})\, .
\end{equation}
Notice that Eqs.~\eqref{eq:Uppers} and \eqref{eq:AssXi1} imply that $(\lambda_{d,j})_{j \leq \evn}$ corresponds exactly to all the eigenvalues associated to spherical harmonics of degree less or equal to $\lvn$. Similarly Eqs.~\eqref{eq:UpperS} and \eqref{eq:AssXi1p} imply that $(\lambda_{d,j})_{j \leq \evN}$ corresponds exactly to all the eigenvalues associated to spherical harmonics of degree less or equal to $\lvN$.

Notice that the diagonal elements of the truncated kernels are given by (for any $\bx,\btheta \in \S^{d-1} (\sqrt{d})$)
\begin{equation}\label{eq:diag_kernels_sphere}
\begin{aligned}
H_{d,>\evn} ( \bx , \bx) =&~ \sum_{k=\lvn +1 }^\infty \xi_{d,k}^2  B(\S^{d-1} ; k) Q_k^{(d)} ( \< \bx , \bx \> ) = \sum_{k=\lvn +1}^\infty \xi_{d,k}^2  B(\S^{d-1} ; k)  = \| \oproj_{>\lvn} \barsigma_d \|_{L^2}^2 = \Tr ( \Hop_{d,>\evn} ), \\
U_{d,>\evN} ( \btheta , \btheta) =&~ \sum_{k=\lvN +1 }^\infty \xi_{d,k}^2  B(\S^{d-1} ; k) Q_k^{(d)} ( \< \btheta , \btheta \> ) = \sum_{k=\lvN +1}^\infty \xi_{d,k}^2  B(\S^{d-1} ; k)  = \| \oproj_{>\lvN} \barsigma_d \|_{L^2}^2 = \Tr ( \Uop_{d,>\evN} ), \\
\end{aligned}
\end{equation}
where we used that $Q^{(d)}_k (d) = 1$.

\noindent
{\bf Step 2. Checking the assumptions at level $\{ (N(d), \evN(d), n(d) , \evn(d) ) \}_{d \ge 1}$. }

We are now in position to verify the assumptions of Theorem \ref{thm:RFK_generalization}. Choose $u := u(d)$ to be the number of eigenvalues with absolute value $\Omega_d ( d^{-2 \max(\lvn,\lvN) - 2 + \delta} )$ for some $\delta>0$ that will be chosen small enough, see Eq.~\eqref{eq:proper_eig_sphere}. In particular, $(\lambda_{d,j})_{j \in [u]}$ contains all the eigenvalues associated to the spherical harmonics of degree less or equal to $\max (\lvN , \lvn)$, and none of the eigenvalues associated to spherical harmonics of degree $2\max (\lvN , \lvn)+2$ and bigger. We therefore must have $u \geq \max (\evN(d) , \evn(d))$.
%
%

Let us verify the conditions of $(N,\evN,n,\evn)$-FMCP in Assumption \ref{ass:FMPCP} with the sequence of integers $u(d)$:
\begin{itemize}
\item[$(a)$] The hypercontractivity of the space of polynomials of degree less or equal $2\max (\lvN, \lvn) +1$ is a consequence of a classical result due to Beckner \cite{beckner1992sobolev} (see Section \ref{app:hypercontractivity}).

\item[$(b)$] Let us  lower bound the right-hand side of Eq.~\eqref{eq:ConcentratioB}. We have
\[
\begin{aligned}
\sum_{j=u(d)+1}^{\infty} \lambda_{d,j}^2\geq &~ \sum_{k=2 \max(\lvn , \lvN)+2}^{\infty} \xi_{d,k}^2B(\S^{d-1};k) = \Omega_d (1), \\
\sum_{j=u(d)+1}^{\infty} \lambda_{d,j}^4 \leq &~ \Big\{ \sup_{j > u} \lambda_{d,j}^2  \Big\}\cdot \sum_{j=u(d)+1}^{\infty} \lambda_{d,j}^2 = O_d ( d^{-2 \max(\lvn,\lvN) - 2 + \delta} ) \cdot  \sum_{j=u(d)+1}^{\infty} \lambda_{d,j}^2.
\end{aligned}
\]
Hence,
    \begin{equation}\label{eq:proper_eig_sphere}
  \begin{aligned}
    \frac{\Big(\sum_{j=u(d)+1}^{\infty} \lambda_{d,j}^2\Big)^2}{\sum_{j=u(d)+1}^{\infty} \lambda_{d,j}^4} &=   \Omega_d (d^{2 \max(\lvn,\lvN) + 2 - \delta} ) \geq \max( n, N)^{2 + \delta },
  \end{aligned}
  \end{equation}
  for $\delta >0$ small enough, where we recall that $n \leq d^{\lvn +1 - \delta_0}$ and $N \leq d^{\lvN +1 - \delta_0}$ for some fixed $\delta_0 >0$.

\item[$(c)$] From Eq.~\eqref{eq:ass_sphere_b2} in Assumption \ref{ass:activation_RFK_sphere}, we only need to check that for $q$ such that 
\[
\min(n,N) \max(N,n)^{1/q - 1} \log(\max(N,n) ) = o_d (1),
\]
we have
  \begin{align*}
\E_{\bx,\btheta} \big[ [\proj_{> u } \barsigma_{d}] ( \<\bx, \btheta \> / \sqrt{d} )^{2q} \big]^{1/(2q)} =   O_d(1).
    \end{align*}
    Denote $S$ the set of eigenvalues $\lambda_{d,j}$, with $j >u$, associated to spherical harmonics of degree less of equal to $2 \max(\lvn , \lvN ) +1$. By triangular inequality, we have 
    \[
    \begin{aligned}
   &~ \E_{\bx,\btheta} \big[ [\proj_{> u } \barsigma_{d}] ( \<\bx, \btheta \> / \sqrt{d} )^{2q} \big]^{1/(2q)} \\
    \leq &~ \E_{\bx,\btheta} \big[ [\proj_{S} \barsigma_{d}] ( \<\bx, \btheta \> / \sqrt{d} )^{2q} \big]^{1/(2q)} + \E_{\bx,\btheta} \big[ [\oproj_{> 2 \max(\lvn , \lvN ) +1 } \barsigma_{d}] ( \<\bx, \btheta \> / \sqrt{d} )^{2q} \big]^{1/(2q)} = O_d(1),
   \end{aligned}
    \]
    where we used that $\proj_{S} \barsigma_{d}$ is a polynomial of degree less or equal to  $2\max (\lvN, \lvn) +1$ in each variable $\bx$ and $\btheta$ and satisfies the hypercontractivity property (see Lemma \ref{lem:hypercontractivity_basis_kernel}), i.e.,
    \[
    \begin{aligned}
    \E_{\bx,\btheta} \big[ [\proj_{S} \barsigma_{d}] ( \<\bx, \btheta \> / \sqrt{d} )^{2q} \big] = &~  O_d(1) \cdot \E_{\bx} \big[ H_{d,S} (\bx , \bx)^q \big] = O_d(1) \cdot \Tr ( \Hop_{d,S} )^q = O_d(1),
    \end{aligned}
    \]
    while the bound on $\oproj_{> 2 \max(\lvn , \lvN ) +1 } \barsigma_{d}$ follows from Assumption \ref{ass:activation_RFK_sphere}.$(a)$ and Lemma \ref{lem:assumption2a_spherical} stated below.

\item[$(d)$] This is automatically verified because the diagonal elements are constant in this case (Eq.~\eqref{eq:diag_kernels_sphere}).
\end{itemize}
Next, we check Assumption \ref{ass:spectral_gap} at level $(N,\evN,n,\evn)$. Consider the overparametrized case $N(d) \ge n(d)$, and therefore $\evN \ge \evn$. The underparametrized case $N(d)\le n(d)$ is treated analogously.
\begin{itemize} 
\item[$(i)$] The eigenvalue sums in Eq.~\eqref{eq:NumberOfSamples} can be estimated as follows
\begin{align}
  \frac{1}{\lambda_{d,\evn(d)}^2}\sum_{k=\evn(d)+1}^{\infty}\lambda_{d,k}^2&=\frac{1}{\xi_{d,\lvn}^2}\sum_{k=\lvn+1}^{\infty}\xi^2_{d,k}B(\S^{d-1};k)  =
  O_d(d^{\lvn})\, ,\label{eq:CheckingSphere1}\\
  \frac{1}{\lambda_{d,\evn(d)+1}^2}\sum_{k=\evn(d)+1}^{\infty}\lambda_{d,k}^2&=
                                                                               \frac{1}{\xi_{d,\lvn+1}^2}\sum_{k=\lvn+1}^{\infty}\xi^2_{d,k}B(\S^{d-1};k)  \ge B(\S^{d-1};\lvn+1)
                                                                               =\Omega_d(d^{\lvn+1})\, .\label{eq:CheckingSphere2}
\end{align}
The last equality in \eqref{eq:CheckingSphere1}  follows from Eq.~\eqref{eq:BoundL2Sigma} and the assumption \eqref{eq:AssXi1},
Hence condition \eqref{eq:NumberOfSamples} in Assumption  \ref{ass:spectral_gap} is satisfied since, by the statement
of Theorem \ref{prop:RFK_sphere}, we assume $d^{\lvn+\delta} \leq n \le d^{\lvn+1-\delta}$. Furthermore, by Eq.~\eqref{eq:bound_m_M}, we have $\evn \leq n^{1 - \delta}$ for some $\delta>0$ chosen small enough.
\item[$(ii)$] The eigenvalue sum in Eq.~\eqref{eq:NumberOfFeatures} is 
  \begin{align}
    \frac{1}{\lambda_{d,\evN(d)+1}^2}\sum_{k=\evN(d)+1}^{\infty}\lambda_{d,k}^2=
    \frac{1}{\xi_{d,\lvN+1}^2}\sum_{k=\lvN+1}^{\infty}\xi^2_{d,k}B(\S^{d-1};k)  \ge B(\S^{d-1};\lvN+1) =\Omega_d(d^{\lvN+1})\, .
  \end{align}
  Hence  condition \eqref{eq:NumberOfFeatures} in Assumption  \ref{ass:spectral_gap} is satisfied since,  by the statement
  of Theorem \ref{prop:RFK_sphere}, $N\le d^{\lvN+1-\delta_0}$. By Eq.~\eqref{eq:bound_m_M}, we have $\evN \leq N^{1 - \delta}$ for some $\delta>0$ chosen small enough.
\end{itemize}

\end{proof}

\begin{lemma}\label{lem:assumption2a_spherical}
Consider $m,\ell$ two fixed integers. Assume $| \barsigma_d ( x) | \leq c_0 \exp ( c_1 x^2 /(4m) )$ with $c_0 >0$ and $c_1 < 1$. Then
\[
\E_{x_1 \sim \tau_d^1 } \big[ \barsigma_{d,>\ell} ( x_1 )^{2m} \big] = O_{d} (1),
\]
where $\tau_d^1$ is the marginal distribution of $\< \be ,\bx \>$ with $\| \be \|_2 = 1$ and $\bx \sim \Unif ( \S^{d-1} (\sqrt{d}))$, and we denoted $ \barsigma_{d,>\ell} = \oproj_{>\ell} \barsigma_d$.
\end{lemma}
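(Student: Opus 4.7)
The plan is to decompose
\[
\barsigma_{d,>\ell}(x_1) = \barsigma_d(x_1) - \barsigma_{d,\le\ell}(x_1),
\]
so that by the triangle inequality in $L^{2m}(\tau_d^1)$ it suffices to bound each of $\|\barsigma_d\|_{L^{2m}(\tau_d^1)}$ and $\|\barsigma_{d,\le\ell}\|_{L^{2m}(\tau_d^1)}$ by an $O_d(1)$ constant.

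For the first term, I would use the growth assumption $|\barsigma_d(x)|\le c_0 \exp(c_1 x^2/(4m))$ to get $\barsigma_d(x_1)^{2m}\le c_0^{2m}\exp(c_1 x_1^2/2)$. Since $\bx\sim\Unif(\S^{d-1}(\sqrt d))$ and $\|\be\|_2=1$, the marginal $\tau_d^1$ of $x_1=\<\be,\bx\>$ has density
\[
p_d(u)=\frac{1}{\sqrt d}\cdot\frac{\Gamma(d/2)}{\sqrt{\pi}\,\Gamma((d-1)/2)}\bigl(1-u^2/d\bigr)^{(d-3)/2}\quad\text{on }[-\sqrt d,\sqrt d].
\]
Using $\log(1-u^2/d)\le -u^2/d$ and Stirling's formula, one gets $p_d(u)\le C\exp\bigl(-\tfrac{d-3}{2d}u^2\bigr)$ with an absolute constant $C$. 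Consequently, for all $d$ large enough,
\[
\E_{x_1\sim\tau_d^1}\bigl[\exp(c_1 x_1^2/2)\bigr]\le C\int_{\R}\exp\Bigl(-\tfrac12\bigl(\tfrac{d-3}{d}-c_1\bigr)u^2\Bigr)\,du=O_d(1),
\]
since $(d-3)/d-c_1$ is eventually bounded below by a positive constant (this is precisely where the hypothesis $c_1<1$ enters).

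For the second term, the key observation is that $\barsigma_{d,\le\ell}(\<\be,\cdot\>)$ lies in $\cE_{d,\le\ell}$, the space of polynomials of degree at most $\ell$ in $L^2(\S^{d-1}(\sqrt d))$. Therefore by the spherical hypercontractivity inequality (Beckner, which is invoked as Assumption \ref{ass:FMPCP}.(a) and recalled in Appendix \ref{app:hypercontractivity}), there is a constant $C'=C'(m,\ell)$ such that
\[
\|\barsigma_{d,\le\ell}\|_{L^{2m}(\tau_d^1)}
=\|\barsigma_{d,\le\ell}(\<\be,\cdot\>)\|_{L^{2m}(\S^{d-1}(\sqrt d))}
\le C'\,\|\barsigma_{d,\le\ell}(\<\be,\cdot\>)\|_{L^{2}(\S^{d-1}(\sqrt d))}
\le C'\,\|\barsigma_d\|_{L^2(\tau_d^1)},
\]
where the equality of $L^{2m}$ norms uses that $\barsigma_{d,\le\ell}$ depends only on $x_1=\<\be,\bx\>$. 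The final $L^2$ norm is $O_d(1)$ by the same MGF bound (applied with $c_1/m<1$ in place of $c_1$) used for the first term.

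Combining the two bounds yields the claim. There is no serious obstacle: the only delicate point is the verification that the marginal $\tau_d^1$ has the sub-Gaussian tail $\E[\exp(c_1 x_1^2/2)]=O_d(1)$ uniformly in $d$ for $c_1<1$, which is the reason the exponent $c_1 x^2/(4m)$ appears in the hypothesis (so that $c_1\cdot 2m/(4m)=c_1/2<1/2$ makes the MGF finite uniformly).
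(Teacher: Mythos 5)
Your proof is correct, and it follows the paper's skeleton (split off the degree-$\le\ell$ part, then control $\barsigma_d$ itself via the growth hypothesis and the sub-Gaussian bound on the marginal density of $\<\be,\bx\>$, with $c_1<1$ giving a uniformly bounded MGF) but handles the low-degree correction by a genuinely different device. The paper argues pointwise: using $\xi_{d,k}^2 B(\S^{d-1};k)\le\|\barsigma_d\|_{L^2}^2$ together with the convergence of $B(\S^{d-1};k)^{1/2}Q_k^{(d)}(\sqrt{d}\,\cdot)$ to Hermite polynomials (Eq.~\eqref{eq:Gegen-to-Hermite}), it shows $\oproj_{\le\ell}\barsigma_d$ is a degree-$\ell$ polynomial with coefficients bounded uniformly in $d$, so that $|\barsigma_{d,>\ell}(x)|\le c_0'\exp(c_1x^2/(4m))$ pointwise, and then applies a single moment/dominated-convergence bound. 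You instead bound $\|\oproj_{\le\ell}\barsigma_d\|_{L^{2m}}$ by Beckner's spherical hypercontractivity (Lemma~\ref{lem:hypercontractivity_sphere}, constant $(2m-1)^{\ell/2}$, fixed since $m,\ell$ are fixed), combined with the $L^2$-contractivity of the orthogonal projection and the $O_d(1)$ bound on $\|\barsigma_d\|_{L^2}$, then conclude by the triangle inequality in $L^{2m}(\tau_d^1)$. Your route avoids the Gegenbauer-to-Hermite coefficient-convergence step entirely and only uses norm-level information, which makes it slightly more robust (and it transfers verbatim to the hypercube via hypercube hypercontractivity, where the paper's coefficient argument needs the analogous limit~\eqref{eq:Hyper-Gegen-to-Hermite}); the paper's pointwise bound, in exchange, packages the whole estimate into one application of the density bound. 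Your bookkeeping is right: $\barsigma_d^{2m}\le c_0^{2m}e^{c_1x^2/2}$ with $c_1<1$ against the density envelope $C\exp\big(-\tfrac{d-3}{2d}x^2\big)$ is exactly where the $4m$ in the hypothesis is consumed, matching the paper's use of it.
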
 

\begin{proof}[Proof of Lemma \ref{lem:assumption2a_spherical}]
Recall that
\begin{equation}\label{eq:decompo_sigma_u}
 \barsigma_{d,>\ell}   (x) = \barsigma_d (x) - \sum_{k = 0}^\ell \xi_{d,k} (\sigma) B(\S^{d-1} ; k) Q_k^{(d)} (\sqrt{d} x),
\end{equation}
where $\xi_{d,k} (\sigma)^2 B(d,k) \leq \|  \barsigma_d  \|_{L^2}^2 \leq C$ for some constant $C>0$ (using $| \barsigma_d ( x) | \leq c_0 \exp ( c_1 x^2 /(4m) )$) and $\sqrt{B(\S^{d-1} ; k)} Q_k^{(d)}$ is a degree-$k$ polynomial that converges to the Hermite polynomial $\He_k / \sqrt{k!}$ (see Section \ref{sec:Hermite}). Therefore, $ \barsigma_{d,>\ell}$ is equal to $ \barsigma_d$ plus a polynomial of degree $\ell$ with bounded coefficients. In particular, from the assumption $| \barsigma_d ( x) | \leq c_0 \exp ( c_1 x^2 /(4m) )$, we deduce there exists $c_0 ' >0$ such that $|  \barsigma_{d,>\ell}  (x )| \leq c_0' \exp ( c_1 x^2 /(4m) )$, whence:
\begin{equation}\label{eq:domination_sigma_2m}
\begin{aligned}
 \Big\vert \E_{x_1 } [  \barsigma_{d,>\ell}  ( x_1 )^{2m} ] \Big\vert 
\leq  (c_0 ')^{2m} \E_{x_1 } [ \exp ( c_1 x_1^2 /2) ].
\end{aligned}
\end{equation}
Furthermore, recall that $ \tau_d^1 (\de x ) = C_d (1 - x^2/d )^{(d-3)/2} \ones_{x \in [-\sqrt{d},\sqrt{d} ]} \de x \leq C \exp (-x^2/2) \de x$. We can therefore upper bound the right hand side of Eq.~\eqref{eq:domination_sigma_2m} and use dominated convergence, which concludes the proof.
\end{proof}

\subsection{On the hypercube}\label{sec:hypercube_case}

The proof for the hypercube $\Cube$ follows from the same proof as for the sphere.
We refer tp \cite{o2014analysis} for background on Fourier analysis on $\Cube$, and Section \ref{sec:functions_hypercube} for
notations that make the analogy with the sphere transparent. In particular, an analogous of Lemma \ref{lem:assumption2a_spherical}
follows by noticing that the law
$\< \ones, \bx \> /\sqrt{d}$ is a  standardized binomial, which can be in terms of the standard normal distribution, times polynomial factors.
The only difference comes from the degeneracy 
\[
B(\Cube ; k) = {{d}\choose{\ell}}.
\]
Hence Assumption \ref{ass:activation_RFK_sphere}.$(a)$ only implies $\xi^2_{d,d-\ell} = O_d ( d^{-\ell})$ for the last coefficients, which is the reason for the further requirement Assumption \ref{ass:activation_RFK_sphere}.$(c)$.

Let us check that Assumption \ref{ass:activation_RFK_sphere}.$(c)$ holds for a class of smooth activation functions.
We believe that indeed this assumption holds much more generally, but we leave such generalizations to future work.
\begin{lemma}\label{lem:bound_eigenvalues_last_Fourier}
 Consider $\ell$ a fixed integer. Assume there exist constants $c_0>0$ and $c_1 <1$ such that $| \barsigma^{(\ell)} (x) | \le c_0 \exp (c_1 x^2 / 4)$ for all $x\in\reals$. 
Then, we have
\[
\max_{k\le \ell} \xi_{d,d-k} (\barsigma)^2 = O_d ( d^{-\ell} ),
\]
where $\xi_{d,d-k}(\barsigma) = \< \barsigma(\< \be,\, \cdot\,\>), Q_{d-k}(\sqrt{d} \< \be, \cdot\>) \>_{L^2(\Cube)}$, and $Q_k$
is the $k$-th hypercubic Gegenbauer polynomial (see Appendix \ref{sec:functions_hypercube}).
\end{lemma}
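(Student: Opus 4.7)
The strategy is to reformulate $\xi_{d,d-k}(\barsigma)$ as a Walsh parity coefficient on a lower-dimensional hypercube and control it via iterated discrete differentiation combined with Parseval. Fix $\be = \ones$; by the hypercube addition formula $B(\Cube;k)Q_k^{(d)}(\langle\be,\bx\rangle) = \sum_{|S|=k}\chi_S(\be)\chi_S(\bx)$ (where $\chi_S(\bx) = \prod_{i\in S}x_i$) together with the permutation symmetry of $\bx\mapsto\barsigma(\langle\ones,\bx\rangle/\sqrt{d})$, one obtains (up to the normalization implicit in the statement)
$$
\xi_{d,d-k}(\barsigma) \;=\; \E_{\bx}\!\left[\chi_{[d-k]}(\bx)\,\barsigma\!\left(\langle\ones,\bx\rangle/\sqrt{d}\right)\right].
$$
Splitting the coordinates of $\bx = (x_1,\dots,x_d)$ and conditioning on the last $k$ of them gives $\xi_{d,d-k}(\barsigma) = \E_{U}[\widehat{g_U}([d-k])]$, where $T := \sum_{i=1}^{d-k}x_i$, $U := \sum_{i=d-k+1}^d x_i$, $g_U(x_1,\dots,x_{d-k}) := \barsigma((T+U)/\sqrt{d})$, and $\widehat{g_U}([d-k])$ denotes the parity Walsh coefficient of $g_U$ on $\{\pm 1\}^{d-k}$.

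Introducing the discrete derivative $\partial_i h(\bx) := \tfrac{1}{2}(h(\bx|_{x_i=+1})-h(\bx|_{x_i=-1}))$, one has $\widehat{g_U}([d-k]) = \partial_1\cdots\partial_{d-k}g_U$. Applying Parseval to only the first $\ell$ derivatives yields the key estimate
$$
\widehat{g_U}([d-k])^2 \;\leq\; \|\partial_1\cdots\partial_\ell g_U\|_{L^2(\{\pm 1\}^{d-k})}^2,
$$
since $\widehat{g_U}([d-k])^2$ is a single summand in $\sum_{S\supseteq[\ell]}|\widehat{g_U}(S)|^2$. The fundamental theorem of calculus expresses $\partial_i g_U$ as $d^{-1/2}$ times the average of $\barsigma'$ over an interval of width $2/\sqrt{d}$, and iterating $\ell$ times produces
$$
\partial_1\cdots\partial_\ell g_U(\bx) \;=\; d^{-\ell/2}\,h_\ell\!\left(\frac{T_{>\ell}+U}{\sqrt{d}}\right),
$$
where $h_\ell(y)$ is an $\ell$-fold windowed average of $\barsigma^{(\ell)}$ over a window of total width $O(\ell/\sqrt{d})$ centred at $y$. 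Under the hypothesis $|\barsigma^{(\ell)}(z)| \leq c_0 e^{c_1 z^2/4}$, Young's inequality $z^2 \leq (1+\eps)y^2+(1+\eps^{-1})\ell^2/d$ yields $|h_\ell(y)|^2 \leq C\,e^{c_1(1+\eps)y^2/2}$ for $d$ large, whence
$$
\|\partial_1\cdots\partial_\ell g_U\|_{L^2}^2 \;\leq\; C\, d^{-\ell}\,\E\!\left[\exp\!\Big(\tfrac{c_1(1+\eps)}{2}\big((T+U)/\sqrt{d}\big)^{2}\Big)\right].
$$

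The final step is the Gaussian-MGF computation for the Rademacher sum. Combining the Gaussian identity $e^{aY^2/2} = \E_{Z\sim N(0,1)}[e^{\sqrt{a}YZ}\mid Y]$, Hoeffding's bound $\E[e^{\lambda T}] \leq e^{\lambda^2(d-k)/2}$, and the Gaussian MGF $\E_Z[e^{\lambda Z^2+\mu Z}] = (1-2\lambda)^{-1/2}e^{\mu^2/(2(1-2\lambda))}$ bounds the above expectation by $(1-c_1(1+\eps)(d-k)/d)^{-1/2}\,e^{O(k^2/d)}$, which is $O(1)$ uniformly in $U$ and $d$ provided $\eps$ is chosen with $c_1(1+\eps) < 1$. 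Averaging over $U$ (and using $|U|\le k\le\ell$) gives $|\xi_{d,d-k}(\barsigma)| = O(d^{-\ell/2})$ uniformly in $k \leq \ell$, as desired. The main subtlety is that the parity mode has no direct ``derivative'' interpretation: iterating discrete differentiation $d-k$ times would require control on $\barsigma^{(d-k)}$, which is unavailable, so Parseval is essential to convert the unused $d-k-\ell$ iterations into the benign $L^2$ slack factor $1$. A second delicate point is that $c_1 < 1$ (rather than $c_1 < 1/2$) is the sharp threshold: this forces the exact Gaussian-MGF estimate in place of the naive $z^2 \leq 2y^2+2\eta^2$ bound, since the subgaussian variance proxy of $(T+U)/\sqrt{d}$ is $(d-k)/d < 1$, so the MGF $\E[e^{aY^2/2}]$ is finite precisely when $a(d-k)/d < 1$, a range that contains every $a<1$.
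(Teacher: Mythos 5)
Your proof is correct, and its engine is the same as the paper's: peel off $\ell$ of the coordinates appearing in the parity, each discrete difference of $\barsigma(\langle\ones,\bx\rangle/\sqrt{d})$ contributing a factor $d^{-1/2}$ and turning $\barsigma$ into (an average of) $\barsigma^{(\ell)}$, then exploit the growth hypothesis with $c_1<1$ together with subgaussianity of the normalized Rademacher sum. Where you differ is in how the remaining $d-k-\ell$ coordinates are disposed of and how the final expectation is controlled. The paper simply takes absolute values, bounding the quantity by $d^{-\ell/2}\,\E[|\barsigma^{(\ell)}(\zeta^{\ell})|]$, and finishes by invoking weak convergence of $X=(x_{\ell+1}+\cdots+x_d)/\sqrt{d}$ to $\normal(0,1)$ plus dominated convergence; you instead keep the Fourier structure, discard the unused parity modes via Bessel's inequality (an $L^2$ rather than $L^1$ bound), and control $\E[\exp(aX^2/2)]$ with $a=c_1(1+\eps)<1$ by the exact Gaussian-linearization/Hoeffding/MGF computation. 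The $L^2$ route squares the growth bound, which is why you genuinely need the Young split with small $\eps$ (crude doubling would force $c_1<1/2$), whereas the paper's $L^1$ bound tolerates the crude split; on the other hand, your explicit $d$-uniform MGF estimate is a real gain in rigor, since the paper's appeal to dominated convergence glosses over the uniform integrability needed to pass $\E[\exp(c_1X^2/2)]$ to its Gaussian limit, and your computation supplies exactly that missing uniform bound. Your iterated-FTC windowed averages also sidestep the mild awkwardness of re-applying the mean value theorem at the intermediate points $\zeta^{j}$. Two cosmetic nits: the Hoeffding step involves $d-k-\ell$ (not $d-k$) Rademacher terms, and $h_\ell$ should be evaluated at $(T_{>\ell}+U)/\sqrt{d}$ throughout; neither affects the conclusion.
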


\begin{proof}[Proof of Lemma \ref{lem:bound_eigenvalues_last_Fourier}]
By the mean value theorem, we have for any $k \leq \ell$,
\[
\begin{aligned}
 \xi_{d-k,d} (\barsigma) = & \E_{\bx} [ \barsigma ( \< \ones, \bx\> /\sqrt{d} ) Q_{d -k }^{(d)} ( \< \ones , \bx \> ) ] \\
 = & \E_{\bx} \Big[ x_1 \cdots x_{d-k} \barsigma \Big( \frac{x_1 + \ldots + x_d}{\sqrt{d}} \Big) \Big] \\
 = & \frac{1}{2} \E_{x_2, \ldots , x_3} \Big[  x_2 \ldots x_{d-k} \Big( \barsigma \Big( \frac{1+ x_2 + \ldots + x_d}{\sqrt{d}} \Big)  - \barsigma \Big( \frac{-1 + x_2 + \ldots + x_d}{\sqrt{d}} \Big)  \Big) \Big] \\
 = & \frac{1}{\sqrt{d} } \E_{x_2 , \ldots ,  x_d} \Big[  x_2 \ldots x_{d-k} \barsigma^{(1)} ( \zeta^1 ( x_2 , \ldots , x_d) ) \Big],
 \end{aligned}
\]
where on the third line we integrated over the first coordinate $x_1$ and on the last line $|  \zeta^1 ( x_2 , \ldots , x_d) ) -  (x_2 + \ldots  + x_d )/\sqrt{d} | \leq 1/\sqrt{d}$. By iterating this computation $\ell$ times, we get
\[
\begin{aligned}
  \xi_{d-k,d} (\barsigma) 
 = & \frac{1}{d^{\ell/2}}  \E_{x_{\ell+1} , \ldots ,  x_d} \Big[  x_{\ell+1} \ldots x_{d-k} \barsigma^{(\ell)} ( \zeta^\ell ( x_{\ell+1} , \ldots , x_d) ) \Big] ,
 \end{aligned}
\]
where $| \zeta^{\ell} ( x_{\ell+1} , \ldots , x_d)  - (x_{\ell+1} + \ldots  + x_d )/\sqrt{d} | \leq \ell /\sqrt{d}$. Hence,
\[
\begin{aligned}
 | \xi_{d-k,d} (\barsigma) |
 \leq & \frac{1}{d^{\ell/2}}  \E_{x_{\ell+1} , \ldots ,  x_d} \Big[  | \barsigma^{(\ell)} ( \zeta^\ell ( x_{\ell+1} , \ldots , x_d) ) |  \Big] \\
 \leq &  \frac{1}{d^{\ell/2}}  \E_{X = (x_{\ell+1} + \ldots + x_d) /\sqrt{d} } [ c_0 \exp( c_1 X^2/ 2 + c_1 \ell^2/d ) ] \\
 =& O_{d}( d^{-\ell/2} ),
 \end{aligned}
\]
where we used that $X$ converges weakly to the standard normal distribution and dominated convergence.
\end{proof}

\section{Technical background}\label{sec:technical_background}

\subsection{Functions on the sphere}\label{sec:functions_sphere}

\subsubsection{Functional spaces over the sphere}

For $d \ge 3$, we let $\S^{d-1}(r) = \{\bx \in \R^{d}: \| \bx \|_2 = r\}$ denote the sphere with radius $r$ in $\reals^d$.
We will mostly work with the sphere of radius $\sqrt d$, $\S^{d-1}(\sqrt{d})$ and will denote by $\tau_{d}$  the uniform probability measure on $\S^{d-1}(\sqrt d)$. 
All functions in this section are assumed to be elements of $ L^2(\S^{d-1}(\sqrt d) ,\tau_d)$, with scalar product and norm denoted as $\<\,\cdot\,,\,\cdot\,\>_{L^2}$
and $\|\,\cdot\,\|_{L^2}$:
\begin{align}
\<f,g\>_{L^2} \equiv \int_{\S^{d-1}(\sqrt d)} f(\bx) \, g(\bx)\, \tau_d(\de \bx)\,.
\end{align}

For $\ell\in\integers_{\ge 0}$, let $\tilde{V}_{d,\ell}$ be the space of homogeneous harmonic polynomials of degree $\ell$ on $\reals^d$ (i.e. homogeneous
polynomials $q(\bx)$ satisfying $\Delta q(\bx) = 0$), and denote by $V_{d,\ell}$ the linear space of functions obtained by restricting the polynomials in $\tilde{V}_{d,\ell}$
to $\S^{d-1}(\sqrt d)$. With these definitions, we have the following orthogonal decomposition
\begin{align}
L^2(\S^{d-1}(\sqrt d) ,\tau_d) = \bigoplus_{\ell=0}^{\infty} V_{d,\ell}\, . \label{eq:SpinDecomposition}
\end{align}
The dimension of each subspace is given by
\begin{align}
\dim(V_{d,\ell}) = B(\S^{d-1}; \ell) = \frac{2 \ell + d - 2}{d - 2} { \ell + d - 3 \choose \ell} \, .
\end{align}
For each $\ell\in \integers_{\ge 0}$, the spherical harmonics $\{ Y_{\ell, j}^{(d)}\}_{1\le j \le B(\S^{d-1}; \ell)}$ form an orthonormal basis of $V_{d,\ell}$:
\[
\<Y^{(d)}_{ki}, Y^{(d)}_{sj}\>_{L^2} = \delta_{ij} \delta_{ks}.
\]
Note that our convention is different from the more standard one, that defines the spherical harmonics as functions on $\S^{d-1}(1)$.
It is immediate to pass from one convention to the other by a simple scaling. We will drop the superscript $d$ and write $Y_{\ell, j} = Y_{\ell, j}^{(d)}$ whenever clear from the context.

We denote by $\oproj_k$  the orthogonal projections to $V_{d,k}$ in $L^2(\S^{d-1}(\sqrt d),\tau_d)$. This can be written in terms of spherical harmonics as
\begin{align}
\oproj_k f(\bx) \equiv& \sum_{l=1}^{B(\S^{d-1}; k)} \< f, Y_{kl}\>_{L^2} Y_{kl}(\bx). 
\end{align}
We also define
$\oproj_{\le \ell}\equiv \sum_{k =0}^\ell \oproj_k$, $\oproj_{>\ell} \equiv \id -\oproj_{\le \ell} = \sum_{k =\ell+1}^\infty \oproj_k$,
and $\oproj_{<\ell}\equiv \oproj_{\le \ell-1}$, $\oproj_{\ge \ell}\equiv \oproj_{>\ell-1}$.

\subsubsection{Gegenbauer polynomials}
\label{sec:Gegenbauer}

The $\ell$-th Gegenbauer polynomial $Q_\ell^{(d)}$ is a polynomial of degree $\ell$. Consistently
with our convention for spherical harmonics, we view $Q_\ell^{(d)}$ as a function $Q_{\ell}^{(d)}: [-d,d]\to \reals$. The set $\{ Q_\ell^{(d)}\}_{\ell\ge 0}$
forms an orthogonal basis on $L^2([-d,d],\tilde\tau^1_{d})$, where $\tilde\tau^1_{d}$ is the distribution of $\sqrt{d}\<\bx,\be_1\>$ when $\bx\sim \tau_d$,
satisfying the normalization condition:
\begin{align}
\< Q^{(d)}_k(\sqrt{d}\< \be_1, \cdot\>), Q^{(d)}_j(\sqrt{d}\< \be_1, \cdot\>) \>_{L^2(\S^{d-1}(\sqrt d))} = \frac{1}{B(\S^{d-1};k)}\, \delta_{jk} \, .  \label{eq:GegenbauerNormalization}
\end{align}
In particular, these polynomials are normalized so that  $Q_\ell^{(d)}(d) = 1$. 
As above, we will omit the superscript $(d)$ in $Q_\ell^{(d)}$ when clear from the context.

Gegenbauer polynomials are directly related to spherical harmonics as follows. Fix $\bv\in\S^{d-1}(\sqrt{d})$ and 
consider the subspace of  $V_{\ell}$ formed by all functions that are invariant under rotations in $\reals^d$ that keep $\bv$ unchanged.
It is not hard to see that this subspace has dimension one, and coincides with the span of the function $Q_{\ell}^{(d)}(\<\bv,\,\cdot\,\>)$.

We will use the following properties of Gegenbauer polynomials
\begin{enumerate}
\item For $\bx, \by \in \S^{d-1}(\sqrt d)$
\begin{align}
\< Q_j^{(d)}(\< \bx, \cdot\>), Q_k^{(d)}(\< \by, \cdot\>) \>_{L^2} = \frac{1}{B(\S^{d-1}; k)}\delta_{jk}  Q_k^{(d)}(\< \bx, \by\>).  \label{eq:ProductGegenbauer}
\end{align}
\item For $\bx, \by \in \S^{d-1}(\sqrt d)$
\begin{align}
Q_k^{(d)}(\< \bx, \by\> ) = \frac{1}{B(\S^{d-1}; k)} \sum_{i =1}^{ B(\S^{d-1}; k)} Y_{ki}^{(d)}(\bx) Y_{ki}^{(d)}(\by). \label{eq:GegenbauerHarmonics}
\end{align}
\end{enumerate}
These properties imply that ---up to a constant--- $Q_k^{(d)}(\< \bx, \by\> )$ is a representation of the projector onto 
the subspace of degree -$k$ spherical harmonics
\begin{align}
(\oproj_k f)(\bx) = B(\S^{d-1}; k) \int_{\S^{d-1}(\sqrt{d})} \, Q_k^{(d)}(\< \bx, \by\> )\,  f(\by)\, \tau_d(\de\by)\, .\label{eq:ProjectorGegenbauer}
\end{align}
For a function $\barsigma \in L^2([-\sqrt d, \sqrt d], \tau^1_{d})$ (where $\tau^1_{d}$ is the distribution of $\< \be_!, \bx \> $ when $\bx \sim_{iid} \Unif(\S^{d-1}(\sqrt d))$), denoting its spherical harmonics coefficients $\xi_{d, k}(\barsigma)$ to be 
\begin{align}\label{eqn:technical_lambda_sigma}
\xi_{d, k}(\barsigma) = \int_{[-\sqrt d , \sqrt d]} \barsigma(x) Q_k^{(d)}(\sqrt d x) \tau^1_{d}(\de x),
\end{align}
then we have the following equation holds in $L^2([-\sqrt d, \sqrt d],\tau^1_{d-1})$ sense
\[
\barsigma(x) = \sum_{k = 0}^\infty \xi_{d, k}(\barsigma) B(\S^{d-1}; k) Q_k^{(d)}(\sqrt d x). 
\]

To  any rotationally invariant kernel $H_d(\bx_1, \bx_2) = h_d(\< \bx_1, \bx_2\> / d)$,
with $h_d(\sqrt{d}\, \cdot \, ) \in L^2([-\sqrt{d},\sqrt{d}],\tau^1_{d})$,
we can associate a self adjoint operator $\cuH_d:L^2(\S^{d-1}(\sqrt{d}))\to L^2(\S^{d-1}(\sqrt{d}))$
via
\begin{align}
\cuH_df(\bx) \equiv \int_{\S^{d-1}(\sqrt{d})} h_d(\<\bx,\bx_1\>/d)\, f(\bx_1) \, \tau_d(\de \bx_1)\, .
\end{align}
By rotational invariance,   the space $V_{k}$ of homogeneous polynomials of degree $k$ is an eigenspace of
$\cuH_d$, and we will denote the corresponding eigenvalue by $\xi_{d,k}(h_d)$. In other words
$\cuH_df(\bx) \equiv \sum_{k=0}^{\infty} \xi_{d,k}(h_d) \oproj_{k}f$.   The eigenvalues can be computed via
\begin{align}
  \xi_{d, k}(h_d) = \int_{[-\sqrt d , \sqrt d]} h_d\big(x/\sqrt{d}\big) Q_k^{(d)}(\sqrt d x) \tau^1_{d-1}(\de x)\, .
\end{align}

\subsubsection{Hermite polynomials}
\label{sec:Hermite}

The Hermite polynomials $\{\bbHe_k\}_{k\ge 0}$ form an orthogonal basis of $L^2(\reals,\gamma)$, where $\gamma(\de x) = e^{-x^2/2}\de x/\sqrt{2\pi}$ 
is the standard Gaussian measure, and $\bbHe_k$ has degree $k$. We will follow the classical normalization (here and below, expectation is with respect to
$G\sim\normal(0,1)$):
\begin{align}
\E\big\{\bbHe_j(G) \,\bbHe_k(G)\big\} = k!\, \delta_{jk}\, .
\end{align}
As a consequence, for any function $g\in L^2(\reals,\gamma)$, we have the decomposition
\begin{align}\label{eqn:sigma_He_decomposition}
g(x) = \sum_{k=0}^{\infty}\frac{\mu_k(g)}{k!}\, \bbHe_k(x)\, ,\;\;\;\;\;\; \mu_k(g) \equiv \E\big\{g(G)\, \bbHe_k(G)\}\, .
\end{align}

The Hermite polynomials can be obtained as high-dimensional limits of the Gegenbauer polynomials introduced in the previous section. Indeed, the Gegenbauer polynomials (up to a $\sqrt d$ scaling in domain) are constructed by Gram-Schmidt orthogonalization of the monomials $\{x^k\}_{k\ge 0}$ with respect to the measure 
$\tilde\tau^1_{d}$, while Hermite polynomial are obtained by Gram-Schmidt orthogonalization with respect to $\gamma$. Since $\tilde\tau^1_{d}\Rightarrow \gamma$
(here $\Rightarrow$ denotes weak convergence),
it is immediate to show that, for any fixed integer $k$, 
\begin{align}
\lim_{d \to \infty} \Coeff\{ Q_k^{(d)}( \sqrt d x) \, B(\S^{d-1}; k)^{1/2} \} = \Coeff\left\{ \frac{1}{(k!)^{1/2}}\,\bbHe_k(x) \right\}\, .\label{eq:Gegen-to-Hermite}
\end{align}
Here and below, for $P$ a polynomial, $\Coeff\{ P(x) \}$ is  the vector of the coefficients of $P$. As a consequence,
for any fixed integer $k$, we have
\begin{align}\label{eqn:mu_lambda_relationship}
\mu_k(\barsigma) = \lim_{d \to \infty} \xi_{d,k}(\barsigma) (B(\S^{d-1}; k)k!)^{1/2}, 
\end{align}
where $\mu_k(\barsigma)$ and $\xi_{d,k}(\barsigma)$ are given in Eq. (\ref{eqn:sigma_He_decomposition}) and (\ref{eqn:technical_lambda_sigma}). 

\subsection{Functions on the hypercube}\label{sec:functions_hypercube}

Fourier analysis on the hypercube is a well studied subject \cite{o2014analysis}. The purpose of this section is to introduce
some notations that  make the correspondence with proofs on the sphere straightforward.
For convenience, we will adopt the same notations as for their spherical case. 

\subsubsection{Fourier basis}

Denote $\Cube = \{ -1 , +1 \}^d $ the hypercube in $d$ dimension. Let us denote $\tau_{d}$ to be the uniform probability measure on $\Cube$. All the functions will be assumed to be elements of $L^2 (\Cube, \tau_d)$ (which contains all the bounded functions $f : \Cube \to \R$), with scalar product and norm denoted as $\< \cdot , \cdot \>_{L^2}$ and $\| \cdot \|_{L^2}$:
\[
\< f , g \>_{L^2} \equiv \int_{\Cube} f(\bx) g(\bx) \tau_d ( \de \bx) = \frac{1}{2^n} \sum_{\bx \in \Cube} f(\bx) g(\bx).
\]
Notice that $L^2 ( \Cube, \tau_d)$ is a $2^n$ dimensional linear space. By analogy with the spherical case we decompose $L^2 ( \Cube, \tau_d)$ as a direct sum of $d+1$ linear spaces obtained from polynomials of degree $\ell = 0, \ldots , d$
\[
L^2 ( \Cube, \tau_d) = \bigoplus_{\ell = 0}^d V_{d,\ell}.
\]

For each $\ell \in \{ 0 , \ldots , d \}$, consider the Fourier basis $\{ Y_{\ell, S}^{(d)} \}_{S \subseteq [d], |S | =\ell}$ of degree $\ell$, where for a set $S \subseteq [d]$, the basis is given by
\[
Y_{\ell, S}^{(d)} (\bx) \equiv x^S \equiv \prod_{i \in S} x_i.
\]
It is easy to verify that (notice that $x_i^k = x_i$ if $k$ is odd and  $x_i^k = 1$ if $k$ is even)
\[
\< Y_{\ell, S}^{(d)} , Y_{k, S'}^{(d)} \>_{L^2} = \E[ x^{S} \times x^{S'} ] = \delta_{\ell,k} \delta_{S,S'}. 
\]
Hence $\{ Y_{\ell, S}^{(d)} \}_{S \subseteq [d], |S | =\ell}$ form an orthonormal  basis of $V_{d,\ell}$ and 
\[
\dim (V_{d,\ell} ) = B(\Cube;\ell) = {{d}\choose{\ell}}.
\]
As above, we will omit the superscript $(d)$ in $Y_{\ell, S}^{(d)}$ when clear from the context.

\subsubsection{Hypercubic Gegenbauer}

We consider the following family of polynomials $\{ Q^{(d)}_\ell \}_{\ell = 0 , \ldots, d}$ that we will call hypercubic Gegenbauer, defined as
\[
Q^{(d)}_\ell (  \< \bx , \by \> ) = \frac{1}{B(\Cube; \ell)} \sum_{S \subseteq [d], |S| = \ell} Y_{\ell,S}^{(d)} ( \bx ) Y_{\ell,S}^{(d)} ( \by ). 
\]
Notice that the right hand side only depends on $ \< \bx , \by \>$ and therefore these polynomials are uniquely defined. In particular,
\[
\< Q_\ell^{(d)} ( \< \ones , \cdot \> ) , Q_k^{(d)} ( \< \ones , \cdot \> ) \>_{L^2} = \frac{1}{B(\Cube;k)} \delta_{\ell k}.
\]
Hence $\{ Q^{(d)}_\ell \}_{\ell = 0 , \ldots, d}$ form an orthogonal basis of $L^2 ( \{ -d , -d+2 , \ldots , d-2 ,d\}, \Tilde \tau_d^1 )$ where $\Tilde \tau_d^1$ is the distribution of $\< \ones , \bx \>$ when $\bx \sim \tau_d$, i.e., $\Tilde \tau_d^1 \sim 2 \text{Bin}(d, 1/2) - d/2$.

We have
\[
\< Q_\ell^{(d)} ( \< \bx , \cdot \> ) , Q_k^{(d)} ( \< \by , \cdot \> ) \>_{L^2} =   \frac{1}{B(\Cube;k)} Q_{k} ( \< \bx , \by \> )\delta_{\ell k} .
\]
For a function $\barsigma ( \cdot / \sqrt{d} ) \in L^2 ( \{ -d , -d+2 , \ldots , d-2 ,d\}, \Tilde \tau_d^1 )$, denote its hypercubic Gegenbauer coefficients $\xi_{d,k} ( \barsigma)$ to be
\[
\xi_{d,k} (\barsigma ) = \int_{\{-d, -d+2 , \ldots , d-2 , d\} } \barsigma(x / \sqrt{d} ) Q_k^{(d)} ( x) \Tilde \tau_d^1 (\de x).
\]

Notice that by weak convergence of $\< \ones, \bx \> / \sqrt{d}$ to the normal distribution, we have also convergence of the (rescaled) hypercubic Gegenbauer polynomials to the Hermite polynomials, i.e., for any fixed $k$, we have
\begin{align}
\lim_{d \to \infty} \Coeff\{ Q_k^{(d)}( \sqrt d x) \, B(\Cube; k)^{1/2} \} = \Coeff\left\{ \frac{1}{(k!)^{1/2}}\,\bbHe_k(x) \right\}\, .\label{eq:Hyper-Gegen-to-Hermite}
\end{align}

\subsection{Hypercontractivity of Gaussian measure and uniform distributions on the sphere and the hypercube}
\label{app:hypercontractivity}

By Holder's inequality, we have $\| f \|_{L^p} \le \| f \|_{L^q}$ for any $f$ and any $p \le q$. The reverse inequality does not hold in general, even up to a constant. However, for some measures, the reverse inequality will hold for some sufficiently nice functions. These measures satisfy the celebrated hypercontractivity properties \cite{gross1975logarithmic, bonami1970etude, beckner1975inequalities, beckner1992sobolev}. 

\begin{lemma}[Hypercube hypercontractivity \cite{beckner1975inequalities}] For any $\ell = \{ 0 , \ldots , d \}$ and $f_d \in L^2 ( \Cube)$ to be a degree $\ell$ polynomial, then for any integer $q\ge 2$, we have
\[
\| f_d \|_{L^q ( \Cube)}^2 \leq (q-1)^\ell \cdot \| f_d \|^2_{L^2 (\Cube)}.
\]
\end{lemma}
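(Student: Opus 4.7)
The plan is to derive the stated bound from the two-function Bonami–Beckner hypercontractivity inequality for the Boolean noise operator. For $0\le \rho\le 1$, let $T_\rho: L^2(\Cube)\to L^2(\Cube)$ denote the noise operator defined on the Fourier basis by
\begin{equation*}
T_\rho\Big(\sum_{S\subseteq [d]} \hat f(S)\, \chi_S\Big) = \sum_{S\subseteq [d]} \rho^{|S|}\, \hat f(S)\, \chi_S,\qquad \chi_S(\bx)=\prod_{i\in S} x_i.
\end{equation*}
The classical Bonami–Beckner theorem \cite{bonami1970etude,beckner1975inequalities} asserts that for any $1\le p\le q$ and any $0\le \rho\le \sqrt{(p-1)/(q-1)}$, the operator $T_\rho$ is a contraction from $L^p(\Cube)$ to $L^q(\Cube)$, i.e.\ $\|T_\rho h\|_{L^q}\le \|h\|_{L^p}$ for all $h\in L^p(\Cube)$. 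I will take this as known input.

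Given the statement, I set $p=2$ and $\rho:=1/\sqrt{q-1}\in(0,1]$, which saturates the hypercontractivity threshold. The key observation is that if $f_d$ is a polynomial of degree at most $\ell$, then the Fourier expansion $f_d=\sum_{|S|\le \ell}\hat f_d(S)\chi_S$ is a finite sum, so the function
\begin{equation*}
g:=\sum_{|S|\le \ell}\rho^{-|S|}\hat f_d(S)\chi_S
\end{equation*}
is well defined in $L^2(\Cube)$, and by construction $T_\rho g=f_d$.

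Applying Bonami–Beckner with $p=2$ yields
\begin{equation*}
\|f_d\|_{L^q(\Cube)}=\|T_\rho g\|_{L^q(\Cube)}\le \|g\|_{L^2(\Cube)}.
\end{equation*}
The bound on $\|g\|_{L^2}^2$ is immediate from Parseval and the fact that $|S|\le \ell$ for every Fourier coefficient contributing to $g$:
\begin{equation*}
\|g\|_{L^2(\Cube)}^2=\sum_{|S|\le \ell}\rho^{-2|S|}\hat f_d(S)^2\le \rho^{-2\ell}\sum_{|S|\le \ell}\hat f_d(S)^2=(q-1)^\ell\,\|f_d\|_{L^2(\Cube)}^2.
\end{equation*}
Combining the two displays gives $\|f_d\|_{L^q(\Cube)}^2\le (q-1)^\ell\|f_d\|_{L^2(\Cube)}^2$, as claimed.

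There is essentially no obstacle here beyond invoking the correct form of the Bonami–Beckner inequality; the only subtlety is checking that inverting $T_\rho$ on $f_d$ makes sense, which is why restricting to finite-degree polynomials is crucial (otherwise $\rho^{-|S|}$ would blow up). If one wished to be entirely self-contained, the main work would lie in proving Bonami–Beckner itself via the standard route: establish the two-point inequality on $\{-1,+1\}$ (an elementary but delicate calculus computation comparing $(\tfrac{1+\rho x}{2})^q+(\tfrac{1-\rho x}{2})^q$ to a suitable $L^p$ norm), then tensorize over the $d$ coordinates using the multiplicative property $T_\rho=T_\rho^{\otimes d}$ and Minkowski's integral inequality. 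This tensorization step is the genuine analytic core.
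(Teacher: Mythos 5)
Your proof is correct: the reduction of degree-$\ell$ hypercontractivity to the Bonami--Beckner bound $\|T_\rho g\|_{L^q}\le \|g\|_{L^2}$ at the critical noise rate $\rho=1/\sqrt{q-1}$, followed by Parseval to control $\|g\|_{L^2}$, is exactly the standard argument, and the restriction to finite-degree polynomials correctly handles the inversion of $T_\rho$. The paper itself gives no proof for this lemma --- it simply cites Beckner --- so your write-up supplies the classical derivation that the citation points to, and it is sound.
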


\begin{lemma}[Spherical hypercontractivity \cite{beckner1992sobolev}]\label{lem:hypercontractivity_sphere}
For any $\ell \in \N$ and $f_d \in L^2(\S^{d-1})$ to be a degree $\ell$ polynomial, for any $q \ge 2$, we have 
\[
\| f_d \|_{L^q(\S^{d-1})}^2 \le (q - 1)^\ell \cdot \| f_d \|_{L^2(\S^{d-1})}^2. 
\]
\end{lemma}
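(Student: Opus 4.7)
The plan is to deduce Lemma \ref{lem:hypercontractivity_sphere} from the sharp spherical multiplier inequality of Beckner \cite{beckner1992sobolev}, via the classical semigroup reduction of polynomial hypercontractivity. I would introduce on $L^2(\S^{d-1})$ the multiplier semigroup $T_\rho$, $\rho \in [0,1]$, which acts on each spherical harmonic of degree $k$ by $T_\rho Y_{k,j} = \rho^k Y_{k,j}$. The key analytic input, which I would take as a black box from \cite{beckner1992sobolev}, is the sharp bound
\[
\|T_\rho g\|_{L^q(\S^{d-1})} \;\le\; \|g\|_{L^2(\S^{d-1})} \qquad \text{for all } q \ge 2 \text{ and } \rho \le (q-1)^{-1/2}.
\]
Beckner's proof of this multiplier inequality proceeds via stereographic projection, transporting the spherical estimate to an inequality on Gauss space that is then settled by a two-point tensorization in the spirit of Nelson's original argument for Gaussian hypercontractivity.

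Given the multiplier inequality, the lemma follows by a short formal manipulation. Fix $q \ge 2$, set $\rho = (q-1)^{-1/2}$, and let $f_d$ be a polynomial of degree at most $\ell$ with orthogonal decomposition $f_d = \sum_{k=0}^\ell f_{d,k}$ into spherical harmonics. Define $g_d := \sum_{k=0}^\ell \rho^{-k} f_{d,k}$, so that $T_\rho g_d = f_d$ by construction. By orthogonality of the $f_{d,k}$,
\[
\|g_d\|_{L^2}^2 \;=\; \sum_{k=0}^\ell \rho^{-2k} \|f_{d,k}\|_{L^2}^2 \;\le\; \rho^{-2\ell} \|f_d\|_{L^2}^2 \;=\; (q-1)^\ell \|f_d\|_{L^2}^2,
\]
and applying Beckner's inequality to $g_d$ yields $\|f_d\|_{L^q}^2 = \|T_\rho g_d\|_{L^q}^2 \le \|g_d\|_{L^2}^2 \le (q-1)^\ell \|f_d\|_{L^2}^2$, which is precisely the stated bound.

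The main obstacle is of course the multiplier bound itself. A softer route via the Bakry--\'Emery $CD(d-2,d-1)$ curvature-dimension criterion for the sphere would yield hypercontractivity only for the heat semigroup $e^{-t\Delta_{\S^{d-1}}}$, with constants that do not match the sharp $(q-1)^\ell$ factor once one reverts the spectral weight $e^{-tk(k+d-2)}$ on degree-$k$ harmonics. A purely asymptotic alternative valid as $d \to \infty$ with $\ell$ fixed is to reduce low-degree polynomials on $\S^{d-1}(\sqrt{d})$ to polynomials in standard Gaussians through the convergence \eqref{eq:Gegen-to-Hermite} and invoke Nelson's Gaussian hypercontractivity; however this only gives the inequality up to a $(1+o_d(1))$ multiplicative factor and fails for small $d$, so Beckner's sharp spherical result really is needed for the statement as written.
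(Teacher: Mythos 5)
Your proposal is correct and rests on the same foundation as the paper, which states this lemma as a known result of Beckner \cite{beckner1992sobolev} without further proof; your semigroup reduction (writing $f_d = T_\rho g_d$ with $\rho = (q-1)^{-1/2}$, using orthogonality to get $\|g_d\|_{L^2}^2 \le (q-1)^\ell \|f_d\|_{L^2}^2$, and invoking Beckner's $L^2 \to L^q$ contraction for the multiplier $\rho^k$ on degree-$k$ harmonics) is the standard and valid way to pass from Beckner's theorem to the polynomial statement with the $(q-1)^\ell$ constant. Your remarks on why the Bakry--\'Emery and Gaussian-limit routes would not yield the stated sharp, dimension-free constant are also accurate.
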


\begin{lemma}[Gaussian hypercontractivity]\label{lem:hypercontractivity_Gaussian}
For any $\ell \in \N$ and $f \in L^2(\R, \gamma)$ to be a degree $\ell$ polynomial on $\R$, where $\gamma$ is the standard Gaussian distribution. Then for any $q \ge 2$, we have 
\[
\| f \|_{L^q(\R, \gamma)}^2 \le (q - 1)^{\ell} \cdot \| f \|_{L^2(\R, \gamma)}^2. 
\]
\end{lemma}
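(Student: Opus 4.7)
The plan is to derive the Gaussian hypercontractivity inequality as a limit of the hypercube hypercontractivity inequality (the immediately preceding lemma), exploiting the central limit theorem for sums of Rademacher variables.

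First, I would fix a degree-$\ell$ polynomial $f : \R \to \R$ and, for each $d \ge 1$, define $g_d : \Cube \to \R$ by
\[
g_d(\bx) \;=\; f\!\left(\frac{\langle \ones, \bx\rangle}{\sqrt{d}}\right).
\]
Since $f$ has degree $\ell$, after expansion $g_d$ is a polynomial in $(x_1,\ldots,x_d)$ of total degree at most $\ell$. Applying the hypercube hypercontractivity lemma to $g_d$ gives, for every $q \ge 2$ and every $d \ge 1$,
\[
\| g_d \|_{L^q(\Cube)}^2 \;\le\; (q-1)^\ell \, \| g_d \|_{L^2(\Cube)}^2.
\]

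Next, I would argue that both sides converge to the Gaussian norms as $d \to \infty$. Let $X_d = \langle \ones, \bx\rangle/\sqrt{d}$ where $\bx \sim \Unif(\Cube)$. By the classical central limit theorem for Rademacher sums, $X_d \gotod G \sim \normal(0,1)$. Because $|f(x)|^p$ has polynomial growth (of order $p\ell$) and the moments $\E[X_d^{2k}]$ are bounded uniformly in $d$ for every fixed $k$ (they converge to the Gaussian moments), the family $\{|f(X_d)|^p\}_{d \ge 1}$ is uniformly integrable for any fixed $p \ge 1$. Hence
\[
\E[|f(X_d)|^p] \;\longrightarrow\; \E[|f(G)|^p], \qquad d \to \infty,
\]
so that $\|g_d\|_{L^p(\Cube)} \to \|f\|_{L^p(\R,\gamma)}$ for $p \in \{2, q\}$.

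Finally, passing to the limit $d \to \infty$ in the hypercube inequality yields
\[
\| f \|_{L^q(\R,\gamma)}^2 \;\le\; (q-1)^\ell \, \| f \|_{L^2(\R,\gamma)}^2,
\]
which is the claim. The only step requiring a little care is the uniform integrability used in Step~3; this is routine because one can bound $\E[X_d^{2k}]$ for each fixed $k$ via a direct moment computation (or invoke that $X_d$ is sub-Gaussian with constant independent of $d$) and then dominate $|f(X_d)|^p$ by a polynomial in $X_d$ whose moments stay bounded. No other part of the argument presents a real obstacle.
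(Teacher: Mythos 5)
Your proposal is correct and is essentially the paper's own argument: the paper proves this lemma in one line by declaring it a direct consequence of the hypercube hypercontractivity lemma, and your CLT-plus-uniform-integrability passage to the limit (applied to $g_d(\bx)=f(\<\ones,\bx\>/\sqrt{d})$, which is a degree-$\le\ell$ polynomial on $\Cube$) is exactly the computation that remark leaves implicit. The only caveat, shared with the paper itself, is that the hypercube lemma is stated for integer $q\ge 2$, so the limiting argument as written yields the Gaussian bound for integer $q$; the full range of real $q\ge 2$ requires the general Bonami--Beckner inequality.
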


The Gaussian hypercontractivity is a direct consequence of hypercube hypercontractivity.

\end{document}